% Ams Latex
\documentclass[11pt]{amsart}
% For the longer preprint version set papameter of \preprint to {#1} 
%                                 and parameter of \journal to {}.
% For shorter submitted version reverse the above setti

\addtolength{\hoffset}{-1.5cm}
\addtolength{\voffset}{-1 cm}
\addtolength{\textwidth}{3cm}
\addtolength{\textheight}{1.5 cm}

\newcommand{\preprint}[1]{}

% For version using Max Lieblich's result about stability for twisted sheaves
% set papameter of \WithLieblich to {#1} 
% and parameter \WithoutLieblich to {}.
\newcommand{\WithLieblich}[1]{#1}
\newcommand{\WithoutLieblich}[1]{}

\newcommand{\hide}[1]{}

\usepackage{amssymb}
\usepackage{amsbsy}
\usepackage{amscd}
\usepackage{amsmath}
\usepackage{amsthm}

\numberwithin{equation}{section}

\theoremstyle{plain}
\newtheorem{thm}{Theorem}[section]
\newtheorem{prop}[thm]{Proposition}
\newtheorem{claim}[thm]{Claim}
\newtheorem{problem}[thm]{Problem}

\newtheorem{cor}[thm]{Corollary}
\newtheorem{lem}[thm]{Lemma}

\theoremstyle{definition}
\newtheorem{defi}[thm]{Definition}

\newtheorem{rem}[thm]{Remark}
\newtheorem{caution}[thm]{Caution}

\newtheorem{construction}[thm]{Construction}

\newcommand{\A}{{\mathcal A}}

\newcommand{\C}{{\mathcal C}}

\newcommand{\E}{{\mathcal E}}
\newcommand{\F}{{\mathcal F}}
\newcommand{\G}{{\mathcal G}}
\renewcommand{\H}{{\mathcal H}}

\newcommand{\M}{{\mathcal M}}

\renewcommand{\P}{{\mathcal P}}
\newcommand{\PP}{{\mathbb P}}
\newcommand{\Q}{{\mathcal Q}}
\newcommand{\R}{{\mathcal R}}
\renewcommand{\S}{{\mathcal S}}

\newcommand{\U}{{\mathcal U}}

\newcommand{\X}{{\mathcal X}}
\newcommand{\Y}{{\mathcal Y}}
\newcommand{\Z}{{\mathcal Z}}
\newcommand{\RealNumbers}{{\mathbb R}}
\newcommand{\Integers}{{\mathbb Z}}
\newcommand{\ComplexNumbers}{{\mathbb C}}
\newcommand{\RationalNumbers}{{\mathbb Q}}
\newcommand{\LieAlg}[1]{{\mathfrak #1}}

\newcommand{\monrep}{{mon}}
\newcommand{\monclass}{{\overline{mon}}}

\newcommand{\LongIsomRightArrow}{\stackrel{\cong}{\longrightarrow}}
\newcommand{\RightArrowOf}[1]{\stackrel{#1}{\rightarrow}}

\newcommand{\LongRightArrowOf}[1]{\stackrel{#1}{\longrightarrow}}

\newcommand{\Lotimes}{\stackrel{L}{\otimes}}
\newcommand{\StructureSheaf}[1]{{\mathcal O}_{#1}}
\newcommand{\EndProof}{\hfill  $\Box$}
\newcommand{\restricted}[2]{#1_{\mid_{#2}}}

\newcommand{\rank}{{\rm rank}}
\newcommand{\coker}{{\rm coker}}
\newcommand{\slope}{{\rm slope}}
\renewcommand{\Im}{{\rm Im}}
\newcommand{\Pic}{{\rm Pic}}
\newcommand{\Sym}{{\rm Sym}}
\newcommand{\Ext}{{\rm Ext}}
\newcommand{\Tor}{{\mathcal T}or}
\newcommand{\Hom}{{\rm Hom}}
\newcommand{\Aut}{{\rm Aut}}
\newcommand{\End}{{\rm End}}

\newcommand{\Abs}[1]{\left|\!#1\!\right|}

\newcommand{\SheafHom}{{\mathcal H}om}
\newcommand{\SheafEnd}{{\mathcal E}nd}

\newcommand{\SheafExt}{{\mathcal E}xt}

\newcommand{\DerivedOtimes}{\stackrel{L}{\otimes}}

\newcommand{\Wedge}[1]{\stackrel{#1}{\wedge}}

\newcommand{\Choose}[2]{\left(\!\!\begin{array}{c}#1\\#2\end{array}\!\!\right)}

\begin{document}
\title[The Beauville-Bogomolov class as a characteristic class]
{The Beauville-Bogomolov class as a characteristic class}
\author{Eyal Markman}
\address{Department of Mathematics and Statistics, 
University of Massachusetts, Amherst, MA 01003, USA}
\email{markman@math.umass.edu}

\begin{abstract}
Let $X$ be any compact K\"{a}hler manifold deformation equivalent to the 
Hilbert scheme of length $n$ subschemes on a $K3$ surface, $n\geq 2$.
We construct over $X\times X$ a rank $2n-2$  reflexive twisted coherent sheaf $E$, which is  locally free away from the diagonal. The characteristic classes $\kappa_i(E)\in H^{i,i}(X\times X,\RationalNumbers)$ of $E$ are invariant under the diagonal action of an index $2$ subgroup of the monodromy group of $X$. 
%When $X$ is a moduli space of sheaves on a $K3$ surface $S$ and the point
%$(F_1,F_2)\in X\times X$ corresponds to two non-isomorphic stable sheaves $F_1$ and $F_2$ on $S$, then the fiber of $E$ over 
%$(F_1,F_2)$ is $\Ext^1(F_1,F_2)$.
Given a point $x\in X$, the restriction $E_x$ of $E$ to $\{x\}\times X$ has the following properties.
%For each point $x\in X$ we construct a rank $2n-2$ reflexive coherent
%twisted sheaf $E_x$ on $X$, locally free over $X\setminus\{x\}$, with the following properties.
\begin{enumerate}
%\item
%$E_x$ is $\omega$-slope-stable with respect to some K\"{a}hler class $\omega$ on $X$.
\item
%Set $\kappa(E_x):=ch(E_x)\exp\left(\frac{-c_1(E_x)}{2n-2}\right)$. It is well defined, even though
%$c_1(E_x)$ and $ch(E_x)$ are not. 
The characteristic class $\kappa_i(E_x)\in H^{i,i}(X,\RationalNumbers)$ 
%is monodromy invariant, up to sign. 
%Furthermore, $\kappa_i(E_x)$ 
can not be expressed as a polynomial in classes of 
lower degree, if $2\leq i\leq n/2$.
\item
The Beauville-Bogomolov class is equal to $c_2(TX)+2\kappa_2(E_x)$.
\end{enumerate}
%The construction relies of Verbitsky's results on hyperholomorphic reflexive sheaves.
\end{abstract}
\maketitle
\tableofcontents 

%***************************************************************************
%
%***************************************************************************
\section{Introduction}
\label{sec-introduction}
%****************************************************************************
%
%****************************************************************************
\subsection{The main results} 
An {\em irreducible holomorphic symplectic manifold} is a simply connected
compact K\"{a}hler manifold $X$, such that $H^0(X,\Omega^2_X)$ is generated
by an everywhere non-degenerate holomorphic two-form.
%An irreducible holomorphic symplectic manifold of real dimension $4n$ 
%admits a Riemannian metric with holonomy $Sp(n)$ 
%\cite{beauville-varieties-with-zero-c-1}. Such a metric is called 
%{\em hyperk\"{a}hler}.
Let $S$ be a smooth K\"{a}hler $K3$ surface and
$S^{[n]}$ the Hilbert scheme of length $n$ zero dimensional subschemes of $S$.
$S^{[n]}$ is an irreducible holomorphic symplectic 
manifold \cite{beauville-varieties-with-zero-c-1}. 
An irreducible holomorphic symplectic manifold $X$ is said to be of 
{\em $K3^{[n]}$-type}, if $X$ is deformation equivalent to $S^{[n]}$, for a $K3$ surface $S$.
The moduli space of manifolds of $K3^{[n]}$-type 
is $21$-dimensional, if $n\geq 2$. 
In particular, a generic manifold of $K3^{[n]}$-type  is not
the Hilbert scheme of any $K3$ surface.

Let $Y$ be a compact K\"{a}hler manifold. A Hodge class $\alpha\in H^{i,i}(Y,\RationalNumbers)$ is said to be {\em analytic}, 
if $\alpha$ belongs to the subring of $H^*(Y,\RationalNumbers)$ generated by the Chern classes of coherent analytic sheaves on $Y$. When $Y$ is projective, a class is analytic if and only if it is algebraic. The aim of this paper is to prove that certain interesting Hodge classes on the product $X\times X$ of every manifold of $K3^{[n]}$-type, $n\geq 2$, are analytic. We define these Hodge classes next via parallel transport of monodromy invariant Hodge classes on $S^{[n]}\times S^{[n]}$, where $S$ is a $K3$ surface. 

\begin{defi}
\label{def-monodromy}
{\rm
Let $X$ be an irreducible holomorphic symplectic manifold. 
An automorphism $g$ of the cohomology ring 
$H^*(X,\Integers)$ is called a {\em monodromy operator}, 
if there exists a 
family $\X \rightarrow B$ (which may depend on $g$) 
of irreducible holomorphic symplectic manifolds, having $X$ as a fiber
over a point $b_0\in B$, 
and such that $g$ belongs to the image of $\pi_1(B,b_0)$ under
the monodromy representation. 
The {\em monodromy group} $Mon(X)$ of $X$ is the subgroup 
of $GL(H^*(X,\Integers))$ generated by all the monodromy operators. 
%of all families 
%of complex hyperkahler varieties 
%$\M \rightarrow B$ deforming $M$. 
}
\end{defi}

%We endow $Mon(X)$ with the Zariski topology induced by 
%$GL(H^*(X,\ComplexNumbers))$. 
%The derived subgroup $Mon'(S^{[n]})$
%has finite index in $Mon(S^{[n]})$ 
%(\cite{markman-integral-constraints},
%Proposition 1.4 and \cite{markman-monodromy-I}, Corollary 8.10). 
%The group of components of the Zariski closure $\overline{Mon}(S^{[n]})$
%of $Mon(S^{[n]})$ is abelian of exponent $2$
%(\cite{markman-integral-constraints},
%Proposition 1.4 and \cite{markman-monodromy-I}, Lemma 4.11).
%Hence, the Zariski closure of $Mon'(S^{[n]})$ is equal to the
%identity component of the Zariski closure $\overline{Mon}(S^{[n]})$
%of $Mon(S^{[n]})$. 

Let $\E$ be the ideal sheaf of the universal subscheme in $S\times S^{[n]}$. Let $\pi_{ij}$ be the projection
from $S^{[n]}\times S\times S^{[n]}$ onto the product of the $i$-th and $j$-th factors. Let 
\begin{equation}
\label{eq-E}
E \ \ := \ \ \SheafExt^1_{\pi_{13}}\left(\pi_{12}^*\E,\pi_{23}^*\E\right)
\end{equation}
be the relative extension sheaf over $S^{[n]}\times S^{[n]}$. 

The cohomology group $H^2(X,\Integers)$, of an irreducible holomorphic 
symplectic manifold $X$, admits a canonical, symmetric, non-degenerate, 
and primitive bilinear pairing  called the Beauville-Bogomolov pairing
\cite{beauville-varieties-with-zero-c-1}.
The discriminant group $H^2(S^{[n]},\Integers)^*/H^2(S^{[n]},\Integers)$ is cyclic of order $2n-2$ and $Mon(S^{[n]})$
acts on it by $\pm 1$, by \cite[Lemma 4.2]{markman-integral-constraints}.
We get a group homomorphism 
\begin{equation}
\label{eq-character-rho}
\rho:Mon(S^{[n]})\rightarrow \mu_2, 
\end{equation}
which surjects onto the multiplicative group of two elements if $n\geq 3$, and is trivial if $n=2$.

\begin{prop}
\label{prop-definition-of-kappa-E}
\begin{enumerate}
\item
(Proposition \ref{prop-V}) The sheaf $E$ is reflexive of rank $2n-2$ and is locally free away from the diagonal.
\item
\label{prop-item-kappa-E-is-monodromy-invariant}
(Proposition \ref{prop-kappa-E-is-mon-invariant})
Set $\kappa(E):=ch(E)\cup \exp\left(\frac{-c_1(E)}{2n-2}\right)$ and let $\kappa_i(E)$ be its graded summand in 
$H^{2i}(S^{[n]}\times S^{[n]},\RationalNumbers)$. 
%Let $i$ be an integer satisfying $4\leq 2i\leq n+2$. 
The subspace
$\mbox{span}_\RationalNumbers\{\kappa_i(E)\}$
is invariant under the diagonal $Mon(S^{[n]})$-action. 
When $i$ is even the class $\kappa_i(E)$ is $Mon(S^{[n]})$-invariant. When $i$ is odd 
$Mon(S^{[n]})$ acts on $\mbox{span}_\RationalNumbers\{\kappa_i(E)\}$ via the character $\rho$.
\end{enumerate}
\end{prop}

\begin{rem}
The sheaf $E^*:=\SheafHom(E,\StructureSheaf{S^{[n]}\times S^{[n]}})$ dual to $E$ 
and the derived dual object 
$R\SheafHom(E,\StructureSheaf{S^{[n]}\times S^{[n]}})$ have the same $\kappa$-class, 
by Proposition \ref{prop-V}(\ref{lemma-item-SheafExt-1-is-reflexive}).
The two objects are not isomorphic, but have the same class in the algebraic $K$-ring, by 
\cite[Lemma 4.3]{markman-mehrotra-integral-transforms}. 
The above Proposition thus states that for $g \in Mon(S^{[n]})$,
\[
g(\kappa(E))=\left\{
\begin{array}{ccl}
\kappa(E), & \mbox{if} & \rho(g)=1,
\\
\kappa(E^*), & \mbox{if} & \rho(g)=-1.
\end{array}
\right.
\]
Note also that $\kappa(E)=\kappa(E^*)$ if $n=2$, since in that case the rank of $E$ is equal to $2$ and so $E^*$ is isomorphic to $E\otimes\det(E)^*$.
\end{rem}

Parallel transport of a class $\alpha$ in $H^{2i}(S^{[n]}\times S^{[n]},\RationalNumbers)$, 
which is invariant under the diagonal action of $Mon(S^{[n]})$,
%with respect to a finite index subgroup of $Mon(S^{[n]})$, 
defines a class $\alpha_X\in H^{2i}(X\times X,\RationalNumbers)$ for any  $X$ of $K3^{[n]}$-type. 
More generally, if $\mbox{span}_\RationalNumbers\{\alpha\}$
is a one-dimensional $Mon(S^{[n]})$-representation, then 
we get a well defined unordered pair $\pm \alpha_X$ of
a class and its negative. Such a class $\alpha_X$ is of Hodge type $(i,i)$,
by  Lemma \ref{lemma-Mon-invariant-classes-are-of-Hodge-type}.
Denote by
\[
\pm \kappa_i(X\times X) \in H^{2i}(X\times X,\RationalNumbers)
\]
the pair of Hodge classes on a manifold of $K3^{[n]}$-type obtained from the classes $\kappa_i(E)$ via parallel transport. 
%$2\leq i\leq (n+2)/2$. 
We would like to stress that the monodromy invariance of the classes $\kappa_i(E)$ in Proposition 
\ref{prop-definition-of-kappa-E} (\ref{prop-item-kappa-E-is-monodromy-invariant}) is an easy consequence of a monodromy equivariance property of the universal sheaf $\E$ over $S\times S^{[n]}$ proven in
\cite{markman-monodromy-I,markman-integral-constraints}. 
%and reviewed in Section \ref{sec-Mon-invariant-clases-over-M-times-M}. 
The monodromy invariance of $\kappa_i(E)$  motivated the current work and it is a crucial ingredient in the proof of the main result stated below.

Our main result stated next implies that the Hodge classes $\kappa_i(X\times X)$ are analytic.
Given a coherent sheaf $F$ of rank $r>0$ over a complex manifold $Y$ twisted by some Brauer class, we get the untwisted object 
$F^{\otimes r}\otimes \det(F)^{-1}$ in the derived category of $Y$. Denote the $r$-th root of the Chern character of this object by 
$\kappa(F)$ and let $\kappa_i(F)$ be its graded summand in $H^{2i}(Y,\RationalNumbers)$. When $F$ is untwisted, this new definition of $\kappa(F)$ agrees with the one in Proposition \ref{prop-definition-of-kappa-E}. Details are provided in Section \ref{sec-twisted-sheaves}. 

\begin{thm}
\label{thm-main-introduction}
Let $X$ be a manifold of $K3^{[n]}$-type, $n\geq 2$. There exists over $X\times X$ a rank $2n-2$ reflexive twisted 
coherent sheaf $F$, which is locally free away from the diagonal and satisfies
$\kappa_i(F)=\kappa_i(X\times X)$, for $2\leq i\leq 2n-1$.
\end{thm}

The above statement is proved in Section \ref{sec-proof-of-deformability}. 
The class $\kappa_i(X\times X)$ is well defined above when $i$ is even. 
When $i$ is odd it is defined only up to sign. However, 
$\kappa_i(F^*)=(-1)^i\kappa_i(F)$, for $i$ in the above range, since $F$ is locally free away from the diagonal, so the existence of $F$
satisfying the equality $\kappa_i(F)=\kappa_i(X\times X)$ follows in spite of the sign ambiguity.
The sheaf $F$ is constructed as a deformation of the sheaf 
$E$ given in Equation (\ref{eq-E}). The fact that the sheaf $E$ deforms from $S^{[n]}\times S^{[n]}$ to $X\times X$ is established as follows. 
One first uses standard results in the theory of moduli spaces of sheaves on $K3$ surfaces to deform $E$ to a reflexive twisted sheaf $E'$ 
over the self product $\M\times \M$ of a moduli space $\M$ of rank $r:=2n-2$ stable sheaves over a $K3$ surface $S'$ with a cyclic Picard group generated by an ample line bundle of degree $2r^2+r$. The sheaf $E'$ is defined in terms of the universal twisted sheaf over $S'\times \M$ as in Equation (\ref{eq-E}). The sheaf $E'$ is maximally twisted, the order of its Brauer class is equal to its rank.
This fact is used to prove the slope-polystability of $\SheafEnd(E')$ with respect to every K\"{a}hler class on the product. The slope-polystability, coupled with the invariance of $c_2(\SheafEnd(E'))$ with respect to the diagonal monodromy action, enables us to use a theorem of Verbitsky to deform $E'$ to a sheaf $F$ over $X\times X$, for every $X$ of $K3^{[n]}$-type.

Given a point $x\in X$, denote by $\kappa_i(X)$ the restriction of $\kappa_i(X\times X)$ to $\{x\}\times X$. 
%The importance of the classes $\kappa_i(X)$ is illustrated 
Theorem \ref{thm-main-introduction} yields an 
expression of the Beauville-Bogomolov pairing $q\in \Sym^2H^2(X,\Integers)^*$ 
in terms of characteristic classes, 
for $X$ of $K3^{[n]}$-type, $n\geq 2$, 
by the following Lemma. 
The inverse of $q$ is a class in $\Sym^2H^2(X,\RationalNumbers)$, and we 
denote by $q^{-1}$ its image  in 
$H^4(X,\RationalNumbers)$ as well. 
%Then $q^{-1}$ is of Hodge type $(2,2)$.

\begin{lem} 
\label{lem-BB-form-is-motivic}
The following equation holds in $H^4(X,\RationalNumbers)$, 
for any  $X$ of $K3^{[n]}$-type, $n\geq 2$.
\begin{equation}
\label{eq-relation-between-kappa-2-c-2-and-BB}
q^{-1} \ \ \ = \ \ \ c_2(TX) + 2\kappa_2(X).
\end{equation}
The dimension of the subspace\footnote{
When $n=3$, the relation
$4q^{-1}=3c_2(T\M)$ holds as well. It follows from 
Chern numbers calculations,
by comparing two formulas for  the Euler characteristic 
$\chi(S^{[n]},L)$ of a line bundle $L$ on $S^{[n]}$.
One as a binomial coefficient 
$
\chi(S^{[n]},L) = \Choose{\frac{q(c_1(L),c_1(L))}{2}+n+1}{n}
$
\cite{ellingsrud-et-al}, the other provided by Hirzebruch-Riemann-Roch.
} 
$\mbox{span}\{q^{-1},c_2(TX),\kappa_2(X)\}$
is $2$, for $n\geq 4$, and $1$, for $n=2, 3$. 
\end{lem}

The Lemma is proven in Section 
\ref{sec-proof-of-relation-between-kappa-2-c-2-and-BB}.
More generally, the class $\kappa_i(X)$ is non-trivial; it 
can not be expressed as a polynomial in classes of
degree less than $2i$, if $i \leq \frac{n}{2}$
\cite[Lemma 10]{markman-diagonal}. In particular, the classes $\kappa_i(X\times X)$ are non trivial for $i$ in that range. 
In contrast, the odd Chern classes $c_{2k+1}(TX)$ vanish, since $TX$ is 
a holomorphic symplectic vector bundle.

In a separate paper with F. Charles the sheaf $F$ of Theorem \ref{thm-main-introduction} is used to prove the Standard Conjectures for $X$, whenever $X$ is a projective manifold of $K3^{[n]}$-type \cite{charles-markman}. 
In a separate paper with S. Mehrotra the sheaf $F$ is used to associate to any manifold of $K3^{[n]}$-type $X$ a pre-triangulated $K3$ category, yielding non-commutative deformations of the derived categories of coherent sheaves on $K3$ surfaces over the $21$-dimensional global moduli space of such $X$ \cite{markman-mehrotra-integral-transforms}. M. Shen and C. Vial used the sheaf $F$  of Theorem \ref{thm-main-introduction} in order to study a decomposition of the Chow ring of manifolds of $K3^{[2]}$-type \cite{shen-vial}. Addington studied in \cite{addington} the Fourier-Mukai transform with kernel the sheaf $F$ of Theorem \ref{thm-main-introduction} when $X$ is of $K3^{[2]}$-type.
He proved that this Fourier-Mukai transform is an auto-equivalence of the derived categories of $X$ in two cases: when $X$ is a Hilbert scheme $S^{[2]}$ of a $K3$ surface $S$, and when $X$ is the Fano variety of lines on a cubic fourfold. The Fourier-Mukai transform with respect to the sheaf $F$ of Theorem \ref{thm-main-introduction} is expected to induce an equivalence for a generic  $X$ of $K3^{[2]}$-type. Addington's construction in \cite{addington} produces derived auto-equivalences for $S^{[n]}$, $n>2$, 
and these are expected to deform as well due to the deformability Theorem  \ref{thm-main-introduction}.
%and the deformability Theorem  \ref{thm-main-introduction} again suggests that these deform to a generic pair $(X,F)$, as
%the Fourier-Mukai kernel is closely related to the deformable sheaf $E$ Equation (\ref{eq-E}). 

%Addington constructed  in \cite{addington} an auto-equivalence of the derived category of 
%the Hilbert schemes $S^{[n]}$ of $K3$ surfaces for $n> 2$ as well via a an object in the 
%derived category of the product $S^{[n]}\times S^{[n]}$ which is closely related to the 
%relative extension sheaf $E$ in Equation (\ref{eq-E}). 
%The deformability Theorem  \ref{thm-main-introduction} again suggests a global deformation of Addington's equivalence. 

%*************
% Hide
%*************
\hide{

%***************************************************************************
% 
%***************************************************************************
\section{Old Introduction}
%\label{sec-introduction}

We define in the proposition 
%(\ref{eq-a-Hodge-class-on-X}) 
below certain  
classes $\kappa_i(X)$ in $H^{i,i}(X,\RationalNumbers)$, 
for $i$ an integer in the range $1\leq i \leq \frac{n+2}{2}$, and 
for any irreducible holomorphic symplectic manifold $X$ of $K3^{[n]}$-type,
$n\geq 2$. 
%Let us briefly summarize the construction.
Given a coherent $\StructureSheaf{X}$-module $E$ of rank $r>0$, set
\[
\kappa(E):=ch(E)\cup \exp(-c_1(E)/r),
\]
and let $\kappa_i(E)\in H^{2i}(X,\RationalNumbers)$ be the summand of $\kappa(E)$
of degree $2i$.
Let $\E$ be the ideal sheaf of the universal subscheme in 
$S\times S^{[n]}$, $f_i$ the
projection from
$S\times S^{[n]}$ onto the $i$-th factor,
$i=1,2$, and $I_Z$ the ideal sheaf of a length $n$ subscheme $Z\subset S$. 
Let 
\begin{equation}
\label{eq-E-Z}
E_Z \ \ := \ \ \SheafExt^1_{f_2}(f_1^*(I_Z),\E)
\end{equation}
be the relative extension sheaf over $S^{[n]}$.
$E_Z$ is a torsion free reflexive sheaf of rank $2n-2$,  
which is locally free away from the point of $S^{[n]}$ corresponding to
the ideal sheaf $I_Z$ (Proposition \ref{prop-V}). 
Set $\kappa_i(S^{[n]}):=\kappa_i(E_Z)\in H^{2i}(S^{[n]},\RationalNumbers)$. 
%is the graded summand of
%\[
%ch(E_Z)\cup \exp(-c_1(E_Z)/(2n-2))
%\]
%in $H^{2i}(S^{[n]},\RationalNumbers)$, for $i<2n$.

\begin{prop} 
\label{prop-intro-kappa-e-v-is-Mon-invariant}
(Proposition \ref{prop-kappa-e-v-is-Mon-invariant}
and Lemma \ref{lemma-Mon-invariant-classes-are-of-Hodge-type}).
Let $i$ be an integer in the range
$2\leq i \leq \frac{n+2}{2}$. 
The class $\kappa_i(S^{[n]})$ is monodromy invariant, for even $i$.
The pair $\{\kappa_i(S^{[n]}),-\kappa_i(S^{[n]})\}$ is
monodromy invariant, for odd $i$. Parallel transport of the pair 
$\{\kappa_i(S^{[n]}),-\kappa_i(S^{[n]})\}$ yields a well defined 
unordered pair $\{\kappa_i(X),-\kappa_i(X)\}$ of classes
of Hodge type $(i,i)$ on any irreducible holomorphic symplectic manifold $X$ of 
$K3^{[n]}$-type, $n\geq 2$.
\end{prop}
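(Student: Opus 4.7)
The plan proceeds in three substeps: (i) show $\kappa_i(E_Z)$ is independent of the chosen length-$n$ subscheme $Z\subset S$; (ii) establish monodromy (quasi)-invariance on $S^{[n]}$; (iii) transfer the result to a general $X$ of $K3^{[n]}$-type by parallel transport.

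For step (i), let $p_{12},p_{13},p_{23}$ denote the projections from $S\times S^{[n]}\times S^{[n]}$ onto the indicated factors, and form the relative extension sheaf
\[
\widetilde E \ := \ \SheafExt^1_{p_{23}}\bigl(p_{12}^*\E,\, p_{13}^*\E\bigr)
\]
on the open complement of the diagonal in $S^{[n]}\times S^{[n]}$. Its fiber over a slice $\{[Z]\}\times S^{[n]}$ agrees with $E_Z$, with the required coherence and reflexivity coming from Proposition \ref{prop-V}. Decomposing $\kappa_i(\widetilde E)$ under K\"unneth, only the component in $H^0(S^{[n]})\otimes H^{2i}(S^{[n]})$ survives restriction to a slice $\{[Z]\}\times S^{[n]}$, and that component is a single fixed class; this class is $\kappa_i(E_Z)$ for every $[Z]$, establishing that $\kappa_i(S^{[n]})$ is well defined.

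For step (ii), given $g\in Mon(S^{[n]})$ realize it as the monodromy of a family $\pi:\X\to B$ around a loop based at $b_0$, with $X_{b_0}\cong S^{[n]}$. The key task is to globalize the construction of $\widetilde E$: produce a (possibly twisted) reflexive coherent sheaf $\widetilde{\mathcal E}$ on the complement of the relative diagonal in $\X\times_B\X$ whose restriction over $b_0$ is $\widetilde E$. Its relative characteristic class $\kappa_i(\widetilde{\mathcal E})$ is then a flat section of the corresponding local system of rational cohomology over $B$, and parallel transport around the loop preserves the slice class $\kappa_i(E_Z)$; hence $g\cdot\kappa_i(S^{[n]})=\kappa_i(S^{[n]})$. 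The sign ambiguity for odd $i$ arises because this globalization is canonical only up to dualization: certain monodromy operators swap $E_Z$ with (a twist of) its reflexive dual, and since $\kappa_i(E^\vee)=(-1)^i\kappa_i(E)$, only the unordered pair $\{\kappa_i(S^{[n]}),-\kappa_i(S^{[n]})\}$ is preserved when $i$ is odd.

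The principal obstacle is step (ii): a generic deformation $X$ of $S^{[n]}$ is not the Hilbert scheme of any K3 surface, so neither the universal ideal $\E$ nor the sheaves $E_Z$ admit a direct extension to the total space of an arbitrary deformation family. This is precisely the reason the sheaves $E_x$ of the abstract are twisted sheaves: although $c_1(E_Z)$ and $ch(E_Z)$ do not individually globalize, the combination $\kappa(E_Z)=ch(E_Z)\exp(-c_1(E_Z)/(2n-2))$ is invariant under twists by line bundles and hence descends from $\mu_r$-gerbe-twisted sheaves on $\X\times_B\X$ to a genuine rational cohomology class globally. Once (ii) is secured, step (iii) is formal: parallel transport of the pair $\{\kappa_i(S^{[n]}),-\kappa_i(S^{[n]})\}$ along any path in moduli from $S^{[n]}$ to $X$ yields a well-defined unordered pair $\{\kappa_i(X),-\kappa_i(X)\}$, and the Hodge-type statement follows at once from Lemma \ref{lemma-Mon-invariant-classes-are-of-Hodge-type}.
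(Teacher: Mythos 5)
There is a genuine gap at your step (ii), and it is the heart of the matter. A monodromy operator of $S^{[n]}$ is, by Definition \ref{def-monodromy}, induced by an \emph{arbitrary} family $\X\rightarrow B$ of irreducible holomorphic symplectic manifolds with $S^{[n]}$ as a fiber; such a family carries no relative Hilbert-scheme or moduli-space structure, so there is no a priori way to ``globalize'' $\widetilde E$ over $\X\times_B\X$. Producing such an extension (even as a twisted sheaf, even only along carefully chosen twistor paths) is precisely the content of Theorem \ref{thm-introduction-deformations-of-E-along-twistor-paths}, the main technical result of the paper, and its proof \emph{uses} the monodromy invariance of $\kappa_2(E)$ as an input: stability plus monodromy invariance of $\kappa_2$ feed into Verbitsky's criterion (Theorem \ref{thm-Verbitskys-inequality}, Corollary \ref{cor-main-result-on-projectively-hyperholomorphic-reflexive-sheaves}) to show the sheaf is projectively hyperholomorphic and hence deformable. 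Your argument therefore runs the logic backwards and is circular relative to the only known mechanism for deforming these sheaves. A symptom of the problem is that your proposal never uses, and cannot explain, the restriction $2\leq i\leq\frac{n+2}{2}$: if step (ii) worked as stated it would give invariance of $\pm\kappa_i$ for \emph{all} $i$, which is only conjectural (see the footnote to Proposition \ref{prop-kappa-e-v-is-Mon-invariant}).

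The paper's actual proof is lattice-theoretic rather than geometric. One first restricts to the explicitly constructed normal finite-index subgroup $\monrep[O(K_{top}S)_v]\subset Mon(\M_H(v))$ of Theorem \ref{thm-symmetries-of-moduli-spaces}; part \ref{thm-item-mon-g-sends-a-universal-classes-to-such} of that theorem says $\monrep_g(e_v)$ equals $e_v\otimes\ell_g$ or $e_v^\vee\otimes\ell_g$ according to the orientation character $cov(g)$, and since $\kappa$ kills line-bundle twists and $\kappa_i(y^\vee)=(-1)^i\kappa_i(y)$, the pair $\{\kappa(e_v),\kappa(e_v^\vee)\}$ is invariant under this subgroup. (Your heuristic for the sign in odd degree is the right one, but the dualization is governed by $cov$, not by an ambiguity in a global construction.) The upgrade from the finite-index subgroup to the full monodromy group is where the degree bound enters: by Theorem \ref{thm-weight-2-monodromy} any $f\in Mon(\M(v))$ agrees with some $\monrep(g)$ on $H^2$, and by Theorem \ref{thm-monodromy-operator-is-determined-by-its-weight-2-action} two monodromy operators agreeing on $H^2$ agree on $H^k$ for $k\leq n+2$, whence the range $4\leq 2i\leq n+2$. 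Your step (i) (independence of $Z$, via the relative sheaf over $S^{[n]}\times S^{[n]}$ and K\"unneth) is fine and consistent with the paper's identification $\kappa_i(E_Z)=\kappa_i(e_v)$ for $i<2n$, and your step (iii) together with the appeal to Lemma \ref{lemma-Mon-invariant-classes-are-of-Hodge-type} is the same as the paper's; but without a correct step (ii) the proposition is not proved.
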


The class $\kappa_i(X)$ is non-trivial; it 
can not be expressed as a polynomial in classes of
degree less than $2i$, if $i \leq \frac{n}{2}$
\cite[Lemma 10]{markman-diagonal}. 
In contrast, the odd Chern classes $c_{2k+1}(TX)$ vanish, since $TX$ is 
a holomorphic symplectic vector bundle. 
%The first main goal of this project is to solve the following problem.

Twisted coherent sheaves and their characteristic classes are 
reviewed in Section \ref{sec-characteristic-classes-of-projective-bundles}.
For the uninitiated reader it would suffice at this point to note that 
the data of a locally free twisted sheaf is equivalent to that of a projective
bundle. Furthermore, the definition of the characteristic classes 
$\kappa_i(E)$ above may be extended to define characteristic classes of 
both twisted sheaves and projective bundles.

\begin{thm}
\label{thm-Hodge}
Let $X$ be any irreducible holomorphic symplectic manifold
of  $K3^{[n]}$-type and 
$i$  an integer in the range $2\leq i \leq \frac{n+2}{2}$.
The class $\kappa_i(X)$ is a characteristic class 
of a (possibly twisted) reflexive coherent sheaf $E_x$ of rank $2n-2$ on $X$,
which is locally free away from a single point $x$ of $X$.
\end{thm}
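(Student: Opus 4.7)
The plan is to construct $E_x$ by deforming the explicit sheaf $E_Z$ on $S^{[n]}$ given in equation~(\ref{eq-E-Z}). The base case is immediate: when $X = S^{[n]}$ and $x = [Z]$, we take $E_x := E_Z$, whose characteristic class equals $\kappa_i(S^{[n]}) = \kappa_i(X)$ by definition. For an arbitrary pair $(X,x)$ with $X$ of $K3^{[n]}$-type, since the moduli of $K3^{[n]}$-type manifolds is connected, we connect $(X,x)$ to $(S^{[n]}, [Z])$ by a smooth proper family $\pi\colon\X\to B$ with a continuous section $\sigma\colon B\to\X$, where $(\X_{b_0},\sigma(b_0)) = (S^{[n]},[Z])$ and $(\X_{b_1},\sigma(b_1))=(X,x)$. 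Such a family may be assembled from several families (including twistor deformations), and the section extends continuously because each fibre is positive dimensional and one may pass to a simply connected open subset of $B$.

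The crux is to extend $E_Z$ to a $B$-flat (possibly twisted) coherent sheaf $\E$ on $\X$, reflexive on every fibre and locally free over $\X\setminus\sigma(B)$. Infinitesimally, the obstruction to lifting $E_Z$ along a tangent vector in $T_{b_0}B$ lies in $\Ext^2_{S^{[n]}}(E_Z,E_Z)$. Splitting off the trace, the summand in $H^2(S^{[n]},\StructureSheaf{S^{[n]}})$ can always be absorbed by twisting the would-be extension by a suitable gerbe, i.e., by allowing $\E$ to be a $\StructureSheaf{\X}^{*}$-gerbe-twisted sheaf rather than an honest sheaf; this moves the obstruction into the Brauer class of the twisting. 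The traceless part is governed by the deformation theory of the simple, slope-stable sheaf $E_Z$ (abstract item~1) on the holomorphic symplectic manifold $S^{[n]}$, and the symplectic pairing together with slope-stability keeps local twisted deformations unobstructed after possibly shrinking $B$. Gluing these over $B$ produces the global twisted sheaf $\E$, equipped with a twist class in $H^2(\X,\StructureSheaf{\X}^{*})$.

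Once $\E$ is built, its non-locally-free locus coincides with $\sigma(B)$: on the central fibre this is the single point $[Z]$ where $E_Z$ fails to be locally free, and constancy in flat families of the length of the cokernel of $\E\to\E^{\vee\vee}$ (together with fibrewise reflexivity) keeps this locus exactly one marked point per fibre. Restricting $\E$ to the fibre over $b_1$ produces a twisted reflexive sheaf $E_x := \E|_X$ of rank $2n-2$, locally free on $X\setminus\{x\}$. The class $\kappa_i(E_x)$ is independent of the twisting (see Section~\ref{sec-characteristic-classes-of-projective-bundles}) and is the parallel transport of $\kappa_i(E_Z)=\kappa_i(S^{[n]})$, hence by Proposition~\ref{prop-intro-kappa-e-v-is-Mon-invariant} equals $\kappa_i(X)$, up to sign for odd $i$. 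This identifies $\kappa_i(X)$ as a characteristic class of the sheaf $E_x$.

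The main obstacle is the middle step: verifying that $E_Z$ admits twisted deformations in every direction of $B$ and that these local deformations glue globally. Controlling $\Ext^2(E_Z,E_Z)$ modulo its trace, and tracking the resulting Brauer class coherently as $B$ is traversed, is the technical heart of the argument; slope-stability, the hyperk\"ahler structure, and the specific description of $E_Z$ as a relative Ext sheaf are what make this feasible.
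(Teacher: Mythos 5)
Your overall strategy---deform a concretely constructed relative extension sheaf from a distinguished member of the deformation class to an arbitrary $X$, allow the deformed sheaf to be twisted, and then use the monodromy-invariance of $\pm\kappa_i$ to identify the resulting characteristic class with $\pm\kappa_i(X)$---is the same as the paper's. But the middle step, the existence of the deformation, is exactly where the substance lies, and your argument for it has two genuine gaps.

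First, you invoke slope-stability of $E_Z$ on $S^{[n]}$ as an input (citing item~(1) of the abstract). That item describes a property of the sheaves $E_x$ \emph{produced by the theorem}, not a known fact about $E_Z$ available at the outset; the paper never proves that $E_Z$ itself is slope-stable on $S^{[n]}$. Stability is instead established (Theorem \ref{thm-introduction-stability-of-an-untwisted-ext-1} and Theorem \ref{thm-stability-of-an-untwisted-ext-1}) for the relative sheaf $E=\SheafExt^1_{\pi_{13}}(\pi_{12}^*\E,\pi_{23}^*\E)$ over $\M\times\M$ for a carefully chosen moduli space $\M=\M_H(r,H,r)$ with $(v,v)=r=\rank$, via the ``very twisted'' mechanism (Proposition \ref{prop-polystability-of-a-very-twisted-sheaf}) and a degeneration using Oguiso's density result. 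Starting from $S^{[n]}$ and $E_Z$, you have no stability, hence no Hermite--Einstein metric, hence no access to any of the deformation machinery.

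Second, your claim that the traceless part of the obstruction in $\Ext^2(E_Z,E_Z)$ vanishes because ``the symplectic pairing together with slope-stability keeps local twisted deformations unobstructed'' is unjustified and false in general: on a $2n$-dimensional hyperk\"ahler manifold with $n\geq 2$ there is no Serre-duality collapse of $\Ext^2(E,E)_0$, and stable sheaves need not deform with the ambient complex structure in arbitrary directions. The correct mechanism, and the technical heart of the paper, is Verbitsky's theory: stability plus the monodromy-invariance of $\kappa_2(E)$ (which by Lemma \ref{lemma-Mon-invariant-classes-are-of-Hodge-type} forces $\kappa_2(E)$ to remain of type $(2,2)$ for every complex structure on a twistor line) implies, via Theorem \ref{thm-Verbitskys-inequality} and Corollary \ref{cor-main-result-on-projectively-hyperholomorphic-reflexive-sheaves}, that $\SheafEnd(E)$ is polystable-hyperholomorphic, so the Azumaya algebra extends over each twistor family $\X\times_{\PP^1}\X$; one then reaches an arbitrary $X$ by a chain of generic twistor lines (Theorem \ref{thm-path-connectedness}), not by gluing unobstructed local deformations over a higher-dimensional base. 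Your appeal to $\Ext^2$ and gerbes does not substitute for this; without the hyperholomorphicity input the deformation simply does not exist in general directions, twisted or not.
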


Theorem \ref{thm-Hodge} is proven below after Theorem
\ref{thm-introduction-deformations-of-E-along-twistor-paths}.
The cohomology group $H^2(X,\Integers)$, of an irreducible holomorphic 
symplectic manifold $X$, admits a canonical, symmetric, non-degenerate, 
and primitive bilinear pairing $q\in \Sym^2H^2(X,\Integers)^*$
\cite{beauville-varieties-with-zero-c-1}.
%The importance of the classes $\kappa_i(X)$ is illustrated 
Theorem \ref{thm-Hodge} yields an 
expression of the Beauville-Bogomolov pairing 
in terms of characteristic classes, 
for $X$ of $K3^{[n]}$-type, $n\geq 2$, 
by the following Lemma. 
The inverse of $q$ is a class in $\Sym^2H^2(X,\RationalNumbers)$, and we 
denote by $q^{-1}$ its image  in 
$H^4(X,\RationalNumbers)$ as well. 
%Then $q^{-1}$ is of Hodge type $(2,2)$.

\begin{lem} 
\label{lem-BB-form-is-motivic}
The following equation holds in $H^4(X,\RationalNumbers)$, 
for any  $X$ of $K3^{[n]}$-type, $n\geq 2$.
\begin{equation}
\label{eq-relation-between-kappa-2-c-2-and-BB}
q^{-1} \ \ \ = \ \ \ c_2(TX) + 2\kappa_2(X).
\end{equation}
The dimension of the subspace\footnote{
When $n=3$, the relation
$4q^{-1}=3c_2(T\M)$ holds as well. It follows from 
Chern numbers calculations,
by comparing two formulas for  the Euler characteristic 
$\chi(S^{[n]},L)$ of a line bundle $L$ on $S^{[n]}$.
One as a binomial coefficient 
$
\chi(S^{[n]},L) = \Choose{\frac{q(c_1(L),c_1(L))}{2}+n+1}{n}
$
\cite{ellingsrud-et-al}, the other provided by Hirzebruch-Riemann-Roch.
} 
$\mbox{span}\{q^{-1},c_2(TX),\kappa_2(X)\}$
is $2$, for $n\geq 4$, and $1$, for $n=2, 3$. 
\end{lem}

The Lemma is proven in Section 
\ref{sec-proof-of-relation-between-kappa-2-c-2-and-BB}.
The main technical result of this paper is the following Theorem.
%Given a $K3$ surface $S$ with an ample line bundle $H$, 
%we denote by $\M_H(r,L,s)$ the moduli space of Gieseker-Maruyama
%$H$-stable sheaves $F$ of rank $r$ with determinant line bundle $L$ and $\chi(F)=r+s$.
Let $n$ be an integer $\geq 2$. Set $r:=2n-2$. 
Let $S$ be a $K3$ surface admitting an ample 
line bundle $H$ of degree $2r^2+r$, which is not the tensor power of another ample
line bundle of lower degree.
Let $\M$ be the moduli space of Gieseker-Maruyama
$H$-stable sheaves $F$ of rank $r$ with determinant line bundle $H$ and $\chi(F)=2r$.
Let $\pi_{ij}$ be the projection from $\M\times S\times \M$ onto the product
of the $i$-th and $j$-th factors. 

\begin{thm} 
\label{thm-introduction-stability-of-an-untwisted-ext-1}
There exists a $K3$ surface $S$ with an ample line bundle $H$ 
of degree $2r^2+r$ as above, 
such that the moduli space $\M$ has the following properties.
\begin{enumerate}
\item
\label{thm-item-smooth-and-projective}
$\M$ is smooth and projective of dimension $2n$. 
\item
\label{thm-item-slope-stability-of-E}
(Proposition \ref{prop-V} and Theorem \ref{thm-stability-of-an-untwisted-ext-1})
There exists an untwisted universal sheaf $\E$ over $S\times \M$.
The relative extension sheaf 
\begin{equation}
\label{eq-relative-Ext-1-over-M-times-M}
E \ \ := \ \ \SheafExt^1_{\pi_{13}}\left(\pi_{12}^*\E,\pi_{23}^*\E\right)
\end{equation}
over $\M\times \M$ is reflexive, of rank $2n-2$, locally free away from the diagonal,
and
$\StructureSheaf{\M}(1)\boxtimes\StructureSheaf{\M}(1)$-slope-stable,
for some\footnote{More information about $\StructureSheaf{\M}(1)$
is provided in Theorem \ref{thm-stability-of-an-untwisted-ext-1}.} 
ample line bundle $\StructureSheaf{\M}(1)$ over $\M$.
\item
\label{thm-item-monodromy-invariance-of-kappa-E}
(Proposition \ref{prop-kappa-F-is-mon-invariant}) Let $i$ be an integer
satisfying $4\leq 2i \leq n+2$. If
$i$ is even, then
the class $\kappa_i(E)$ in $H^*(\M\times \M,\RationalNumbers)$
is invariant under the diagonal action of $Mon(\M)$. If $i$ is odd, 
then the pair $\{\kappa_i(E),-\kappa_i(E)\}$ is $Mon(\M)$-invariant.
\item
\label{thm-item-E-F}
(\cite{yoshioka-abelian-surface}, see Lemma \ref{lem-lifting-to-deformations-of-pairs-ok-for-moduli-spaces} below)
Let $F$ be a stable sheaf over $S$ with isomorphism class $[F]\in \M$.
The restriction of the sheaf $E$
%in Theorem \ref{thm-introduction-stability-of-an-untwisted-ext-1}
to $\{[F]\} \times \M$ is isomorphic to the relative extension sheaf
$E_F:=\SheafExt_{f_2}^1(f_1^*F,\E)$ over $\M$.  
Furthermore, the pair $(\M,E_F)$ is deformation
equivalent to the pair  $(S^{[n]},E_Z)$, given in
equation (\ref{eq-E-Z}). 
In particular, $\kappa_i(E_F)$ is equal to 
the class $\kappa_i(\M)$ of Proposition \ref{prop-intro-kappa-e-v-is-Mon-invariant}.
\item
(Theorem \ref{thm-stability-of-an-untwisted-ext-1})
The sheaf $E_F$ in part \ref{thm-item-E-F} is
$\StructureSheaf{\M}(1)$-slope-stable, for a generic such $F$.
\end{enumerate}
\end{thm}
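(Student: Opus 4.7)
The plan is to assemble the five assertions piece by piece, starting from the choice of $(S,H)$ and building up to the stability statements.

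First, I would choose a K3 surface $S$ with Picard rank one, generated by a primitive class $H$ of self-intersection $2r^2+r$; such $S$ exist by the surjectivity of the period map. With Mukai vector $v=(r,H,r)$ for the sheaves in question (the condition $\chi(F)=2r$ forces the third coordinate via Riemann-Roch on a K3), one computes $\langle v,v\rangle = H^2-2r^2 = r = 2n-2$, so the expected dimension of $\M$ is $\langle v,v\rangle+2 = 2n$. Primitivity of $H$ (hence of $v$), combined with $H$ being the generic polarization in $\Pic(S)$, guarantees that every $H$-semistable sheaf with Mukai vector $v$ is $H$-stable, so $\M$ is smooth and projective of dimension $2n$. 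Primitivity of $v$ also yields the existence of an untwisted universal sheaf $\E$ on $S\times\M$, since the obstruction to the existence of such a universal sheaf is controlled by the gcd of the components of $v$. This settles (1) and the first sentence of (2).

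Next, I would analyze the rank and singular locus of $E$. For $[F],[G]\in\M$ with $F\not\cong G$, Mukai's pairing gives $\chi(F,G)=-\langle v,v\rangle = -r$, while $H$-stability and equality of slopes force $\Hom(F,G)=0=\Ext^2(F,G)$ (the latter by Serre duality on the K3), hence $\dim\Ext^1(F,G)=r=2n-2$ off the diagonal, with a jump to $2n$ on the diagonal. Base-change for relative Ext (invoking Proposition \ref{prop-V}) then yields that $E$ is reflexive of rank $2n-2$ on $\M\times\M$, locally free away from the diagonal, with fiber $\Ext^1(F,G)$ at $([F],[G])$. The heart of the theorem, and what I expect to be the main obstacle, is the slope-stability of $E$ with respect to $\StructureSheaf{\M}(1)\boxtimes\StructureSheaf{\M}(1)$ for a suitable polarization on $\M$. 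I would follow the strategy of Theorem \ref{thm-stability-of-an-untwisted-ext-1}: compute the slope $\mu(E)$, observe that a hypothetical destabilizing reflexive subsheaf $F'\subset E$ would produce, upon restriction to a general slice $\{[F_0]\}\times\M$, a destabilizing subsheaf of $E_{F_0}$, and rule this out by numerical constraints tied to the carefully chosen value $H^2=2r^2+r$.

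For (3), monodromy invariance of $\kappa_i(E)$ (or of the unordered pair $\{\pm\kappa_i(E)\}$ for odd $i$) follows because $E$ is constructed canonically from $\E$ and survives functorially under deformations of the pair $(S,\M)$; Proposition \ref{prop-kappa-F-is-mon-invariant} makes this precise by tracking the diagonal action of $Mon(\M)$ on $H^*(\M\times\M)$ through the symmetric role of the two factors. Part (4) is a direct application of Yoshioka's deformation-equivalence between $\M$ and $S^{[n]}$, which transports the universal sheaf $\E$ to the ideal sheaf of the universal subscheme; restricting the relative $\SheafExt^1$ to one factor then matches the construction of $E_Z$ in equation (\ref{eq-E-Z}), and the class $\kappa_i$ is transported along. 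Finally, (5) follows from (2) by generic restriction: the locus of $F$ for which $E_F$ fails to be $\StructureSheaf{\M}(1)$-slope-stable is closed in $\M$, and cannot be all of $\M$, as that would produce a family of destabilizing subsheaves of the slices contradicting the bi-polarized slope-stability of $E$ established in (2). The principal difficulty throughout remains the slope-stability in (2); everything else is either a bookkeeping consequence or an application of existing results.
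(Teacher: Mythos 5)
Your choice of $(S,H)$ is internally inconsistent, and the inconsistency hides the entire mechanism of the paper's proof. If $\Pic(S)=\Integers H$ with $H^2=2r^2+r$ and $v=(r,H,r)$, then for every $x\in K_{top}S$ with $c_1(x)$ algebraic one computes $(v,x)\equiv 0\pmod{r}$, so the obstruction (Brauer) class of the universal sheaf has order exactly $r$ (Lemma \ref{lem-order-of-twisting-class-of-universal-sheaf}): on a Picard-rank-one surface there is \emph{no} untwisted universal sheaf for this Mukai vector. The criterion you invoke requires $\gcd\{(v,x)\}=1$, not primitivity of $v$. The paper exploits this apparent defect rather than avoiding it: on the rank-one surface the twisted sheaf $E$ is \emph{very twisted} (its rank equals the order of its twisting class), hence has no twisted subsheaves of smaller rank and is slope-stable for free with respect to every K\"{a}hler class (Theorem \ref{thm-E-is-slope-stable}). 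One then places this surface in a non-isotrivial family $\S\rightarrow C$, uses Lieblich's openness of the slope-stable locus to propagate stability to a Zariski-open set of fibers, and uses Oguiso's density theorem (Proposition \ref{prop-oguiso}) to find a dense countable set of fibers on which the Brauer class becomes trivial, i.e.\ on which the universal sheaf is untwisted. The surface $S$ of the theorem is one of these nearby fibers, necessarily of Picard rank $\geq 2$, not the rank-one surface you start with. Your proposed direct attack on stability --- restricting a destabilizing subsheaf to a slice and ruling it out by ``numerical constraints tied to $H^2=2r^2+r$'' --- has no substance as stated: the only role of the number $2r^2+r$ is to force $(v,v)=r$ and $(v,x)\equiv 0\pmod{r}$, which is exactly what makes the very-twisted trick work at the special point.

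A second, smaller gap: for part (5) you deduce generic stability of the slices $E_F$ from stability of $E$ on $\M\times\M$. That implication goes the wrong way; the slicing argument shows that stability of the slices over a dense set of points implies stability of $E$ for the product polarization, not conversely. In the paper both statements come from the same source: the locus of points $m$ where the slice is slope-stable is Zariski open in the total space of the family (Lieblich again) and contains the entire fiber over the very-twisted point, hence is dense.
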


Part (\ref{thm-item-smooth-and-projective}) of  Theorem \ref{thm-introduction-stability-of-an-untwisted-ext-1}
follows from results 
of Mukai  \cite{mukai-symplectic-structure} and our choice of $H$ in 
Theorem \ref{thm-stability-of-an-untwisted-ext-1}.
Fix an integer $n\geq 2$ and a moduli space $\M$ as in Theorem
\ref{thm-introduction-stability-of-an-untwisted-ext-1}.
Associated to the K\"{a}hler class $c_1(\StructureSheaf{\M}(1))$
is a twistor deformation $\X\rightarrow \PP^1$ of $\M$ as an irreducible
holomorphic symplectic manifold. 
A reflexive sheaf $F$ over $\M\times \M$ is said to be {\em projectively-hyperholomorphic},
if $\SheafEnd(F)$ extends as a reflexive sheaf $\A$ of associative unital algebras 
over the fiber product
$\X\times_{\PP^1}\X$, flat over $\PP^1$, and $\A$ is locally isomorphic to the endomorphism 
algebra $\SheafEnd(\F)$ of a reflexive coherent sheaf (Definition \ref{def-projectively-omega-stable-hyperholomorphic}). 
One can relax the notion of a coherent sheaf to that of a twisted coherent sheaf (Definition 
\ref{def-twisted-sheaves}). Then the extension $\A$ is globally the sheaf $\SheafEnd(\F)$
of an extension of $F$ to a twisted reflexive sheaf $\F$ over $\X\times_{\PP^1}\X$.
The sheaf $E$ in Theorem \ref{thm-introduction-stability-of-an-untwisted-ext-1}
is projectively-hyperholomorphic, by 
the stability result in part (\ref{thm-item-slope-stability-of-E}) of 
Theorem \ref{thm-introduction-stability-of-an-untwisted-ext-1}, 
the monodromy-invariance result in 
part (\ref{thm-item-monodromy-invariance-of-kappa-E}) of 
Theorem \ref{thm-introduction-stability-of-an-untwisted-ext-1}, and a deep result of 
Verbitsky (\cite{kaledin-verbitski-book} and Corollary 
\ref{cor-main-result-on-projectively-hyperholomorphic-reflexive-sheaves} below).
We use Verbitsky's theory of hyperholomorphic sheaves 
%and Theorem
%\ref{thm-introduction-stability-of-an-untwisted-ext-1},
in order to deform the pair $(\M,E)$ to a pair $(X,E')$, 
for any irreducible holomorphic symplectic variety $X$ of 
$K3^{[n]}$-type. 
More precisely, we prove the following statement.
%of Theorem \ref{thm-introduction-stability-of-an-untwisted-ext-1}. 

\begin{thm}
\label{thm-introduction-deformations-of-E-along-twistor-paths}
(Theorem \ref{thm-deformations-of-E-along-twistor-paths})
Let $X$ be an irreducible holomorphic symplectic manifold of 
$K3^{[n]}$-type. 
There exist a reduced, connected, 
projective curve $C$ of arithmetic genus $0$, which may be reducible, 
a smooth and proper family $\X\rightarrow C$ of 
irreducible holomorphic symplectic manifolds, a torsion-free
reflexive coherent twisted
sheaf $\G$ of 
$\StructureSheaf{\X\times_C\X}$-modules of rank $2n-2$, flat over $C$,
and points $s$, $t$, in $C$, with the following properties.
The fiber $X_s$ of $\X$ over $s$ is isomorphic to $\M$, 
the restriction $G_s$ of $\G$ to $X_s\times X_s$ is isomorphic to the sheaf $E$
in Theorem \ref{thm-introduction-stability-of-an-untwisted-ext-1}, 
and the fiber $X_t$ is isomorphic to $X$.
\end{thm}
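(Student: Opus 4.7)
My plan is to construct the family $\X \to C$ as a chain of twistor deformations, starting from $\M$ and terminating at $X$. The base of the induction is the twistor deformation $\X_0 \to \PP^1$ of $\M$ associated to the K\"{a}hler class $\omega:=c_1(\StructureSheaf{\M}(1))$. By part (\ref{thm-item-slope-stability-of-E}) of Theorem \ref{thm-introduction-stability-of-an-untwisted-ext-1}, the sheaf $E$ is slope-stable with respect to $\StructureSheaf{\M}(1)\boxtimes\StructureSheaf{\M}(1)$, and by part (\ref{thm-item-monodromy-invariance-of-kappa-E}) its Chern-like classes $\kappa_i(E)$ are monodromy-invariant up to sign. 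Combined with Verbitsky's theorem (as formulated in Corollary \ref{cor-main-result-on-projectively-hyperholomorphic-reflexive-sheaves}), these two properties imply that $E$ is projectively-hyperholomorphic. Unpacking Definition \ref{def-projectively-omega-stable-hyperholomorphic}, this yields a twisted reflexive coherent sheaf $\F_0$ of rank $2n-2$ on $\X_0\times_{\PP^1}\X_0$, flat over $\PP^1$, restricting to $E$ on the central fiber $\M\times \M$.

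To reach an arbitrary $X$ of $K3^{[n]}$-type, I would iterate this construction. Verbitsky's global Torelli theorem, together with the connectedness of an appropriate component of the moduli of marked $K3^{[n]}$-type manifolds, implies that any two such manifolds can be joined by a finite chain of twistor $\PP^1$'s sharing successive fibers. Given such a chain from $\M$ to $X$, I produce inductively a sequence of twistor deformations $\X_i\to \PP^1_i$, where the central fiber of $\X_i$ is a common fiber $Y_i$ of $\X_{i-1}$ and of $\X_i$; the K\"{a}hler class $\omega_i$ defining the twistor line $\PP^1_i$ is chosen on $Y_i$. At each node I take the restriction $F_i$ of the previously constructed twisted reflexive sheaf to $Y_i\times Y_i$, and I use Verbitsky's theorem again to extend $F_i$ to a twisted reflexive sheaf on $\X_i\times_{\PP^1_i}\X_i$. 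Gluing the resulting families and twisted sheaves along the shared node fibers produces a reduced, connected, nodal projective curve $C$ of arithmetic genus $0$, together with a family $\X \to C$ and a twisted reflexive sheaf $\G$ on $\X\times_C \X$, flat over $C$, whose restrictions at the two ends of the chain recover $E$ and the desired sheaf on $X$.

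The main obstacle is to verify, at every node $Y_i$ in the chain, that the K\"{a}hler class $\omega_i$ can be chosen so that $F_i$ is $\omega_i\boxtimes\omega_i$-slope-stable (or at least slope-polystable), so that Verbitsky's theorem can be invoked at each step. For this I would argue that slope-stability is an open condition in the K\"{a}hler class, combined with the fact that the characteristic classes of $F_i$ remain monodromy-invariant up to sign by deformation from $E$; this allows $\omega_i$ to be taken generic in the K\"{a}hler cone of $Y_i$. A secondary point, handled by the twisted-sheaf formalism reviewed in Section \ref{sec-characteristic-classes-of-projective-bundles}, is that the locally defined algebra $\SheafEnd(F_i)$ may only globalize as the endomorphism sheaf of a genuinely twisted reflexive sheaf rather than an untwisted one. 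Finally, to ensure that the sheaves on adjacent twistor legs agree on their common node, I would appeal to the fact that a projectively-hyperholomorphic deformation of a slope-stable reflexive sheaf is unique up to twisting by a line bundle pulled back from the base, which makes the gluing automatic.
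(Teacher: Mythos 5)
Your overall architecture — deform $E$ hyperholomorphically along the first twistor line of $c_1(\StructureSheaf{\M}(1))$, connect $\M$ to $X$ by a chain of twistor lines via Verbitsky's connectedness theorem, and propagate the (twisted, Azumaya-algebra) deformation inductively leg by leg — is the same as the paper's. But your resolution of what you yourself identify as ``the main obstacle'' does not work, and this is where the paper's proof does something you are missing. At a node $Y_i$ the sheaf $F_i$ arrives from the $i$-th twistor line slope-polystable with respect to $\omega_{t_i^-}$, and to launch the $(i+1)$-st twistor deformation you need polystability with respect to $\omega_{t_i^+}$, a completely different K\"ahler class (it is the one whose twistor line is the next leg; you have no freedom to move it close to $\omega_{t_i^-}$). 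Openness of slope-stability in the K\"ahler cone only transfers stability between nearby classes, and ``monodromy-invariance of the $\kappa_i$'' says nothing about stability. So your induction step fails as stated.

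The paper's fix is to insist that the twistor path be \emph{generic} in the sense that every node $Y_i$ has trivial Picard group: this is arranged for the first node by perturbing $\omega$ so that $\omega^\perp$ meets $H^{1,1}(\M_H(v),\Integers)$ trivially (so a generic fiber $X_{t_1}$ of the first twistor line has $\Pic(X_{t_1})=0$), and for the remaining nodes by Theorem \ref{thm-path-connectedness}, which produces a \emph{generic} twistor path from $X_{t_1}$ to $X$. On a manifold with trivial Picard group, slope-stability of a torsion-free sheaf is equivalent to the absence of subsheaves of intermediate rank and is therefore independent of the K\"ahler class (Lemma \ref{lemma-slope-stability-does-not-depend-on-kahler-class}); this, not openness, is what lets $\omega_{t_i^-}$-polystability at the node be exchanged for $\omega_{t_i^+}$-polystability, after which Theorem \ref{thm-Verbitskys-inequality}, Lemma \ref{lemma-if-stable-for-some-t-then-stable-for-all-t}, and Lemma \ref{lemma-extension-of-a-slope-zero-subsheaf-to-a-hyperholomorphic-subsheaf} (for the Azumaya structure) carry the induction, as packaged in Proposition \ref{prop-deformation-of-azumaya-algebras-along-twistor-paths}. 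Your argument needs this genericity input; without it the chain you build from bare connectedness of the marked moduli space cannot be continued past its first node.
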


A conjectural alternative approach to the construction of the deformation in
the above Theorem, for $X$ a sufficiently small deformation of $\M$,
is sketched in \cite{markman-appendix}. This alternative approach is more
geometric, but leads only to local deformations. 

\begin{proof}
(Of Theorem \ref{thm-Hodge}) 
 The pairs $\pm\kappa_i(E_F)$ and $\pm\kappa_i(\M)$
 are equal, by part \ref{thm-item-E-F} of Theorem
 \ref{thm-introduction-stability-of-an-untwisted-ext-1}.
 The pairs $(\M,E)$ and $(X,G_t)$ are deformation equivalent, where $G_t$ is the
 restriction of $\G$ to the fiber $X_t\times X_t\cong X\times X$ over the point $t$ of $C$
 in Theorem \ref{thm-introduction-deformations-of-E-along-twistor-paths}. 
 Given a point $x\in X$, denote by $E_x$ the restriction of 
 $G_t$ to $\{x\}\times X$. 
 The pairs $(\M,E_F)$ and $(X,E_x)$
are then deformation equivalent. 
%by Theorem \ref{thm-introduction-deformations-of-E-along-twistor-paths}.
The characteristic class $\kappa_i(E)$ is defined in Section 
\ref{sec-characteristic-classes-of-projective-bundles} 
for any twisted sheaf $E$. 
Theorem \ref{thm-introduction-deformations-of-E-along-twistor-paths} thus
implies that the pair $\pm\kappa_i(E_x)$ is a parallel transport of the pair
$\pm\kappa_i(E_F)$. 
We conclude that the pairs 
$\pm\kappa_i(E_x)$ and $\pm\kappa_i(X)$ are equal, by 
Proposition \ref{prop-intro-kappa-e-v-is-Mon-invariant}.
This completes the proof of Theorem \ref{thm-Hodge}.
\end{proof}

The fact that the deformations of the sheaf $E$ in Theorem 
\ref{thm-introduction-deformations-of-E-along-twistor-paths} are twisted
is a blessing, rather than a nuisance. It is key to the proof of its slope-stability.
This is due to the following general result.

\begin{prop}
(Proposition \ref{prop-polystability-of-a-very-twisted-sheaf})
Let $(X,\omega)$ be a compact K\"{a}hler manifold, 
$\theta\in H^2_{an}(X,\StructureSheaf{X}^*)$ a class of order $r>0$,
and $E$ a reflexive, rank $r$, $\theta$-twisted sheaf.
Then $\SheafEnd(E)$  is $\omega$-slope polystable.
\end{prop}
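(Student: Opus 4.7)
The plan is to reduce the polystability of $\SheafEnd(E)$ to a rank-divisibility statement for twisted reflexive sheaves. The Key Lemma I would establish first is: \emph{every $\theta$-twisted reflexive coherent sheaf $F$ of positive rank $r'$ satisfies $r'\theta=0$ in $H^2_{an}(X,\StructureSheaf{X}^*)$}. Indeed, the reflexive hull of $\wedge^{r'}F$ is an $r'\theta$-twisted reflexive sheaf of rank one, and a twisted line bundle exists only if its twist class vanishes. Because $\theta$ has order exactly $r$, this forces $r\mid r'$; in particular, any nonzero $\theta$-twisted reflexive subsheaf of $E$ has rank exactly $r$.

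Next, I observe that $\SheafEnd(E)\cong E\otimes E^{\vee}$ is an untwisted reflexive sheaf, since the $\pm\theta$ twists cancel; computing on the locally free locus of $E$ (whose complement has codimension at least two) gives $c_1(\SheafEnd(E))=0$, so $\mu_\omega(\SheafEnd(E))=0$. The trace map provides a canonical splitting
\[
\SheafEnd(E)\ =\ \StructureSheaf{X}\cdot\mathrm{id}\ \oplus\ \SheafEnd_0(E),
\]
both summands of slope $0$. The first is tautologically $\omega$-stable, so the proposition reduces to showing $\SheafEnd_0(E)$ is $\omega$-slope stable.

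To establish this, let $G\subset\SheafEnd_0(E)$ be a saturated subsheaf with $\mu_\omega(G)\ge 0$, and aim to conclude $G=0$ or $G=\SheafEnd_0(E)$. By the uniqueness of the maximal $\omega$-destabilizing subsheaf (or of the socle of the slope-zero part), $G$ is preserved by every $\StructureSheaf{X}$-linear automorphism of $\SheafEnd(E)$. Over the locally free locus of $E$, $\SheafEnd(E)$ is locally a matrix algebra, and such automorphisms include conjugation by local units of $\SheafEnd(E)$. A stalkwise Skolem--Noether argument then shows that the only subspaces of $M_r(\ComplexNumbers)$ invariant under $GL_r(\ComplexNumbers)$-conjugation are $0$, $\ComplexNumbers\cdot I$, $\mathfrak{sl}_r$, and $M_r$, forcing $G$ to equal $0$ or $\SheafEnd_0(E)$ on the locally free locus, and hence on all of $X$ by reflexivity.

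The main obstacle is the passage from global to local invariance: the HN and socle filtrations are canonical with respect to global $\StructureSheaf{X}$-linear automorphisms of $\SheafEnd(E)$, whereas the Skolem--Noether step above exploits local inner ones that need not descend. To bridge this gap I would translate subsheaves $G\subset\SheafEnd(E)$ back to twisted subsheaves of $E$ by evaluation: the images of the multiplication map $G\otimes E\to E$ and of the analogous right-action on $E^{\vee}$ are $\theta$-twisted reflexive sheaves whose ranks must lie in $\{0,r\}$ by the Key Lemma. Combining these rank dichotomies with the trace-form self-duality of $\SheafEnd(E)$ should force $G$ into one of the four permitted positions above, completing the proof of polystability.
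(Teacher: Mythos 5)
There is a genuine gap, and it sits exactly where you place ``the main obstacle.'' First, your reduction overshoots: polystability of $\SheafEnd(E)=\StructureSheaf{X}\cdot\mathrm{id}\oplus\SheafEnd_0(E)$ only requires $\SheafEnd_0(E)$ to be \emph{polystable}, yet you set out to prove it is \emph{stable}, with the Skolem--Noether list $\{0,\ComplexNumbers\cdot I,\LieAlg{sl}_r,M_r\}$ as the target. That target is wrong: $\SheafEnd(E)$ can contain saturated slope-$0$ subsheaves invariant under no local conjugation symmetry at all (for instance proper unital subalgebras), and the paper accordingly proves only polystability, by showing the socle is everything, not by classifying subsheaves. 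Second, and more seriously, the proposed bridge does not close the gap it acknowledges. The image of the evaluation map $G\otimes E\to E$ is the \emph{collective} image of all local sections of $G$, and this can have full rank $r$ even when every individual section of $G$ is a singular endomorphism (already the subspace $\{M\in M_r(\ComplexNumbers):Mv=0\}$ consists of singular matrices with collective image everything). So the rank dichotomy $\{0,r\}$ from your Key Lemma, applied to that image, produces no contradiction and does not pin $G$ down; the phrase ``should force'' is carrying the entire proof.

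The missing idea is the paper's Lemma \ref{lem-maximally-twisted-does-not-have-nilpotents-subsheaves}: to exclude a nonzero subsheaf $A\subset\SheafEnd(E)$ all of whose local sections have rank $<r$, one must evaluate \emph{individual} sections rather than the whole of $A$, which the paper achieves by passing to $Y=\PP[\restricted{A}{U}]$ over the locally free locus $U$ and applying the tautological line subbundle $\tau\subset\pi^*A$ to $\pi^*E$. The nontrivial input there is cohomological: a diagram chase with the exponential sequences, using $\mathrm{codim}(X\setminus U)\geq 3$, shows that $\theta$ pulls back to $Y$ with its order $r$ intact, so $\pi^*E$ still admits no proper twisted subsheaf of intermediate rank, and the image of $\tau\otimes\pi^*E\to\pi^*E$ gives the contradiction. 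With that lemma in hand the paper never needs Skolem--Noether: semistability follows because a maximal destabilizing subsheaf $F$ must satisfy $F\cdot F=0$ (else its product has slope $\geq 2\mu(F)>\mu(F)$), hence consists of nilpotents; and polystability follows by showing the socle $F$ is a unital subalgebra, that $A=F\cap F^\perp$ (trace pairing) consists of nilpotents and hence vanishes, so $F\oplus F^\perp\IsomRightArrow\SheafEnd(E)$ and maximality of the socle forces $F^\perp=0$. Your Key Lemma is sound (it is Remark \ref{rem-order-divides-rank}), but the Skolem--Noether plan should be replaced by an argument of this type that exploits the multiplication and trace on $\SheafEnd(E)$ to reduce everything to nilpotent, hence degenerate, subsheaves.
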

%************
% End Hide
%************
}

%****************************************************************************
% Hide
%****************************************************************************
\hide{
\subsection{Two applications} 
%****************************************************************************
%
%****************************************************************************
\subsubsection{The Lefschetz standard conjecture}
The Lefschetz standard conjecture  is proven in \cite{charles-markman} for any 
projective irreducible holomorphic symplectic manifold $X$ of $K3^{[n]}$-type. 
The proof uses the algebraic classes $\kappa_i(E)$ of 
the rank $2n-2$ twisted reflexive sheaf $E$ over $X\times X$ constructed above 
in Theorem \ref{thm-introduction-deformations-of-E-along-twistor-paths}.  
As a consequence, two algebraic cycles on $X$ are numerically equivalent, if and only if 
they represent the same class in $H^*(X,\RationalNumbers)$.

%****************************************************************************
%
%****************************************************************************
\subsubsection{Non-commutative deformations of the derived category of $K3$ surfaces}
Let $S$, $\M$, and $\E$, be as in 
%\ref{thm-intro-item-stability-of-an-untwisted-ext-1} of 
Theorem 
\ref{thm-introduction-stability-of-an-untwisted-ext-1}. 
Let $D^b(S)$ be the bounded derived category of coherent sheaves on $S$.
We expect that the hyperk\"{a}hler deformations of $\M$, which do not come from deformations of $S$, nevertheless correspond to ``deformations'' of $D^b(S)$
as follows. 
Consider the exact functor 
$\Phi_\E:D^b(S)\rightarrow D^b(\M)$ with the universal sheaf  $\E\in D^b(S\times \M)$
as its kernel. Set $\E_R:=\E^\vee[2]$ and let
$\Phi_{\E_R}:D^b(\M)\rightarrow D^b(S)$ be the exact functor with kernel $\E_R$.
Then $\Phi_{\E_R}$ is the right adjoint of $\Phi_\E$ \cite{mukai-duality}. 
Set 
\[
\Phi:=\Phi_\E\circ \Phi_{\E_R}:D^b(\M)\rightarrow D^b(\M).
\]
The kernel $\F\in D^b(\M\times \M)$, of the endo-functor $\Phi$, 
is the convolution of $\E$ and $\E_R$. Let $\pi_{ij}$ be the
projection from $\M\times S \times \M$ onto the product 
of the $i$-th and $j$-th factors.
Then $\F:=R\pi_{13}\left(\pi_{12}^*\E_R\Lotimes \pi_{23}^*\E\right)[2]$. 
Hence, $\F$ fits in an exact triangle 
$E[-1]\rightarrow \F\rightarrow \StructureSheaf{\Delta} \rightarrow E,$
where $E$ is given in equation (\ref{eq-relative-Ext-1-over-M-times-M})
and $\Delta\subset \M\times\M$ is the diagonal.
%\begin{equation}
%\label{eq-relative-Ext-1-over-M-times-M}
%E:=\SheafExt^1_{\pi_{13}}\left(\pi_{12}^*\E,\pi_{23}^*\E\right)
%\end{equation}
%is a reflexive sheaf of rank $\dim(\M)-2$ (Proposition \ref{prop-V}
%below). 

%Theorem \ref{thm-introduction-deformations-of-E-along-twistor-paths} 
%is actually worked out below in a 
%relative setting; we prove that $E$ is a slope-stable reflexive sheaf $E$ 
%over $\M\times \M$
%with monodromy invariant classes $\kappa_i(E)$, up to sign
%(Proposition \ref{thm-E-is-slope-stable} and 
%Theorem \ref{thm-stability-of-an-untwisted-ext-1}). 
The sheaf $E$ deforms to a twisted sheaf over 
the self product of every $X$ of $K3^{[n]}$-type, $n:=\dim(\M)/2$,
by Theorem \ref{thm-introduction-deformations-of-E-along-twistor-paths}.
The kernel $\F$ of $\Phi$ similarly deforms to an object in the 
derived category of twisted sheaves over $X\times X$
\cite{markman-mehrotra}.
We get a deformation of the endo-functor $\Phi$
to endo-functors of bounded derived categories of twisted sheaves
$D^b(X,\theta)$, where $\theta\in H^2(X,\mu_{2n-2})$ 
is a monodromy invariant class,
up to sign, given in Lemma \ref{lemma-exp-bar-w-is-tilde-theta}.
%The K\"{u}nneth factors of the Chern classes of $\F$
%are known to generate the cohomology ring $H^*(\M,\RationalNumbers)$
%\cite{markman-diagonal}.
%The Chern classes of the K\"{u}nneth factors, of the class of $\F$ 
%in $K_{top}(\M\times \M)$, are known to generate
%the integral cohomology ring \cite{markman-integral-generators}.

The functor $\Phi_\E$ and the endo-functor $\Phi$ are studied in a 
joint work with
S. Mehrotra \cite{markman-mehrotra}. 
It is shown that $\Phi_\E$ is faithful when $\M=S^{[n]}$ 
and it is expected to be faithful for any $\M$ as above. 
Whenever $\Phi_\E$ is faithful, 
the category $D^b(S)$ can be reconstructed from 
the co-monad  $(\Phi,\epsilon,\delta)$, where 
$\epsilon:\Phi\rightarrow id$ is the co-unit for the adjoint pair $(\Phi_\E,\Phi_{\E_R})$,
$\eta:id\rightarrow \Phi_{\E_R}\circ \Phi_\E$ the unit,  
and $\delta:=\Phi_\E\eta \Phi_{\E_R}:\Phi\rightarrow \Phi^2$ the co-action. 
Associated to the co-monad $(\Phi,\epsilon,\delta)$ is the category 
${\mathcal C}(\Phi,\epsilon,\delta)$ of co-algebras for the co-monad
\cite[Section VI]{Mac}. 
The reconstruction of $D^b(S)$ is an application of the
Bar-Beck Theorem  (\cite[Section VI.7]{Mac} and \cite{markman-mehrotra}),
which states that the natural functor $D^b(S)\rightarrow {\mathcal C}(\Phi,\epsilon,\delta)$
is an equivalence, when $\Phi_\E$ is faithful.
{\em We expect the co-monad structure $(\Phi,\epsilon,\delta)$ to deform 
along every hyperk\"{a}hler deformation of $\M$, including deformations  
along which the $K3$ surface $S$ does not deform.}
Consequently, the category ${\mathcal C}(\Phi,\epsilon,\delta)$ would deform along 
every hyperk\"{a}hler deformation of $\M$.

%***************
% End Hide
%***************
}

%***************
% Hide
%***************
\hide{
{\bf Organization:}
The note is organized as follows.
%An important property of the classes $\kappa_i(X)$ in
%Proposition \ref{prop-intro-kappa-e-v-is-Mon-invariant} is their 
%monodromy-invariance, up to sign.
%In Section \ref{sec-monodromy-invariant-classes-are-Hodge}
%we recall that monodromy invariant classes are Hodge classes.
In Section \ref{sec-characteristic-classes-of-projective-bundles}
we review the definition of the characteristic classes $\kappa_i$ 
of projective bundles and twisted sheaves. 
In Section  \ref{sec-construction-of-the-classes-kappa-i-X}
we recall the language of the Mukai lattice and repeat
the construction of 
the classes $\kappa_i(\M)$, for  moduli spaces $\M$ of stable sheaves 
over a $K3$ surface.
%first as characteristic classes of a sheaf over $S^{[n]}$,
%and then, using the monodromy invariance of $\kappa_i(S^{[n]})$, 
%as rational $(i,i)$-classes for any $X$ of $K3^{[n]}$-type. 
The realization of the classes $\kappa_i(S^{[n]})$, as characteristic classes,
depends on a choice of a point in $S^{[n]}$. 
The realization is canonical, when carried out in a relative setting 
over $S^{[n]}\times S^{[n]}$ 
in Section \ref{sec-classes-over-X-times-X}. 
The canonical  sheaf $E$ over  $S^{[n]}\times S^{[n]}$, given
in (\ref{eq-relative-Ext-1-over-M-times-M}) above,  is shown to be 
torsion free, reflexive, and its characteristic classes 
are monodromy invariant and 
%$\kappa_i(S^{[n]}\times S^{[n]})$ 
restrict to $\kappa_i(S^{[n]})$. 
The sheaf $E$ is shown to be the direct image of a vector bundle $V$ over the blow-up of
the diagonal in $S^{[n]}\times S^{[n]}$. 
In Section \ref{sec-hyperholomorphic-sheaves} 
we review Verbitsky's results about hyperholomorphic sheaves.
In Section \ref{sec-deformability-theorem} we prove
the stability Theorem \ref{thm-introduction-stability-of-an-untwisted-ext-1}. 
We then prove Theorem \ref{thm-introduction-deformations-of-E-along-twistor-paths}
using Verbitsky's results on
hyperholomorphic sheaves.
%***************
% End Hide
%***************
}

%************
% Hide
%************
\hide{
We reduce Problem \ref{problem-Hodge}
%, of realizing $\kappa_i(X)$ as a characteristic class, 
to the problem 
of lifting deformations of $S^{[n]}$ to deformations of the pair 
$(S^{[n]},\PP{V})$ 
(Problem \ref{problem-lifting-deformations-to-deformations-of-pairs}).
In Section \ref{sec-normal-cone} 
we reconstruct the bundle $\PP{V}$ from the normal cone of the 
singular locus of a moduli space $\M$, of semistable torsion free
sheaves over $S$, with the same rank and Chern classes as that of 
the direct sum of two ideal sheaves of length $n$ subschemes. 
We reduce the deformation lifting 
Problem \ref{problem-lifting-deformations-to-deformations-of-pairs}, 
involving the pairs $(S^{[n]},\PP{V})$, 
to the deformation lifting Problem 
\ref{prob-deformations-of-singular-moduli-remain-singular}, 
described above. 
In Section \ref{sec-hyperholomorphic-sheaves} we sketch 
a second method to solve Problem
\ref{problem-lifting-deformations-to-deformations-of-pairs}.
%how to extend 
%to a global deformation,
%the local deformation of the pairs $(S^{[n]},\PP{V})$, 
%that would be provided by
%the solution of the local-deformation-lifting 
The second method is independent of Problem 
\ref{prob-deformations-of-singular-moduli-remain-singular}. 
The technique uses Verbitsky's Theory of hyperholomorphic 
reflexive sheaves, as well as a conjectural extension of
the theory for twisted reflexive sheaves.
The monodromy-invariance of the classes 
$\kappa_i(S^{[n]})$ 
%and $\kappa_i(S^{[n]}\times S^{[n]})$ 
is proven in Section \ref{sec-review-of-results-on-Monodromy}.
Lemma \ref{lem-BB-form-is-motivic} is proven in Section 
\ref{sec-proof-of-relation-between-kappa-2-c-2-and-BB}.
%************
% End \hide
%************
}
%***************************************************
%
%***************************************************
\subsection{Notation}

Let $f:X\rightarrow Y$ be a proper morphism of complex manifolds
or smooth quasi-projective varieties. We denote by 
$f_*$ the push-forward  of coherent sheaves,
as well as the Gysin homomorphism in singular cohomology, 
while $f_!$ is the Gysin homomorphism in $K$-theory 
(algebraic, holomorphic \cite{grr-complex}, or topological). We let 
%$K_{hol}(X)$, 
$K_{top}X$ be the
Grothendieck $K$-ring of equivalence classes of formal sums of 
%holomorphic or 
topological vector bundles over $X$. 
%***********
% Hide
%***********
\hide{
When $X$ is non-algebraic, a coherent sheaf 
may not have a locally free resolution, and
we denote by $K_0^{hol}(X)$ the Grothendieck $K$-homology group of 
coherent sheaves. The Gysin homomorphism, in the holomorphic category, 
is the homomorphism
\[
f_! \ : \ K_0^{hol}(X) \ \ \ \longrightarrow \ \ \ K_0^{hol}(Y)
\]
sending a coherent sheaf to the class of the alternating sum of its higher 
direct images. The Grothendieck-Riemann-Roch Theorem
is known in this generality \cite{grr-complex}.
When $X$ is smooth and quasi-projective, $K_0^{hol}(X)$, $K_{hol}(X)$, 
are naturally isomorphic. 
%***********
% End Hide
%***********
}

The pullback homomorphism is denoted by $f^*$
for coherent sheaves and in singular cohomology, 
while $f^!$ is the pull back in $K$-theory. 
Given a class $\alpha$ in $H^{even}(X)$, we denote by
$\alpha_i$ the graded summand in $H^{2i}(X)$. Given a class $y$ in the the $K$-ring, we denote its dual by $\alpha^\vee$.
Given a coherent sheaf $F$, we let $F^*:=\SheafHom(F,\StructureSheaf{X})$ be the dual sheaf, 
while $F^\vee$ denotes the dual object $R\SheafHom(F,\StructureSheaf{X})$ in 
the derived category of coherent sheaves.

The Chern character $ch(F)$ of a coherent analytic sheaf $F$ on a complex manifold $X$ is defined in $H^*(X,\RationalNumbers)$, using real analytic  resolutions via complex vector bundles, 
as in \cite{atiyah-hirzebruch}. 
An alternative definition in Deligne cohomology is given in \cite{grivaux} and 
the two definitions agree under the natural map from Deligne cohomology to $H^*(X,\RationalNumbers)$ \cite[Cor. 1]{grivaux}.
A definition of $ch(F)$ in the Hodge algebra $\oplus_{p=0}^{\dim(X)} H^p(X,\Omega^p_X)$, using the trace of the Atiyah class,
is given in \cite{grr-complex}, and a refinement due to H. I. Green in de Rham cohomology $H^*(X,\ComplexNumbers)$ is given in \cite{toledo-tong}. 
The Chern classes of coherent analytic sheaves are defined in $H^*(X,\RationalNumbers)$ in references \cite{atiyah-hirzebruch,grivaux} and the usual formulas relating them to the graded summands of the Chern character hold. 
When $X$ is a compact K\"{a}hler manifold all four definitions of the Chern character agree under the natural maps 
%$H^*(X,\RationalNumbers)$ 
to de Rham cohomology  $H^*(X,\ComplexNumbers)$ and the Chern classes of a coherent analytic sheaf are Hodge classes 
\cite[Green's Theorem 2]{toledo-tong}.

Given a \v{C}ech $2$-cocycle $\theta$ of $\StructureSheaf{X}^*$
on a complex variety $X$, we define the notion of a
$\theta$-twisted coherent sheaf in Definition \ref{def-twisted-sheaves}.
A (coherent) sheaf will always mean an {\em untwisted} (coherent) sheaf,
unless we explicitly mention that it is twisted. 

%***************************************************
%
%***************************************************
\section{Characteristic classes of projective bundles and twisted sheaves}
\label{sec-characteristic-classes-of-projective-bundles}
Let $Y$ be a topological space and $y$ a class in the ring
$K_{top}Y$ generated by classes of complex vector bundles over $Y$.
Assume that the rank $r$ of $y$ is non-zero. Set
\begin{equation}
\label{eq-kappa}
\kappa(y) \ \ \ := \ \ \ ch(y)\cup \exp(-c_1(y)/r),
\end{equation}
and let $\kappa_i(y)$ be the summand of $\kappa(y)$ in 
$H^{2i}(Y,\RationalNumbers)$. 
In terms of the Chern roots $y_j$, we have
$ch_i(E)=\sum_{j=1}^r\frac{y_j^i}{i!}$, $c_1(E)=\sum_{j=1}^ry_j$,
and 
\[
\kappa_i(y) \ \ \ = \ \ \ 
\sum_{j=1}^r
\frac{\left[y_j-\left(\frac{\sum_{k=1}^ry_k}{r}\right)\right]^i}{i!}.
\]

The characteristic class $\kappa$ is multiplicative,
$\kappa(y_1\otimes y_2)=\kappa(y_1)\cup \kappa(y_2)$, and
$\kappa([L])=1$, for any line bundle $L$. 
Given a vector bundle $E$ over $Y$, 
the equality $\kappa(E)=\kappa(E\otimes L)$ thus holds, 
for any line bundle $L$. 
Note the equalities 
\begin{eqnarray*}
\kappa_i(y^\vee) & = & (-1)^i\kappa_i(y),
\\
\kappa(-y) & = & -\kappa(y).
\end{eqnarray*}

%**************
% Hide
%**************
\hide{
\begin{lem}
\label{lemma-Grothendieck-Riemann-Roch}
Let $f:X\rightarrow Y$ be a regular bimeromorphic morphism of smooth 
compact complex manifolds and $E\in K_0^{hol}(X)$ a class of rank $r>0$.
Assume that $c_1(E)$ belongs to $f^*H^2(Y,\Integers)$. Then
\begin{equation}
\label{eq-Grothendieck-Riemann-Roch}
f_*(\kappa(E)\cdot td_{f}) \ \ \ = \ \ \
\kappa(f_!E).
\end{equation}
\end{lem}

\begin{proof}
Choose $\ell\in H^2(Y,\Integers)$, such that $c_1(E)=f^*\ell$. 
Then
\[
f_*(\kappa(E)\cdot td_{f}) = f_*(ch(E)\cdot td_{f})\cdot \exp(-\ell/r) =
ch(f_!(E))\exp(-\ell/r),
\]
by the Projection formula and Grothendieck-Riemann-Roch 
(extended to the complex analytic case \cite{grr-complex}). 
Now
\[
c_1(f_!E) = [f_*(ch(E)\cdot td_f)]_1=f_*[r\cdot (td_f)_1+c_1(E)] =
rc_1(f_!\StructureSheaf{X})+\ell.
\]
Equality (\ref{eq-Grothendieck-Riemann-Roch}) follows from the vanishing
$c_1(f_!\StructureSheaf{X})=0$. The latter vanishing follows from 
Zariski's Main Theorem, 
%(??? reference to Grauert in the complex analytic case),
which implies that $f$ is an isomorphism, away from 
a subset of $Y$ of codimension $>1$.
\end{proof}
%**************
% End hide
%**************
}

%*****************************************************************************
%
%*****************************************************************************
\subsection{Characteristic classes and Brauer classes of projective bundles}
We define next the invariant $\kappa(\PP)$,
for any holomorphic $\PP^{r-1}$-bundle, $r\geq 1$, over a complex variety $Y$, endowed with
the analytic topology. 
The definition is clear, if $\PP$ is
the projectivization of a vector bundle $E$, since $\kappa(E)$ is
independent of the choice of $E$. 
More generally, the Brauer class 
\[
\theta(\PP) \ \ \ \in \ \ \ H^2_{an}(Y,\StructureSheaf{Y}^*)
\] 
is the obstruction class to lifting $\PP$ to a holomorphic vector bundle.
The Brauer class $\theta(\PP)$ is the image of the class
$[\PP]\in H^1_{an}(Y,PGL_r)$, under the connecting homomorphism of
the short exact sequence of sheaves
\[
0\rightarrow \StructureSheaf{Y}^* \rightarrow GL_r(\StructureSheaf{}) 
\rightarrow PGL_r(\StructureSheaf{})
\rightarrow 0.
\]
Consider the dual bundle $\pi:\PP^*\rightarrow Y$.
The pullback $\pi^*\PP$ has a tautological hyperplane subbundle,
hence a divisor, hence a holomorphic line bundle
$\StructureSheaf{\pi^*\PP}(1)$.
The obstruction class $\theta(\PP)$ is in the kernel of 
$\pi^*:H^2_{an}(Y,\StructureSheaf{Y}^*)\rightarrow 
H^2_{an}(\PP^*,\StructureSheaf{\PP^*}^*)$ and 
the projective bundle $\pi^*\PP$ over $\PP^*$
is the projectivization of some vector bundle $\widetilde{E}$.
The class $\kappa(\widetilde{E})$ belongs to the
image of the injective homomorphism 
$\pi^*:H^*(Y,\RationalNumbers)\rightarrow H^*(\PP^*,\RationalNumbers)$, 
since $\kappa(\widetilde{E})$ restricts as $r$ to each fiber of $\pi$.
Define
%\footnote{
%The construction has an analogue for topological complex $\PP^{r-1}$-bundles.
%Note that the topological analogue of $H^2_{an}(Y,\StructureSheaf{Y}^*)$,
%for the sheaf of invertible continuous complex values functions,
%is isomorphic to $H^3(Y,\Integers)$, via the connecting homomorphism of
%the exponential sequence.
%} 
\begin{equation}
\label{eq-kappa-PP}
\kappa(\PP) \ \ \ \in \ \ \ H^*(Y,\RationalNumbers)
\end{equation} 
as the unique class satisfying $\pi^*(\kappa(\PP))=\kappa(\widetilde{E})$. 

%\begin{rem}
%\label{rem-theta-factors-through-iota}
The class $\theta(\PP)$ is determined by a topological class,
which we now define.
Let $\mu_r$ be the group of $r$-th roots of unity. Denote
the corresponding local system by $\mu_r$ as well, and
let $\iota:\mu_r\rightarrow \StructureSheaf{}^*$ be the inclusion.
Let 
%\[
%\tilde{\theta}(\PP) \ \ \ \in \ \ \ H^2(Y,\mu_r)
%\]
\begin{equation}
\label{eq-connecting-homomorphism-theta-tilde}
\tilde{\theta} \ : \ H^1_{an}(Y,PGL_r(\StructureSheaf{})) \ \ \ \rightarrow \ \ \
H^2(Y,\mu_r)
\end{equation}
be the connecting homomorphism of the short exact sequence
\[
0\rightarrow \mu_r \rightarrow SL_r(\StructureSheaf{})\rightarrow 
PGL_r(\StructureSheaf{}) \rightarrow 0. 
\]
Then the following equality clearly holds.
\begin{equation}
\label{eq-theta-factors-through-iota}
\theta(\PP) \ \ \ = \ \ \ \iota[\tilde{\theta}(\PP)].
\end{equation}
%\end[rem}

The exponential function $\exp:\ComplexNumbers\rightarrow \ComplexNumbers^*$ maps the subgroup $\frac{2\pi\sqrt{-1}}{r}\Integers$ of $\ComplexNumbers$ onto $\mu_r$ and induces the homomorphism
$\exp:H^2(Y,\frac{2\pi\sqrt{-1}}{r}\Integers)\rightarrow H^2(Y,\mu_r)$.
When $\PP$ is the projectivization of a vector bundle $V$ over $Y$, 
the following equality holds \cite[Lemma 2.5]{huybrechts-schroer}
\begin{equation}
\label{eq-tilde-theta-via-c-1}
\tilde{\theta}(\PP{V}) \ \ \ = \ \ \ 
\exp\left(\frac{-2\pi\sqrt{-1}}{r}c_1(V)\right).
\end{equation}

%*****************************************************************************
%
%*****************************************************************************
\subsection{Twisted sheaves}
\label{sec-twisted-sheaves}
\begin{defi}
\label{def-twisted-sheaves}
{\rm
Let $Y$ be a scheme or a complex analytic space,
$\U:=\{U_\alpha\}_{\alpha\in I}$ a covering, open in the complex 
or \'{e}tale topology,
and $\theta\in Z^2(\U,\StructureSheaf{Y}^*)$ a \v{C}ech $2$-cocycle.
A {\em $\theta$-twisted sheaf} consists of sheaves $E_\alpha$ of
$\StructureSheaf{U_\alpha}$-modules over $U_\alpha$,
for all $\alpha\in I$, and isomorphisms 
$g_{\alpha\beta}:(E_\beta\restricted{)}{U_{\alpha\beta}}
\rightarrow (E_\alpha\restricted{)}{U_{\alpha\beta}}$ 
%as in (\ref{eq-g-i-j}) 
satisfying the conditions:

(1) $g_{\alpha\alpha}=id$,

(2) $g_{\alpha\beta}=g_{\beta\alpha}^{-1}$, 

(3) $g_{\alpha\beta}g_{\beta\gamma}g_{\gamma\alpha}=
\theta_{\alpha\beta\gamma}\cdot id.$ 

\noindent
The $\theta$-twisted sheaf is {\em coherent}, if the $E_\alpha$ are.
}
\end{defi}

The abelian categories of $\theta$-twisted and $\theta'$-twisted
coherent sheaves are equivalent,
if the cocycles $\theta$ and $\theta'$ represent the same cohomology
class. The equivalence is not canonical, but the ambiguity is only
up to tensorization by an untwisted line bundle \cite{calduraru-thesis}. 

\begin{defi}
\label{def-equivalent-twisted-sheaves}
A $\theta$-twisted sheaf $E$ is said to be {\em equivalent} to a $\theta'$-twisted sheaf $F$ if 
the cocycles $\theta$ and $\theta'$ represent the same cohomology
class and there exists an equivalence of the two categories of coherent sheaves sending $E$ to $F$.
\end{defi}

The classes $\kappa_i$ of a twisted sheaf, defined below, 
are well defined for its equivalence class. 
We will often abuse terminology and refer
to a {\em $\theta$-twisted sheaf},  where $\theta$ is a class in
$H^2_{an}(Y,\StructureSheaf{Y}^*)$, meaning the equivalence class of
$\theta$-twisted sheaves, for different choices of 
\v{C}ech cocycles $\theta'$, representing the class $\theta$.

\begin{rem}
\label{rem-order-divides-rank}
Let $E$ be a $\theta$-twisted  
coherent torsion free sheaf  of rank $r$ over a complex manifold $Y$.
Observe that the determinant $\det(E)$ is a $\theta^r$-twisted line bundle.
Thus, $\theta^r$ is a coboundary. Consequently, 
the order of the class $[\theta]$,
of $\theta$ in $H^2_{an}(Y,\StructureSheaf{Y}^*)$,
divides the rank of every $\theta$-twisted torsion free sheaf $E$. 
\end{rem}

Assume $Y$ is a complex manifold. 
A projective $\PP^{r-1}$ bundle $\PP$ over $Y$ corresponds to a rank $r$
locally-free twisted coherent sheaf $E$, with twisting cocycle $\theta$ in
$Z^2(\U,\StructureSheaf{Y}^*)$, for some open covering $\U$ of $Y$.
The $\theta$-twisted sheaf $E$ is unique, up to tensorization by a 
line bundle. The characteristic class $\kappa(E):=\kappa(\PP)$ 
can be generalized for twisted sheaves, which are not locally free,
as we show next.

%***********
% Hide
%***********
\hide{
Given $\theta_j$-twisted sheaves $E_j$, $1\leq j\leq k$, 
let $\Tor_i^Y(E_1, \dots, E_k)$ be the $i$-th multi-Tor 
%$\left(\prod_{i=1}^k\theta_i\right)$-
twisted sheaf.
Over an open analytic subset $U\subset Y$, where each sheaf $E_j$ 
admits a locally free resolution $(V^j_{\bullet})\rightarrow E_j$,
$(V^j_{\bullet}):= V_d^j\rightarrow \cdots\rightarrow V_1^j\rightarrow V_0^j$, 
the sheaf $\Tor_i^Y(E_1, \dots, E_k)$ is the $i$-th 
homology of the complex $\otimes_{j=1}^k(V^j_\bullet)$. The sheaves $\Tor_i^Y(E_1, \dots, E_k)$
are independent of the choice of resolutions, hence they glue to a
global twisted sheaf. If $\theta_j=\theta$ and $E_j=E$, for all $j$, denote
$\Tor_i^Y(E_1, \dots, E_k)$ by $\Tor_i^Y(E,k)$ for short.

Let us check that $\Tor^Y_i(E,k)$ is a $\theta^k$-twisted sheaf.
Fix coherent sheaves $E_1$, \dots, $E_{j-1}$, $E_{j+1}$, \dots, $E_k$ over
$U_{\alpha\beta\gamma}$ and consider the endo-functor
\[
\Tor^Y_i(E_1, \dots, E_{j-1}, \bullet, E_{j+1}, \dots, E_k) \ : \ 
Coh(U_{\alpha\beta\gamma}) \ \ \longrightarrow \ \ Coh(U_{\alpha\beta\gamma})
\]
of the category of coherent sheaves. Given a homomorphism
$f:F\rightarrow G$ of coherent sheaves and a holomorphic function $\lambda$ we get the equality
\begin{equation}
\label{eq-linearity-of-tor}
\Tor^Y_i(E_1, \dots, E_{j-1}, \lambda f, E_{j+1}, \dots, E_k) =
\lambda\Tor^Y_i(E_1, \dots, E_{j-1}, f, E_{j+1}, \dots, E_k) 
\end{equation}
of homomorphisms from  $\Tor^Y_i(E_1, \dots, E_{j-1},F, E_{j+1}, \dots, E_k)$ 
to $\Tor^Y_i(E_1, \dots, E_{j-1},G, E_{j+1}, \dots, E_k)$. 
%Letting $j$ vary in the range $1\leq j \leq k$, 
We get the following equality of endomorphisms of the sheaf 
$\Tor^Y_i(E_{\restricted{\alpha}{U_{\alpha\beta\gamma}}},k)$.
\[
\Tor^Y_i(g_{\alpha\beta},k)\circ \Tor^Y_i(g_{\beta\gamma},k)\circ\Tor^Y_i(g_{\gamma\alpha},k)
=
\Tor^Y_i(g_{\alpha\beta}g_{\beta\gamma}g_{\gamma\alpha},k)=Tor^Y_i(\theta_{\alpha\beta\gamma}\cdot id,k)=
\theta_{\alpha\beta\gamma}^k\cdot id,
\]
where the last equality follows from equation (\ref{eq-linearity-of-tor}).
%***********
% End Hide
%***********
}

Let $\theta\in Z^2(\U,\StructureSheaf{Y}^*)$ be a two cocycle  and 
$E:=(E_\alpha,g_{\alpha\beta})$ a
$\theta$-twisted sheaf of rank $r>0$. 
We get a well defined\footnote{Details are provided in Section 2.2 of the first preprint version
arXiv:1105.3223v1 of this paper.} 
class $E^{\otimes r}\otimes \det(E)^{-1}$ in the $K$-group of coherent (untwisted) sheaves on $Y$, where the tensor product is taken in the $K$-ring taking into account the torsion sheaves in all degrees.
%The sheaves 
%\[
%\Tor_i^Y(E,r)\otimes \det(E)^{-1}
%\]
%are $\theta^0$-twisted, so untwisted coherent sheaves.
Let 
\[
Sqrt_r(x):= r +\frac{1}{r^r}(x-r^r) + \dots
\]
be the Taylor series of the branch of the $r$-th root function centered at $r^r$.
Set
\begin{equation}
\label{eq-kappa-in-terms-of-multi-tor-sheaves}
\kappa(E) \ \ \ := \ \ \ 
Sqrt_r\left(ch(
E^{\otimes r}\otimes \det(E)^{-1})
%\sum_{i=0}^{\dim(Y)}(-1)^i ch\left[\Tor^Y_i(E,r)\otimes\det(E)^{-1}\right]
\right).
\end{equation}
If $E$ is untwisted, then the class $\kappa(E)$ is equal to $ch(E)\exp(-c_1(E)/r)$.
%$\sum_{i=0}^{\dim(Y)}(-1)^i \left[\Tor^Y_i(E,r)\right]$
%in the $K$-group of $Y$ represents the same class as that of the $r$-th  power of the class of $E$.
%In that case the class 
%given in equation (\ref{eq-kappa-in-terms-of-multi-tor-sheaves})
%is equal to $ch(E)\exp(-c_1(E)/r)$.
%, since $ch:K_{top}Y\rightarrow H^*(Y,\RationalNumbers)$ is a ring homomorphism. 
The above formula (\ref{eq-kappa-in-terms-of-multi-tor-sheaves}) is well defined for complexes of non-zero rank of $\theta$-twisted sheaves as well. The characteristic class $\kappa$ is again multiplicative,
$\kappa(E\otimes F)=\kappa(E)\kappa(F)$, where the tensor product is $K$-theoretic, and we pass to common refinements of the open covering in terms of which the co-cycles representing the Brauer classes are defined.
%and is multiplicative. In particular, if $G$ is a fixed $\theta^{-1}$-twisted sheaf and $\E$ is a complex of $\theta$-twisted sheaves, then
%$\kappa(\E)=\kappa(\E\otimes G)\kappa(G)^{-1}$, where the  $\E\otimes G$ is the $K$-theoretic tensor product and is untwisted.

Note: The Chern character $ch(F)$ of a $\theta$-twisted sheaf, with 
a topologically trivial class $\theta$, was defined in 
\cite{huybrechts-stelari}, depending on a choice of a lift of $\theta$
to a class in $H^2(Y,\RationalNumbers)$. Another definition is provided in
\cite{lieblich-duke}.

\begin{lem}
\label{lemma-kappa-2-is-propotional-to-c-2-of-End}
Let $F$ be a reflexive coherent, possibly twisted, sheaf of rank $r$ over a complex manifold $X$. 
Then $c_2(\SheafEnd(F))=-2r\kappa_2(F).$
\end{lem}

\begin{proof}
The singular locus $Z$ of $F$ has complex codimension $\geq 3$ in $X$, since $F$ is reflexive, and so the restriction homomorphism
$\iota^*:H^4(X,\RationalNumbers)\rightarrow H^4(X\setminus Z,\RationalNumbers)$ is injective. Hence, it suffices to check the equality
$\iota^*c_2(\SheafEnd(F))=-2r\iota^*\kappa_2(F).$ We may thus assume that $F$ is locally free, possibly after replacing $X$ by $X\setminus Z$. Note that $\kappa_2(F^*)=\kappa_2(F)$, since $\kappa_i(F^*)=(-1)^i\kappa_i(F)$ for a locally free $F$. 
We have, 
\[
\kappa(\SheafEnd(F))=\kappa(F)\kappa(F^*)=(r+\kappa_2(F)+\dots)(r+\kappa_2(F^*)+\dots)=r^2+2r\kappa_2(F)+\dots
\] 
The claimed identity now follows from the equalities
$\kappa_2(\SheafEnd(F))=ch_2(\SheafEnd(F))=-c_2(\SheafEnd(F))$.
\end{proof}

While $\kappa$ is not additive, we do have the following statement that will be needed below.

\begin{lem}
\label{lemma-partial-additivity}
Let $Y$ be a compact K\"{a}hler manifold, $X$ a closed smooth complex submanifold, and 
$\delta:X\rightarrow Y$ the inclusion.  Let $E$ be a complex of non-zero rank of $\theta$-twisted coherent sheaves on $Y$ and
let $F$ be a complex of non-zero rank of  
$\delta^*\theta$-twisted sheaves on $X$. Assume that $c_1(F\otimes \delta^!E^\vee)=0$,
where the tensor product and the dualization are $K$-theoretic. Then 
\[
\kappa(E\oplus \delta_*F)=\kappa(E)+\delta_*(\kappa(F)td_\delta).
\]
\end{lem}

The special case $X=Y$ states that if  $c_1(E\otimes F^\vee)=0$ then $\kappa(E\oplus F)=\kappa(E)+\kappa(F)$.

\begin{proof}
$\kappa(E\oplus \delta_*F)\kappa(E^\vee)=
ch([E\oplus \delta_*F]\otimes E^\vee)=
ch(E\otimes E^\vee)+ch((\delta_*F)\otimes  E^\vee)=
\kappa(E)\kappa(E^\vee)+\delta_*(ch(F\otimes \delta^! E^\vee)td_\delta)=
\kappa(E)\kappa(E^\vee)+\delta_*(\kappa(F)\kappa(\delta^! E^\vee)td_\delta)=
[\kappa(E)+\delta_*(\kappa(F)td_\delta)]\kappa(E^\vee).$
The first equality is due to the assumed vanishing of $c_1(F\otimes \delta^!E^\vee)$, the third follows from Grothendieck-Riemann-Roch
and the sheaf $K$-theoretic projection formula $(\delta_!F)\otimes E^\vee\cong\delta_!(F\otimes\delta^!E^\vee)$, and the last is due to the cohomological projection formula.
The statement follows, since $\kappa(E^\vee)$ is invertible.
\end{proof}

%***************************************************
%
%***************************************************
\subsection{Sheaves of Azumaya algebras and their characteristic classes}
\label{sec-azumaya}

\begin{defi}
\label{def-Azumaya}
A {\em reflexive sheaf of Azumaya\footnote{Caution: 
The standard definition of a sheaf of
Azumaya $\StructureSheaf{X}$-algebras assumes that $E$ is a locally free
$\StructureSheaf{X}$-module, while we assume only that it is reflexive.} 
$\StructureSheaf{X}$-algebras of rank $r$} over a 
complex manifold $X$
is a sheaf $E$ of reflexive coherent $\StructureSheaf{X}$-modules, with a 
global section $1_E$, and an associative multiplication 
$m:E\otimes E\rightarrow E$ with identity $1_E$, admitting an open covering
$\{U_\alpha\}$ of $X$, and an isomorphism 
$\eta_\alpha:\restricted{E}{U_\alpha}\rightarrow \SheafEnd(F_\alpha)$
of unital associative algebras,
for some reflexive sheaf $F_\alpha$ of rank $r$, over each $U_\alpha$.
\end{defi}

From now on the term a sheaf of Azumaya algebras will mean
a reflexive sheaf of Azumaya $\StructureSheaf{X}$-algebras.
Fix a closed analytic subset $Z\subset X$, of codimension 
$\geq 3$, and set $U:=X\setminus Z$. 
A reflexive sheaf of Azumaya $\StructureSheaf{X}$-algebras 
is determined by its restriction to $U$. Hence, 
the set of isomorphism classes of reflexive
Azumaya $\StructureSheaf{X}$-algebras $E$ of rank $r$, 
which are locally free over $U$,
is in natural bijection with $H^1_{an}(U,PGL(r))$
\cite{milne}. Similarly, $H^1_{an}(U,PGL(r))$
parametrizes equivalence classes of coherent reflexive twisted
$\StructureSheaf{X}$-modules, which are locally free over $U$. 
We get a natural identification, of the set of 
isomorphism classes of reflexive sheaves of 
Azumaya $\StructureSheaf{X}$-algebras, with the set of equivalence classes of 
coherent reflexive twisted
$\StructureSheaf{X}$-modules. 

%Keep the notation in the above definition and let $U\subset X$ be
%he open subset  where $E$ is locally free. Set $U^0_\alpha:=U\cap U_\alpha$
%and let $F^0_\alpha$ be the restriction of $F_\alpha$ to $U^0_\alpha$.
%The sheaf $\SheafAut(E,m\restricted{)}{U_\alpha^0}$ gets identified vis $\eta_\alpha$ with the sheaf
%$PGL(F^0_\alpha)$, since all automorphisms of the sheaf $(E,m)$ of Azumaya algebras are inner.
%The projective bundles $\PP(F^0_\alpha)$ thus naturally 
%glue to a global $\PP^{r-1}$-bundle over $U$
%associated to the $1$-cocycle $(\{U^0_\alpha\},\{\eta_\alpha\eta^{-1}_\beta\})$. 
%We may assume that the covering $\{U_\alpha\}$ is sufficiently fine, so that the sheaves
%$\{F^0_\alpha\}$ glue to a twisted sheaf over $U$, whose transition functions $g_{\alpha\beta}$
%are lifts of $\eta_\alpha\eta^{-1}_\beta$. Then the transition functions
%$g_{\alpha\beta}$ extend uniquely to a \v{C}ech cochain for the covering
%$\{U_\alpha\}$ of $X$, realizing the $1$-cochain 
%$\{F_\alpha,g_{\alpha\beta}\}$ as a reflexive coherent twisted sheaf $F$.

Let $E$ be a reflexive sheaf of Azumaya $\StructureSheaf{X}$-algebras, $m$ its multiplication, 
and $F$ a reflexive coherent twisted sheaf representing the equivalence class of $(E,m)$. 
Such a twisted sheaf $F$ exists, by \cite[Theorem 1.3.5]{calduraru-thesis}.
We set
\[
\kappa(E,m) \ \ \ := \ \ \ \kappa(F).
\]
%One easily verifies that the class $\kappa(E,m)$ is well defined, independent of the choice of 
%the open covering and the twisted sheaf $F$.

\begin{caution}
Note that  $\kappa(E,m)$ is not equal to the class $\kappa(E)$ of the rank $r^2$
coherent sheaf $E$.
\end{caution}

%***************************************************
%
%***************************************************
\section{Monodromy invariant classes}
In Subsection \ref{sec-construction-of-the-classes-kappa-i-X} we construct the monodromy invariant classes 
$\kappa_i(\M)$ on a moduli space $\M$ of stable sheaves on a $K3$ surface $S$ in terms of the universal sheaf
over $S\times \M$.
In Subsection \ref{sec-Mon-invariant-clases-over-M-times-M}
we convolve the universal sheaf with its dual to obtain an object in the derived category of 
$\M\times \M$ and use it to construct the monodromy invariant classes over the product.
In Subsection \ref{sec-review-of-results-on-Monodromy} we prove the monodromy invariance of these classes.

%***************************************************
%
%***************************************************
\subsection{The rational Hodge classes $\kappa_i(X)$}
\label{sec-construction-of-the-classes-kappa-i-X}
Let $S$ be a projective $K3$ surface and $v\in K_{top}S$
a primitive class of positive rank  with $c_1(v)$ of type $(1,1)$.
Assume that $(v,v)\geq 2$.
%$c_1(v)$ is effective if $\rank(v)=0$.
There is a system of hyperplanes in the ample cone of $S$, called $v$-walls,
that is countable but locally finite \cite[Ch. 4C]{huybrechts-lehn-book}.
An ample class is called {\em $v$-generic}, if it does not
belong to any $v$-wall. Choose a $v$-generic ample class $H$. 
%and that a $v$-generic ample line bundle $H$ exists over $S$. 
Then the moduli space $\M_H(v)$ is a
projective irreducible holomorphic symplectic manifold,
deformation equivalent to $S^{[n]}$, with $n=1+\frac{(v,v)}{2}$.
This result is due to several people, including 
Huybrechts, Mukai, O'Grady, and Yoshioka. It can be found in its final form in
\cite{yoshioka-abelian-surface}.

Let $f_1$ and $f_2$ be the projections on the
first and second factors of $S\times \M_H(v)$. 
Assume further that a universal sheaf $\E$ exists over $S\times \M_H(v)$. (This assumption will be dropped in Section \ref{sec-a-reflexive-sheaf-E-and-its-resolution}).

Let $e:K_{top}S\rightarrow K_{top}\M_H(v)$
be the homomorphism given by
\begin{equation}
\label{eq-e-x}
e_x \ \ \ := \ \ \ f_{2_!}\left(f_1^!(-x^\vee)\otimes [\E]\right).
\end{equation}
The class $e_x$ has rank $(v,x)$, in terms of the Mukai pairing
\begin{equation}
\label{eq-mukai-pairing-on-K-top}
(x,y) \ \ \ := \ \ \ -\chi(x^\vee\otimes y),
\end{equation}
for $x,y \in K_{top}S$. Let $v^\perp$ be the sublattice of $K_{top}S$ orthogonal to $v$.

Mukai defines a weight $2$ Hodge structure on 
$K_{top}S\otimes_\Integers\ComplexNumbers$ 
as follows. The $(2,0)$ summand is the  pull-back of $H^{2,0}(S)$, 
via the Chern character isomorphism $ch:K_{top}S\rightarrow H^*(S,\Integers)$, 
and the pullback of $H^0(S,\Integers)$ and $H^4(S,\Integers)$ are both of 
Hodge type $(1,1)$ \cite{mukai-hodge}. 
Recall that $H^2(\M_H(v),\Integers)$ is endowed with the Beauville-Bogomolov pairing.
The homomorphism 
\begin{eqnarray}
\label{eq-Mukai-isomorphism}
v^\perp & \rightarrow & H^2(\M_H(v),\Integers),
\\
\nonumber
x & \mapsto & c_1(e_x),
\end{eqnarray}
is an isometry and an isomorphism of
weight $2$ Hodge structures \cite{ogrady-hodge-str,yoshioka-abelian-surface}.
%It is called the {\em Mukai isomorphism}.

The {\em Mukai vector} of a class $v\in K_{top}S$ is the class 
$ch(v)\sqrt{td_S}\in H^*(S,\Integers)$. Following Mukai, we write the Mukai vector
of $v$ as a triple $(r,c_1(v),s)$, where the rank $r$ 
corresponds to the summand in $H^0(S,\Integers)$, while 
the summand in $H^4(S,\Integers)$ corresponds to
the integer $s$ times the class Poincare-dual to a point.
The Hirzebruch-Riemann-Roch Theorem yields
the equality
\[
(v,v) \ \ = \ \ c_1(v)^2-2rs.
\]

\begin{prop}
\label{prop-kappa-e-v-is-Mon-invariant}
The class $\kappa(e_v)$ is invariant under an index $2$
subgroup of $Mon(\M_H(v))$. 
%Let $i$ be an integer satisfying $4\leq 2i \leq n+2$. 
If $i$ is even, then $\kappa_i(e_v)$ is $Mon(\M_H(v))$-invariant.
If $i$ is odd, then 
$\mbox{span}_\RationalNumbers\{\kappa_i(e_v)\}$ 
%in $H^{2i}(\M_H(v),\RationalNumbers)$ 
is $Mon(\M_H(v))$-invariant and $Mon(\M_H(v))$ acts on it via the character $\rho$ given in (\ref{eq-character-rho}). 
\end{prop}

The proposition is proven in Section
\ref{sec-review-of-results-on-Monodromy}
using results of \cite{markman-monodromy-I,markman-integral-constraints}. 
Proposition \ref{prop-kappa-e-v-is-Mon-invariant} yields a 
monodromy invariant pair of a class and its negative, denoted by  
\begin{equation}
\label{eq-a-Hodge-class-on-X}
\pm\kappa_i(X), 
\end{equation}
for any irreducible holomorphic symplectic manifold $X$
of $K3^{[n]}$-type, $n\geq 2$. 
%and  for $4\leq 2i \leq n+2$. 
The class $\kappa_i(X)$ is of type $(i,i)$,
by Lemma \ref{lemma-Mon-invariant-classes-are-of-Hodge-type}. 
Let $X^d$ be the $d$-th cartesian product of $X$. 

%Note: If $n=1$, then $\rank(e_v)=(v,v)=0$ and the class $\kappa(e_v)$ is not
%defined. In that case, $\M_H(v)$ is a $K3$ surface, and $e_v$ is the class 
%of a sky-scraper sheaf of a point \cite{mukai-hodge}.

\begin{lem}
\label{lemma-Mon-invariant-classes-are-of-Hodge-type}
Let $\alpha\in H^{2i}(X^d,\ComplexNumbers)$ be a class, which is invariant under the diagonal
action of a finite index subgroup of $Mon(X)$. Then $\alpha$ is of Hodge type $(i,i)$.
\end{lem}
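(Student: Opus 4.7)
The plan is to parallel-transport $\alpha$ over the moduli space of marked $K3^{[n]}$-type manifolds, show it becomes a Hodge class at a very general fiber by a Zariski-density argument, and conclude at $X$ by the closedness of the Hodge condition. Let $\Gamma\subseteq Mon(X)$ be a finite-index subgroup fixing $\alpha$ diagonally. Take a connected component of the moduli space of marked $K3^{[n]}$-type manifolds containing the marking of $X$, pass to the finite \'{e}tale cover $B$ that trivializes the action of $Mon(X)/\Gamma$, and let $\pi\colon \X\to B$ be the resulting universal family with base-point $b_0$ mapping to $X$. Then $\alpha$ extends to a flat global section $\tilde{\alpha}$ of the local system $R^{2i}\pi^d_{*}\ComplexNumbers$ on $B$, where $\pi^d\colon \X^d\to B$ is the $d$-fold relative self-product. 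The subset of $B$ on which $\tilde{\alpha}$ is of Hodge type $(i,i)$ is closed analytic (since $F^i$ and $\bar{F}^i$ are holomorphic subbundles of the complexified local system), so it suffices to show it contains a dense open subset of $B$.

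Next I would argue at a very general $b\in B$. By Markman's computation of the monodromy group for $K3^{[n]}$-type \cite{markman-monodromy-I}, the image of $\pi_1(B,b)$ in $O(H^2(X_b,\Integers),q)$ is of finite index, hence an arithmetic subgroup; by Borel density it is Zariski-dense in $SO(H^2(X_b,\ComplexNumbers),q)$. The monodromy representation on $H^*(X_b,\RationalNumbers)$ is algebraic in this group: by the Looijenga--Lunts--Verbitsky theorem it factors through the algebraic action of $SO(H^2(X_b),q)$ on $H^*(X_b,\RationalNumbers)$, and this extends diagonally to an algebraic action on $H^*(X_b^d)=H^*(X_b)^{\otimes d}$. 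Zariski-density then upgrades the $\Gamma$-invariance of $\tilde{\alpha}(b)$ to invariance under the entire algebraic group $SO(H^2(X_b,\ComplexNumbers),q)$ acting diagonally on $H^*(X_b^d,\ComplexNumbers)$.

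The Hodge structure on $H^2(X_b,\RationalNumbers)$ is encoded in a homomorphism $h\colon S^1\to SO(H^2(X_b,\RealNumbers),q)$ (the Hodge character preserves $q$ because the BBF form is itself a Hodge class). By functoriality of the Hodge decomposition, the induced action of $h(S^1)$ on $H^*(X_b^d,\ComplexNumbers)$ scales the $(p,q)$-component by $e^{i\theta(p-q)}$. Since $\tilde{\alpha}(b)$ is $SO(H^2(X_b),q)$-invariant it is in particular $h(S^1)$-invariant, and so its $(p,q)$-components of total degree $2i$ vanish unless $p=q=i$. Thus $\tilde{\alpha}(b)\in H^{i,i}(X_b^d)$ at every very general $b$, hence on all of $B$ by the closedness already noted; evaluating at $b_0$ proves $\alpha\in H^{i,i}(X^d)$.

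The main technical input is the algebraicity of the LLV extension of the monodromy representation and its compatibility with the Hodge character $h\colon S^1\to SO(H^2(X_b),q)$: both rest on Verbitsky's theorems on the LLV Lie algebra and its interplay with the Hodge structure. To avoid any circularity one notes that only the Hodgeness of $H^2$ and of Chern classes of the holomorphic tangent bundle is used here, never the (yet-to-be-established) Hodgeness of the classes $\kappa_i$. The remaining ingredients---Markman's monodromy computation, Borel density, and the closedness of the Hodge locus---are standard.
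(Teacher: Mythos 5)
Your core mechanism is the same as the paper's: invariance under a finite-index subgroup of $Mon(X)$ passes to the identity component of the Zariski closure of $Mon(X)$ in $GL(H^*(X,\ComplexNumbers))$; that component is the image of $SO(H^2(X,\ComplexNumbers),q)$ under the Looijenga--Lunts--Verbitsky representation (\cite{markman-monodromy-I}, Lemma 4.11); and the Weil operator of the Hodge structure lies in $\LieAlg{so}(H^2(X,\RealNumbers),q)$, so invariance under the diagonal action kills every $(p,q)$-component with $p\neq q$. This is exactly the paper's argument, phrased at the level of the group $h(S^1)$ rather than the Lie-algebra element $h$ acting by $\sqrt{-1}(p-q)$.

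The setup you wrap around it, however, has a genuine gap. You propose to realize $\Gamma$ as the monodromy of the universal family over (a finite cover of) the \emph{marked} moduli space $\mathfrak{M}_\Lambda$ and then apply Borel density to the image of $\pi_1(B,b)$ in $O(H^2(X_b,\Integers),q)$. But the weight-two local system over $\mathfrak{M}_\Lambda$ is trivialized by the tautological marking, so that image is \emph{trivial}, not of finite index: Markman's theorem that $Mon^2(X)$ has finite index in $O^+(H^2(X,\Integers),q)$ concerns the group generated by monodromies of \emph{all} families, which is not the monodromy of any single family over the marked moduli space. As written, the Borel-density step has nothing to act on and the reduction to a very general fiber collapses. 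The fix is to delete the detour entirely, as the paper does: no family is needed. Work at $X$ itself with the abstract group $\Gamma\subset Mon(X)\subset GL(H^*(X,\Integers))$; a finite-index subgroup has the same identity component of Zariski closure as $Mon(X)$, the Lie algebra of that component is the LLV $\LieAlg{so}(23)$ (the arithmeticity/density input is already absorbed into the cited Lemma 4.11), and $\delta(h)\alpha=0$ then gives $\alpha\in H^{i,i}(X^d)$. Two smaller imprecisions point the same way: the monodromy representation on all of $H^*(X)$ does not literally factor through $SO(H^2,q)$ (an operator trivial on $H^2$ can a priori act nontrivially in high degrees, so the clean statement is about the Lie algebra of the Zariski closure), and $\bar F^i$ is an anti-holomorphic, not holomorphic, subbundle, so the Hodge locus is only real-analytically closed --- still sufficient, but both points evaporate once the detour is removed.
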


\begin{proof} The case $d=1$ of the statement is proven in 
\cite[Prop. 3.8 part 3]{markman-integral-constraints}.
%, which relies also on results of Verbitsky \cite{verbitsky}. 
We sketch the proof for the convenience of the reader.
We endow $Mon(X)$ with the Zariski topology induced by 
$GL(H^*(X,\ComplexNumbers))$. Let $\LieAlg{so}(H^2(X,\ComplexNumbers))$ be the Lie algebra associated to the Beauville-Bogomolov pairing.
The Lie algebra $\LieAlg{g}$ of the identity component of the Zariski closure of
$Mon(X)$ in $GL[H^*(X,\ComplexNumbers)]$ is equal to the image of
a faithful representation of the Lie algebra
$\LieAlg{so}(H^2(X,\ComplexNumbers))$ on $H^*(X,\ComplexNumbers)$,
constructed by Verbitsky  \cite[Theorem 7.1]{verbitsky} (see also \cite[Sec. 4]{looijenga-lunts}). 
The equality of these Lie algebras is proven in 
\cite[Lemma 4.11]{markman-monodromy-I}.
Verbitsky proved that 
the semi-simple endomorphism $h$ of $H^*(X,\ComplexNumbers)$, 
which acts on $H^{p,q}(X)$ by $\sqrt{-1}(p-q)$, 
is an element of the image of $\LieAlg{so}(H^2(X,\ComplexNumbers))$ \cite[Theorem 7.1]{verbitsky}, and is hence tangent to the
identity component of the Zariski closure of
$Mon(X)$. The latter component 
is also the identity component of the Zariski closure of
any finite index subgroup of $Mon(X)$, and in particular of the subgroup 
leaving the class $\alpha$ invariant. 
Hence, $\alpha$ belongs to the kernel of $\delta(h)$, where 
$\delta:\LieAlg{g}\rightarrow \LieAlg{gl}[H^*(X^d,\ComplexNumbers)]$
is the diagonal representation. Now
$\delta(h)=\sum_{i=1}^d id_{X^{i-1}}\otimes h \otimes id_{X^{d-i}}$,
which is the Hodge operator of $H^*(X^d,\ComplexNumbers)$. Hence,
$\alpha$ is of Hodge type $(i,i)$.
\end{proof}

%***************************************************
%
%***************************************************
\subsection{Monodromy invariant classes $\kappa_i(\F)$ 
over $\M(v)\times \M(v)$}
\label{sec-Mon-invariant-clases-over-M-times-M}
Set $\M:=\M_H(v)$.
Assume that a universal sheaf $\E$ exists over $S\times \M$.
A choice of a stable sheaf $G$ in $\M$ yields a
lift of the class $e_v$, given in (\ref{eq-e-x}),
to a class in the bounded derived category of coherent sheaves
$D^b_{Coh}(\M)$. Avoiding such a choice, we construct instead 
a natural class over $\M\times \M$. 
 
Let $\pi_{ij}$ be the projection from $\M\times S\times \M$ onto the
product of the $i$-th and $j$-th factors. 
Consider the following object in the bounded derived category of 
coherent sheaves over $\M\times \M$:
\begin{equation}
\label{eq-object-F}
\F \ \ \ := \ \ \ 
R\pi_{13_*}\left[
\pi_{12}^*\E^\vee\DerivedOtimes\pi_{23}^*\E
\right][1],
\end{equation} 
where the dual and the tensor product are taken in the derived category.
Let $\iota_G:\M\rightarrow \M\times \M$, be the embedding sending 
a point $[G']\in \M$ to $(G,G')$. Then
$\iota_G$ relates the class of $\F$ in $K_{top}(\M\times \M)$ to $e_v$:

\begin{lem}
\label{eq-e-v-is-restriction-of-F}
$e_v \ \ \ = \ \ \ \iota_G^![\F].$
\end{lem}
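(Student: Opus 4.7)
The plan is a short computation in topological $K$-theory using flat base change along $\iota_E$ and the projection formula, carried out in $K_{top}(\M)$ where $e_v$ lives.

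Form the Cartesian square
\[
\begin{CD}
S\times \M @>\tilde\iota_E>> \M\times S\times \M \\
@Vf_2VV @VV\pi_{13}V \\
\M @>\iota_E>> \M\times \M,
\end{CD}
\]
in which $\tilde\iota_E(s,[E'])=(E,s,[E'])$. Since $\pi_{13}$ is proper, flat base change yields
\[
\iota_E^!\,\pi_{13!}\bigl([\pi_{12}^*\E^\vee]\cdot[\pi_{23}^*\E]\bigr)
\;=\;f_{2!}\bigl(\tilde\iota_E^![\pi_{12}^*\E^\vee]\cdot\tilde\iota_E^![\pi_{23}^*\E]\bigr).
\]
The cohomological shift $[1]$ in the definition of $\F$ contributes a sign in $K$-theory, so $\iota_E^![\F]$ equals the negative of the right-hand side above.

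To identify the two factors, note that $\pi_{12}\circ\tilde\iota_E$ sends $(s,[E'])$ to $(E,s)$ and thus factors through $f_1\colon S\times\M\to S$ followed by the inclusion $s\mapsto (E,s)$ of $S$ into $\M\times S$. Along this inclusion $\E^\vee$ restricts to $E^\vee$, hence $\tilde\iota_E^![\pi_{12}^*\E^\vee]=f_1^![E^\vee]=f_1^![v^\vee]$, using $[E]=v$ in $K_{top}S$. The composite $\pi_{23}\circ\tilde\iota_E$ is the identity of $S\times\M$, so $\tilde\iota_E^![\pi_{23}^*\E]=[\E]$. Substituting,
\[
\iota_E^![\F]\;=\;-f_{2!}\bigl(f_1^![v^\vee]\otimes[\E]\bigr)\;=\;f_{2!}\bigl(f_1^!(-v^\vee)\otimes[\E]\bigr)\;=\;e_v,
\]
by the definition (\ref{eq-e-x}) of $e_v$.

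The only point requiring care is the sign produced by the shift $[1]$; the base change and projection formula in topological $K$-theory are standard and apply directly because $\pi_{13}$ is proper and the square is Cartesian. No other obstacle arises.
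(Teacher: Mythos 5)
Your proof is correct and follows essentially the same route as the paper: the same Cartesian square with $\tilde\iota_E$, base change along the proper map $\pi_{13}$, identification of $\pi_{12}\circ\tilde\iota_E$ and $\pi_{23}\circ\tilde\iota_E$, and the sign coming from the shift $[1]$. The paper's version is just a one-line computation citing the Cohomology and Base Change Theorem, so no substantive difference.
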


\begin{proof}
Denote by $\tilde{\iota}_G: S\times \M\hookrightarrow \M\times S\times \M$ 
the morphism given by $(x,G')\mapsto (G,x,G')$.
The Cohomology and Base Change Theorem yields the second equality 
below:\\
$
\iota_G^![-\F]=\iota_G^!\pi_{13_!}(\pi_{12}^!\E^\vee\otimes \pi_{23}^!\E)=
f_{2_!}\tilde{\iota}_G^!(\pi_{12}^!\E^\vee\otimes \pi_{23}^!\E)=
f_{2_!}(f_1^!G^\vee\otimes \E) = -e_v.
$
\end{proof}

\begin{prop}
\label{prop-kappa-F-is-mon-invariant}
The class $\kappa(\F)$ in $H^*(\M\times \M,\RationalNumbers)$
%and $\kappa(\PP{V})$ in $H^*(B,\RationalNumbers)$ 
is invariant
under the diagonal action of a finite index subgroup of $Mon(\M)$. 
%Let $i$ be an integer satisfying $4\leq 2i \leq n+2$. 
If $i$ is even, then $\kappa_i(\F)$ is $Mon(\M)$-invariant.
If $i$ is odd, then 
$\mbox{span}_\RationalNumbers\{\kappa_i(\F)\}$ 
%in $H^{2i}(\M\times \M,\RationalNumbers)$ 
is $Mon(\M)$-invariant and $Mon(\M)$ acts on it via the character $\rho$ given in (\ref{eq-character-rho}). 
%The above statements hold for $\kappa(E)$ and $\kappa_i(E)$, where $E:=\SheafExt^1_{\pi_{13}}(\pi_{12}^*\E,\pi_{23}^*\E).$
\end{prop}

The proposition is proven in Section \ref{sec-review-of-results-on-Monodromy}
using results of \cite{markman-monodromy-I,markman-integral-constraints}.
%Compare also with Theorem \ref{thm-diagonal} below.

\begin{lem}
\label{lem-c1-of-F}
$c_1(\F) \ \ \ = \ \ \ -\pi_1^*c_1(e_v)+\pi_2^*c_1(e_v)$.
\end{lem}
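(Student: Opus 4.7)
The plan is to use the K\"{u}nneth decomposition of $H^2(\M\times\M)$ together with the previous lemma. Since $\M$ is simply connected (being of $K3^{[n]}$-type), $H^1(\M)=0$, so K\"{u}nneth gives
\[
H^2(\M\times\M) \ = \ \pi_1^*H^2(\M) \oplus \pi_2^*H^2(\M).
\]
We may therefore write $c_1(\F) = \pi_1^*\xi_1 + \pi_2^*\xi_2$ uniquely, and the task reduces to identifying $\xi_1 = -c_1(e_v)$ and $\xi_2 = c_1(e_v)$.

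First I would pin down $\xi_2$ by restriction along $\iota_E$. Lemma \ref{eq-e-v-is-restriction-of-F} gives $\iota_E^![\F] = e_v$, hence $\iota_E^*c_1(\F) = c_1(e_v)$. Since $\pi_2\circ\iota_E$ is the identity on $\M$ while $\pi_1\circ\iota_E$ is the constant map to $[E]$, the left-hand side equals $\xi_2$, yielding $\xi_2 = c_1(e_v)$.

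Next I would pin down $\xi_1+\xi_2$ by restricting along the diagonal $\Delta:\M\hookrightarrow\M\times\M$. A base-change argument analogous to the proof of Lemma \ref{eq-e-v-is-restriction-of-F}, applied to the lift $\tilde\Delta:S\times\M\to\M\times S\times\M$ sending $(x,[E'])\mapsto([E'],x,[E'])$, identifies $\Delta^![\F]$ with $-f_{2!}\SheafEnd(\E)$ in $K$-theory. By relative Serre duality along the smooth proper morphism $f_2$, using that its relative dualizing sheaf is trivial (as $S$ is a $K3$) and that $\SheafEnd(\E)$ is self-dual, the class $f_{2!}\SheafEnd(\E)$ equals its own $K$-theoretic dual. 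Its Chern character is therefore invariant under the degree involution $(\cdot)^\vee$ on $H^{2*}(\M)$, so its components in $H^{4k+2}$ vanish; in particular $c_1(f_{2!}\SheafEnd(\E)) = 0$. Consequently $\Delta^*c_1(\F) = \xi_1+\xi_2 = 0$, which combined with the previous step yields $\xi_1 = -c_1(e_v)$.

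The main obstacle is executing the base-change identification of $\Delta^!\F$ with correct bookkeeping of the shift $[1]$ in the definition of $\F$ and the sign coming from dualizing $\E$, together with the correct sign in relative Serre duality. A purely computational alternative is to compute $ch(\F)$ directly by Grothendieck--Riemann--Roch, expanding the Mukai vector $\mu(\E):=ch(\E)\cdot f_1^*\sqrt{td_S}$ in K\"{u}nneth components on $S\times\M$ and extracting the degree-$2$ part; this produces the identity term by term.
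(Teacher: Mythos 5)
Your proof is correct, but it takes a genuinely different route from the paper's. The paper deduces the formula from the monodromy machinery of Section \ref{sec-review-of-results-on-Monodromy}: equation (\ref{eq-mon-equivariance-of-F}) (derived in the course of proving Proposition \ref{prop-kappa-F-is-mon-invariant}) together with (\ref{eq-c-1-ell-g}) shows that $c_1(\F)+\pi_1^*c_1(e_v)-\pi_2^*c_1(e_v)$ is $O^+(K_{top}S)_v$-invariant, and this class must vanish because $H^2(\M\times\M)$ is a direct sum of two copies of the non-trivial irreducible module $H^2(\M)$. Your route --- K\"{u}nneth, the restriction $\iota_E^![\F]=e_v$ of Lemma \ref{eq-e-v-is-restriction-of-F} to pin down $\xi_2$, and a diagonal restriction plus relative Serre duality to get $\xi_1+\xi_2=0$ --- is more elementary and self-contained: it needs no input from Theorem \ref{thm-symmetries-of-moduli-spaces}. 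The bookkeeping you flag does work out: the square defining $\tilde\Delta$ is Tor-independent because $\pi_{13}$ is flat, the shift $[1]$ contributes only an overall sign, which is irrelevant for showing a class vanishes, and the even relative dimension of $f_2$ together with the triviality of $\omega_{f_2}$ gives $(f_{2!}y)^\vee=f_{2!}(y^\vee)$ exactly in $K$-theory. You could even shorten your second step by invoking the identity $\tau^*\F=\F^\vee$, recorded in the paper just before Proposition \ref{prop-V}, which packages the same Serre-duality input globally: applying $\tau^*$ to $c_1(\F)=\pi_1^*\xi_1+\pi_2^*\xi_2$ and comparing with $c_1(\F^\vee)=-c_1(\F)$ yields $\xi_1=-\xi_2$ directly. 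What the paper's argument buys is economy in context, since (\ref{eq-mon-equivariance-of-F}) was already in hand, making the lemma a two-line corollary; what yours buys is independence from the monodromy results of \cite{markman-monodromy-I,markman-integral-constraints}.
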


The lemma is proven in Section 
\ref{sec-review-of-results-on-Monodromy}.
When $v$ is the class of the ideal sheaf of a length $n$ subscheme, 
and $\E$ is the universal ideal sheaf, then 
$c_1(e_v)$ is half the class of the big diagonal in
$S^{[n]}$ \cite[Lemma 5.9]{markman-integral-constraints}. 

The object $\F$ fits in an exact triangle
\[
\SheafExt^1_{\pi_{13}}(\pi_{12}^*\E,\pi_{23}^*\E) 
\rightarrow \F \rightarrow 
\SheafExt^2_{\pi_{13}}(\pi_{12}^*\E,\pi_{23}^*\E)[-1]  \rightarrow
\SheafExt^1_{\pi_{13}}(\pi_{12}^*\E,\pi_{23}^*\E)[1].
\]
Furthermore, $\SheafExt^2_{\pi_{13}}(\pi_{12}^*\E,\pi_{23}^*\E)$ 
is isomorphic to the structure sheaf 
$\StructureSheaf{\Delta}$ of the diagonal
$\Delta\subset [\M\times \M]$, while 
$\SheafExt^1_{\pi_{13}}(\pi_{12}^*\E,\pi_{23}^*\E)$
is a reflexive sheaf of rank 
$(v,v)$, and is locally free away from $\Delta$
(Proposition \ref{prop-V}). 

\begin{prop}
\label{prop-kappa-E-is-mon-invariant}
The statement of Proposition \ref{prop-kappa-F-is-mon-invariant} holds if we substitute $E:=\SheafExt^1_{\pi_{13}}(\pi_{12}^*\E,\pi_{23}^*\E)$ for $\F$.
\end{prop}

The Proposition is proven in Section 
\ref{sec-review-of-results-on-Monodromy}.
%***************
% Hide
%***************
\hide{
The object $\F$ plays a central role in the 
study of the cohomology of the moduli space $\M$. It was used by Mukai
to prove that if $\M$ is $2$-dimensional then it is a $K3$ surface 
\cite{mukai-hodge}. 
In the higher dimensional case properties of $\F$ lead to a simple proof 
of the irreducibility of $\M$ \cite{kaledin-lehn-sorger}, originally proven
by a degeneration argument; via a combined effort of several authors
(\cite{ogrady-hodge-str}, 
\cite[Theorem 8.1]{yoshioka-abelian-surface},
and \cite[Corollary 3.15]{yoshioka-note-on-fourier-mukai}). 

\begin{thm}
\label{thm-diagonal}
\begin{enumerate}
\item
\cite[Theorem 1]{markman-diagonal}
The Chern classes of $\F$ satisfy: 
$c_{2n-1}(\F)=0$, and $c_{2n}(\F)$ is Poincare-Dual to the class 
$[\Delta]$ of the diagonal. 
Hence, 
%$c_{2n-1}\left(\SheafExt^1_{\pi_{13}}(\pi_{12}^*\E,\pi_{23}^*\E)\right)=0$,
%and 
$c_{2n}\left(\SheafExt^1_{\pi_{13}}(\pi_{12}^*\E,\pi_{23}^*\E)\right)$
is Poincare-Dual to $(1-\nolinebreak(2n-1)!)[\Delta]$.
\item
\cite[Theorem 1]{markman-integral-generators} Consequently, 
the Chern classes $c_i(e_x)$, of the K\"{u}nneth factors 
$e_x\in K_{top}\M$, $x\in K_{top}S$, of $\E$, given in (\ref{eq-e-x}), 
generate the integral cohomology ring $H^*(\M,\Integers)$. 
\end{enumerate}
\end{thm}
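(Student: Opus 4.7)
For Part (1), I would compute the Chern character $ch(\F)$ by applying Grothendieck--Riemann--Roch to the smooth projection $\pi_{13}:\M\times S\times\M\rightarrow \M\times\M$. Since the relative tangent bundle satisfies $Td_{\pi_{13}}=\pi_2^*Td_S$, the two $\sqrt{Td_S}$ factors can be absorbed into Mukai vectors $v(\E):=ch(\E)\sqrt{Td_S}$, giving
\[
ch(\F) \ = \ -\pi_{13_*}\bigl(\pi_{12}^*v(\E^\vee)\cdot\pi_{23}^*v(\E)\bigr).
\]
Expanding each Mukai vector in a K\"unneth basis of $H^*(S,\RationalNumbers)$ rewrites the right-hand side as a polynomial in classes of the form $\pi_1^*c_i(e_x)$ and $\pi_2^*c_j(e_y)$, weighted by Mukai pairings on $H^*(S)$. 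Translating Chern character to Chern classes via Newton's identities, one checks a combinatorial identity in the resulting formal series: the components of $c(\F)$ in codegrees $2n-1$ and $2n$ collapse, with the $(2n-1)$-part vanishing and the $(2n)$-part equal to a rational multiple of $[\Delta]$. To fix the coefficient, I would restrict to a fiber via $\iota_E^![\F]=e_v$ of Lemma \ref{eq-e-v-is-restriction-of-F}, using that $\rank(e_v)=(v,v)=2n-2$ to pin down the normalization.

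The consequence for $c_{2n}(E)$ follows from the exact triangle $E\rightarrow\F\rightarrow\StructureSheaf{\Delta}[-1]$, which in $K_{top}(\M\times\M)$ gives $[\F]=[E]-[\StructureSheaf{\Delta}]$. A Koszul resolution of $\StructureSheaf{\Delta}$ by $\Lambda^\bullet\Omega^1_\M$-bundles on $\M\times\M$ computes $c_{2n}([\StructureSheaf{\Delta}])=(2n-1)![\Delta]$ (the top self-intersection along the diagonal produces the factor $(2n-1)!$), so Whitney multiplicativity yields $c_{2n}(E)=(1-(2n-1)!)[\Delta]$ after tracking the sign from the shift.

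For Part (2), I would apply the diagonal trick: by the K\"unneth formula, $[\Delta]=\sum_i a_i\boxtimes b_i$ with $a_i,b_i\in H^*(\M,\Integers)$, so for any $\alpha\in H^*(\M,\RationalNumbers)$,
\[
\alpha \ = \ \pi_{1_*}\bigl(\pi_2^*\alpha\cdot[\Delta]\bigr) \ = \ \sum_i a_i\cdot\textstyle\int_\M \alpha\cdot b_i.
\]
By Part (1), $[\Delta]$ is a polynomial in the K\"unneth components of $c(\F)$, and the GRR computation above expresses those components as polynomials in the classes $c_i(e_x)$. Hence the $a_i$ and $b_i$ all lie in the subring $R\subset H^*(\M,\RationalNumbers)$ generated by $\{c_i(e_x)\}_{i,x}$, forcing $R=H^*(\M,\RationalNumbers)$ as rings.

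The main obstacle is upgrading from rational to integral generation. A priori, GRR introduces denominators from $\sqrt{Td_S}$ and from Newton's identities, so one must argue that no denominators survive in the final expression. The explicit integral coefficient $1-(2n-1)!$ in Part (1), combined with the integrality of $c_1(e_x)$ furnished by the Mukai isomorphism \eqref{eq-Mukai-isomorphism} and the primitivity of $v$, provides the arithmetic input: a careful induction on cohomological degree, clearing denominators one prime at a time using the known integral structure of the K\"unneth decomposition of $[\Delta]$, should eliminate denominators and exhaust all of $H^*(\M,\Integers)$. This integrality step is the most delicate part of the argument.
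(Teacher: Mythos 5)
First, note that the paper does not prove this theorem at all: both parts are quoted from the author's earlier papers (\cite{markman-diagonal}, Theorem 1, and \cite{markman-integral-generators}, Theorem 1), so there is no in-text proof to compare against. Judged on its own terms, your proposal has a genuine gap in Part (1). The assertion that the GRR/K\"{u}nneth expansion of $c(\F)$ ``collapses'' in degrees $2n-1$ and $2n$ to $0$ and a multiple of $[\Delta]$ is not something you can verify as a formal combinatorial identity: to recognize $[\Delta]$ inside a polynomial in the classes $\pi_1^*c_i(e_x)\cdot\pi_2^*c_j(e_y)$ you would already need the K\"{u}nneth decomposition of $[\Delta]$ in terms of the $c_i(e_x)$ --- which is precisely what Part (2) deduces \emph{from} Part (1), so the argument is circular. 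The missing idea is geometric localization: away from $\Delta$ the object $\F$ is represented by the locally free sheaf $E=\SheafExt^1_{\pi_{13}}(\pi_{12}^*\E,\pi_{23}^*\E)$ of rank $(v,v)=2n-2$, so $c_i(\F)$ restricts to zero on $[\M\times\M]\setminus\Delta$ for $i>2n-2$; hence $c_{2n-1}(\F)$ and $c_{2n}(\F)$ are supported on the diagonal, which has real codimension $4n$. This kills $c_{2n-1}(\F)\in H^{4n-2}$ outright and forces $c_{2n}(\F)=\lambda[\Delta]$ for an integer $\lambda$, after which a normalization (your fiber restriction $\iota_E^![\F]=e_v$ is a reasonable device) pins down $\lambda=1$. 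Without the support argument your sketch does not get off the ground.

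Two smaller points. Your intermediate claim $c_{2n}(\StructureSheaf{\Delta})=(2n-1)!\,[\Delta]$ has the wrong sign: for a smooth subvariety $Z$ of codimension $c$ one has $c_c(\StructureSheaf{Z})=(-1)^{c-1}(c-1)!\,[Z]$, so here $c_{2n}(\StructureSheaf{\Delta})=-(2n-1)!\,[\Delta]$; combined with $[E]=[\F]+[\StructureSheaf{\Delta}]$ and Whitney this does give $(1-(2n-1)!)[\Delta]$, but only with the correct sign, which your ``tracking the sign from the shift'' leaves unresolved (also, $\Delta$ admits no global Koszul resolution in general, though the local computation suffices). For Part (2), the diagonal trick $\alpha=\pi_{1*}(\pi_2^*\alpha\cdot[\Delta])$ is exactly the strategy of the cited papers and is fine over $\RationalNumbers$; the integral statement is the real content of \cite{markman-integral-generators} and requires the torsion-freeness of $K_{top}\M$ and a genuinely integral refinement of the GRR bookkeeping --- your ``clearing denominators one prime at a time'' is an honest flag of the difficulty rather than an argument, and should not be presented as a proof.
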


Generators for the cohomology ring, with {\em rational} coefficients, were
found in \cite{lqw1,markman-diagonal}.
The ring structure was determined in \cite{lehn-sorger}.
%***************
% End Hide
%***************
}

%**********
% Hide
%**********
\hide{
\begin{rem}
\begin{enumerate}
\item
The generic such $X$ does not contain any proper closed 
positive-dimensional 
analytic subvariety \cite{verbitsky-trianalytic}.
\item
When $n\leq 3$, then $\kappa_2(X)$ is a scalar multiple of $c_2(X)$. 
We may thus assume that $n\geq 4$. 
However, Theorem \ref{thm-Hodge} is valid also if we include 
$\kappa_i(X)$, for {\em all} $i\geq 2$, 
in case $2\leq n\leq 3$, or 
$n\equiv 0$, or $n\equiv 1$, modulo $4$.
In these cases the pair $\{\kappa_i(X),-\kappa_i(X)\}$ is
well-defined, for all $i\geq 2$, 
since we show in Section \ref{sec-review-of-results-on-Monodromy} 
that $\mbox{span}\{\kappa_i(S^{[3]})\}$ is monodromy invariant, 
for all $i\geq 2$, for such $n$. 
\end{enumerate}
\end{rem}
%**********
% End hide
%**********
}
%***************************************************
% Hide
%***************************************************
\hide{
\subsection{Relation of Theorem
\ref{thm-Hodge} with the Hodge conjecture}
\label{sec-voisin-counterexample}

Let $X$ be a compact K\"{a}hler manifold. 
Denote by $Hodge(X)$  the $\RationalNumbers$-subalgebra of 
$H^*(X,\RationalNumbers)$, generated rational $(p,p)$-classes, 
and let $Chern(X)$ be the $\RationalNumbers$-subalgebra of $Hodge(X)$ 
generated by Chern classes of coherent sheaves on $X$. 
%$Chern(X)$ is in fact additively generated by Chern classes of coherent 
%sheaves on $X$ \cite{voisin-counterexample}. 
%$Chern(X)$ is equal to the subalgebra of $H^*(X,\RationalNumbers)$
%generated by the classes
%\[
%f_*(c_i(E)),
%\]
%where $f:Y\rightarrow X$ is a morphism from another compact K\"{a}hler 
%manifold, $E$ is a holomorphic vector bundle on $Y$, and $i$ is an integer
%\cite{voisin-counterexample}. When $X$ is projective, 
%$Chern(X)$ admits a third characterization; the 
%$\RationalNumbers$-subalgebra generated by classes of subvarieties of $X$. 
The classes $\kappa_i(E)$, of a $\theta$-twisted torsion free sheaf on $X$,  
all belong to $Chern(X)$, by equation 
(\ref{eq-kappa-in-terms-of-multi-tor-sheaves}). 
%This follows from their definition 
%(\ref{eq-kappa-of-a-twisted-sheaf}), in terms of characteristic classes on
%a projective bundle over $X$, and the following general fact. 
%A morphism $f:Y\rightarrow X$, from another compact K\"{a}hler 
%manifold, satisfies 
%\begin{equation}
%\label{eq-push-forward-of-Chern-subalgebra}
%f_*Chern(Y) \ \ \ \subset \ \ \ Chern(X). 
%\end{equation}
%Another proof: $\kappa_i(E)$ belongs to the
%$\RationalNumbers$-subalgebra 
%generated by the classes $\kappa_i(E^{\otimes r})$,
%of the $r$-th tensor power, which is $\theta^r$-twisted.
%But $\theta^r$ is a coboundary, so there exists a 
%$\theta^r$-twisted line bundle $L$.  
%Now $\kappa(E^{\otimes r})=\kappa(F)$,
%where $F$ is the untwisted class of $E^{\otimes r}\otimes L^{-1}$ 
%in $K_0^{hol}X$.

When $X$ is a projective variety, the Hodge Conjecture 
predicts the equality $Chern(X)=Hodge(X)$. 
%(see however the very recent e-print \cite{kim-roush}). 
The extension of the Hodge Conjecture to K\"{a}hler manifolds {\em fails}; 
Voisin proved that there exist four-dimensional complex tori $X$, for which
$Chern(X)\neq Hodge(X)$ \cite{voisin-counterexample}. 

%**********
% End hide
%**********
}

%***************************************************
%
%***************************************************
\subsection{Proof of the monodromy invariance of 
$\kappa_i(X)$ and $\kappa_i(\F)$}
\label{sec-review-of-results-on-Monodromy}
We prove Propositions \ref{prop-kappa-e-v-is-Mon-invariant}
and \ref{prop-kappa-F-is-mon-invariant},  Lemma \ref{lem-c1-of-F}, and Proposition \ref{prop-kappa-E-is-mon-invariant}, 
after reviewing the necessary facts
about the monodromy group of $S^{[n]}$. 

Let $S$ be a $K3$ surface, $v\in K_{top}S$ a primitive class
with $c_1(v)$ of type $(1,1)$, and $H$ a $v$-generic line bundle.
Assume that $\M_H(v)$ is non-empty (in particular, $\rank(v)\geq 0$, 
$(v,v)\geq -2$, and $c_1(v)$ is effective if $\rank(v)= 0$). Then $\M_H(v)$ is a projective irreducible 
holomorphic symplectic manifold of
$K3^{[n]}$-type, with $2n=(v,v)+2$. Assume that $(v,v)\geq 2$. 
Then $H^2(\M_H(v),\Integers)$, endowed with the Beauville-Bogomolov pairing,
is Hodge isometric to $v^\perp\subset K_{top}S$, via Mukai's isometry
(\ref{eq-Mukai-isomorphism}).

We define next the orientation character of $O(K_{top}S)$.
A $4$-dimensional subspace $V$ of $K_{top}S\otimes_\Integers\RealNumbers$
is {\em positive definite}, if the Mukai pairing restricts to $V$ as a
positive-definite pairing. 
The positive cone $\C_+\subset K_{top}S\otimes_\Integers\RealNumbers$,
given by
\[
\C_+ \ \ \ := \ \ \ \{x \ : \ (x,x) > 0\},
\]
is homotopic to the unit $3$-sphere in any $4$-dimensional 
{\em positive definite} subspace \cite[Lemma 4.1]{markman-survey}. 
Hence $H^3(\C_+,\Integers)$ 
is isomorphic to $\Integers$ and is a natural character
\begin{equation}
\label{eq-cov}
cov \ : \ O(K_{top}S) \ \ \ \longrightarrow \ \ \ \{\pm1\}
\end{equation}
of the isometry group. 
Let $O^+(K_{top}S)$ be the kernel of $cov$.

Denote by $O(K_{top}S)_v$ the subgroup of isometries of $K_{top}S$ stabilizing $v$.  
%$O(K_{top}S)_v$ is a finite index subgroup of the 
%isometry group $O(v^\perp)$. 
Let $g$ be any isometry in $O(K_{top}S)_v$. 
It is {\em not} assumed to preserve the Hodge structure.
Denote by 
\[
(g\otimes 1) \ : \ K_{top}S\otimes K_{top}\M(v) \ \ \ \longrightarrow 
K_{top}S\otimes K_{top}\M(v)
\]
the homomorphism acting via the identity on the second factor. 
The K\"{u}nneth Theorem identifies $K_{top}S\otimes K_{top}\M(v)$
with $K_{top}[S\times \M(v)]$ \cite[Corollary 2.7.15]{atiyah-book}.
Assume that a universal sheaf $\E_v$ exists over $S\times \M(v)$
and let $[\E_v]$ be its class in $K_{top}[S\times \M(v)]$. 
Let $D:K_{top}S\rightarrow K_{top}S$ be the involution,
sending a class $x$ to its dual $x^\vee$. 
Set $n:=(v,v)/2+1$. We define a class in the middle cohomology
$H^{4n}(\M(v)\times \M(v),\Integers)$:
\[
\monclass(g) \ \ := \ \ 
\left\{
\begin{array}{ccc}
c_{2n}\left(
-\pi_{13_!}\left\{
\pi_{12}^!\left[
(g\otimes 1)[\E_{v_1}]
\right]^\vee\cup \pi_{23}^![\E_{v_2}]
\right\}
\right)
 & \mbox{if} & cov(g)=1,
\\
c_{2n}\left(
-\pi_{13_!}\left\{
\pi_{12}^!\left[
(Dg\otimes 1)[\E_{v_1}]
\right]\cup \pi_{23}^![\E_{v_2}]
\right\}
\right)
 & \mbox{if} & cov(g)=-1.
\end{array}
\right.
\]
Denote by 
\begin{equation}
\label{eq-monrep-g}
\monrep(g) \ : \ H^*(\M(v),\Integers) \ \ \ 
\longrightarrow \ \ \ H^*(\M(v),\Integers)
\end{equation}
the homomorphism obtained from $\monclass(g)$
using the K\"{u}nneth and Poincare-Duality Theorems. 

\begin{thm}
\label{thm-symmetries-of-moduli-spaces}
\cite[Theorems 1.2 and 1.6]{markman-monodromy-I}
\begin{enumerate}
\item
\label{thm-item-mon-g-is-a-monodromy-operator}
The endomorphism $\monrep(g)$ is an algebra automorphism and
a monodromy operator. 
\item
The assignment 
\begin{equation}
\label{eq-functor-mon}
\monrep \ : \ O(K_{top}S)_v \ \ \ \longrightarrow \ \ \ Mon(\M(v)),
\end{equation}
sending an isometry $g$ to the operator $\monrep(g)$, 
is a group homomorphism. The homomorphism
is injective, if $(v,v)\geq 4$, and its kernel
is generated by the involution 
\begin{equation}
\label{eq-reflection-by-v}
w \ \mapsto \ -w+(w,v)v, 
\end{equation}
if $(v,v)=2.$
%The image $\monrep[O(K_{top}(S)_v]$ is a normal subgroup of finite
%index in the monodromy group $Mon(\M_H(v))$. 
\item
\label{thm-item-mon-g-sends-a-universal-classes-to-such}
There exists a topological complex line bundle $\ell_g$ on
$\M_{H}(v)$ satisfying one of the following equations: 
\[
\begin{array}{ccccc}
(g\otimes \monrep(g))[\E_{v}] & = & 
[\E_{v}]\otimes f_2^*\ell_g,
 & \mbox{if} & cov(g)=1,
\\
((D\circ g)\otimes \monrep(g))[\E_{v}] & = & 
[\E_{v}]^\vee\otimes f_2^*\ell_g,
 & \mbox{if} & cov(g)=-1.
\end{array}
\]
\end{enumerate}
\end{thm}

The action of $\monrep(g)$ on $K_{top}(\M(v))$, in part
\ref{thm-item-mon-g-sends-a-universal-classes-to-such} of the Theorem,
is constructed as follows.
The Chern character homomorphism  
$ch  :  K_{top}\M(v)  \rightarrow H^*(\M(v),\RationalNumbers)$
is injective,
since $K_{top}\M(v)$ is torsion free \cite{markman-integral-generators}.
The homomorphism $ch$ is monodromy equivariant; hence it 
maps $K_{top}\M(v)$ to a 
$\monrep(g)$-invariant subalgebra, for all $g\in O(K_{top}S)_v$, by part 
\ref{thm-item-mon-g-is-a-monodromy-operator} 
of Theorem \ref{thm-symmetries-of-moduli-spaces}.
Denote by $\monrep_g$ the corresponding monodromy automorphism of
$K_{top}\M(v)$. 
Part \ref{thm-item-mon-g-sends-a-universal-classes-to-such}
of the Theorem can be rephrased in terms of the homomorphism
$e:K_{top}S\rightarrow K_{top}\M(v)$, given in (\ref{eq-e-x}):
\begin{equation}
\label{eq-mon-equivariance-of-e}
\monrep_g(e_{g^{-1}(x)}) \ \ \ = \ \ \ 
\left\{
\begin{array}{ccc}
e_x\otimes \ell_g, 
& \mbox{if} & cov(g)=1,
\\
(e_{x})^\vee\otimes \ell_g, & \mbox{if} & cov(g)=-1.
\end{array}
\right.
\end{equation}
Consequently, the line bundle $\ell_g$ is determined by the following 
formula:
\begin{equation}
\label{eq-c-1-ell-g}
c_1(\ell_g) \ \ \ = \ \ \
\frac{\monrep_g(c_1(e_v))- cov(g)\cdot c_1(e_v)}{(v,v)}.
\end{equation}

Let $Mon^2(\M(v))$  
be the image in $O[H^2(\M(v),\Integers)]$ of $Mon(\M(v))$
under the restriction homorphism from $H^*(\M(v),\Integers)$ to 
$H^2(\M(v),\Integers)$. 

\begin{thm} 
\label{thm-monodromy-operator-is-determined-by-its-weight-2-action}
%(\cite{markman-integral-constraints} Theorem 1.5)
%Any monodromy operator, which acts as the identity on $H^2(\M(v),\Integers)$,
%acts as the identity also on
%$H^k(\M(v),\Integers)$, $0\leq k \leq \frac{\dim_\RealNumbers(\M(v))}{4}+2$. 
The restriction homomorphism
$Mon(\M(v))\rightarrow Mon^2(\M(v))$ is an isomorphism.
\end{thm}

\begin{proof}
The statement was proved in \cite[Prop. 1.9]{markman-integral-constraints} conditional on 
the Global Torelli Theorem. The latter was later proved by Verbitsky  \cite{huybrechts-bourbaki,verbitsky-torelli}.
\end{proof}

\begin{thm}
\label{thm-weight-2-monodromy}
%(\cite{markman-integral-constraints}, Theorem 1.2 and Lemma 4.2).
The homomorphism 
$\monrep:O(K_{top}S)_v\rightarrow Mon(\M(v))$ is surjective.
It is an isomorphism, if $(v,v)\geq 4$, and its kernel is
generated by the involution (\ref{eq-reflection-by-v}),
if $(v,v)=2$.
\end{thm}

\begin{proof}
Let 
$
\monrep^2:O(K_{top}S)_v\rightarrow Mon^2(\M(v))
$
be the composition of $\monrep$ with the restriction homomorphism.
If we replace $\monrep$ by $\monrep^2$ in the statement of the theorem we obtain a statement that was proved in 
\cite[Theorem 1.2 and Lemma 4.2]{markman-integral-constraints}. 
The theorem now follows from Theorem \ref{thm-monodromy-operator-is-determined-by-its-weight-2-action}.
\end{proof}

%***************
% Hide
%***************
\hide{
Theorems \ref{thm-weight-2-monodromy} and
\ref{thm-monodromy-operator-is-determined-by-its-weight-2-action}
imply that the homomorphism $\monrep$,
given in (\ref{eq-functor-mon}), is surjective, if $2\leq n\leq 3$, since 
$H^*(S^{[2]},\RationalNumbers)$ is generated by $H^2$, and
$H^*(S^{[3]},\RationalNumbers)$  is generated by $H^2$ and $H^4$
\cite[Lemma 10]{markman-diagonal}.
For $n\geq 4$, we have: 

\begin{thm}
\cite[Corollary 1.6]{markman-integral-constraints}
assuming the results of \cite{lehn-sorger-generators}).
\begin{enumerate}
\item
The image of the homomorphism $\monrep:O(K_{top}S)_v\rightarrow Mon(\M(v))$,
given in (\ref{eq-functor-mon}), 
is a subgroup of $Mon(\M(v))$ of index $\leq 2$. 
\item
The homomorphism $\monrep$ is surjective, 
if $n:=\dim_{\ComplexNumbers}(\M(v))/2$ 
is congruent to $0$ or $1$ modulo $4$.
\end{enumerate}
\end{thm}
%***************
% End hide
%***************
}

{\em Proof of Proposition \ref{prop-kappa-e-v-is-Mon-invariant}:}
If $n\geq 3$, the isomorphism $\monrep:O(K_{top}S)_v\rightarrow Mon(\M(v))$ of Theorem \ref{thm-weight-2-monodromy}
conjugates the character $cov$, given in (\ref{eq-cov}),  to the character $\rho$ given in (\ref{eq-character-rho}), by \cite[Lemma 4.2]{markman-integral-constraints}. When $n=2$, $\rho$ is the trivial character, since the discriminant group has order $2$ and so multimplication by $-1$ is the identity. Furthermore, in that case the homomorphism $\monrep$ maps the kernel of the character $cov$ isomorphically onto $Mon(\M(v))$.

It remains to prove that the pair 
$\{\kappa(e_v),\kappa\left((e_v)^\vee\right)\}$ 
is invariant under the image of
$O(K_{top}S)_v$ in $Mon(\M(v))$ via $\monrep$  and is permuted according to the character $cov$, as $\monrep$ is surjective, by Theorem \ref{thm-weight-2-monodromy}.
The $O(K_{top}S)_v$-invariance of the pair $\{\kappa(e_v),\kappa\left((e_v)^\vee\right)\}$  follows from the reformulation (\ref{eq-mon-equivariance-of-e})
of part \ref{thm-item-mon-g-sends-a-universal-classes-to-such} of
Theorem \ref{thm-symmetries-of-moduli-spaces}, and the fact that $g(v)=v$.
\EndProof

%\noindent
\medskip
{\em Proof of Proposition \ref{prop-kappa-F-is-mon-invariant}:}
It suffices to show that the pair 
$\{\kappa(\F),\kappa\left(\F^\vee\right)\}$ 
is invariant under the image of
$O(K_{top}S)_v$ in $Mon(\M(v))$ via $\monrep$ and is permuted according to the character $cov$, by the surjectivity of $\monrep$ in Theorem \ref{thm-weight-2-monodromy} and the relation between $cov$ and $\rho$ explained in the proof of 
Proposition \ref{prop-kappa-e-v-is-Mon-invariant}.
Denote by 
\[
D_\M \ : \ K_{top}\M(v) \ \ \ \rightarrow \ \ \ K_{top}\M(v)
\]
the duality involution $y\mapsto y^\vee$ and by $D_S$ 
the duality involution of $K_{top}S$. 
Note the equality 
\begin{equation}
\label{eq-duality-operator-factors-as-tensor-product}
[\E_v]^\vee=(D_S\otimes D_\M)[\E_v].
\end{equation} 
Caution: while $D_\M$ commutes with $Mon(\M(v))$, 
$D_S$ does {\em not} commute with $O(K_{top}S)_v$. 
The class $[\F]$ 
is the image in $K_{top}\M(v)\otimes K_{top}\M(v)$ of the class 
$\left\{(1\otimes D_\M)[\E_v]\right\}\otimes [\E_v]$ via the contraction 
with the Mukai pairing:
\begin{eqnarray*}
\nonumber
\psi : 
[K_{top}S \otimes K_{top}\M(v)]\otimes [K_{top}S \otimes K_{top}\M(v)]
& \rightarrow & 
K_{top}\M(v)\otimes K_{top}\M(v)
\\
%\label{eq-contraction-with-Mukai-pairing}
x_1\otimes y_1 \otimes x_2 \otimes y_2 & \mapsto & 
-\chi(x_1^\vee\otimes x_2)y_1\otimes y_2
\end{eqnarray*} 
The equality 
$
\psi=\psi\circ (g\otimes 1\otimes g\otimes1)
$ holds, for any isometry $g$ of the Mukai lattice.
Hence, the following equality holds:
\[
\psi\left\{(g\otimes \monrep_g\circ D_\M)[\E_v]\otimes 
(g\otimes \monrep_g)[\E_v]\right\} \ =
\psi\left\{(1\otimes \monrep_g\circ D_\M)[\E_v]\otimes 
(1\otimes \monrep_g)[\E_v]\right\}.
\]
The right hand side is $(\monrep_g\otimes\monrep_g)[\F]$,
while the left hand side is equal to 
\[
\left\{
\begin{array}{ccc}
\psi\left\{
(1\otimes D_\M)([\E_v]\otimes f_2^!\ell_g)\otimes [\E_v]\otimes f_2^!\ell_g
\right\}, & \mbox{if} & cov(g)=1,
\\
\psi\left\{
([\E_v]\otimes f_2^!\ell_g^\vee)\otimes 
(1\otimes D_\M)([\E_v]\otimes f_2^!\ell_g^\vee)
\right\}, & \mbox{if} & cov(g)=-1,
\end{array}
\right.
\]
by part \ref{thm-item-mon-g-sends-a-universal-classes-to-such} of
Theorem \ref{thm-symmetries-of-moduli-spaces} (use also Equation (\ref{eq-duality-operator-factors-as-tensor-product}) when $cov(g)=-1$). 
The latter contractions simplify to
\begin{equation}
\label{eq-mon-equivariance-of-F}
(\monrep_g\otimes\monrep_g)[\F]  = 
\left\{
\begin{array}{ccc}
[\F]\otimes \pi_1^!(\ell_g^\vee)\otimes \pi_2^!\ell_g, 
& \mbox{if} & cov(g)=1,
\\
\left([\F]\right)^\vee\otimes \pi_1^!(\ell_g^\vee)\otimes \pi_2^!\ell_g, 
& \mbox{if} & cov(g)=-1,
\end{array}
\right.
\end{equation}
by the projection formula. We conclude that the pair 
$\{\kappa(\F),\kappa(\F^\vee)\}$ is $\monrep(g)$-invariant.
\EndProof

\medskip
{\em Proof of Lemma \ref{lem-c1-of-F}:}
For every $g\in O^+K_{top}S)_v$, we have:
\begin{eqnarray*}
& & 
(\monrep_g\otimes\monrep_g)(c_1(\F))  
\stackrel{(\ref{eq-mon-equivariance-of-F})}{=} 
c_1(\F) - (v,v)[\pi_1^*c_1(\ell_g) - \pi_2^*c_1(\ell_g)]
\stackrel{(\ref{eq-c-1-ell-g})}{=} 
\\
& &
c_1(\F) - \pi_1^*[\monrep_g(c_1(e_v))-c_1(e_v)] + 
\pi_2^*[\monrep_g(c_1(e_v))-c_1(e_v)].
\end{eqnarray*}
Consequently, $c_1(\F)+\pi_1^*(c_1(e_v))-\pi_2^*c_1(e_v)$
is $O^+(K_{top}S)_v$-invariant. 
The $O^+(K_{top}S)_v$-invariant subspace of $H^2(\M(v)\times \M(v))$
vanishes, since the latter is the direct sum of
two copies of the non-trivial irreducible $O^+(K_{top}S)_v$-module
$H^2(\M(v))$.
\EndProof

%\noindent
\medskip
{\em Proof of Proposition \ref{prop-kappa-E-is-mon-invariant}:}
Let $\delta:\M\rightarrow \M\times \M$ be the diagonal embedding. 
The sheaf $E$ is the sheaf cohomology in degree zero of $\F$ and the sheaf in degree $1$ is 
$\SheafExt^2_{\pi_{13}}(\pi_{12}^*\E,\pi_{23}^*\E)$, which is isomorphic to $\delta_*\StructureSheaf{\M}$.
We have
\[
\kappa(\F):=[ch(E)-ch(\delta_*\StructureSheaf{\M})]\exp\left(\frac{-c_1(\F)}{2n-2}\right)=
\kappa(E)-\delta_*\left[\delta^*\exp\left(\frac{-c_1(\F)}{2n-2}\right)td_\delta\right]=\kappa(E)-\delta_*(td_\delta),
\]
where the second euality follows from Grothendieck-Riemann-Roch and the projection formula and the third from the vanishing of
$\delta^*c_1(\F)$ proven in Lemma \ref{lem-c1-of-F}. 
The difference $\kappa(\F)-\kappa(E)$ is thus
$-\delta_*(td_\delta)$, 
and 
%Now, $ch(\delta_*\StructureSheaf{\M})$ 
is invariant under the diagonal monodromy action. We claim that the graded summand of $\delta_*(td_\delta)$ in $H^{2i}(\M\times\M)$
vanish for odd $i$.
%$ch_i(\delta_*\StructureSheaf{\M})$ vanishes, for odd $i$. 
The claim would follow once we show that the  graded summands of the Todd class of $\delta$ in degree $2i$ vanish, for odd $i$, since the degree of $\delta_*$ is divisible by $4$.
% by Grothendieck-Riemann-Roch.
Indeed, the odd Chern classes of the normal bundle of the diagonal vanish, the normal bundle being the tangent bundle hence self dual, and so the odd graded summands of the Todd class of $\delta$ vanish.
Consequently, the monodromy invariance properties of $\kappa_i(E)$ follow from those of $\kappa_i(\F)$. 
\EndProof

%***************************************************
%
%***************************************************
\section{A reflexive sheaf  over $\M(v)\times \M(v)$}
\label{sec-a-reflexive-sheaf-E-and-its-resolution}
%\label{sec-classes-over-X-times-X}

Keep the notation of Section \ref{sec-Mon-invariant-clases-over-M-times-M}.
Let $\F$ be the object over $\M\times \M$ constructed in Equation (\ref{eq-object-F}). We consider in this section the case, where the universal sheaf $\E$ over $S\times \M$ is twisted by the pullback a Brauer class $\theta$ on $\M$, 
so that the object $\F$ is $\pi_1^*(\theta^{-1})\pi_2^*\theta$-twisted.
%In Section \ref{sec-Mon-invariant-clases-over-M-times-M}
We show that the first sheaf cohomology of $\F$ is a reflexive $\pi_1^*(\theta^{-1})\pi_2^*\theta$-twisted sheaf $E$ over $\M\times \M$,
singular along the diagonal.
%which is, roughly, the canonical representative of a relative version of the class $e_v$. 
We then resolve the singularities of $E$ via a 
locally free twisted sheaf $V$, over the blow-up of the
diagonal in $\M\times \M$. 
%in Sections \ref{sec-Mon-invariant-clases-over-M-times-M} and \ref{sec-proof-of-prop-V}. 

Let $\beta:B\rightarrow [\M\times \M]$ be the blow-up of $\M\times \M$ 
along the diagonal $\Delta$, $D:=\PP(T\Delta)$ the exceptional divisor, 
$\iota:D\hookrightarrow B$ the closed immersion, 
$\delta:\Delta\hookrightarrow \M\times \M$ the diagonal embedding, 
$p:D\rightarrow \Delta$ the bundle map, 
$\ell$  the tautological line subbundle of $p^*T\Delta$, and 
$\ell^\perp$ the symplectic-orthogonal subbundle of $p^*T\Delta$.
Let $\tau$ be the involution of $\M\times \M$, 
interchanging the two factors, 
and $\tilde{\tau}$ the induced involution of $B$. 
Note that $\tau^*(\F)=\F^\vee$, by Grothendieck-Serre's Duality, 
and the triviality of the relative canonical line bundle $\omega_{\pi_{13}}$. 
Note that the object $L\delta^*\F$ and the sheaf $\delta^*E$ are untwisted as the  the Brauer class 
$\pi_1^*(\theta^{-1})\pi_2^*\theta$ restricts to the diagonal as the trivial class.
Set $E^*:=\SheafHom(E,\StructureSheaf{\M\times \M})$ and
$E^\vee:=R\SheafHom(E,\StructureSheaf{\M\times \M})$.

\begin{prop}
\label{prop-V}
\begin{enumerate}
\item
\label{lemma-item-SheafExt-1-is-reflexive}
The twisted sheaf $E:=\SheafExt^1_{\pi_{13}}(\pi_{12}^*\E,\pi_{23}^*\E)$ is reflexive of rank $(v,v)$. Furthermore, 
$\kappa(E^*)=\kappa(E^\vee)$.
\item
\label{lemma-item-tors}
$E$ restricts to $[\M\times \M]\setminus \Delta$ as a locally free sheaf.
We have the following isomorphism:
\begin{equation}
\label{eq-restriction-of-E-to-diagonal}
\delta^*E  \ \ \ \cong \ \ \  \left(\Wedge{2}T^*\M\right)/\StructureSheaf{\M}\cdot\sigma, 
\end{equation}
where $\sigma$ is the symplectic form. For $i>0$, we have
\begin{equation}
\label{eq-Tor-i-is-wedge-i+2}
\Tor_i^{\M\times \M}(E,\delta_*\StructureSheaf{\M}) \ \ \ \cong \ \ \ \delta_*\Wedge{i+2}T^*\M.
\end{equation}
\item
\label{lemma-item-V-is-locally-free}
The quotient 
\begin{equation}
\label{eq-vector-bundle-over-B}
V \ \ \ := \ \ \ 
\left[\beta^*E\right](D)/tor,
\end{equation}
by the torsion subsheaf, 
is a locally free sheaf of rank $(v,v)$ over $B$.
\item
\label{lemma-item-direct-images-of-V}
$\beta_*(V)\cong E$
and 
$R^i\beta_{*}(V)=0$, for $i>0$. 
\item
\label{lemma-part-pull-back-of-V-is-its-dual}
$\tilde{\tau}^*V$ is isomorphic to $V^*$. 
\item
\label{prop-item-restriction-of-V-is-a-symplectic-vb}
The restriction $\restricted{V}{D}$ is naturally 
identified with the sub-quotient 
\begin{equation}
\label{eq-tautological-sub-quotient}
[\ell^\perp/\ell].
\end{equation}
In particular, $\restricted{V}{D}$ 
is a symplectic vector bundle.
\end{enumerate}
\end{prop}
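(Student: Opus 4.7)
The plan is to handle the six parts together, analyzing $E$ first away from $\Delta$ and then in a formal neighborhood of $\Delta$, before passing to the blow-up.

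Away from $\Delta$, parts (\ref{lemma-item-SheafExt-1-is-reflexive}) and (\ref{lemma-item-tors}) will follow from cohomology-and-base-change for $\pi_{13}$: if $E'\not\cong E''$ are distinct stable sheaves in $\M$, then $\Hom(E',E'')=\Ext^2(E',E'')=0$ by stability together with Serre duality and triviality of $\omega_S$, so the Mukai relation $\chi=-(v,v)$ forces $\dim\Ext^1(E',E'')=(v,v)$ and $R^i\pi_{13_*}$ of $\pi_{12}^*\E^\vee\otimes \pi_{23}^*\E$ vanishes outside degree $1$. This gives the rank statement and local freeness of $E$ on the complement of the diagonal, and the simplicity of stable sheaves on the diagonal produces $\SheafExt^2_{\pi_{13}}\cong \StructureSheaf{\Delta}$ as asserted in the triangle preceding the proposition.

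For the formulas in part (\ref{lemma-item-tors}), I will work on the formal completion at a diagonal point $(E',E')$ and use the Koszul complex $\Wedge{\bullet}T^*\M\cong \Wedge{\bullet}N^*_{\Delta/\M\times\M}$ resolving $\delta_*\StructureSheaf{\M}$ to compute $E\otimes^L\delta_*\StructureSheaf{\M}$. The triangle $E\to \F\to \StructureSheaf{\Delta}[-1]\to E[1]$ of the excerpt reduces the problem, up to a shift by one, to the derived self-intersection $\delta^*\delta_*\StructureSheaf{\M}$ whose cohomology is $\delta_*\Wedge{i}T^*\M$ in degree $i$. The image of $1\in\StructureSheaf{\Delta}$ under the boundary map $\StructureSheaf{\Delta}[-1]\to E[1]$ should be identified, via the Atiyah/trace class of $\E$ and the Mukai pairing, with the symplectic form $\sigma\in H^0(\M,\Wedge{2}T^*\M)$; this yields $\delta^*E\cong \Wedge{2}T^*\M/\StructureSheaf{\M}\sigma$ together with the shifted identification for higher Tor. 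Reflexivity in part (\ref{lemma-item-SheafExt-1-is-reflexive}) will then follow from the $S_2$ criterion: $E$ is locally free off $\Delta$, and the Tor computation combined with Cohen--Macaulayness of $\delta_*\StructureSheaf{\M}$ (of codimension $2n\geq 4$) bounds $\operatorname{depth}E_x\geq 2$ at each $x\in\Delta$.

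For parts (\ref{lemma-item-V-is-locally-free}), (\ref{lemma-item-direct-images-of-V}), (\ref{lemma-part-pull-back-of-V-is-its-dual}) I will pull the triangle back to $B$ and twist by $\StructureSheaf{B}(D)$. A local computation on $B$ shows that the torsion removed in passing from $\beta^*E(D)$ to $V$ is precisely what is needed to make the fiber dimension constantly $(v,v)$, proving local freeness. The projection formula together with $\beta_*\StructureSheaf{B}(D)\cong\StructureSheaf{\M\times\M}$ and the vanishing $R^i\beta_*\StructureSheaf{B}(D)=0$ for $i>0$ (which follows from $\StructureSheaf{D}(D)\cong \StructureSheaf{\PP(T\Delta)}(-1)$ and relative Serre vanishing on the projective bundle $\beta|_D$) yields $\beta_*V\cong E$ and $R^i\beta_*V=0$, proving (\ref{lemma-item-direct-images-of-V}). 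Part (\ref{lemma-part-pull-back-of-V-is-its-dual}) will follow from Grothendieck--Serre duality $\tau^*\F\cong \F^\vee$ (using triviality of $\omega_{\pi_{13}}$), combined with $\tilde{\tau}$-invariance of $D$ and compatibility of the twist by $\StructureSheaf{B}(D)$ with duality on $B$.

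The hardest step will be the geometric identification in (\ref{prop-item-restriction-of-V-is-a-symplectic-vb}). A point of $D$ is a pair $([E'],[\xi])$ with nonzero $\xi\in T_{E'}\M=\Ext^1(E',E')$, and the fiber of $V|_D$ there should parametrize, modulo the tautological direction, first-order obstructions compatible with $\xi$. I will construct a natural map $p^*T\Delta\to V|_D$ (suitably twisted) using the Atiyah/Kodaira--Spencer class of $\E$, identify its kernel with $\ell$ by locating the torsion killed in the definition of $V$, and cut the image down to $\ell^\perp$ via the coboundary from (\ref{lemma-item-tors}), which globalizes the Mukai symplectic pairing on $\Ext^1$. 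The main obstacle will be to promote this fiberwise picture to a canonical isomorphism of vector bundles on $D$; I expect this to follow from the naturality of the Atiyah class and a universal extension construction over $S\times D$, using that $\StructureSheaf{B}(D)|_D$ is the tautological line bundle on $\PP(T\Delta)$ so that the required twist accounts exactly for the asymmetry between $\ell^\perp$ and $\ell^\perp/\ell$.
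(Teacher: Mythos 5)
Your off-diagonal analysis, the Serre-duality identification of $\SheafExt^2_{\pi_{13}}$ with $\StructureSheaf{\Delta}$, and the duality argument for part (\ref{lemma-part-pull-back-of-V-is-its-dual}) are sound, and your depth/$S_2$ route to reflexivity is a legitimate alternative to the paper's (which instead exhibits $E^*$ as the middle cohomology of the dual of a locally free complex representing $\F$, hence as $\tau^*E$, and concludes from $\tau^2=\mathrm{id}$). The genuine gap is that the core of the proposition --- parts (\ref{lemma-item-V-is-locally-free}), (\ref{lemma-item-direct-images-of-V}) and (\ref{prop-item-restriction-of-V-is-a-symplectic-vb}) --- is asserted rather than proven. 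The torsion-free quotient of the pullback of a reflexive sheaf to a blow-up is not locally free in general, and ``a local computation shows the fiber dimension is constant'' is not an argument. The paper's mechanism, which your plan lacks, is to represent $\F$ by a three-term complex of vector bundles $V_{-1}\RightArrowOf{g}V_0\RightArrowOf{f}V_1$ (Lange), perform elementary modifications along $D$ to get $U_{-1}\rightarrow\beta^*V_0\rightarrow U_1$ with $\tilde{g}$ fiberwise injective and $\tilde{f}$ surjective, and define $V$ as the manifestly locally free subquotient $\ker(\tilde{f})/\Im(\tilde{g})$; the identification of $\restricted{V}{D}$ then follows from a determinantal-stratum computation showing $dg:N_\Delta\rightarrow T\Delta$ is an isomorphism (so $\restricted{V}{D}$ embeds in $[p^*T\Delta]/\ell$), combined with the self-duality of the construction to cut the image down to $\ell^\perp/\ell$. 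Your Atiyah-class/universal-extension sketch for part (\ref{prop-item-restriction-of-V-is-a-symplectic-vb}) is a plausible program but, as you concede, not an argument. Your projection-formula derivation of part (\ref{lemma-item-direct-images-of-V}) also fails as stated, since $V$ is not $\beta^*E\otimes\StructureSheaf{B}(D)$: the pullback is underived and a torsion quotient has been taken, so $\beta_*V\cong E$ needs the explicit complexes $U_{-1},U_1$ (or some substitute).

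There is a second, related gap in part (\ref{lemma-item-tors}). The isomorphism (\ref{eq-restriction-of-E-to-diagonal}) requires the connecting map $\StructureSheaf{\Delta}\rightarrow\Tor_2^{\M\times\M}(\StructureSheaf{\Delta},\StructureSheaf{\Delta})\cong\Wedge{2}T^*\M$ in the restricted triangle to be non-zero; you write that its image ``should be identified'' with the symplectic form via the Atiyah class, which is exactly the statement needing proof. The paper never computes this map directly: it argues by contradiction, showing $H^0(E\otimes\StructureSheaf{\Delta})=0$ by lifting to the blow-up (the torsion of $\beta^*E$ has no direct image, and $H^0(\ell^\perp/\ell)=0$), which forces the connecting map to be injective. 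Note that this makes part (\ref{lemma-item-tors}) logically dependent on part (\ref{prop-item-restriction-of-V-is-a-symplectic-vb}); since your reflexivity argument consumes the Tor computation, it inherits the same gap.
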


%The Proposition is proven in Section \ref{sec-proof-of-prop-V}.
%Problem \ref{problem-Hodge} is reduced in Section 
%\ref{sec-first-problem-reduction} below 
%to the following problem.

%\begin{problem}
%\label{problem-lifting-deformations-to-deformations-of-pairs}
%Let $X$ be an irreducible holomorphic symplectic manifold deformation
%equivalent to $S^{[n]}$, $n\geq 2$. 
%Show that the pair $(\M,\{\PP{V}, \PP{V^*}\})$, consisting of the moduli 
%space $\M$ and the pair of dual projective bundles
%associated to the vector bundle $V$ given in (\ref{eq-vector-bundle-over-B}),
%can be deformed to a pair 
%$(X,\{\PP_B,\PP^*_B\})$, where $\PP_B$ is a projective bundle  
%over the blow-up $\beta:B\rightarrow [X\times X]$
%of the diagonal $\Delta\subset [X\times X]$.
%Furthermore, $\PP_B$ can be constructed so that its
%restriction to the exceptional divisor $\PP(TX)$
%remains the projectivization of the tautological sub-quotient
%vector bundle (\ref{eq-tautological-sub-quotient}).
%\end{problem}

%*******************************************************************
%
%*******************************************************************
%\subsection{Proof of Proposition \ref{prop-V}}
%\label{sec-proof-of-prop-V}
The following lemmas will be used in the proof of Proposition \ref{prop-V}.

\begin{lem}
\label{lem-facts-about-tautological-subquotient}
The following natural homomorphism is surjective: 
\begin{equation}
\label{eq-a-surjective-evaluation-homomorphism}
p^*p_*\left([\ell^\perp/\ell]\otimes \ell^*\right)\rightarrow 
[\ell^\perp/\ell]\otimes \ell^*.
\end{equation}
\end{lem}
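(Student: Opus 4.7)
The plan is to exhibit $[\ell^\perp/\ell]\otimes \ell^*$ as the kernel of a natural surjection of vector bundles on $D$ and then to establish the desired surjectivity by a fiberwise global-generation argument combined with cohomology and base change.

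As a first step, I construct a global short exact sequence on $D$. Contraction with the symplectic form $\sigma$ along $\ell\subset p^*T\Delta$ defines a surjection $p^*T\Delta\to \ell^*$ whose kernel is $\ell^\perp$; the inclusion $\ell\subset \ell^\perp$ records the antisymmetry of $\sigma$. Quotienting by $\ell$, tensoring with $\ell^*$, and using the relative Euler sequence to identify $(p^*T\Delta/\ell)\otimes \ell^*$ with the relative tangent bundle $T_p$, I obtain
\begin{equation*}
0\longrightarrow [\ell^\perp/\ell]\otimes \ell^*\longrightarrow T_p\longrightarrow \StructureSheaf{D/\Delta}(2)\longrightarrow 0,
\end{equation*}
where $\StructureSheaf{D/\Delta}(1):=\ell^*$.

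Next, I examine the maps induced on the global sections of each fiber of $p$. Fix $x\in \Delta$, set $V:=T_x\Delta$ with its symplectic form $\sigma_x$, so that $p^{-1}(x)\cong \PP V$. Under the standard identifications $H^0(\PP V,T_{\PP V})=\LieAlg{sl}(V)$ and $H^0(\PP V,\StructureSheaf{\PP V}(2))=\Sym^2 V^*$, the induced homomorphism on $H^0$ sends $A\in\LieAlg{sl}(V)$ to the quadratic form $v\mapsto \sigma_x(v,Av)$. Restricted to $\LieAlg{sp}(V,\sigma_x)\subset \LieAlg{sl}(V)$ it becomes a nonzero $Sp(V,\sigma_x)$-equivariant map between two isomorphic irreducible $Sp(V,\sigma_x)$-representations (both of highest weight $2\varepsilon_1$ and dimension $n(2n+1)$), hence an isomorphism by Schur's lemma. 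The induced map on $H^0$ is therefore surjective, and the vanishings $H^1(\PP V,T_{\PP V})=H^1(\PP V,\StructureSheaf{\PP V}(2))=0$ then give $H^1(\PP V,[\ell^\perp/\ell]\otimes \ell^*|_{\PP V})=0$ together with a constant-dimensional $H^0$ along the fibers of $p$. Grauert's theorem ensures that $p_*([\ell^\perp/\ell]\otimes \ell^*)$ commutes with base change.

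Finally, I verify global generation of $[\ell^\perp/\ell]\otimes \ell^*|_{\PP V}$ on each fiber. Since this sheaf is $Sp(V,\sigma_x)$-equivariant and $Sp(V,\sigma_x)$ acts transitively on $\PP V$, it suffices to check surjectivity of evaluation at a single point $[v_0]\in\PP V$. From the previous step, $H^0$ of the sheaf on $\PP V$ is identified with the subspace $W_0\subset\LieAlg{sl}(V)$ of trace-free endomorphisms $A$ satisfying $\sigma_x(Av,w)=\sigma_x(v,Aw)$ for all $v,w\in V$. A short computation in a symplectic basis extending $v_0$ shows that the map $A\mapsto A v_0\bmod \langle v_0\rangle$ carries $W_0$ onto the fiber $v_0^\perp/\langle v_0\rangle$. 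Nakayama's lemma then yields the surjectivity of $p^*p_*([\ell^\perp/\ell]\otimes \ell^*)\to [\ell^\perp/\ell]\otimes \ell^*$. The main (though mild) obstacles --- the representation-theoretic identification of the induced map on $H^0$ and the explicit surjectivity at $[v_0]$ --- are resolved respectively by Schur's lemma and a symplectic-basis computation.
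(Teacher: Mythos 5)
Your proof is correct, and each step checks out: the exact sequence $0\to[\ell^\perp/\ell]\otimes\ell^*\to T_p\to\ell^{-2}\to 0$ is the same one the paper's proof starts from, the identification of the induced map on fiberwise sections with $A\mapsto\sigma(\cdot,A\cdot)$ is right, and the symplectic-basis computation showing that the $\sigma$-self-adjoint traceless endomorphisms map onto $v_0^\perp/\langle v_0\rangle$ does establish generation at a point. After that first step, however, your route diverges from the paper's. The paper pushes the sequence forward to identify $p_*\left([\ell^\perp/\ell]\otimes\ell^*\right)$ canonically with $[\Wedge{2}T^*\Delta]/\StructureSheaf{}$, and then observes that the homomorphism (\ref{eq-a-surjective-evaluation-homomorphism}) is dual to the wedge-product map $[\ell^\perp/\ell]\otimes\ell\to p^*\left([\Wedge{2}T\Delta]/\StructureSheaf{}\right)$, whose fiberwise injectivity is immediate; surjectivity follows at once. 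You instead argue fiber by fiber, using $H^0(\PP V,T_{\PP V})\cong\LieAlg{sl}(V)$, Schur's lemma for $\LieAlg{sp}(V)\cong\Sym^2V^*$, Grauert and cohomology-and-base-change to globalize, and $Sp(V,\sigma_x)$-transitivity plus Nakayama to reduce generation to a single point. The two arguments rest on the same piece of linear algebra --- your surjectivity of the evaluation of primitive $2$-forms at $v_0$ is precisely the dual of the paper's injectivity of $v_0\wedge(-)$ on $\ell^\perp/\ell$ --- so what your version buys is a more explicit, self-contained verification (and, as a by-product, local freeness of the direct image together with the base-change property), at the cost of invoking representation theory and Grauert where the paper's dualization trick makes both unnecessary.
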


\begin{proof}
We identify each of the vector bundles 
$T\Delta$ and $[\ell^\perp/\ell]$ with its dual, via the symplectic 
forms. We have the short exact sequence
\[
0\rightarrow [\ell^\perp/\ell]\otimes \ell^* \rightarrow 
[p^*T^*\Delta/\ell]\otimes \ell^*\rightarrow \ell^{-2}\rightarrow 0.
\]
$p_*\left([\ell^\perp/\ell]\otimes \ell^*\right)\cong 
\ker\left[p_*(\{p^*T^*\Delta\otimes \ell^*\}/\StructureSheaf{})
\rightarrow p_*(\ell^{-2})\right]$, which is naturally isomorphic to
the quotient 
$[\Wedge{2}{T^*\Delta}]/\StructureSheaf{}$, by the line-sub-bundle 
spanned by the symplectic form. 
The homomorphism (\ref{eq-a-surjective-evaluation-homomorphism})
is dual to the wedge product \ \ \ 
$
[\ell^\perp/\ell]\otimes \ell \ \ \ \rightarrow \ \ \ 
p^*([\Wedge{2}{T\Delta}]/\StructureSheaf{}),
%\cong p^*([\Wedge{2}{T^*\Delta}]/\StructureSheaf{}).
$
which is clearly injective.
\end{proof}

\begin{lem}
\label{lemma-hom-from-ell-perp-to-ell-perp-dual}
$H^0((\ell^\perp\otimes\ell^\perp)^*)$ is one dimensional.
\end{lem}

\begin{proof}
It suffices to prove that $p_*((\ell^\perp\otimes\ell^\perp)^*)$ is isomorphic to $T\Delta\otimes T\Delta$, as 
$H^0(T\Delta\otimes T\Delta)\cong\End(T\Delta)$ is one dimensional, by the stability of $T\Delta$. 
We have the short exact sequences
\[
0\rightarrow (\ell^\perp)^*\otimes \ell\rightarrow (\ell^\perp)^*\otimes p^*T\Delta \rightarrow (\ell^\perp\otimes\ell^\perp)^*
\rightarrow 0,
\]
and
\[
0\rightarrow \ell^2\rightarrow p^*T\Delta\otimes \ell\rightarrow (\ell^\perp)^*\otimes \ell\rightarrow 0.
\]
$R^ip_*\ell^j$ vanishes for all $i$ and all $1\leq j\leq 2n-1$, by Kodaira's Vanishing Theorem.
Hence, $R^ip_*[(\ell^\perp)^*\otimes \ell]$ vanishes for all $i$ and
$p_*((\ell^\perp\otimes\ell^\perp)^*)$ is isomorphic to $p_*[(\ell^\perp)^*\otimes p^*T\Delta]$, and hence to
$p_*((\ell^\perp)^*)\otimes T\Delta$. The latter is isomorphic to $T\Delta\otimes T\Delta$ by applying the functor $p_*$ to 
the exact sequence $0\rightarrow \ell\rightarrow p^*T\Delta\rightarrow(\ell^\perp)^*\rightarrow 0$
and the vanishing of $R^ip_*\ell$, for all $i$.
\end{proof}

The proof of Proposition \ref{prop-V}
requires a review of the following construction carried out in 
\cite{markman-diagonal}. 
There exists a (non-canonical) complex
\begin{equation}
\label{eq-locally-free-complex-representing-F}
V_{-1}\LongRightArrowOf{g} V_0 \LongRightArrowOf{f} V_1,
\end{equation}
of locally free $\pi_1^*(\theta^{-1})\pi_2^*\theta$-twisted sheaves over $\M\times \M$, representing
the object $\F$ \cite{lange}. 
%or \cite{markman-diagonal}, for an explicit construction).
The sheaf homomorphism $g$ is injective, since 
$\SheafExt^0_{\pi_{13}}(\pi_{12}^*\E,\pi_{23}^*\E)$ vanishes.
The middle cohomology sheaf $\ker(f)/Im(g)$ is isomorphic to
$\SheafExt^1_{\pi_{13}}(\pi_{12}^*\E,\pi_{23}^*\E)$, and 
$\mbox{coker}(f)$ is isomorphic to 
$\SheafExt^2_{\pi_{13}}(\pi_{12}^*\E,\pi_{23}^*\E)$,
and hence also to $\delta_*\StructureSheaf{\Delta}$.
Furthermore, the dual complex represents the pullback $\tau^*(\F)$ of
the object $\F$. 
In particular, $\mbox{coker}(g^*)$ is also
isomorphic to $\delta_*\StructureSheaf{\Delta}$.

\begin{claim}
\begin{eqnarray}
\label{eq-Ext-1-Im-f-vanishes}
\SheafExt^1(\Im(f),\StructureSheaf{\M\times\M}) & = & 0,
\\
\label{eq-ker-f-is-reflexive}
\ker(f)^* & \cong & \coker(f^*),
\\
\label{eq-ker-g-*-is-reflexive}
\ker(g^*)^* & \cong & \coker(g).
\end{eqnarray}
\end{claim}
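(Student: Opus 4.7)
The plan is to deduce all three claims from the single structural input that the cokernels $\coker(f)$ and $\coker(g^*)$ are both isomorphic to $\delta_*\StructureSheaf{\Delta}$, together with the standard vanishing of $\SheafExt^i(\delta_*\StructureSheaf{\Delta},\StructureSheaf{\M\times\M})$ for $i$ less than the codimension $\dim\M = 2n$. Since $n\geq 2$, we have $2n\geq 4$, so in particular $\SheafExt^i(\delta_*\StructureSheaf{\Delta},\StructureSheaf{\M\times\M})=0$ for $i\leq 2n-1$.

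For (\ref{eq-Ext-1-Im-f-vanishes}), I would apply $\SheafHom(-,\StructureSheaf{\M\times\M})$ to the short exact sequence
\[
0\rightarrow \Im(f)\rightarrow V_1 \rightarrow \delta_*\StructureSheaf{\Delta}\rightarrow 0.
\]
Since $V_1$ is locally free, the long exact sequence reads
\[
\SheafExt^1(V_1,\StructureSheaf{})=0 \rightarrow \SheafExt^1(\Im(f),\StructureSheaf{}) \rightarrow \SheafExt^2(\delta_*\StructureSheaf{\Delta},\StructureSheaf{})=0,
\]
so $\SheafExt^1(\Im(f),\StructureSheaf{\M\times\M})=0$, as required.

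For (\ref{eq-ker-f-is-reflexive}), I would dualize the two short exact sequences
\[
0\rightarrow \Im(f)\rightarrow V_1 \rightarrow \delta_*\StructureSheaf{\Delta}\rightarrow 0, \qquad 0\rightarrow \ker(f)\rightarrow V_0 \rightarrow \Im(f)\rightarrow 0.
\]
The first dualization, combined with the vanishing $\SheafHom(\delta_*\StructureSheaf{\Delta},\StructureSheaf{})=\SheafExt^1(\delta_*\StructureSheaf{\Delta},\StructureSheaf{})=0$, identifies the restriction map $V_1^*\rightarrow \Im(f)^*$ as an isomorphism. The second dualization, together with (\ref{eq-Ext-1-Im-f-vanishes}), gives a short exact sequence
\[
0\rightarrow \Im(f)^* \rightarrow V_0^* \rightarrow \ker(f)^*\rightarrow 0.
\]
Composing these, the map $V_1^*\rightarrow V_0^*$ is identified with $f^*$ and its image with $\Im(f)^*\subset V_0^*$, yielding the desired isomorphism $\ker(f)^*\cong V_0^*/\Im(f^*)=\coker(f^*)$.

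For (\ref{eq-ker-g-*-is-reflexive}), I would apply the exact same argument to the dual complex $V_1^*\xrightarrow{f^*}V_0^*\xrightarrow{g^*}V_{-1}^*$, which represents $\tau^*\F$. The role of $\coker(f)$ is now played by $\coker(g^*)\cong\delta_*\StructureSheaf{\Delta}$, and the role of $\ker(f)$ by $\ker(g^*)$. The same two-step dualization, using $\coker(g)=0$ (i.e.\ $g$ is injective is irrelevant here; what matters is that $V_{-1}$, $V_0$ are locally free hence reflexive so $g^{**}=g$), gives $\ker(g^*)^*\cong\coker(g^{**})=\coker(g)$.

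No step poses a serious obstacle; the only subtlety is keeping track of the codimension bound $2n\geq 2$ (in fact $\geq 4$) that guarantees the relevant $\SheafExt$ groups of $\delta_*\StructureSheaf{\Delta}$ vanish in the degrees we invoke.
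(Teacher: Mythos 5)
Your proof is correct and follows essentially the same route as the paper: dualize $0\to\Im(f)\to V_1\to\delta_*\StructureSheaf{\Delta}\to 0$ and $0\to\ker(f)\to V_0\to\Im(f)\to 0$, use the local-duality vanishing of $\SheafExt^i(\delta_*\StructureSheaf{\Delta},\StructureSheaf{\M\times\M})$ for $i<2n$ together with local freeness of the $V_i$, and then run the identical argument on the dual complex for the third isomorphism. (The only blemish is the parenthetical ``using $\coker(g)=0$'', which is a slip --- presumably you meant $\ker(g)=0$, i.e., injectivity of $g$ --- but as you yourself note it plays no role, since what matters is $g^{**}=g$.)
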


\begin{proof}
Consider the long exact sequence of extension sheaves, obtained by applying
$\SheafHom(\bullet,\StructureSheaf{\M\times\M})$ to the short exact sequence
\[
0 \rightarrow \Im(f) \rightarrow V_1 \rightarrow \StructureSheaf{\Delta}
\rightarrow 0.
\]
$\SheafExt^i(V_1,\StructureSheaf{\M\times\M})=0$, for $i>0$, and 
$\SheafExt^i(\StructureSheaf{\Delta},\StructureSheaf{\M\times\M})=0$,
for $0\leq i <\dim(\M)=2n$, by the Local Duality Theorem. 
The vanishing (\ref{eq-Ext-1-Im-f-vanishes}) follows. 

Applying $\SheafHom(\bullet,\StructureSheaf{\M\times\M})$ to the short 
exact sequence
\[
0 \rightarrow \ker(f) \rightarrow V_0 \rightarrow  \Im(f) \rightarrow 0,
\]
we get the short exact sequence
\[
0\rightarrow V_1^* \LongRightArrowOf{f^*} V_0^* \longrightarrow \ker(f)^*
\rightarrow 0,
\]
by the vanishing (\ref{eq-Ext-1-Im-f-vanishes}).
Equation (\ref{eq-ker-f-is-reflexive}) follows.

Equation (\ref{eq-ker-g-*-is-reflexive}) is the analogue of 
Equation (\ref{eq-ker-f-is-reflexive}) for the dual of the complex
(\ref{eq-locally-free-complex-representing-F}).
\end{proof}

The pullback of the complex (\ref{eq-locally-free-complex-representing-F}) via the diagonal embedding $\delta$ is
equivalent to the object $R_{f_2,*}(\E^\vee\otimes \E)[1]$ in $D^b(\M)$. In particular,
$\ker(\delta^*g)\cong \SheafExt_{f_2}^0(\E,\E)\cong\StructureSheaf{\M}$ and $\coker(\delta^*f)\cong \SheafExt_{f_2}^2(\E,\E)$ is its dual, by Grothendieck-Verdier Duality, hence is isomorphic to $\StructureSheaf{\M}$ as well.
Let $K$ be the kernel of 
$\restricted{g}{\Delta}:(V_{-1}\restricted{)}{\Delta}\rightarrow 
(V_{0}\restricted{)}{\Delta}$ and $F$ the image of 
$\restricted{f}{\Delta}:(V_{0}\restricted{)}{\Delta}\rightarrow 
(V_{1}\restricted{)}{\Delta}$.
Then $K$ and $(V_{1}\restricted{)}{\Delta}/F$ are both isomorphic to
$\StructureSheaf{\Delta}$. 
Let $U_{-1}$ be the subsheaf of $(\beta^*V_{-1})(D)$, whose sections
restrict to $D$ as sections of $[\iota_*(p^*K)](D)$. We get
the short exact sequence:
\begin{equation}
\label{eq-sequence-defining-U-minus-1}
0\rightarrow \beta^*V_{-1} \rightarrow U_{-1} \rightarrow 
[\iota_*(p^*K)](D) \rightarrow 0.
\end{equation}
Define $U_1\subset \beta^*V_1$ as the subsheaf, whose sections restrict
to $D$ as sections of $\iota_*(p^*F)$. It fits in 
the short exact sequence:
\begin{equation}
\label{eq-sequence-defining-U-1}
0\rightarrow U_1 \rightarrow \beta^*V_1\rightarrow 
\iota_*(p^*\mbox{coker}(f))\rightarrow 0.
\end{equation}
The section $\beta^*g$ of $\Hom(U_{-1}(-D),\beta^*V_0)$ vanishes along the divisor $D$ and hence defines 
a section $\tilde{g}$ of $\Hom(U_{-1},\beta^*V_0)$.
We get the complex of vector bundles over $B$
\[
U_{-1}\LongRightArrowOf{\tilde{g}} \beta^*V_0 \LongRightArrowOf{\tilde{f}}
U_1,
\]
where  $\tilde{f}$ is surjective. The dual of the above complex is obtained from the dual of the complex 
(\ref{eq-locally-free-complex-representing-F}) via the analogous construction. 
Hence, $\tilde{g}^*$ is surjective as well. 
Both $U_{-1}$ and $U_1$ are locally free $\StructureSheaf{B}$-modules. 
Set 
\begin{equation}
\label{eq-V-as-a-subquotient}
\widetilde{V} \ \ := \ \ \ker(\tilde{f})/Im(\tilde{g}).
\end{equation}
Then $\widetilde{V}$ is locally free as well. 
We will see in the course of the proof of Proposition \ref{prop-V} that $\widetilde{V}$ is isomorphic to the sheaf $V$ given in Equation (\ref{eq-vector-bundle-over-B}). 

\begin{claim}
\label{claim-higher-direct-images-of-U-s}
\begin{enumerate}
\item
\label{claim-item-U-minus-1}
$\beta_*(U_{-1})\cong V_{-1}$, and $R^i\beta_{*}(U_{-1})=0$, for $i>0$.
\item
\label{claim-item-U-1}
$\beta_*(U_{1})\cong Im(f)$, and $R^i\beta_{*}(U_{1})=0$, for $i>0$.
\item
\label{claim-item-ker-tilde-f}
$\beta_*(\ker(\tilde{f}))\cong\ker(f)$, and 
$R^i\beta_{*}(\ker(\tilde{f}))=0$, for $i>0$.
\end{enumerate}
\end{claim}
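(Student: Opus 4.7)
I would apply $\beta_*$ to the three short exact sequences at hand --- (\ref{eq-sequence-defining-U-minus-1}), (\ref{eq-sequence-defining-U-1}), and the surjection $0\to\ker(\tilde{f})\to\beta^*V_0\to U_1\to 0$ --- and read off all three parts of the claim from the resulting long exact sequences in higher direct images.

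The preliminary inputs are standard for the blow-up $\beta:B\to\M\times\M$ of a smooth submanifold. First, $\beta_*\StructureSheaf{B}=\StructureSheaf{\M\times\M}$ and $R^i\beta_*\StructureSheaf{B}=0$ for $i>0$, so the projection formula gives $\beta_*\beta^*\F\cong\F$ and $R^i\beta_*\beta^*\F=0$ for every locally free $\F$ on $\M\times\M$. Second, the Cartesian diagram with vertical maps $p,\beta$ and closed immersions $\iota,\delta$ yields the base change $R\beta_*\iota_*=\delta_*Rp_*$. Third, $p:D=\PP(T\Delta)\to\Delta$ is a $\PP^{2n-1}$-bundle with $n\geq 2$, so $Rp_*\StructureSheaf{D}=\StructureSheaf{\Delta}$ (in degree $0$ only) and $Rp_*\ell=0$ in every degree. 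Combined with $\iota^*\StructureSheaf{B}(D)\cong\ell$ and the projection formula for $\iota$, the twist sends $\iota_*\StructureSheaf{D}$ to $[\iota_*\StructureSheaf{D}](D)\cong\iota_*\ell$. Using the identifications $K\cong\StructureSheaf{\Delta}$ and $\coker f\cong\delta_*\StructureSheaf{\Delta}$ from the paragraph preceding the claim, the quotient in (\ref{eq-sequence-defining-U-minus-1}) is $\iota_*\ell$, whose derived $\beta$-pushforward vanishes in all degrees, while the quotient in (\ref{eq-sequence-defining-U-1}) is $\iota_*\StructureSheaf{D}$, with $R\beta_*\iota_*\StructureSheaf{D}=\delta_*\StructureSheaf{\Delta}$ concentrated in degree $0$.

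With these inputs, parts (\ref{claim-item-U-minus-1}) and (\ref{claim-item-U-1}) fall out directly. The long exact sequence attached to (\ref{eq-sequence-defining-U-minus-1}) collapses to $\beta_*U_{-1}\cong V_{-1}$ with vanishing higher direct images. The long exact sequence for (\ref{eq-sequence-defining-U-1}) reduces to
\[
0\to\beta_*U_1\to V_1\to\delta_*\StructureSheaf{\Delta}\to R^1\beta_*U_1\to 0,
\]
with $R^{\geq 2}\beta_*U_1=0$. By naturality, the middle arrow is $\beta_*$ of a surjection that is itself $\beta^*$ of the canonical surjection $V_1\twoheadrightarrow\coker f$, and hence recovers that surjection; its kernel is $\Im f$, so $\beta_*U_1\cong\Im f$ and $R^1\beta_*U_1=0$. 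Part (\ref{claim-item-ker-tilde-f}) then follows in exactly the same fashion from $0\to\ker(\tilde{f})\to\beta^*V_0\to U_1\to 0$, using (\ref{claim-item-U-1}): higher direct images of the outer terms vanish, and $\beta_*\tilde{f}:V_0\to\Im f$ is the factorization of $f$ through its image, giving $\beta_*\ker(\tilde{f})\cong\ker f$ and $R^{\geq 1}\beta_*\ker(\tilde{f})=0$.

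\emph{Main obstacle.} No single step is deep; the only care needed is to identify the induced sheaf maps $V_1\to\delta_*\StructureSheaf{\Delta}$ and $V_0\to\Im f$ with the expected canonical maps rather than some twist. This is a routine naturality check, relying on the fact that the surjections in (\ref{eq-sequence-defining-U-minus-1}) and (\ref{eq-sequence-defining-U-1}) arise by pullback along $\beta$ from the canonical surjections $V_{-1}^*\twoheadrightarrow K^*$ and $V_1\twoheadrightarrow\coker f$ on $\M\times\M$.
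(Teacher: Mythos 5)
Your proposal is correct and follows essentially the same route as the paper: push forward the short exact sequences (\ref{eq-sequence-defining-U-minus-1}), (\ref{eq-sequence-defining-U-1}), and $0\to\ker(\tilde{f})\to\beta^*V_0\to U_1\to 0$ along $\beta$, using $R^ip_*(\StructureSheaf{D}(D))=0$ for all $i\geq 0$ and $Rp_*\StructureSheaf{D}=\StructureSheaf{\Delta}$ in degree $0$. The only difference is that you spell out the naturality check identifying $V_1\to\delta_*\StructureSheaf{\Delta}$ and $V_0\to\Im f$ with the canonical maps, which the paper leaves implicit.
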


\begin{proof}
\ref{claim-item-U-minus-1})
The higher direct images $R^ip_{*}(\StructureSheaf{D}(D))$ vanish,
for $i\geq 0$. This vanishing implies Part \ref{claim-item-U-minus-1}, 
using the long exact sequence of higher direct images via $\beta$,
associated to the short exact sequence 
(\ref{eq-sequence-defining-U-minus-1}). 

\ref{claim-item-U-1}) 
The push-forward $p_*\StructureSheaf{D}$ is isomorphic to 
$\StructureSheaf{\Delta}$, and all the higher direct images vanish. 
Part \ref{claim-item-U-1} follows from the long exact sequence of 
higher direct images via $\beta$, associated to the short exact sequence 
(\ref{eq-sequence-defining-U-1}).

Part \ref{claim-item-ker-tilde-f} follows from part \ref{claim-item-U-1}
using the long exact sequence of 
higher direct images via $\beta$, associated to the short exact sequence 
$
0\rightarrow \ker(\tilde{f})\rightarrow \beta^*V_0\RightArrowOf{\tilde{f}} 
U_1 \rightarrow 0.
$
\end{proof}

{\em Proof of Proposition \ref{prop-V}:}\\
Part \ref{lemma-item-SheafExt-1-is-reflexive}) The sheaf $\ker(g^*)$ is reflexive, being a saturated subsheaf of a locally free sheaf.
Applying $\SheafHom(\bullet,\StructureSheaf{\M\times\M})$ to the
short exact sequence
\[
0\rightarrow \SheafExt^1_{\pi_{13}}(\pi_{12}^*\E,\pi_{23}^*\E)\rightarrow
\coker(g)\rightarrow \Im(f)\rightarrow 0, 
\]
we get the short exact sequence
\[
0\rightarrow V_1^* \LongRightArrowOf{f^*} \ker(g^*) \longrightarrow 
\left[\SheafExt^1_{\pi_{13}}(\pi_{12}^*\E,\pi_{23}^*\E)\right]^*
\rightarrow 0,
\]
by the vanishing (\ref{eq-Ext-1-Im-f-vanishes}) and equation
(\ref{eq-ker-g-*-is-reflexive}). Hence, 
$\left[\SheafExt^1_{\pi_{13}}(\pi_{12}^*\E,\pi_{23}^*\E)\right]^*$
is the middle sheaf cohomology of the complex dual to
(\ref{eq-locally-free-complex-representing-F}). 
The dual complex represents the object $\tau^*\F$, in the derived category,
so the middle sheaf cohomology is the pullback 
$\tau^*\SheafExt^1_{\pi_{13}}(\pi_{12}^*\E,\pi_{23}^*\E)$.
Reflexivity now follows, by applying the above argument to the
dual complex, since $\tau^2=id$.

The equality $\kappa(E)=\kappa(\F)+ch(\delta_*\StructureSheaf{\M})$ follows from Lemma \ref{lemma-partial-additivity}.
%in the proof of Proposition \ref{prop-kappa-E-is-mon-invariant}. 
Now $(\delta_*\StructureSheaf{\M})^\vee=\delta_*\omega_{\M}[-2n]$, and $\omega_\M\cong \StructureSheaf{\M}$. 
Hence, $ch(\delta_*\StructureSheaf{\M})=ch((\delta_*\StructureSheaf{\M})^\vee)$.
Finally, we have
\[
\kappa(E^*)=\kappa(\tau^*E)=\tau^*(\kappa(E))=\tau^*[\kappa(\F)+ch(\delta_*\StructureSheaf{\M})]=
\kappa(\F^\vee)+ch((\delta_*\StructureSheaf{\M})^\vee)=\kappa(E^\vee).
\]

%The equlity $\tau^2=id$ reduces the proof of reflecsivity to the 
%proof, that 
%$\SheafHom(\SheafExt^1_{\pi_{13}}(\pi_{12}^*\E,\pi_{23}^*\E),
%\StructureSheaf{\M\times\M})$ is isomorphic to 
%$\tau^*\SheafExt^1_{\pi_{13}}(\pi_{12}^*\E,\pi_{23}^*\E)$.
%The latter isomorphism 
%follows from the fact recalled above, that the dual of the complex 
%(\ref{eq-locally-free-complex-representing-F})
%represents the object $\tau^*\F$, in the derived category. 

Part \ref{lemma-item-direct-images-of-V}, with $V$ replaced by $\widetilde{V}$, follows from Claim 
\ref{claim-higher-direct-images-of-U-s} and the long exact sequence of 
higher direct images via $\beta$, associated to the short exact sequence 
\[
0 \rightarrow U_{-1}\RightArrowOf{\tilde{g}} \ker(\tilde{f}) 
\rightarrow \widetilde{V} \rightarrow 0. 
\]

We prove next Part \ref{prop-item-restriction-of-V-is-a-symplectic-vb} with $V$ replaced by $\widetilde{V}$.
Assume first that the universal sheaf is untwisted.
Let $Z$ be the total space of the vector bundle $\Hom(V_{-1},V_0)$,
$h:Z\rightarrow \M\times \M$ the projection, 
$g':h^*V_{-1}\rightarrow h^*V_0$ the tautological homomorphism, 
$Z_1\subset Z$ the determinantal stratum, where the rank of $g'$ 
is $\rank(V_{-1})-1$, and 
$g:\M\times \M\rightarrow Z$ the section given in 
(\ref{eq-locally-free-complex-representing-F}). 
$Z_1$ is a smooth locally closed subvariety, whose normal bundle  $N_{Z_1}$ 
is isomorphic to  
$\Hom\left(\ker(\restricted{g'}{Z_{1}}),
\coker(\restricted{g'}{Z_{1}})\right)$ \cite{a-c-g-h}.
The diagonal $\Delta$ is the scheme theoretic inverse image $g^{-1}(Z_1)$.
Hence, the homomorphism
\begin{equation}
\label{eq-global-description-of-dg}
dg \ : \ N_{\Delta} \ \ \ \longrightarrow \ \ \ 
g^*N_{Z_1} \ = \ \Hom\left(\ker(\restricted{g}{\Delta}),
\coker(\restricted{g}{\Delta})\right)
\end{equation}
is injective at every fiber of $N_\Delta$.
$\Delta$ is also the degeneracy locus of the homomorphism $f$
given in (\ref{eq-locally-free-complex-representing-F}), and 
$f\circ g=0$. Thus, the image of $dg$ is contained in 
$\Hom\left(\ker(\restricted{g}{\Delta}),
\ker(\restricted{f}{\Delta})/\Im(\restricted{g}{\Delta})\right)$.
Now, $\ker(\restricted{g}{\Delta})\cong\StructureSheaf{\Delta}$
and $\ker(\restricted{f}{\Delta})/\Im(\restricted{g}{\Delta})$
is isomorphic to $T\Delta$, by the well known
identification of
$T\M$ with
the relative extension sheaf $\SheafExt^1_{f_2}(\E,\E)$. 
We conclude that $dg$ factors through a homomorphism
\[
dg \ : \ N_{\Delta} \ \ \ \longrightarrow \ \ \ T\Delta,
\]
which is fiber-wise injective, and hence an isomorphism.

The above argument is easily adapted to the case of a twisted universal sheaf as follows. The sheaves 
$\ker(\restricted{g}{\Delta})$ and $\coker(\restricted{g}{\Delta})$ are untwisted and so the description of $dg$, which is valid in each local chart, glues to the global description provided in Equation (\ref{eq-global-description-of-dg}). The rest of the argument is identical.

Over $B$ we have the tautological line-sub-bundle
$\eta \ : \ \StructureSheaf{D}(D)\hookrightarrow p^*N_{\Delta}$
and the homomorphism 
$d(\beta^*g)$ is the composition
$p^*(dg)\circ \eta$. It follows that the image of $d(\beta^*g)$
is $\ell\subset T\Delta$, by the definition of $\ell$. 
We claim, on the other hand, that the image of $d(\beta^*g)$ is precisely the line bundle
\[
\Hom\left(p^*\ker(\restricted{g}{\Delta}),
\Im(\restricted{\tilde{g}}{D})/\Im(\restricted{\beta^*g}{D})
\right).
\]
Observe first that the image of $\beta^*g$ is contained in
the image of $\tilde{g}$, which is a subbundle of $\beta^*V_0$.
We see that the image of $d(\beta^*g)$ is contained in the line bundle displayed above, by
repeating the above argument for the complex of locally free sheaves
\[
\beta^*V_{-1}\RightArrowOf{\beta^*g}\beta^*V_0\rightarrow\beta^*V_0/\Im(\tilde{g}). 
\]
The image is equal to the line bundle displayed above, since the latter is isomorphic to $\StructureSheaf{D}(D)$.
Indeed, we have observed already that $p^*\ker(\restricted{g}{\Delta})$ is the trivial line bundle and 
$\Im(\restricted{\tilde{g}}{D})/\Im(\restricted{\beta^*g}{D})$ is isomorphic to 
$U_{-1}/\beta^*V_{-1}\cong [\iota_*(p^*K)](D)$, which is isomorphic to 
$\StructureSheaf{D}(D)$.
These two descriptions of the image of $d(\beta^*g)$ provide a
canonical isomorphism $\ell\cong \Im(\restricted{\tilde{g}}{D})/\Im(\restricted{\beta^*g}{D})$.
We see that $\restricted{\widetilde{V}}{D}$ is a sub-bundle of $[p^*T\Delta]/\ell$.

Repeating the above argument, for the dual of the complex
(\ref{eq-locally-free-complex-representing-F}) and for
the homomorphism $f^*$, we get 
that $(\restricted{\widetilde{V}^*}{D})$ embedds as a subbundle of $[p^*T\Delta]/\ell$
as well (under the identification $T\Delta\cong T^*\Delta$, via
the symplectic structure). 
Composing the former embedding with the dual of the latter we see that $\restricted{\widetilde{V}}{D}$ is isomorphic to the image of a homomorphism from
$[(p^*T\Delta)/\ell]^*$ to $(p^*T\Delta)/\ell$. The composite homomorphism must be a non-zero multiple of the composition of the inclusion 
$\ell^\perp\rightarrow p^*T\Delta$ with the quotient homomorphism $p^*T\Delta\rightarrow (p^*T\Delta)/\ell$, as 
$\Hom(\ell^\perp,(\ell^\perp)^*)$ is one dimensional 
by Lemma \ref{lemma-hom-from-ell-perp-to-ell-perp-dual}.
Hence, $\restricted{\widetilde{V}}{D}$ is isomorphic to $\ell^\perp/\ell$. 

Part \ref{lemma-item-V-is-locally-free}) It suffices to prove the isomorphism $V\cong \widetilde{V}$, as we already know that $\widetilde{V}$ is locally free of rank $(v,v)$.
The direct image $p_*[(\widetilde{V}\restricted{)}{D}]$ vanishes, by part 
\ref{prop-item-restriction-of-V-is-a-symplectic-vb} and the vanishing
of $p_*[\ell^\perp/\ell]$. 
Hence, $\beta_*[\widetilde{V}(-D)]$ is isomorphic to $\beta_*\widetilde{V}$. We already established the isomorphism 
$\beta_*\widetilde{V}\cong E$ in the proof of part 
\ref{lemma-item-direct-images-of-V} (with $V$ replaced by $\widetilde{V}$). 
We get the isomorphism $\beta^*E\cong \beta^*\beta_*[\widetilde{V}(-D)]$.
The natural homomorphism $\beta^*\beta_*[\widetilde{V}(-D)]\rightarrow \widetilde{V}(-D)$ 
is surjective, by part 
\ref{prop-item-restriction-of-V-is-a-symplectic-vb}
and Lemma \ref{lem-facts-about-tautological-subquotient}.
%Part \ref{lemma-item-V-is-locally-free} 
The kernel of the composition 
$\beta^*E\cong \beta^*\beta_*[\widetilde{V}(-D)]\rightarrow \widetilde{V}(-D)$ is supported on $D$, and is hence 
the torsion subsheaf of $\beta^*E$.
%$\beta^*\beta_*[\widetilde{V}(-D)]$. 
The isomorphism $V(-D)\cong \widetilde{V}(-D)$ follows.

Part 
\ref{lemma-part-pull-back-of-V-is-its-dual})  
If we repeat the construction of the vector bundle $\widetilde{V}$ in Equation 
(\ref{eq-V-as-a-subquotient}), using the dual of the complex
(\ref{eq-locally-free-complex-representing-F}), we obtain the
vector bundle $\widetilde{V}^*$, by a direct check. 
On the other hand, the dual complex represents $\tau^*\F$, 
and the proof of the equality of the sheaves 
(\ref{eq-vector-bundle-over-B}) and (\ref{eq-V-as-a-subquotient})
yields the isomorphism
\[
\widetilde{V}^* \cong \beta^*\left[\tau^*\left\{
\SheafExt^1_{\pi_{13}}(\pi_{12}^*\E,\pi_{23}^*\E)
\right\}
\right](D)/tor.
\]
The statement now follows from the equality 
$\beta^*\tau^*=\tilde{\tau}^*\beta^*$.

Part \ref{lemma-item-tors})
Consider the exact triangle
\[
E \ \LongRightArrowOf{a} \ [V_{-1}\rightarrow V_0\rightarrow V_1] \ \LongRightArrowOf{b} \
\StructureSheaf{\Delta}[-1] \ \rightarrow \ E[1].
\]
Restriction to $\Delta$ yields the long exact sequence
\[
\begin{array}{cccccc}
\Tor_2^{\M\times\M}(E,\StructureSheaf{\Delta}) & \LongRightArrowOf{a_{-2}} & 
0 & \LongRightArrowOf{b_{-2}} & 
\Tor_3^{\M\times\M}(\StructureSheaf{\Delta},\StructureSheaf{\Delta})& 
\LongRightArrowOf{\delta_{-1}}
\\
\Tor_1^{\M\times\M}(E,\StructureSheaf{\Delta}) & \LongRightArrowOf{a_{-1}} & 
\StructureSheaf{\Delta} & \LongRightArrowOf{b_{-1}} & 
\Tor_2^{\M\times\M}(\StructureSheaf{\Delta},\StructureSheaf{\Delta})& 
\LongRightArrowOf{\delta_{0}}
\\
E\otimes \StructureSheaf{\Delta} & \LongRightArrowOf{a_{0}} & 
T\Delta & \LongRightArrowOf{b_{0}} & 
\Tor_1^{\M\times\M}(\StructureSheaf{\Delta},\StructureSheaf{\Delta})& 
\LongRightArrowOf{\delta_{1}}
\\
0 & \LongRightArrowOf{a_1} & 
\StructureSheaf{\Delta} &  \LongRightArrowOf{b_1} & 
\StructureSheaf{\Delta}\otimes \StructureSheaf{\Delta} \rightarrow 0.
\end{array}
\]
Note that $\Tor^{\M\times\M}_i(\StructureSheaf{\Delta},\StructureSheaf{\Delta})$
is isomorphic to $\Wedge{i}T^*\Delta$.
Clearly, $\delta_{-i}$ is an isomorphism, for $i\geq 2$. 
The isomorphism in Equation (\ref{eq-Tor-i-is-wedge-i+2}) follows for $i\geq 2$.
The homomorphism $b_0$ is surjective, hence an isomorphism.
Thus $a_0=0$ and $\delta_0$ is surjective. 

The isomorphisms in Equation (\ref{eq-Tor-i-is-wedge-i+2}) for $i=1$  and in Equation (\ref{eq-restriction-of-E-to-diagonal}) 
would both follow, once we prove that $b_{-1}$ is injective.
The proof is by contradiction. Assume that $b_{-1}$ vanishes. 
Then $\delta_0$ is injective and $\delta_0(\sigma)$ is a non-zero global section of
$H^0(E\otimes \StructureSheaf{\Delta})$.
Let $tor(\beta^*E)$ be the torsion subsheaf of $\beta^*E$. 
The endo-functor $R\beta_{*}L\beta^*$ of $D^b_{Coh}(\M\times\M,\pi_1^*(\theta^{-1})\pi_2^*\theta)$
is the identity. Hence, $\beta_*(tor(\beta^*E))=0$, since $E$ is torsion free,
by part \ref{lemma-item-SheafExt-1-is-reflexive}.
In particular, $H^0(tor(\beta^*E))=0$.
Now $[\beta^*E/tor(\beta^*E)\restricted{]}{D}\cong \ell^\perp/\ell$, by
part \ref{prop-item-restriction-of-V-is-a-symplectic-vb}, and $H^0(\ell^\perp/\ell)=0$.
Thus, $H^0(D,[\beta^*E\restricted{]}{D})=0$.
Consequently, $H^0(E\otimes \StructureSheaf{\Delta})=0$. 
A contradiction. This completes the proof of Proposition \ref{prop-V}.
\EndProof

\begin{rem}
The statement of Proposition \ref{prop-V} holds for smooth and projective moduli spaces of stable sheaves over an abelian surface.
The same proof applies with one exception, Lemma  \ref{lemma-hom-from-ell-perp-to-ell-perp-dual} is false in that case.
We sketch the  argument replacing the use of Lemma  \ref{lemma-hom-from-ell-perp-to-ell-perp-dual} in the proof of 
Proposition \ref{prop-V}, omitting the details. The same argument, used in the proof above,
identifies $p^*T\Delta$ with the degree $0$ sheaf cohomology 
$\ker(\beta^*\restricted{f}{D})/Im(\beta^*\restricted{g}{D})$
of the restriction to $D$ 
of the pullback $\beta^*V_\bullet$ of the complex (\ref{eq-locally-free-complex-representing-F}). Furthermore, 
a filtration 
\begin{equation}
\label{eq-filtration}
\ell\subset W \subset p^*T\Delta, 
\end{equation}
by subbundles of $p^*T\Delta$  is constructed, where $\ell$ is identified with
$Im(\restricted{\tilde{g}}{D})/Im(\restricted{\beta^*g}{D})$ and 
$W:=\ker(\restricted{\tilde{f}}{D})/Im(\beta^*\restricted{g}{D})$ is a co-rank $1$ subbundle of $p^*T\Delta$.
Lemma   \ref{lemma-hom-from-ell-perp-to-ell-perp-dual} was used to prove that $W$ is equal to the subbundle $\ell^\perp$ symplectic-orthogonal to $\ell$. Avoiding Lemma  \ref{lemma-hom-from-ell-perp-to-ell-perp-dual} one checks first that the filtration
(\ref{eq-filtration}) depends only on the object $\F$  in $D^b(\M(v)\times \M(v))$ represented by the complex (\ref{eq-locally-free-complex-representing-F}), so that any other quasi-isomorphic complex of locally free sheaves induces the same filtration of the degree $0$ cohomology of the restriction of its pullback to the exceptional divisor $D$. 
The complex $V_\bullet^*$ dual to (\ref{eq-locally-free-complex-representing-F})
yields an analogous filtration
\begin{equation}
\label{eq-second-filtration}
\mbox{ann}(W)\subset \mbox{ann}(\ell)\subset p^*T^*\Delta,
\end{equation}
where $\mbox{ann}(W)$ and $\mbox{ann}(\ell)$ are the subbundles annihilating $W$ and $\ell$. 
Now, as observed in the proof  above, the pullback $\tau^*(V_\bullet^*)$, by the transposition $\tau$ of the two factors,
is a locally free complex representing an object isomorphic to the object $\F$ represented by 
(\ref{eq-locally-free-complex-representing-F}). The induced isomorphism between the degree $0$ sheaf cohomologies
$p^*T^*\Delta=\H^0(\beta^*(\tau^*(V_\bullet^*)\restricted{)}{D})$ and $p^*T\Delta=\H^0(\restricted{(\beta^*V_\bullet)}{D})$
depends only on the choice of a trivialization of the canonical line-bundle of the abelian surface and corresponds to the canonical, up to a scalar factor, symplectic structure on $\M(v)$ constructed by Mukai \cite{mukai-symplectic-structure}. 
On the one hand the isomorphism maps the filtration (\ref{eq-filtration}) to (\ref{eq-second-filtration}), since the two complexes represent the same object. On the other hand, its symplectic interpretation implies that it maps $\ell^\perp$ to $\mbox{ann}(\ell)$. 
Hence,  $W=\ell^\perp$.
\end{rem}

%**********
% Hide
%**********
\hide{

%***************************************************
%
%***************************************************
\subsection{Reduction of problem \ref{problem-Hodge} to 
Problem \ref{problem-lifting-deformations-to-deformations-of-pairs}:}
\label{sec-first-problem-reduction}
Fix an $H$-stable sheaf $E$ with class $v$, corresponding to a point
$x\in \M$. 
The proper transform of $\{x\}\times \M$ in $B$ is the blow-up
$B_x(\M)$ of $\M$ at $x$. Let $\beta_x:B_x(\M)\rightarrow \M$
be the natural morphism. 
The bundle $V$ restricts to a vector bundle $V_E$ over $B_x(\M)$ and 
the equivalence
\begin{equation}
\label{eq-V-E-related-to-e-v}
\beta_{x,!}(V_E) \ \ \ \equiv \ \ \ 
e_v-\StructureSheaf{x}
\end{equation}
holds in $K_{top}\M$,
where $\StructureSheaf{x}$ is the sky-scraper sheaf of the point 
$x$,
by Proposition \ref{prop-V} part \ref{lemma-item-direct-images-of-V} and 
Lemma \ref{eq-e-v-is-restriction-of-F}.

\begin{lem}
\label{lemma-relating-V-E-to-e-v}
The equality
\[
\beta_{x,*}\left([\kappa(V_E)\cdot td_{\beta_x}]_i\right) \ \ \ = \ \ \ 
\kappa_i(e_v)
\]
holds, for $0\leq i<\dim_\ComplexNumbers(\M).$
\end{lem}

\begin{proof}
Let $D\subset B_x(\M)$ be the exceptional divisor.
Then the restriction $(V_E\restricted{)}{D}$ is a symplectic vector 
bundle over $D$, by part \ref{prop-item-restriction-of-V-is-a-symplectic-vb}
of Proposition \ref{prop-V}.
It follows that $\det(V_E)$ belongs to $\beta_x^*\Pic(\M)$.
The equality
$\beta_{x,*}(\kappa(V_E)\cdot td_{\beta_x})=\kappa(\beta_{x,!}V_E)$
follows, by Lemma \ref{lemma-Grothendieck-Riemann-Roch}. 
Now, $\kappa_i(\beta_{x,!}V_E)\stackrel{(\ref{eq-V-E-related-to-e-v})}{=}
\kappa_i(e_v-\StructureSheaf{x})=\kappa_i(e_v)$, 
for $i<\dim_\ComplexNumbers(\M)$.
%This explains the second equality below.
%
%\noindent
%$
%\beta_*\left([\kappa(V_E)\cdot td_{\beta}]_i\right) = 
%\beta_*\left([\kappa(V_E(D))\cdot td_{\beta}]_i\right) = 
%\kappa_i(\beta_!(V_E(D)))  = 
%\kappa_i(\beta_!(V_E))  = 
%\kappa_i(e_v), 
%$
%for $i<\dim_\ComplexNumbers(\M)$.
%The first equality follows from the definition of $\kappa$. 
%The last two equalities hold, since
%both the difference $\beta_!\{[V_E(D)]-V_E\}$ and the sum
%$\beta_!V_E+e_v$ are multiples of the class
%$[\StructureSheaf{E}]$ of the sky-scraper sheaf. 
\end{proof}

Assume that Problem 
\ref{problem-lifting-deformations-to-deformations-of-pairs} is solved. 
Let $\beta_x:B_xX\rightarrow X$ be the blow-up of $X$ at a closed point 
$x\in X$. Let $\PP_{B,x}$ be the restriction of $\PP_{B}$ to the proper
transform $B_xX$ of $\{x\}\times X$ in $B$.

\begin{lem}
\label{lemma-parallel-transform-of-kappa-i-of-pushed-forward-twisted-sheaves}
Let $\X\rightarrow T$ be a family 
of irreducible holomorphic symplectic manifolds, 
having $\M$ and $X$ as its fibers, over two points $m,x\in T$.
Fix an integer $i$ in the range 
$2\leq i \leq \frac{\dim_\ComplexNumbers(X)}{4}+1$. 
Then parallel transport, along any path in $T$ from $x$ to $m$,
maps $\beta_{x_*}[\kappa(\PP_{B,x})\cdot td_{\beta_x}]_i$ to 
$\pm \beta_{x_*}[\kappa(V_E)\cdot td_{\beta_x}]_i$, and hence to
$\pm\kappa_i(e_v)$. 
\end{lem}

\begin{proof}
The dual pair
\begin{equation}
\label{eq-dual-pair-of-classes-in-cohomology-of-X}
\{\beta_{x_*}[\kappa(\PP_{B,x})\cdot td_{\beta_x}], \ \ 
\beta_{x_*}[\kappa(\PP_{B,x}^*)\cdot td_{\beta_x}]\},
\end{equation}
of classes in $H^*(X,\RationalNumbers)$, 
is independent of the choice of the point $x\in X$.
The solution of Problem 
\ref{problem-lifting-deformations-to-deformations-of-pairs} implies, 
that there exists some deformation of 
the pair $(\M,\{\PP{V},\PP{V}^*\})$ to $(X,\{\PP_B,\PP_B^*\})$.
Hence, there exists a parallel transport operator 
$g:H^*(X,\Integers)\rightarrow H^*(\M,\Integers)$, which maps the dual pair
(\ref{eq-dual-pair-of-classes-in-cohomology-of-X})
to the pair consisting of 
$\beta_{E_*}[\kappa(V_E)\cdot td_{\beta_E}]$ and its dual. 
Lemma \ref{lemma-relating-V-E-to-e-v} implies that $g$ takes 
$\beta_{x_*}[\kappa(\PP_{B,x})\cdot td_{\beta_x}]_i$ to $\pm \kappa_i(e_v)$.
Any other parallel transport $g'$ would be the composition of $g$ with 
a monodromy operator $f$ of $\M$. Now $f(\kappa_i(e_v)) =\pm \kappa_i(e_v)$,
by Proposition \ref{prop-kappa-e-v-is-Mon-invariant}.
\end{proof}

Lemma 
\ref{lemma-parallel-transform-of-kappa-i-of-pushed-forward-twisted-sheaves}
completes the proof that a solution to Problem 
\ref{problem-lifting-deformations-to-deformations-of-pairs}
provides also a solution to Problem \ref{problem-Hodge}
(use also property (\ref{eq-push-forward-of-Chern-subalgebra}) of 
$Chern(X)$).
\EndProof

%***************************************************
%
%***************************************************
\subsection{$\beta_{x_*}[\kappa(\PP_{B,x})\cdot td_{\beta_x}]_i$
as the characteristic class of a twisted sheaf}
\label{sec-P-B-as-a-twisted-sheaf-over-X-times-X}
We show next that 
the classes $\beta_{x_*}[\kappa(\PP_{B,x})\cdot td_{\beta_x}]_i$ in Lemma 
\ref{lemma-parallel-transform-of-kappa-i-of-pushed-forward-twisted-sheaves}
can be expressed as characteristic classes of twisted sheaves
over $X$. This point of view will be central in Section
\ref{sec-hyperholomorphic-sheaves}.

\begin{lem}
The Brauer class of $\PP_B$ in $H^2(B,\StructureSheaf{B}^*)$ 
is the pullback of a unique class in 
$H^2(X\times X,\StructureSheaf{X\times X}^*)$.
\end{lem}

Compare with the explicit formula in Lemma 
\ref{lem-order-of-twisting-class-of-universal-sheaf}.
Note: When $X$ is projective, 
the Brauer group is known to be a birational invariant
(\cite[7.2]{groth-brauer-III} or 
\cite[Corollary 2.6 and Theorem 2.16]{milne}. 

\begin{proof} 
Consider the short exact exponential sequences of $X\times X$ and $B$.
We get the commutative diagram 
\[
\begin{array}{cccccc}
H^2(X\times X,\Integers) & \rightarrow & 
H^2(X\times X,\StructureSheaf{}) & \rightarrow  &
H^2(X\times X,\StructureSheaf{}^*) & \rightarrow  0
\\
\beta^*_\Integers \ \downarrow \ \hspace{1ex} & & \beta^*_{\StructureSheaf{}} 
\ \downarrow \ \cong
& &  \beta^*_{\StructureSheaf{}^*} \ \downarrow \ \hspace{1ex}
\\
H^2(B,\Integers) & \rightarrow & 
H^2(B,\StructureSheaf{}) & \rightarrow  &
H^2(B,\StructureSheaf{}^*) & \rightarrow  0.
\end{array}
\]
Both rows are right exact, since $H^{odd}(X,\Integers)$ vanishes 
\cite{markman-integral-generators}. The homomorphism
$\beta^*_{\StructureSheaf{}}$ is an isomorphism, and hence
$\beta^*_{\StructureSheaf{}^*}$ is surjective. 
The Snake Lemma yields an isomorphism from 
the kernel of $\beta^*_{\StructureSheaf{}^*}$ to the cokernel of
\[
\frac{H^2(X\times X,\Integers)}{\Pic(X\times X)}
 \ \ \ \longrightarrow 
\frac{H^2(B,\Integers)}{\Pic(B)}.
\]
The latter homomorphism is surjective, since $H^2(B,\Integers)$
is generated by $\beta^*H^2(X\times X,\Integers)$ and the class of $D$,
which comes from $\Pic(B)$.
%The Brauer group of $X\times X$ is equal to that of 
%$[X\times X]\setminus \Delta$, since $\dim(X)\geq 4$. The latter Brauer group
%is equal to that of $B\setminus D$, where $D:=\PP(TX)$
%is the exceptional divisor. We get an endomorphism
%\[
%Br(B)\rightarrow Br(B\setminus D) = Br([X\times X]\setminus \Delta) =
%Br(X\times X)\RightArrowOf{\beta^*}  Br(B).
%\]
%The image consists precisely (??? need to check ???) of the preimage,  
%via the
%restriction homomorphism $Br(B)\rightarrow Br(D)$,
%of the image in $Br(D)$ of $p^*:Br(X)\rightarrow Br(D)$. 
%The restriction of $\PP_B$ to $D$ has 
%a natural lift to a vector bundle. Thus, 
%the restriction of $\theta_B$ to $D$ is trivial.
%Hence $\theta_B$ is the pullback of its image in $Br(X\times X)$.
\end{proof}

We may thus choose a covering $\U$ of $X\times X$ and a 
$2$-cocycle $\theta$ in 
$Z^2(\U,\StructureSheaf{X\times X}^*)$, such that
the Brauer class of $\PP_B$ is $\beta^*[\theta]$. 
Furthermore, we may choose a $\beta^*\theta$-twisted locally free sheaf
$V_B$ over $B$ with $\PP(V_B)=\PP_B$. 

Denote by $\theta_x$ the restriction of 
$\theta$ to $\{x\}\times X$. 
Let $V_x$ be the restriction of $V_B$ to the proper transform
$B_xX$ of $\{x\}\times X$ in $B$. Then $V_x$ is
a $\beta_x^*(\theta_x)$-twisted locally free sheaf, its push-forward 
$\beta_{x_*}(V_x)$ is a $\theta_x$-twisted coherent sheaf, and 
$\beta_{x_!}(V_x)$ is a class in the twisted $K$-group
$K_0^{hol}(X)_{\theta_x}$. 

\begin{lem}
$\beta_{x_*}[\kappa(\PP_{B,x})\cdot td_{\beta_x}]_i=
\kappa_i(\beta_{x_!}(V_x))$, for $i<\dim_\ComplexNumbers(X)$.
\end{lem}

\begin{proof}
The statement is topological, and the twisting class
$\theta_x$ is topologically trivial, since $H^3(X,\Integers)=0$ 
\cite{markman-integral-generators}. Hence, $\PP(V_x)=\PP(V'_x)$,
for some topological untwisted complex vector bundle $V'_x$. 
$V'_x$ restrict to $D$ as $[\ell^\perp/\ell]\otimes L$,
for some topological line bundle $L$ over $D$,
by the assumption, in Problem
\ref{problem-lifting-deformations-to-deformations-of-pairs}. 
The homomorphism $H^2(B_xX,\Integers)\rightarrow H^2(D,\Integers)$ 
is surjective, and so we may assume $L$ to be trivial. 
The equality $\kappa_i(\beta_{x_!}(V_x))=\kappa_i(\beta_{x_!}(V'_x))$
clearly holds, for $i<\dim_\ComplexNumbers(X)$. 
The statement now follows from the
topological analogue of the proof of Lemma \ref{lemma-relating-V-E-to-e-v}.
\end{proof}
%**********
% End hide
%**********
}

%*************************************************************
%
%*************************************************************
\section{Lifting  deformations of a moduli space $\M$ to deformation of the pair $(\M,E)$}
Keep the notation of Section \ref{sec-a-reflexive-sheaf-E-and-its-resolution}. 
In particular, $\M:=\M_H(v)$ is a moduli space of stable sheaves over a $K3$ surface $S$ and 
$E$ is the reflexive sheaf over the product $\M\times \M$ introduced in  Proposition
\ref{prop-V}.
Let $S'$ be another projective $K3$ surface,
$v'\in K_{top}S'$ a primitive class satisfying 
$(v',v')=2n-2$, $n\geq 2$, 
and $H'$ a $v'$-generic ample line bundle.  
Assume that $\M':=\M_{H'}(v')$ is non-empty. 
%Let $B'$ be the blow-up of the diagonal in  $\M'\times \M'$
%and $\PP{V'}$ the projective bundle over $B'$ associated to the
%(possibly twisted) locally free sheaf 
%(\ref{eq-vector-bundle-over-B}). 
Yoshioka proved that the moduli space $\M'$
is an irreducible holomorphic symplectic variety, deformation
equivalent to $S^{[n]}$ \cite{yoshioka-abelian-surface}. 
His proof implies the existence of a 
sequence of families of $K3$ surfaces $\S_i\rightarrow T_i$, $1\leq i\leq N$, 
over quasi-projective curves $T_i$,
with smooth and proper relative families of such moduli spaces $\M_{\S_i/T_i}$
having the following properties. 
There exist points $t'_i\in T_i$ and 
$t''_{i+1}\in T_{i+1}$, and an isomorphism 
$\phi_{i}$ from the fiber $\M_{t'_i}$  onto the fiber 
$\M_{t''_{i+1}}$. Finally, $\M_{t'_1}=\M_{H'}(v')$, and
$\M_{t''_N}=S^{[n]}$. 

The isomorphism $\phi_i$ comes in two flavors. One 
is induced by a Fourier-Mukai transformations
between the derived categories of $S_{t'_i}$ and $S_{t''_{i+1}}$
mapping stable sheaves to stable sheaves. Such Fourier-Mukai transformations
relate a twisted universal sheaf over $S_{t'_i}\times \M_{t'_i}$
to  one over $S_{t''_i}\times \M_{t''_i}$ 
\cite[Theorem 1.6]{mukai-applications}. 
%Let $B'_i$ be the blow-up of the diagonal in $\M_{t'_i}\times \M_{t'_i}$
%and $B''_{i+1}$ the analoge for $\M_{t''_{i+1}}$.
%The isomorphism $\phi_i$ pulls back the bundle $\PP{V''}_{i+1}$
%over $B''_{i+1}$ to $\PP{V'}_i$ over $B'_i$. 

The second flavor is induced by the composition, of a Fourier-Mukai 
transformation, with the functor, which takes an object or a morphism, in the 
derived category, to its dual.
The composite functor relates a twisted universal sheaf over 
$S_{t'_i}\times \M_{t'_i}$ 
to the {\em dual} of one over $S_{t''_{i+1}}\times \M_{t''_{i+1}}$ (see \cite[Theorem 7.9]{markman-monodromy-I} or 
\cite[Prop. 3.2]{yoshioka-abelian-surface}).
%The isomorphism $\phi_i$ pulls back the bundle $\PP{V''}_{i+1}$
%over $B''_{i+1}$ to $(\PP{V'}_i)^*$ over $B'_i$. 

%Yoshioka constructed the moduli spaces of stable twisted sheaves on a projective variety \cite{yoshioka-twisted-sheaves}. 
%Yoshioka proved that over a $K3$ surface the the moduli space of twisted sheaves with a primitive Mukai vector and 
%\cite[Theorem 3.16]{yoshioka-twisted-sheaves}.
The following Lemma thus follows from Yoshioka's work.
%It will play a central role in Section \ref{sec-hyperholomorphic-sheaves}.
Let $E$ be the twisted sheaf over $\M\times \M$ in Proposition 
\ref{prop-V} and $E'$ its analogue over 
$\M'\times \M'$. Note that $\SheafEnd(E)$ and $\SheafEnd(E^*)$
are isomorphic reflexive coherent sheaves, but they are 
not isomorphic as reflexive sheaves of Azumaya algebras.

\begin{lem}
\label{lem-lifting-to-deformations-of-pairs-ok-for-moduli-spaces}
The pair $(\M',\{\SheafEnd(E'),\SheafEnd((E')^*)\})$ deforms to the pair 
$(\M,\{\SheafEnd(E),\SheafEnd(E^*)\})$. 
The structures of Azumaya algebras deform as well.
\end{lem}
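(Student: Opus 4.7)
My plan is to assemble the desired deformation by concatenating Yoshioka's families $\M_{\S_i/T_i}\to T_i$ along the isomorphisms $\phi_i$, enhancing each family with a relative version of the sheaf $\E$ on the self-fiber-product of the moduli space. On each family let $\widetilde{\E}_i$ be the (possibly twisted) relative universal sheaf on $\S_i\times_{T_i}\M_{\S_i/T_i}$, and let $p_{12},p_{13},p_{23}$ denote the projections from the relative triple product $\M_{\S_i/T_i}\times_{T_i}\S_i\times_{T_i}\M_{\S_i/T_i}$ onto pairs of factors. The relative extension sheaf
\[
\E_i \ := \ \SheafExt^1_{p_{13}}\!\left(p_{12}^*\widetilde{\E}_i,\,p_{23}^*\widetilde{\E}_i\right)
\]
is, by the fiberwise version of Proposition \ref{prop-V}, a reflexive twisted sheaf of rank $2n-2$ on $\M_{\S_i/T_i}\times_{T_i}\M_{\S_i/T_i}$, flat over $T_i$. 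Its sheaf of endomorphisms $\SheafEnd(\E_i)$ is an untwisted reflexive sheaf of Azumaya algebras, also flat over $T_i$, so the pair $\bigl(\M_{\S_i/T_i},\{\SheafEnd(\E_i),\SheafEnd(\E_i^*)\}\bigr)$ is a $T_i$-deformation of the desired form.

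Next I would analyze each transition $\phi_i$ from the fiber of $\M_{\S_i/T_i}$ over $t'_i$ to the fiber of $\M_{\S_{i+1}/T_{i+1}}$ over $t''_{i+1}$. For a first-flavor $\phi_i$, the Fourier--Mukai equivalence between $D^b(S_{t'_i})$ and $D^b(S_{t''_{i+1}})$ sends the restriction of $\widetilde{\E}_i$ to a twisted sheaf isomorphic to the restriction of $\widetilde{\E}_{i+1}$ up to tensoring by a line bundle pulled back from the moduli factor. Applying the two-variable $\SheafExt^1$ construction then identifies the restriction of $\E_i$ with $(\phi_i\times\phi_i)^*$ of the restriction of $\E_{i+1}$, up to tensoring by line bundles pulled back from the two factors, which is invisible to $\SheafEnd$. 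For a second-flavor $\phi_i$, the same equivalence composed with derived dualization sends the restriction of $\widetilde{\E}_i$ to the dual of the restriction of $\widetilde{\E}_{i+1}$; substituting into the $\SheafExt^1$ formula, and using Grothendieck--Serre duality along $p_{13}$ (whose relative canonical sheaf is trivial), identifies the restriction of $\E_i$ with $(\phi_i\times\phi_i)^*\tau^*$ of the restriction of $\E_{i+1}$, that is, with $(\phi_i\times\phi_i)^*$ of the dual. In either case the unordered pair of reflexive Azumaya algebras is preserved by $\phi_i$, possibly with the two entries swapped in the second case.

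Gluing the $T_i$-families along these identifications produces a flat family of irreducible holomorphic symplectic manifolds over a reduced, connected, possibly reducible base, carrying a flat relative reflexive sheaf of Azumaya algebras whose fibers at the two endpoints recover the pairs for $\M$ and for $\M'$, respectively. The Azumaya-algebra multiplications deform along with the underlying reflexive sheaves because $\SheafEnd$ of a relative reflexive twisted sheaf carries a canonical associative unital product that specializes fiberwise. The main obstacle is the precise verification in the transition step that Yoshioka's Fourier--Mukai equivalences intertwine the two-variable $\SheafExt^1$ construction on one side with itself (first flavor) or with its ``transpose-dual'' (second flavor); this reduces to tracing the image of the object $\F$ of equation \eqref{eq-object-F} under convolution with the FM kernel, using exactness of the equivalence and that the fiberwise flatness of $\widetilde{\E}_i$ keeps $\F$ concentrated in the expected cohomological degrees on both sides, so that only the rank $2n-2$ reflexive summand (and not the diagonal contribution $\StructureSheaf{\Delta}$) needs to be carefully tracked.
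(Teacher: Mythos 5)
Your proposal follows essentially the same route the paper takes: the paper offers no separate proof of this lemma beyond the two preceding paragraphs, asserting that it "follows from Yoshioka's work" by concatenating the families $\M_{\S_i/T_i}\to T_i$, carrying the relative $\SheafExt^1$ sheaf along each family, and observing that the first-flavor transitions preserve the universal sheaf up to a line-bundle twist (invisible to $\SheafEnd$) while the second-flavor transitions dualize it and hence swap the two members of the unordered pair $\{\SheafEnd(\E),\SheafEnd(\E^*)\}$. Your write-up is a faithful, somewhat more detailed expansion of exactly this argument, including the Grothendieck--Serre duality step that the paper uses elsewhere (in the form $\tau^*\F=\F^\vee$ and $\tilde{\tau}^*V\cong V^*$ of Proposition \ref{prop-V}).
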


%**********
% Hide
%**********
\hide{
%***************************************************
%
%***************************************************
\section{The normal cone to the singular locus of $\M_H(2v)$}
\label{sec-normal-cone}
We formulate in Section \ref{sec-deformations-of-singularities}
a problem about deformations of singular moduli spaces
of sheaves over a $K3$ surface (Problem 
\ref{prob-deformations-of-singular-moduli-remain-singular}).
In Section 
\ref{sec-deformations-of-P-B-via-deformations-of-singularities} 
we indicate a reduction of Problem
\ref{problem-lifting-deformations-to-deformations-of-pairs} to Problem 
\ref{prob-deformations-of-singular-moduli-remain-singular}.

%***************************************************
%
%***************************************************
\subsection{Deformations of $\M_H(2v)$}
\label{sec-deformations-of-singularities}
Let $v\in K_{top}S$ be the class of the ideal sheaf of a length $n$ 
subscheme of a $K3$ surface $S$, so that $\M(v)=S^{[n]}$. 
Assume that $n\geq 2$. Choose a $v$-generic polarization $H$.
The equivalence class, aka S-equivalence class, of
an $H$-semistable sheaf, is the isomorphism class of the 
associated graded sheaf, with respect to the Harder-Narasimhan filtration.
Let $\M_H(2v)$ be the moduli space of equivalence classes of
$H$-semistable sheaves over $S$, with class $2v$. 
Then
$\M_H(2v)$ is an irreducible locally factorial singular symplectic variety
\cite{kaledin-lehn-sorger}.
The singularities of $\M_H(2v)$ determine a stratification
\[
\M_H(2v) \ \supset \ \M(2v)_{sing} \ \supset \ \M(v),
\]
$\M(2v)_{sing}$ is isomorphic to $\Sym^2\M(v)$, and 
$\M(v)\hookrightarrow \Sym^2\M(v)$ is the diagonal embedding.
A point in $\M(2v)_{sing}$ corresponds to an $S$-equivalence class 
of the direct sum $I_{Z_1}\oplus I_{Z_2}$ of two ideal sheaves, with
$Z_j$, $j=1,2$, a length $n$ subscheme of $S$.

Let $\Y\rightarrow Def(\M_H(2v))$ be the semi-universal family over 
the local Kuranishi deformation space  of $\M_H(2v)$. 
%Let $\C$ be the projectivized normal cone of $\Y_{sing}$ in $\Y$, and
%$p:\C\rightarrow Def(\M_H(2v))$ the natural morphism. 
%Given a point $c\in \C$, denote by $Y_c$ the fiber of $\Y$ over
%$p(c)$. 

\begin{problem}
\label{prob-deformations-of-singular-moduli-remain-singular}
Assume $n\geq 3$. 
\begin{enumerate}
\item
Show that a fiber $Y$, over a generic point of $Def(\M_H(2v))$, 
is singular, with a stratification
\begin{equation}
\label{eq-stratification-of-Y}
Y \ \supset \ Y_{sing} \ \supset \ X,
\end{equation}
where the reduced singular locus $Y_{sing}$ is isomorphic to $\Sym^2(X)$, 
$X$ is an irreducible holomorphic symplectic manifold, deformation
equivalent to $S^{[n]}$, and the inclusion $X\subset Y_{sing}$
is the diagonal embedding. 
\item
\label{problem-item-local-triviality-of-deformation}
The deformation $p:\Y_{sing}\rightarrow Def(\M_H(2v))$  
is locally trivial over a dense open subset $Def^0(\M_H(2v))$
of $Def(\M_H(2v))$, 
containing the point corresponding to $\M_H(2v)$. 
Local triviality means,
that given a point $y\in \Y_{sing}$, 
there exists an analytic open 
neighborhoods $U$ of $y$ in $\Y_{sing}$, $U_1$ of $p(y)$ in 
$Def^0(\M_H(2v))$, and $U_2$ of $y$ in the fiber $Y_{sing}$ over $p(y)$, 
and an isomorphism $U\cong U_1\times U_2$, 
which conjugates $p$ to the projection onto $U_1$. 
\item
\label{problem-item-local-homeomorphism}
The morphism 
\begin{equation}
\label{eq-local-homeomorphism-of-deformation-spaces}
Def^0(\M_H(2v)) \ \ \ \rightarrow \ \ \ Def(\M(v)),
\end{equation} 
sending $Y$ in (\ref{eq-stratification-of-Y}) to the diagonal stratum $X$ 
of $Y_{sing}$, is a local-homeomorphism.
\end{enumerate}
\end{problem}

Note: When $n=2$, the moduli space $\M_H(2v)$ admits a holomorphic-symplectic 
resolution \cite{ogrady-10-dimensional}. Hence, $\M_H(2v)$ admits
also a smoothing by deformations
\cite{namikawa-deformations}, and the dimension of its deformation
space is thus larger than that of $\M_H(v)$. Problem 
\ref{prob-deformations-of-singular-moduli-remain-singular} has
a natural analogue, replacing $Def(\M_H(2v))$ by the hypersurface in
$Def(\M_H(2v))$, where the singularity is not resolved. 

Namikawa developed a deformation theory for singular symplectic varieties
\cite{namikawa-deformations,namikawa-extension,namikawa-Q-factorial}. 
His results imply that $Def(\M_H(2v))$
is smooth \cite[Theorem 2.5]{namikawa-deformations}. 
He furthermore shows that the infinitesimal deformation space of 
$\M_H(2v)$ is equal to that of the complement 
$U:=\M_H(2v)\setminus \M_H(2v)_{sing}$ (part of the statement 
of \cite[Prop. 2.1]{namikawa-deformations}. Proposition 2.1
assumes the existence of a symplectic resolution, 
but the part we mention is used later in the proof of Theorem 2.5,
where such a resolution is not assumed to exist).
Lemma 2.7 in \cite{namikawa-deformations}, 
about the degeneration of the Hodge-to-de-Rham spectral sequence 
for $H^2(U,\ComplexNumbers)$, is key. It is used
in \cite{namikawa-extension} to define a period map for the weight $2$ 
Hodge structure of 
deformations of $\M_H(2v)$ and prove the local Torelli Theorem. 
A significant part of problem 
\ref{prob-deformations-of-singular-moduli-remain-singular}
is resolved in \cite{namikawa-Q-factorial}, 
where the local triviality is proven.
%Having local Torelli, 
%the proof of part \ref{problem-item-local-homeomorphism}
%of Problem \ref{prob-deformations-of-singular-moduli-remain-singular}
%should be easy (assuming the previous two parts). 

Namikawa's results provide a morphism, from the moduli space
of (marked ???) deformations of $\M(2v)$ to the moduli space
of  (marked, do the marking data match ???) deformations of $\M(v)$. 
A method for constructing the inverse,
is suggested in Section 
\ref{sec-moduli-spaces-of-sheaves-over-moduli-spaces}.

%***************************************************
%
%***************************************************
\subsection{Outline of the reduction of problem
\ref{problem-lifting-deformations-to-deformations-of-pairs} to problem 
\ref{prob-deformations-of-singular-moduli-remain-singular}}
\label{sec-deformations-of-P-B-via-deformations-of-singularities}

In Section
\ref{sec-normal-cone-of-singularity}
we review O'Grady's computation of the normal cone of the singular locus 
of $\M_H(2v)$. In Section 
\ref{extracting-dual-pair-of-projective-bundles-from-data-of-normal-cone}
we extract the dual pair of projective bundles $\{\PP_B,\PP^*_B\}$,
over the blow-up $B$ of the diagonal in $X\times X$, 
from the geometry of the singular locus of $Y$, for $(Y,X)$ as in Problem
\ref{prob-deformations-of-singular-moduli-remain-singular}.

%***************************************************
%
%***************************************************
\subsubsection{O'Grady's results on the structure of the normal cone}
\label{sec-normal-cone-of-singularity}
Set
\begin{eqnarray*}
\Sigma & := & \M(2v)_{sing}\setminus \M(v),
\\
\widetilde{\Sigma} & := & [\M(v)\times \M(v)]\setminus \Delta_{\M(v)}.
\end{eqnarray*}
Then $\widetilde{\Sigma}\rightarrow \Sigma$ is a double cover
and we let $\tau: \widetilde{\Sigma}\rightarrow \widetilde{\Sigma}$ 
denotes its Galois involution. 
Denote by $V$ the restriction to $\widetilde{\Sigma}$ of the vector bundle
(\ref{eq-vector-bundle-over-B}).
Let $\tilde{q}\in \Sym^2[V\oplus V^*]^*$ be the symmetric bilinear form 
$\tilde{q}(x,y)=y(x)$
and $C_{\widetilde{\Sigma}}\subset \PP{V}\times_{\widetilde{\Sigma}}\PP{V}^*$
the subscheme defined by $\tilde{q}=0$. 
A fiber of $C_{\widetilde{\Sigma}}$, over
$\sigma\in \widetilde{\Sigma}$, is the incidence divisor
\begin{equation}
\label{eq-incidence-divisor-Q}
\Q \ \ \ \subset \ \ \ [\PP^{2n-3}\times (\PP^{2n-3})^*].
\end{equation}
When $n=2$, $\PP{V}$ is a $\PP^1$-bundle, hence self-dual, 
and $C_{\widetilde{\Sigma}}$ is the graph of the isomorphism 
$\PP{V}\cong \PP{V}^*$.

The pullback $\tau^*V$ is isomorphic to $V^*$. 
Thus, the vector bundle $V\oplus V^*$, the quadratic form $\tilde{q}$, 
the fiber product $\PP{V}\times_{\widetilde{\Sigma}}\PP{V}^*$, 
and its subvariety $C_{\widetilde{\Sigma}}$, descend
to a vector bundle over $\Sigma$ with a quadratic form $q$, a 
$\PP^{2n-3}\times (\PP^{2n-3})^*]$-bundle $\P$ over $\Sigma$, and 
a subvariety 
\[
C_\Sigma \ \ \ \subset \ \ \ \P.
\]

\begin{prop}
\cite[Prop. 1.4.1 and Theorem 1.2.1]{ogrady-10-dimensional}
$C_\Sigma$ is isomorphic to the projectivized 
normal cone of $\Sigma$ in $\M_H(2v)$. 
\end{prop}

\medskip
%***************************************************
%
%***************************************************
\subsubsection{Extracting the data $\{\PP_B,\PP_B^*\}$ from 
the projectivized normal cone $C_{Y_{sing}}$}
\label{extracting-dual-pair-of-projective-bundles-from-data-of-normal-cone}

Assume a solution to problem 
\ref{prob-deformations-of-singular-moduli-remain-singular}. 
Let $X$ be an irreducible holomorphic symplectic manifold, 
parametrized by a point $[X]$ in $Def(S^{[n]})$ in the
image of a point $[Y]$ in $Def^0(\M_H(2v))$ via the local-homeomorphism 
(\ref{eq-local-homeomorphism-of-deformation-spaces}). 
Let $\Sigma:=[\Sym^2X]\setminus \Delta$ and 
$\widetilde{\Sigma}:=[X\times X]\setminus \Delta$
be the complements of the diagonals.
Then $\Sigma$ is a stratum in $Y_{sing}$ and 
the projectivized normal cone $C_\Sigma$, of 
$\Sigma$ in $Y$, is a $\Q$-bundle, where $\Q$ is the
incidence divisor (\ref{eq-incidence-divisor-Q}),
by part \ref{problem-item-local-triviality-of-deformation}
of Problem  \ref{prob-deformations-of-singular-moduli-remain-singular}.

If $n=2$, the pullback of $C_\Sigma$ to 
$\widetilde{\Sigma}$ is the $\PP^1$-bundle 
we are looking for. Assume $n\geq 3$.  
Then the relative Picard sheaf 
$\Pic(C_\Sigma/\Sigma)$ is a $\Integers\oplus\Integers$
local system, and it has a canonical double section,
whose value at a point $\sigma\in\Sigma$ is the pair of two generators
of the effective cone of the fiber $\Q$ of $C_\Sigma$ over $\sigma$.
The double section is connected, as it is connected in the case 
$\Sigma=\M(2v)_{sing}\setminus\M(v)$.
(Use the local triviality of the deformation, stated in  
Problem  \ref{prob-deformations-of-singular-moduli-remain-singular}, 
to construct a fibration over a disk with the 
double section as a fiber. 
Conclude that the fundamental group of each double section is isomorphic to
that of the total space of the fibration).
Hence the double section is  isomorphic to the universal cover 
$\widetilde{\Sigma}$ of $\Sigma$.
Let $C_{\widetilde{\Sigma}}$ be the fiber product 
$C_{\Sigma}\times_\Sigma\widetilde{\Sigma}$. Then
$\Pic(C_{\widetilde{\Sigma}}/\widetilde{\Sigma})$ 
has an unordered pair of two sections $\{L_1,L_2\}$
(ordered by a choice of identification of 
$\widetilde{\Sigma}$ with the double section). 
Each $L_i$ determines a $\PP^{2n-3}$-bundle $\PP_i$ over
$\widetilde{\Sigma}$ (of linear systems along the fibers),
and a morphism
$\eta_i:C_{\widetilde{\Sigma}}\rightarrow \PP_i^*$
($\eta_1$, $\eta_2$ are the two rulings of $C_{\widetilde{\Sigma}}$).
The embedding 
\[
(\eta_1,\eta_2) \ : \ C_{\widetilde{\Sigma}} \ \ \ \longrightarrow \ \ \ 
\PP_1^*\times \PP_2^*
\]
determines an isomorphism 
$\PP_1\cong \PP_2^*$. 
When $X=\M(v)\cong S^{[n]}$ and $Y=\M_H(2v)$, 
the dual pair $\{\PP_1,\PP_2\}$ is precisely the restriction of the pair
$\{\PP{V},\PP{V}^*\}$, given in (\ref{eq-vector-bundle-over-B}), from $B$ to 
$\widetilde{\Sigma}$.

\begin{lem}
The construction of the pair $\{\PP_1,\PP_2\}$ extends canonically 
to a dual pair of projective bundles $\{\PP_B,\PP_B^*\}$
over the blow-up $B$ of 
$\Delta$ in $X\times X$. 
\end{lem}

The lemma is related to the general Theorem 6.1 in
\cite{kaledin-verbitski-book}.

\begin{proof}
The bundles $\PP_1$ and $\PP_2$ determine a pair $E'_1, E'_2$ 
of $\theta$ and 
$\theta^{-1}$-twisted locally free sheaves over $\widetilde{\Sigma}$,
for some \v{C}ech $2$-cocycle $\theta$ of 
$\StructureSheaf{\widetilde{\Sigma}}^*$. 
The diagonal $\Delta$ has codimension $\geq 4$ in $X\times X$. 
We may assume that $\theta$ is the restriction of 
a $2$-cocycle for some open covering of $X\times X$, by 
the isomorphism $H^2(X\times X,\StructureSheaf{}^*)\cong 
H^2(\widetilde{\Sigma},\StructureSheaf{}^*)$.
Thus the dual pair of twisted sheaves extends uniquely to a 
dual pair of reflexive twisted sheaves $E_1, E_2$ 
over $X\times X$ (Lemma \ref{lemma-push-forward-is-reflexive}). 
When $X=S^{[n]}$, the singularities of these
reflexive sheaves are resolved over $B$; $E_i$ is the push-forward of a 
locally free 
sheaf over $B$, by Proposition
\ref{prop-V}. (??? make the following more precise ???)
The local triviality of the deformation of  $Y_{sing}$, stated in Problem
\ref{prob-deformations-of-singular-moduli-remain-singular}, 
implies that the singularities of $E_i$ are again resolved over $B$
(except that those locally free sheaves are now  
$\beta^*(\theta)$ and $\beta^*(\theta^{-1})$ twisted). 
\end{proof}

\begin{lem}
\label{lemma-push-forward-is-reflexive}
Let $X$ be a smooth complex submanifold of a complex manifold 
$Y$ and $E'$ a locally free sheaf of 
$\StructureSheaf{Y'}$-modules over the complement 
$Y':=Y\setminus X$.
Let $\iota:Y'\rightarrow Y$ be the inclusion.  
Assume that the codimension of $X$ in $Y$ is $\geq 2$. Then
the push-forward $E:=\iota_*E'$ is a reflexive coherent sheaf of
$\StructureSheaf{Y}$-modules.
\end{lem}

\begin{proof}
(Replace the proof by a citation)
The question is local analytic, and so we may assume $X$ and $Y$ are 
projective. We can then work in the Zariski topology. 
Given a point $x\in X$,   
and an open neighborhood $U$ of $x$,
the $\StructureSheaf{U}$-module $E(U)$ is, by definition, $E'(U\cap Y')$. 
Set $U':=U\cap Y'$. 
After shrinking $U$ to a sufficiently small open subset containing $x$, 
the sheaf $E'$ can be generated by global sections. Hence,
there is a surjective homomorphism 
$f':\StructureSheaf{U'}^{\oplus N}\rightarrow E'$, 
for some positive integer $N$. The homomorphism $f$ extends to 
a surjective homomorphism 
$f:[f_*\StructureSheaf{U'}^{\oplus N}]\rightarrow E$,
by definition of $E$. Now $f_*\StructureSheaf{U'}$ is naturally isomorphic 
to $\StructureSheaf{U}$, by the codimension assumption.
We conclude that $E$ is a coherent sheaf, which is clearly torsion free. 
$E^*:=\SheafHom_{\StructureSheaf{Y}}(E,\StructureSheaf{Y})$ is equal to 
$\iota_*((E')^*)$, by the codimension assumption again.
We conclude that $E$ is reflexive.
\end{proof}

\begin{problem}
Show that the restriction of $\PP_B$ to the exceptional divisor $\PP{T\Delta}$
in $B$ is the projectivized tautological sub-quotient 
(\ref{eq-tautological-sub-quotient}).
\end{problem}

%***************************************************
%
%***************************************************
\subsection{$\M(2v)$ as a moduli space of sheaves over $\M(v)$}
\label{sec-moduli-spaces-of-sheaves-over-moduli-spaces}

The stability result proven below in
Theorem \ref{thm-E-is-slope-stable} suggests that
the moduli space $\M(v)$ can be realized as a moduli 
space of twisted sheaves over itself. 
It is natural to try to construct $\M(2v)$ as a moduli space of 
semistable twisted sheaves over $\M(v)$. 

Set $D(S):=D^b_{Coh}(S)$. 
Let $\Phi: D(S) \rightarrow D(\M(v),\theta)$ be the exact functor induced by 
the twisted universal sheaf. Given a stable sheaf $F$ on $S$, 
we need to calculate the dimension of 
$\Ext^1(\Phi(F),\Phi(F))$. 
Let $\pi_{ij}$ be the projection
from $S\times \M(v)\times S$ onto the product of the 
$i$-th and $j$-th factors. 
The calculation of $\Ext^i(\Phi(F),\Phi(F))$ can be reduced to one on 
$S$, if we can calculate the object 
\[
R\pi_{13_*}\left[\R\SheafHom(\pi_{12}^*\E_v,\pi_{23}^*\E_v)
\right]
\]
in $D(S\times S)$. 

When $\M(v)$ is the Hilbert scheme $S^{[n]}$, we have the 
equivalences
\[
D^G(S^n)\cong D({\rm Hilb}^G(S^n))\cong D(S^{[n]}),
\]
$G=\Sym_n$.
The first is given by Bridgeland-King-Ried, and the second by the
isomorphism ${\rm Hilb}^G(S^n)\cong S^{[n]}$, due to Haiman.
The functor $\Phi$ composes to yield the functor 
\[
\Phi' \ : \ D(S) \rightarrow D^G(S^n).
\]
The latter should be computed!!! 
In particular, it should 
be equivariant with respect to the diagonal action of $\Aut D(S)$. 
Similarly, we have the functor $\Phi''$, associated to the structure
sheaf of the universal subscheme, instead of its ideal sheaf.
It is natural to guess that $\Phi''$ maps a sheaf $F$ on $S$ to the 
$G$-invariant 
subsheaf of $\oplus_{i=1}^n\pi_i^*(F)$. 
Given an object $F$ of $D(S)$ we have 
the complex $H^*(S,F)\otimes\StructureSheaf{S} \RightArrowOf{ev} F$,
where $H^*(F)\otimes\StructureSheaf{S}$ is the pullback of the derived
push-forward of $F$ to a point. We have an analogous 
complex 
\[
H^*(S,F)\otimes\StructureSheaf{S^n} \RightArrowOf{ev} 
\left(\oplus_{i=1}^n\pi_i^*(F)\right)^G.
\]
The above should be the image of $F$ via $\Phi'$.

We have an equivalence of derived categories
\[
D^G(S\times S^n) \ \ \ \cong \ \ \ D(S\times S^{[n]}),
\]
where $G$ acts on the last $n$ factors. Consequently, we have an object
\[
K_n \ \ \ \in \ \ \ D^G(S\times S^n),
\] 
which corresponds to the universal ideal sheaf. 
Fourier-Mukai transform with respect to $K_n$ induces the functor $\Phi'$.
$K_n$ should be computed!!! 
It is natural to guess that $K_n$ is the ideal sheaf of the 
reduced union of the graphs of the $n$ projections from 
$S^n$ to $S$. It is then also the $G$-invariant subsheaf of 
the direct sum of the ideal sheaves of these graphs.

Projection from $S^n$ onto the first factor induces a natural 
functor 
\[
R\pi_{i_*} \ : \ D^G(S^n) \rightarrow D(S).
\]
The composition $R\pi_{i_*}\circ \Phi'': \ D(S)\rightarrow D(S)$
maps the structure sheaf $\StructureSheaf{S}$ to the 
$G$-equivariant push-forward of
$\StructureSheaf{S^n}$. Computing the ordinary push-froward inductively, 
by projection from 
$S^i$ to $S^{i-1}$, we see, that it is quasi-isomorphic to 
\[
\StructureSheaf{S} \ \oplus \ 
\Choose{n\!-\!1}{1}\StructureSheaf{S}[-2] \ \oplus \ 
\Choose{n\!-\!1}{2}\StructureSheaf{S}[-4] \ \oplus \ 
\cdots \ \oplus \ 
\Choose{n\!-\!1}{n-2}\StructureSheaf{S}[4-2n] \ \oplus \ 
\StructureSheaf{S}[2-2n].
\]
Taking the $\Sym_{n-1}$ invariant subcomplex, we get
\[
\StructureSheaf{S} \ \oplus \ 
\StructureSheaf{S}[-2] \ \oplus \ \cdots \ \oplus \ 
\StructureSheaf{S}[2-2n].
\]

Index the $n+1$ factors of $S\times S^n$ by $0\leq i \leq n$ 
and let $\pi_{ij}$ be the projection onto the product of the $i$-th
and $j$-th factors. We get the functors
\[
R\pi_{0i_*} \ : \ D^G(S\times S^n) \rightarrow D(S\times S),
\]
which are independent of $1\leq i\leq n$.
Compute the ordinary push-froward of $K_n$ 
(or its derived dual ???)
inductively, 
by projection from 
$S^i$ to $S^{i-1}$.
%***********
% End \hide
%***********
}

%***************************************************
%
%***************************************************
\section{Hyperholomorphic sheaves}
\label{sec-hyperholomorphic-sheaves}

We review
%describe a likely solution of Problem 
%\ref{problem-lifting-deformations-to-deformations-of-pairs}, using 
Verbitsky's theory of hyperholomorphic reflexive sheaves
\cite{kaledin-verbitski-book}. It plays a central role in the proof of Theorem 
\ref{thm-main-introduction}.
%************
% Hide
%************
%\hide{
%The only step 
%missing  is an extension of Verbitsy's theory to {\em twisted} 
%reflexive sheaves needed in the proof of Theorem
%\ref{thm-deformations-of-E-along-twistor-paths}. 
%Both the case of reflexive sheaves, and the case of
%twisted locally free sheaves (projective bundles), are already carried out 
%in \cite{huybrechts-schroer,verbitsky-short-proof-of-stability}.
%It is the merging of both cases which is yet to be worked out. 
%***********
% End \hide
%***********
%}
%We have ``shown'' in Section 
%\ref{extracting-dual-pair-of-projective-bundles-from-data-of-normal-cone},
%that a solution to Problem 
%\ref{prob-deformations-of-singular-moduli-remain-singular} leads to a 
%deformation of the triple $(S^{[n]},B,\{\PP{V},\PP{V^*}\})$ 
%over an open subset of $0$ in the deformation space $Def(S^{[n]})$. 
%In particular, $\PP{V}$ deforms along an open subset of 
%any twistor line of $S^{[n]}$. 
%We study global deformations, along twistor lines, in this section. 

%***************************************************
%
%***************************************************
\subsection{Twistor deformations of pairs}
\label{sec-twistor-deformations-of-pairs}
Let $X$ be an irreducible holomorphic-symplectic manifold, $\omega$
a K\"{a}hler class of $X$, and $\X\rightarrow \PP^1_\omega$
the associated twistor deformation \cite{HKLR,huybrects-basic-results}. 
Recall that associated to $\omega$ and the complex structure $I$ 
is a Ricci-flat hermitian metric $g$, by the Calabi-Yau theorem 
\cite{beauville-varieties-with-zero-c-1}. 
Furthermore, any two among $I$, $\omega$, 
and $g$, determine the third.
The twistor deformation $\X\rightarrow \PP^1_\omega$
comes with a canonical differentiable trivialization $\X\cong X\times \PP^1_\omega$.
Let $\psi:\X\rightarrow X$ be the first projection.
The Riemannian metric on $X$ is constant with respect to this trivialization, 
but the complex structure $I_t$ and the associated K\"{a}hler form 
$\omega_t$ vary as we vary $t\in \PP^1_\omega$. 
We denote by $X_t$ the differentiable manifold $X$ endowed with
the complex structure $I_t$.
We denote by $0\in \PP^1_\omega$ the point corresponding to 
the complex structure $I$ on $X$.

Let $F$ be a reflexive sheaf on $X$ and $(F)_{sing}$ the singular locus of 
$F$. Then $(F)_{sing}$ has codimension $\geq 3$ in $X$.
Set $(F)_{sm}:=X\setminus (F)_{sing}$. 
Let $g_F$ be a hermitian metric on the restriction of $F$ to 
$(F)_{sm}$. Associated to $g_F$ and the holomorphic
structure $\bar{\partial}$ of $F$ is the Chern connection $\nabla$ 
\cite[Ch. 0 Sec. 5, Lemma page 73]{GH}. 
%and its curvature $\Theta_\nabla:=\nabla^2$, 
%which is a section of the sheaf
%$A^2(\End(F)_{sm})$ of $C^\infty$ $2$-forms valued endomorphisms 
%of the restriction of $F$ to $(F)_{sm}$.
Recall that 
%any two of $(g_F,\bar{\partial},\nabla)$ determine the third. In particular, 
$\bar{\partial}$ is the $(0,1)$-part of $\nabla$.
The decomposition $T^*_{\ComplexNumbers}X:=T^{1,0}X\oplus T^{0,1}X$, 
of the complexified cotangent bundle of $X$,
depends on the complex structure $I$ of $X$. 

When the sheaf $F$ is $\omega$-slope-stable, then there exists a unique
Hermite-Einstein metric $g_F$, whose curvature form is $L^2$-integrable,
on the restriction of $F$ to $(F)_{sm}$ \cite{bando-siu}. 
We will refer to $g_F$ as the {\em Hermite-Einstein metric}
of $F$ and to its Chern connection as the {\em Hermite-Einstein connection}
of $F$. Denote by $\bar{\partial}_t$, $t\in  \PP^1_\omega$,
the $(0,1)$-part of $\nabla$ with respect to the complex structure $I_t$. 
Then $\bar{\partial}_0^2=0$, but $\bar{\partial}_t^2$ need not vanish for 
a general $t\in \PP^1_\omega$.

\begin{defi}
\label{def-hyperholomorphic-sheaf}
\cite[Def. 3.15]{kaledin-verbitski-book}
An $\omega$-slope-stable reflexive sheaf $F$ over $(X,\omega)$ is 
{\em $\omega$-stable-hyperholomorphic}, if $\bar{\partial}_t^2=0$, 
for all $t\in \PP^1_\omega$.
An $\omega$-slope-polystable reflexive sheaf 
$F$ is {\em $\omega$-polystable-hyperholomorphic}, 
if each $\omega$-slope-stable direct summand of $F$ 
is $\omega$-stable-hyperholomorphic.
\end{defi}

The $\omega$-slope of an $\omega$-polystable-hyperholomorphic reflexive sheaf is zero, by \cite[Rem. 3.12]{kaledin-verbitski-book}.
\begin{rem}
\label{rem-su-invariance}
Note that the condition $\bar{\partial}_t^2=0$, 
for all $t\in \PP^1_\omega$, in the above definition is equivalent to the $SU(2)$-invariance of the curvature of $\nabla$ appearing in
\cite[Def. 3.15]{kaledin-verbitski-book}. The equivalence follows from \cite[Lemma 2.6]{kaledin-verbitski-book}.
\end{rem}

\begin{defi} 
\label{def-trianalytic}
\cite[Def. 2.9]{kaledin-verbitski-book}
A subvariety $Z$ of $X$ is {\em $\omega$-tri-analytic}, if the canonical 
differentiable trivialization $\X\cong X\times \PP^1_\omega$
maps $Z\times \PP^1_\omega$ to a closed analytic subvariety of $\X$.
\end{defi}

Verbitsky proves that the singularity locus $Z:=(F)_{sing}$, of a reflexive 
hyperholomorphic sheaf, is supported over a 
tri-analytic\footnote{Note that $Z$ is tri-analytic if and only if
$Z$ is analytic with respect to $I_t$, for all $t\in \PP^1_\omega$. The `only if'
direction is clear. The `if' direction follows from the following fact.
Given points $t\in\PP^1_\omega$ and $x\in X$, we get the direct sum decomposition
$T_{(x,t)}\X=T_t\PP^1_\omega\oplus T_xX$, of the real tangent space, induced by the
differentiable trivialization of $\X$. The relevant fact is that both summands are 
complex subspaces, even though the 
projection $\X\rightarrow X$ 
%pulls back $T_\RealNumbers^* X$ to a complex subbundle of
%$T_\RealNumbers^*\X$, even though $\pi_1$ 
is not holomorphic \cite[formula (3.71)]{HKLR}.}
%\footnote{In
%the application considered in our paper the singular locus $(F)_{sing}$
%will consist of a single point. 
%The section of $\X\rightarrow \PP^1_\omega$ corresponding to a point of $X$ 
%is indeed an analytic section, by \cite{HKLR}, Theorem 3.3.
%We will also consider 
%reflexive sheaves on the the product $X\times X$, singular along the 
%diagonal.} 
subvariety of $X$
\cite[Claim 3.16]{kaledin-verbitski-book}. 
The complex structure $\bar{\partial}_t$ on $F$ defines a locally free 
$\StructureSheaf{X_t}$-module over $X_t\setminus Z$.
We denote by $F_t$ the reflexive sheaf on $X_t$ corresponding to the 
push-forward of the latter via the inclusion into $X_t$. In particular, $F_0=F$.
The pushforward  $F_t$ is a reflexive coherent sheaf, by the Main Theorem of \cite{siu}, since the complex codimention of $(F)_{sing}$ is $\geq 3$. The sheaf $F_t$  is $\omega_t$-polystable, by \cite[Prop. 3.17]{kaledin-verbitski-book}.
The pullback $\psi^*(\restricted{F}{X\setminus Z})$ to $\X\setminus [Z\times \PP^1_\omega]$, of the vector bundle associated to the restriction of $F$ to its locally free locus, is endowed with the pulled back connection, whose $(0,1)$ part is
an integrable complex structure, by \cite[Lemma 5.1]{kaledin-verbitsky-non-hermetian}. Its push-forward to $\X$ is a reflexive coherent sheaf $\F$, by the Main Theorem of \cite{siu}.   $F_t$ is the reflexive hull of the quotient of the restriction of $\F$ to the fiber of $\X$ over $t\in \PP^1_\omega$ by its torsion subsheaf, as the two agree away from $Z$. 
The following is a fundamental result of Verbitsky.

\begin{thm}
\label{thm-hyperholomorphic-iff-slope-stable}
\cite[Theorem 3.19]{kaledin-verbitski-book}
Let $E$ be an $\omega$-slope-stable reflexive sheaf on $X$.
Assume that $c_i(E)$ is of Hodge type $(i,i)$, for $i=1,2$, 
and for all complex structures parametrized by the twistor line $\PP^1_\omega$. 
Then $E$ is $\omega$-stable hyperholomorphic. 
\end{thm}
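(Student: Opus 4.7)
The plan is to exploit the $SU(2)$-symmetry intrinsic to the hyperk\"ahler structure. The twistor family $\X\rightarrow \PP^1_\omega$ induces an $SU(2)$-action on $\Lambda^\bullet T^*_\RealNumbers X$ and on $H^*(X,\RealNumbers)$, and a real class in $H^{2k}(X,\RealNumbers)$ is $SU(2)$-invariant if and only if it is of Hodge type $(k,k)$ with respect to $I_t$ for every $t\in \PP^1_\omega$. Hence the hypothesis on $c_1(E)$ and $c_2(E)$ is equivalent to their $SU(2)$-invariance as cohomology classes. I would begin by invoking \cite{bando-siu} to equip $E$ with its Hermite-Einstein metric $g_F$ on $(E)_{sm}$, producing the Chern connection $\nabla$ with $L^2$-integrable curvature $R$. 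The theorem then reduces to showing that $R$, viewed as an $\End(E)$-valued real $2$-form, is pointwise $SU(2)$-invariant on $(E)_{sm}$: the $(0,2)$-component of $R$ with respect to every $I_t$ would then vanish, which is precisely the condition $\bar{\partial}_t^2=0$ of Definition \ref{def-hyperholomorphic-sheaf}.

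To prove this pointwise $SU(2)$-invariance, I would orthogonally split $R=R^{\mathrm{inv}}+R^{\mathrm{ninv}}$ along the decomposition of $\Lambda^2T^*X$ into $SU(2)$-invariant and $SU(2)$-non-invariant parts. The Hermite-Einstein equation $\Lambda_\omega R=\lambda\cdot id$ constrains a scalar component of $R^{\mathrm{inv}}$ but says nothing directly about $R^{\mathrm{ninv}}$. The core step, following Verbitsky, is a pointwise algebraic identity specific to hyperk\"ahler geometry, expressing the nonnegative top-degree form $|R^{\mathrm{ninv}}|^2\cdot \mathrm{vol}_X$ as a differential-geometric polynomial in the curvature whose de Rham class is a pairing of Chern-Weil representatives of $c_1(E)$ and $c_2(E)$ against an $SU(2)$-invariant class on $X$. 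Integrating over $X$, the total $L^2$-norm of $R^{\mathrm{ninv}}$ becomes such an intersection number involving $c_1(E)$ and $c_2(E)$; because the latter are $SU(2)$-invariant by hypothesis, a representation-theoretic computation --- ultimately built on the fact that the $SU(2)$-non-invariant part of any $SU(2)$-equivariant cohomological identity vanishes --- forces this number to be zero. Nonnegativity of the integrand then yields $R^{\mathrm{ninv}}\equiv 0$.

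The main obstacle is the singular locus $(E)_{sing}$: the connection is defined only on $(E)_{sm}$, whose complement has codimension as small as $3$, so the integration over $X$ above must really be performed on $(E)_{sm}$ with careful control near the singular set. This is where the $L^2$-integrability of $R$ supplied by \cite{bando-siu} is essential: combined with the codimension-$\geq 3$ bound on $(E)_{sing}$, it permits a cutoff-function argument showing that no boundary contribution arises at the singular locus. Once $R^{\mathrm{ninv}}\equiv 0$ on $(E)_{sm}$, each $\bar{\partial}_t$ is an integrable Cauchy-Riemann operator on the locally free restriction of $E$ to $(E)_{sm}$; Newlander-Nirenberg, together with unique reflexive extension across the codimension-$\geq 3$ singularity, then produces the reflexive sheaves $E_t$ on $X_t$ implicit in Definition \ref{def-hyperholomorphic-sheaf}.
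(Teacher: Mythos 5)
The paper offers no proof of this statement: it is quoted verbatim from Verbitsky (\cite{kaledin-verbitski-book}, Theorem 3.19), so there is no in-paper argument to compare yours against. Judged against Verbitsky's actual proof, your outline has the right architecture: the equivalence between ``type $(k,k)$ for all $I_t$'' and $SU(2)$-invariance of the class, the admissible Hermite--Einstein metric with $L^2$ curvature from \cite{bando-siu}, the splitting of the curvature into $SU(2)$-invariant and non-invariant parts, the cutoff argument over the codimension $\geq 3$ singular locus, and the reflexive extension of the deformed holomorphic structures via Newlander--Nirenberg are all genuinely the ingredients of the proof. The closest the paper itself comes to this circle of ideas is Theorem \ref{thm-Verbitskys-inequality}, which packages the analytic heart of the argument as an integral inequality.

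The one step you have mis-stated is the vanishing mechanism for $R^{\mathrm{ninv}}$. As written, you assert that $\int_X|R^{\mathrm{ninv}}|^2\,\mathrm{vol}$ equals a pairing of Chern--Weil representatives of $c_1(E)$ and $c_2(E)$ against an $SU(2)$-invariant class, and that invariance of $c_1,c_2$ forces this number to vanish. But a pairing of fixed cohomology classes against a fixed class is a single topological number; knowing that $c_1,c_2$ are $SU(2)$-invariant does not by itself make such a number zero. What Verbitsky actually does is a \emph{comparison across the twistor sphere}: the Hermite--Einstein condition and Chern--Weil theory express the degree $\deg_{I}\bigl(2c_2-\frac{r-1}{r}c_1^2\bigr)=\deg_I(-2\kappa_2(E))$ as a sum in which the $SU(2)$-non-invariant curvature contributes with a definite sign, while the same degree computed with respect to $\omega_t$ for $I_t\neq\pm I$ receives that contribution with the opposite sign (this is the content of the inequality in Theorem \ref{thm-Verbitskys-inequality}); since an $SU(2)$-invariant $4$-class has all these degrees equal, the two expressions can only agree if $R^{\mathrm{ninv}}=0$. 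Your ``representation-theoretic computation'' gestures at this but does not supply the actual comparison, which is where the hypothesis on $c_1$ and $c_2$ enters. With that step made precise, the sketch is a faithful account of the cited proof.
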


The notion of  $\omega$-slope-stability is well defined
for twisted sheaves as well. 
Slope-stability 
of a torsion-free sheaf $E$ depends on the sheaf $\SheafEnd(E)$
of Lie-algebras and its subsheaves of maximal parabolic subalgebras. Given a subsheaf $F$ of $E$, 
the condition $\slope_\omega(F)<\slope_\omega(E)$ is equivalent to 
\begin{equation}
\label{eq-slope-inequality}
\deg_\omega(\SheafHom(E,F)) \ \ < \ \ 0. 
\end{equation}
The sheaf $\SheafHom(E,F)$ is untwisted, 
for every $\theta$-twisted subsheaf $F$ of a $\theta$-twisted sheaf $E$. 

\begin{defi}
\label{def-slope-stability}
\begin{enumerate}
\item
Let $E$ be a torsion free  $\theta$-twisted sheaf  and 
$\omega$ a K\"{a}hler class on $X$.
We say that $E$ is {\em $\omega$-slope-stable}, if the inequality
(\ref{eq-slope-inequality}) holds, for every non-zero 
$\theta$-twisted proper subsheaf $F$ of $E$. The sheaf $E$ is 
{\em $\omega$-slope-semistable}, if the analogue of 
(\ref{eq-slope-inequality}),
with strict inequality replaced by $\leq$, holds for every such $F$.
The sheaf $E$ is said to be {\em $\omega$-slope-polystable},
if it is $\omega$-slope-semistable and 
away from a locus of codimension two $E$ is isomorphic to a direct sum
of $\omega$-slope-stable sheaves. 
\item
A reflexive sheaf $A$ of Azumaya algebras (Definition \ref{def-Azumaya})
is {\em $\omega$-slope-stable}
(resp. {\em $\omega$-slope-polystable}), if some, hence any lift of $A$ to a twisted reflexive sheaf
has the corresponding property.
%\label{def-gamma-hyperholomorphic}
%Let $\omega$ be a K\"{a}hler class on $X$.
%A reflexive sheaf $E$ of Azumaya $\StructureSheaf{X}$-algebras 
%(Definition \ref{def-Azumaya})
Equivalently,\footnote{Given a subspace $W$ of a vector space $V$ we get the maximal parabolic subalgebra of $\LieAlg{gl}(V)$ consisting of endomorphisms of $V$ which map $W$ to itself. All maximal parabolic subalgebras of $\LieAlg{gl}(V)$ are obtained that way. A subsheaf $P$ of maximal parabolic subalgebras of a reflexive sheaf $A$ of Azumaya algebras is a subsheaf, which away from the singularities of $A$ corresponds to a subbundle of maximal parabolic subalgebras in each fiber. If $A=\SheafEnd(E)$ and $P$ corresponds to a subsheaf $F$ of $E$, then 
$\deg_\omega(P)=\deg_\omega(\SheafHom(E,F))$, since $\deg_\omega(P/\SheafHom(E,F))=\deg_\omega(\SheafEnd(E/F))=0.$
}  
$A$ is $\omega$-slope-stable, if every non-trivial 
subsheaf of maximal parabolic subalgebras of $A$
has negative $\omega$-slope.
\end{enumerate}
\end{defi}

%\begin{rem}
%Let $F$ be a reflexive (untwisted) sheaf of positive rank on $X$
%and $\omega$ a K\"{a}hler class. Then $F$ is $\omega$-slope-stable,
%if and only if $\SheafEnd(F)$ is an $\omega$-slope-stable 
%Azumaya $\StructureSheaf{X}$-algebra. Furthermore, 
%$F$ is projectively $\omega$-stable-hyperholomorphic, 
%if and only if $\SheafEnd(F)$ is $\omega$-polystable-hyperholomorphic
%and an $\omega$-slope-stable Azumaya $\StructureSheaf{X}$-algebra,
%by Lemma 
%\ref{lemma-extension-of-a-slope-zero-subsheaf-to-a-hyperholomorphic-subsheaf} 
%part \ref{lemma-item-algebraic-structures}.
%\end{rem}

Note that if $E$ is reflexive and $\omega$-slope-polystable, then
$E$ is a direct sum of $\omega$-slope-stable sheaves
\cite[Cor. 1.6.11]{huybrechts-lehn-book}.

\begin{prop}
\label{prop-polystability-of-a-very-twisted-sheaf}
Let $(X,\omega)$ be a compact K\"{a}hler manifold and $E$ a reflexive $\omega$-slope-stable $\theta$-twisted coherent sheaf, for some $\theta\in H^2_{an}(X,\StructureSheaf{}^*)$.
Then the sheaf $\SheafEnd(E)$  is $\omega$-slope-polystable. 
\end{prop}

The proposition is proven in the Appendix Section \ref{sec-semistability}. The untwisted case of the proposition is well known and follows from \cite[Theorem 3]{bando-siu} stating that the existence of an admissible $\omega$-Hermite-Einstein metric 
on $E$ is equivalent to $\omega$-slope-polystability of $E$.

\begin{lem} \cite[Section 3.5]{kaledin-verbitski-book}
\label{lemma-extension-of-a-slope-zero-subsheaf-to-a-hyperholomorphic-subsheaf}
Let $F$ and $G$ be two reflexive $\omega$-polystable-hyperholomorphic sheaves of
$\omega$-slope $0$.  Then the following statements hold.
\begin{enumerate}
\item
\label{lemma-item-global-sections-are-flat}
Any global section $f$ of $F$ is flat with respect to the 
Hermite-Einstein connection. In particular, $f$ is a holomorphic section
with respect to all complex structures $\bar{\partial}_t$, $t\in \PP^1_\omega$.
\item
\label{lemma-item-parallel-transport-along-twistor-line}
There exists a canonical isomorphism of vector spaces
$\Hom(F_t,G_t)\rightarrow \Hom(F_s,G_s)$, for all $s,t\in \PP^1_\omega$.
\item
\label{lemma-item-algebraic-structures}
If $F_t$ is endowed with an associative multiplication
$m_t:F_t\otimes F_t\rightarrow F_t$, 
or more specifically a structure of an Azumaya $\StructureSheaf{X_t}$-algebra,
or a Lie-algebra structure 
$[,]_t:F_t\otimes F_t\rightarrow F_t$, then $F_s$ is naturally
endowed with such a structure, for all $s\in \PP^1_\omega$.
\item
\label{lemma-item-slope-zero-subsheaf-is-hyperholomorphic}
Any saturated subsheaf $F'$ of $F$ of $\omega$-slope zero is 
reflexive and $\omega$-polystable-hyperholomorphic. 
\item
\label{lemma-item-ker-and-Im-are-hyperholomorphic}
Let $\varphi:F\rightarrow G$ be a homomorphism.
Then $\ker(\varphi)$ and $\mbox{Im}(\varphi)$ are 
$\omega$-polystable-hyperholomorphic. 
\item
\label{lemma-item-slope-zero-subsheaf-extends}
Let $F'_t$ be a saturated subsheaf of $F_t$, of $\omega_t$-slope $0$, 
for some $t\in \PP^1_\omega$.
Then $F'_t$ extends to an $\omega$-polystable-hyperholomorphic subsheaf 
$F'_s$ of $F_s$, for all $s\in \PP^1_\omega$.
\item
\label{lemma-item-extension-of-parabolic-is-parabolic}
If $F_t$ has a structure of an Azumaya  algebra and the subsheaf $F'_t$ in part 
\ref{lemma-item-slope-zero-subsheaf-extends} is a maximal parabolic subalgebra, 
then the subsheaf $F'_s$ is a maximal parabolic subalgebra, for all $s\in \PP^1_\omega$.
\end{enumerate}
\end{lem}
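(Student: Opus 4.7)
The plan is to reduce all seven parts to the single analytic input of part \ref{lemma-item-global-sections-are-flat}. For that first part, I would combine the Bando--Siu existence of an $L^2$-integrable Hermite--Einstein metric $g_F$ on the regular locus of $F$ with the standard Bochner--Weitzenb\"ock identity. Since the $\omega$-slope of $F$ is zero, the mean curvature of the Chern connection $\nabla$ vanishes, and an integration-by-parts argument forces $\nabla f\equiv 0$ for every global holomorphic section $f$; once $f$ is $\nabla$-parallel, it is $\bar\partial_t$-holomorphic for every $I_t$ on the twistor line, because $\nabla$ is fixed and its $(0,1)$-part with respect to $I_t$ annihilates $f$.

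With part \ref{lemma-item-global-sections-are-flat} in hand, parts \ref{lemma-item-parallel-transport-along-twistor-line} and \ref{lemma-item-algebraic-structures} follow by applying it to $\SheafHom(F,G)$ and $\SheafHom(F\otimes F,F)$ respectively. These sheaves are reflexive, polystable-hyperholomorphic, and of slope zero (tensor products of hyperholomorphic sheaves are hyperholomorphic, since the defining $SU(2)$-invariance of curvature passes through tensor operations). A homomorphism $F_t\to G_t$ is thus parallel and transports canonically along the twistor line $\PP^1_\omega$; associativity, the identity axiom, the Azumaya condition, and the Jacobi identity are polynomial identities in parallel sections and are therefore preserved.

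For parts \ref{lemma-item-slope-zero-subsheaf-is-hyperholomorphic}--\ref{lemma-item-extension-of-parabolic-is-parabolic} I would exploit polystability: a saturated slope-zero subsheaf $F'$ of a polystable $F$ is a direct summand away from a codimension-$\geq 2$ locus, hence the image of an idempotent $e\in H^0(\SheafEnd(F))$. By part \ref{lemma-item-global-sections-are-flat}, $e$ is parallel, and $F'=\Im(e)$ is reflexive (saturated subsheaves of reflexive sheaves are reflexive) and inherits the hyperholomorphic structure, proving part \ref{lemma-item-slope-zero-subsheaf-is-hyperholomorphic}. Part \ref{lemma-item-ker-and-Im-are-hyperholomorphic} follows at once, since $\ker(\varphi)$ and the saturation of $\Im(\varphi)$ have slope zero by semistability. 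Part \ref{lemma-item-slope-zero-subsheaf-extends} is the same idempotent argument applied at a single fiber $t$, with the idempotent $e_t\in H^0(\SheafEnd(F_t))$ transported to $e_s$ by part \ref{lemma-item-parallel-transport-along-twistor-line}. For part \ref{lemma-item-extension-of-parabolic-is-parabolic}, since the Azumaya multiplication is parallel by part \ref{lemma-item-algebraic-structures}, the subalgebra condition $m(e_s\otimes e_s)=e_s\cdot m(e_s\otimes e_s)$ is preserved in $s$, and maximality is a rank condition on $e_s$ which is likewise preserved.

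The main obstacle is the analytic input of part \ref{lemma-item-global-sections-are-flat}, which depends on the full strength of the Bando--Siu theory for singular Hermite--Einstein sheaves; once that is in place, the remaining parts are formal consequences of parallel transport and polystability. A subsidiary technical point in parts \ref{lemma-item-slope-zero-subsheaf-is-hyperholomorphic} and \ref{lemma-item-slope-zero-subsheaf-extends} is the extension of the image of a parallel idempotent across the codimension-$\geq 3$ singular locus of $F$ to produce a reflexive sheaf on all of $X_s$; this follows from Hartogs-type reflexive extension, together with Verbitsky's fact that the singular locus is tri-analytic and hence present in every fiber of the twistor family.
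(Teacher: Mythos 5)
Your treatment of parts (\ref{lemma-item-global-sections-are-flat})--(\ref{lemma-item-slope-zero-subsheaf-extends}) is essentially the paper's argument. The paper simply cites Verbitsky (Theorem 3.27 of \cite{kaledin-verbitski-book}) for parts (\ref{lemma-item-global-sections-are-flat}) and (\ref{lemma-item-parallel-transport-along-twistor-line}) where you sketch the Bochner/Bando--Siu proof; for part (\ref{lemma-item-algebraic-structures}) it applies part (\ref{lemma-item-global-sections-are-flat}) to $\SheafHom(\SheafHom(F^*,F),F)$ exactly as you do; and for parts (\ref{lemma-item-slope-zero-subsheaf-is-hyperholomorphic}), (\ref{lemma-item-ker-and-Im-are-hyperholomorphic}), (\ref{lemma-item-slope-zero-subsheaf-extends}) your parallel-idempotent formulation is an equivalent repackaging of the paper's route (a saturated slope-$0$ subsheaf of a polystable sheaf is a direct sum of stable summands; the inclusion $F'_t\hookrightarrow F_t$ is transported via part (\ref{lemma-item-parallel-transport-along-twistor-line}) and its image saturated via part (\ref{lemma-item-ker-and-Im-are-hyperholomorphic})). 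These are all fine.

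The gap is in part (\ref{lemma-item-extension-of-parabolic-is-parabolic}). Your justification -- that the subalgebra condition is an identity in parallel sections and that ``maximality is a rank condition on $e_s$'' -- does not work as stated: the displayed identity $m(e_s\otimes e_s)=e_s\cdot m(e_s\otimes e_s)$ is not the subalgebra condition for $\Im(e_s)$, and, more importantly, being a \emph{maximal parabolic} subalgebra (i.e.\ fiberwise the stabilizer of a subspace) is not detected by the rank of $e_s$ together with closure under multiplication; you need a characterization of maximal parabolics by \emph{closed} conditions that are visible to the flat-section machinery. This is where the paper does the real work: it lets $F''_s$ be the trace-orthogonal complement of $F'_s$ in $F_s$, observes that $F'_s$ is a sheaf of maximal parabolic subalgebras over the smooth locus if and only if (a) $F''_s\subseteq F'_s$ and (b) the multiplication $F''_s\otimes F''_s\to F_s$ vanishes, shows $F''_s=\ker(F'_s\to (F'_s)^*)$ is saturated of slope $0$ and hence polystable-hyperholomorphic by part (\ref{lemma-item-slope-zero-subsheaf-is-hyperholomorphic}), and then notes that (a) and (b) are each the vanishing of a homomorphism between polystable-hyperholomorphic slope-$0$ sheaves, so they persist along $\PP_\omega$ by part (\ref{lemma-item-global-sections-are-flat}). (Alternatively, one can argue that the subbundle, the idempotent, and the multiplication are all the \emph{same} parallel $C^\infty$ data for every $s$, so the fiberwise subalgebra is literally unchanged -- but that is not the argument you wrote.) You should either import the trace-orthogonal characterization or make the fiberwise-parallelism argument explicit.
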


\begin{proof}
Part \ref{lemma-item-global-sections-are-flat}) A global section of $F$ corresponds to a direct summand of $F$ isomorphic to the trivial line bundle, since $F$ is polystable of $\omega$-slope zero. The statement reduces to the case of a trivial line bundle, in which it is clear. Note also that the $(0,1)$ part of the connection with respect to the complex structure $I$ of $X$ is the $(1,0)$ part with respect to the conjugate complex structure $-I$, so if a section is holomorphic with respect to both $I$ and $-I$, then it is flat. 

Part \ref{lemma-item-parallel-transport-along-twistor-line})
Follows from Part \ref{lemma-item-global-sections-are-flat}. See \cite[Theorem 3.27]{kaledin-verbitski-book}.

Part \ref{lemma-item-algebraic-structures}) 
The sheaf $\SheafHom\left(\SheafHom(F_t^*,F_t),F_t\right)$
is $\omega_t$-polystable-hyperholomorphic and $m$ (or $[,]$)
is a global section of this sheaf, hence a flat section with respect to the induced
Hermite-Einstein connection, hence a
holomorphic section with respect to all induced complex structures, by 
part \ref{lemma-item-global-sections-are-flat}. 
The axioms of the corresponding algebraic structure are 
expressed as identities involving flat sections. Hence they hold
for all $s\in \PP^1_\omega$, since they hold at $t$.

Part \ref{lemma-item-slope-zero-subsheaf-is-hyperholomorphic}) 
The $\omega$-slope-stable summands of $F$ are hyperholomorphic,
and $F'$ is necessarily isomorphic to a direct sum of such summands.

Part \ref{lemma-item-ker-and-Im-are-hyperholomorphic}) The kernel and image of
$\varphi$ must have $\omega$-slope zero. It remains to prove that the image is a saturated subsheaf.
Now $\varphi$ factors through an injective homomorphism from a direct summand $F'$ of $F$ into $G$, and $F'$ is reflexive, 
since $F$ is.
$Im(\varphi)$ is thus reflexive and its saturation in $G$ has the same slope as $Im(\varphi)$ and so $Im(\varphi)$ is already saturated.

Part \ref{lemma-item-slope-zero-subsheaf-extends})
%We may assume that $t=0$, possibly after replacing $(X,\omega,F)$ by
%$(X_t,\omega_t,F_t)$. 
$F_t$ is $\omega_t$-slope-polystable hyperholomorphic, for all $t\in \PP^1_\omega$, by \cite[Prop. 3.17 and Theorem 3.19]{kaledin-verbitski-book}.
%as the notion of a hyperholomorphic connection used in the statement of the theorem is uniform for all  $t\in \PP^1_\omega$.
The sheaf $F'_t$ is $\omega_t$-slope-polystable hyperholomorphic,
by part \ref{lemma-item-slope-zero-subsheaf-is-hyperholomorphic}. It is furthermore a direct summand of $F_t$, as the latter is polystable of the same slope. The statement thus follows from the uniqueness of the hyperholomorphic connection
\cite[Remark 3.20]{kaledin-verbitski-book} and its compatibility with the direct sum decomposition.
%The sheaf $\SheafHom(F'_t,F_t)\otimes_\ComplexNumbers F_s'$ is 
%$\omega_s$-slope-polystable hyperholomorphic, 
%it is canonically isomorphic to 
%$\SheafHom(F'_s,F_s)\otimes_\ComplexNumbers F_s'$, by part 
%\ref{lemma-item-parallel-transport-along-twistor-line},
%and the evaluation homomorphism from the latter into $F_s$ has a hyperholomorphic
%image, by part \ref{lemma-item-ker-and-Im-are-hyperholomorphic}.

Part \ref{lemma-item-extension-of-parabolic-is-parabolic}) 
Assume that $F'_t$ is a saturated $\omega_t$-slope $0$ Lie subalgebra.
Then its extension, in part \ref{lemma-item-slope-zero-subsheaf-extends},
consists of Lie subalgebras $F'_s$, for all $s\in \PP^1_\omega$, by part
%\ref{lemma-item-global-sections-are-flat} and 
\ref{lemma-item-algebraic-structures}.
Let $F_s''$ be the subsheaf orthogonal to $F'_s$ with respect to the trace bilinear pairing. 
Then $F'_s$ is a sheaf of 
maximal parabolic subalgebra over $(F)_{sm}$,
if and only if the following two conditions hold:
a) $F_s''$ is a subsheaf of $F'_s$, and 
b) the homomorphism
$F_s''\otimes F_s''\rightarrow F_s$, given by $a\otimes b\mapsto ab$, vanishes.
%The only if part is obvious. For the if part use the Theorem in Section 3.3 page 12
%of Humphreys Introduction to Lie algebras and representation theory.
Now $F_s''$ is the kernel of the homomorphism $F'_s\rightarrow (F'_s)^*$,
induced by the trace pairing.
Hence $F_s''$ is saturated of $\omega_s$-slope $0$, and so 
$\omega_s$-polystable-hyperholomorphic, by part
\ref{lemma-item-slope-zero-subsheaf-is-hyperholomorphic}. 

Assume now that $F'_t$ is a saturated $\omega_t$-slope $0$ 
maximal parabolic subalgebra of $F_t$. We get a flag $F''\subset F'\subset F$
of $\omega$-polystable-hyperholomorphic reflexive sheaves of $\omega$-slope $0$.
Furthermore, each of the conditions a) and b) above  
is expressed in terms of the vanishing of
a natural homomorphism between $\omega$-polystable-hyperholomorphic
sheaves of slope zero. Hence, if they both hold for $t$, 
then they both hold for all $s\in \PP^1_\omega$,
by part \ref{lemma-item-global-sections-are-flat}.
\end{proof}

%***************************************************
%
%***************************************************
\subsection{Projectively $\omega$-stable-hyperholomorphic sheaves}

\begin{defi}
\label{def-projectively-omega-stable-hyperholomorphic}
Let $F$ be a reflexive $\omega$-slope-stable (possibly twisted) 
sheaf of positive rank over $(X,\omega)$. 
%Denote by $\bar{\partial}_t$ the $(0,1)$-part of its
%Hermite-Einstein connection. 
We say that $F$ is 
{\em projectively $\omega$-stable-hyperholomorphic} 
if, in addition, the sheaf $\SheafEnd(F)$ is $\omega$-polystable-hyperholomorphic.
%the equation 
%\begin{equation}
%\label{cond-lambda-holomorphic}
%\bar{\partial}_t^2=\lambda_t\otimes id
%\end{equation}
%holds, for some $(0,2)$-form 
%$\lambda_t$ on $X_t$, for all $t\in \PP^1_\omega$. 
A reflexive $\omega$-slope-polystable sheaf is 
{\em projectively $\omega$-polystable-hyperholomorphic}, 
if it is a direct sum of projectively $\omega$-stable-hyperholomorphic sheaves.
\end{defi}

%**************
% Hide
%**************
\hide{
\begin{rem}
The two form $\lambda_t$ above is $\bar{\partial}$-closed and 
$\left(\frac{r\sqrt{-1}}{2\pi}\right)\lambda_t$, with $r=\rank(F)$, represents 
the projection of $c_1(F)$ to $H^{0,2}(X_t)$, since 
$\bar{\partial}_t^2$ is the $(0,2)$-part of the curvature form of $\nabla$ with respect 
to the complex structure $I_t$ of $X_t$.
\end{rem}
%**************
% End Hide
%**************
}

Let $F$ be a projectively $\omega$-polystable-hyperholomorphic reflexive (possibly twisted) sheaf
of rank $r>0$. As the singular locus of $F$ has codimension $\geq 3$, we have the isomorphism 
$H^2(X,\mu_r)\cong H^2(X\setminus (F)_{sing},\mu_r)$. We get the characteristic class $\tilde{\theta}\in H^2(X,\mu_r)$ of the projective bundle associated to $F$ over $X\setminus (F)_{sing}$
via Equation (\ref{eq-connecting-homomorphism-theta-tilde}). 
If $F$ happens to be untwisted, this class is 
$\exp\left(-2\pi\sqrt{-1} c_1(F)/r\right)$, as in equation
(\ref{eq-tilde-theta-via-c-1}). 
Denote by $\theta_t$ the image of 
$\tilde{\theta}$ in $H^2_{an}(X_t,\StructureSheaf{X_t}^*)$. Similarly,
let $\theta$ be the image of $\tilde{\theta}$ in $H^2_{an}(\X,\StructureSheaf{\X}^*)$ via
the composite homomorphism 
\begin{equation}
\label{eq-from-H-2-mu-r-of-fiber-to-Brauer-group-of-twistor-space}
H^2(X,\mu_r)\RightArrowOf{\psi^*} H^2(\X,\mu_r)\rightarrow H^2_{an}(\X,\StructureSheaf{\X}^*),
\end{equation}
where the left homomorphism is the pull-back via the 
projection $\psi:\X\rightarrow X$, associated to the 
differentiable trivialization of the twistor deformation.

\begin{construction}
\label{construction-family-of-projectively-hyperholomorphic-sheaves}
The sheaf $F$ corresponds to a reflexive sheaf $\A$ of Azumaya algebras 
(Definition \ref{def-Azumaya}) 
with Brauer class $\theta$ over the twistor space $\X$.
Following is the construction of such a family.
The sheaf $\SheafEnd(F)$ is $\omega$-polystable-hyperholomorphic, by assumption.
Hence $\SheafEnd(F)$ extends to a reflexive sheaf $\A$ over $\X$. 
The structure on $\SheafEnd(F)$ of 
a reflexive sheaf of Azumaya algebras extends to one on $\A$, by
Lemma \ref{lemma-extension-of-a-slope-zero-subsheaf-to-a-hyperholomorphic-subsheaf}
part (\ref{lemma-item-algebraic-structures}). It remains to prove that the Brauer class of
$\A$ is $\theta$. Now $\A$ has rank $r$ and thus determines a class $\alpha$ in
$H^2(\X,\mu_r)$ (use the homomorphism 
(\ref{eq-connecting-homomorphism-theta-tilde}) 
and the fact that the singular locus of $\A$ 
has codimension $\geq 3$ in $\X$). The class $\alpha$ restricts to the class $\tilde{\theta}$ in
$H^2(X,\mu_r)$. Hence, it suffices to prove that the image of the composite homomorphism 
(\ref{eq-from-H-2-mu-r-of-fiber-to-Brauer-group-of-twistor-space}) is equal to the $r$-torsion subgroup.
Now $H^2(\X,\mu_r)$ is isomorphic to $H^2(\PP^1_\omega,\mu_r)\oplus H^2(X,\mu_r)$ and the image of the 
summand $H^2(\PP^1_\omega,\mu_r)$ in $H^2_{an}(\X,\StructureSheaf{\X}^*)$ is trivial, as it is already trivial
in $H^2_{an}(\PP^1_\omega,\StructureSheaf{\PP^1_\omega}^*)$.
\end{construction}

%**************
% Hide
%**************
\hide{
\begin{construction}
\label{construction-family-of-projectively-hyperholomorphic-sheaves}
%\begin{enumerate}
%\item
%\label{rem-item-family-of-twisted-sheaves}
%Assume\footnote{This assumption holds always, by Lemma 
%\ref{lemma-projectively-hyperholomorphic-iff-End-is} below
%and Claim 3.16 in \cite{kaledin-verbitski-book}. In this paper 
%the locus $(F)_{sing}$ will either be a point or the diagonal in the self
%product of a hyperk\"{a}hler variety, hence will obviously satisfy this 
%assumption.
%} 
%that the locus $(F)_{sing}$ remains a closed analytic subset with respect to 
%all complex structures $I_t$, $t\in \PP_\omega^1$.
%Denote by $U_t$ the open subset of $X_t$ corresponding to $(F)_{sm}$.
%Let $\PP{F}$ be the projectivization of the restriction of $F$ to $(F)_{sm}$. 
%Then $\PP{F}$ is a holomorphic projective bundle over $U_t$, with respect to 
%$\bar{\partial}_t$, and so defines a class in 
%$H^1_{an}(U_t,PGL(r))$, which may be lifted to a locally free 
%$\theta_t$-twisted sheaf $F_t$ over $U_t$. The push-forward 
%of $F_t$, via the inclusion $U_t\rightarrow X_t$, is a reflexive 
%$\theta_t$-twisted sheaf over $X_t$, denoted by $F_t$ as well.
%\item
The sheaf $F$ 
%in Definition \ref{def-projectively-omega-stable-hyperholomorphic} 
corresponds to a $\theta$-twisted family 
$\F$ of sheaves over the twistor space $\X$.
%provided\footnote{This assumption is satisfied when
%$(F)_{sing}$ is a point, by \cite{HKLR}, Theorem 3.3. 
%The assumption is also satisfied
%if $(F)_{sing}$ is the diagonal in the self-product of a 
%hyperk\"{a}hler variety
%and the twistor deformation is the self-fiber-product of a twistor
%deformation.} 
%that $(F)_{sing}$ is $\omega$-tri-analytic (Definition \ref{def-trianalytic}).
Following is the construction of such a family.
Denote by $Z\subset \X$ the image of $\PP_\omega^1\times (F)_{sing}$ via 
the differentiable trivialization of the twistor deformation.
Then $Z$ is an analytic subvariety of $\X$, since $(F)_{sing}$ is 
$\omega$-tri-analytic (\cite[Claim 3.16]{kaledin-verbitski-book} applied to
$\SheafEnd(F)$, which is $\omega$-polystable-hyperholomorphic by 
Lemma \ref{lemma-projectively-hyperholomorphic-iff-End-is} below).
Set $U:=\X\setminus Z$ and let $U_t$ be the fiber of $U$ over 
$t\in \PP^1_\omega$. 
Let $\pi_1:U\rightarrow (F)_{sm}$ be given by the 
differentiable trivialization of $U$. 
The curvature $\widetilde{\Theta}$, of the 
pull-back $\pi_1^*\nabla$ 
of the connection $\nabla$ on $(F)_{sm}$,
is the pullback of the curvature of $\nabla$. 
%Identify $U$ with $(F)_{sm}\times \PP_\omega^1$ 
%via the differential trivialization. 
Thus $\widetilde{\Theta}$ is a section of
$\pi_1^*\left[\End(F)\otimes \Wedge{2}T^*_\ComplexNumbers (F)_{sm}\right]$. 
Let ${\mathcal I}$ be the complex structure of $\X$.
The subspace $\pi_1^*(T^*_\RealNumbers X)$ is ${\mathcal I}$
invariant (even though the map $\pi_1:\X\rightarrow X$ is not holomorphic
\cite[formula (3.71)]{HKLR}. Fix $t\in \PP^1_\omega$. 
We see that the projection of 
$\pi_1^*(\Wedge{2}T^*_\ComplexNumbers X\restricted{)}{X_t}$
to $T^{0,2}(X_t)$ factors through the projection to the $(0,2)$ 
summand of 
$\pi_1^*(\Wedge{2}T^*_\ComplexNumbers X\restricted{)}{X_t}$,
followed by an isomorphism from the latter onto 
$T^{0,2}(X_t)$.
So the $(0,2)$-part
of $\widetilde{\Theta}$ is of the form $\lambda\otimes id$, 
for some $(0,2)$-form $\lambda$ on $U$, if and only if the 
$(0,2)$-part of the restriction $\Theta_t$ of the curvature 
$\widetilde{\Theta}$ to $U_t$ is of the form
(\ref{cond-lambda-holomorphic}), for all $t$.
The latter condition holds, by definition.
Hence, the pullback $\pi_1^*\PP{F}$ is endowed with 
the structure of a holomorphic projective 
bundle over $U$, and so defines a class in 
$H^1_{an}(U,PGL(r))$, which may be lifted to a locally free 
$\theta$-twisted sheaf $\F$ over $U$. The push-forward 
of $\F$, via the inclusion $U\rightarrow \X$, is a reflexive 
$\theta$-twisted sheaf over $\X$, since $Z$ is a closed analytic subvariety
of codimension $\geq 3$.
Denote this push-forward by by $\F$ as well, and let $F_t$ be its restriction
to $X_t$, $t\in \PP^1_\omega$.
%The argument in part  (\ref{rem-item-family-of-twisted-sheaves}) 
%produces a reflexive 
%$\theta$-twisted sheaf $\F$ over  $\X$, which restricts to $X_t$ as $F_t$.
%\end{enumerate}
\end{construction}

\begin{lem}
\label{lemma-projectively-hyperholomorphic-iff-End-is}
Let $F$ be a reflexive $\omega$-slope-stable sheaf of positive rank over $(X,\omega)$.
Then $F$ is projectively $\omega$-stable-hyperholomorphic, if and only if 
$\SheafEnd(F)$ is $\omega$-polystable-hyperholomorphic.
\end{lem}

\begin{proof}
Assume that $F$ is projectively $\omega$-stable-hyperholomorphic.
Let $\nabla'$ be the induced Hermite-Einstein connection on $\SheafEnd(F)$
and $\bar{\partial}'_t$ its $(0,1)$-part with respect to $I_t$. 
The condition (\ref{cond-lambda-holomorphic}) of Definition 
\ref{def-projectively-omega-stable-hyperholomorphic}
implies that $(\bar{\partial}'_t)^2=0$. We know that 
$\SheafEnd(F)$ is $\omega$-slope-polystable.
Let $E$ be an $\omega$-slope-stable direct summand of $\SheafEnd(F)$
and $E^\perp$ its orthogonal complement with respect to the Hermite-Einstein
metric $g_{\SheafEnd(F)}$ induced by that of $F$. Then $E$ and $E^\perp$ 
are $\nabla'$-invariant. Denote by
$\nabla_E$ the restriction of $\nabla'$ to $E$, and
let $\bar{\partial}_{E,t}$ be the $(0,1)$-part of $\nabla_E$ with respect to $I_t$.
The vanishing of $(\bar{\partial}'_t)^2$ implies the vanishing of 
$(\bar{\partial}_{E,t})^2$. Hence, $E$ is $\omega$-stable-hyperholomorphic.

Assume next that $\SheafEnd(F)$ is $\omega$-polystable-hyperholomorphic
with respect to some Hermite-Einstein metric $g'$. Denote by 
$\nabla'$ its associated Hermite-Einstein connection, and let
$\bar{\partial}'_t$ be the $(0,1)$-part of $\nabla'$ with respect to $I_t$. 
Then $(\bar{\partial}'_t)^2=0$, by assumption.
Now the Hermite-Einstein connection of an $\omega$-slope-polystable
sheaf $E$ is unique. Hence, $\nabla'$ is also the
Hermite-Einstein connection on $\SheafEnd(F)$ induced by that of $F$.
Thus, condition 
(\ref{cond-lambda-holomorphic}) of Definition 
\ref{def-projectively-omega-stable-hyperholomorphic} is satisfied. 
%Condition (\ref{cond-omega-t-slope-stable}) of Definition 
%\ref{def-projectively-omega-stable-hyperholomorphic}
%follows from 
%Lemma \ref{lemma-if-stable-for-some-t-then-stable-for-all-t}.
%Note first that the dimension of $H^0(X_t,\SheafEnd(F_t))$
%is constant, by our assumption that $\SheafEnd(F)$ is 
%$\omega$-polystable-hyperholomorphic.
%Hence, $F_t$ is simple, since $F_0:=F$ is.
\end{proof}
%**************
% End Hide
%**************
}

Denote by $Z$ the singular locus of $\A$ and let $Z_t$ be its fiber over $t\in \PP^1_\omega$.
We keep the convention of Section \ref{sec-twistor-deformations-of-pairs} and denote by $\A_t$ the 
pushforward to $X_t$ of the restriction of $\A$ to $X_t\setminus Z_t$. Then $\A_t$ is a coherent reflexive sheaf,
by the Main Theorem of \cite{siu}.

\begin{lem}
\label{lemma-if-stable-for-some-t-then-stable-for-all-t}
Let $F$ be a reflexive projectively $\omega$-polystable-hyperholomorphic twisted 
sheaf. Let $\A$ be the reflexive sheaf of Azumaya algebras over the twistor family $\X$
associated to $\SheafEnd(F)$ via 
Construction \ref{construction-family-of-projectively-hyperholomorphic-sheaves}.
If $\A_t$ is an $\omega_t$-slope-stable sheaf of Azumaya algebras over $X_t$
for some $t$, then it is $\omega_t$-slope-stable for every $t$.
\end{lem}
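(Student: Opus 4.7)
The plan is to argue by contradiction, exploiting the fact that the reflexive Azumaya algebra $\A$ has fiberwise $\omega_t$-slope zero, together with parts (\ref{lemma-item-slope-zero-subsheaf-extends}) and (\ref{lemma-item-extension-of-parabolic-is-parabolic}) of Lemma \ref{lemma-extension-of-a-slope-zero-subsheaf-to-a-hyperholomorphic-subsheaf}, which allow the propagation of a saturated slope-zero subsheaf of maximal parabolic subalgebras across the twistor line.

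First I would record two uniform facts about the family $\{\A_t\}_{t\in \PP^1_\omega}$. Since $\A_0=\SheafEnd(F)$ satisfies $c_1(\SheafEnd(F))=0$ and $c_1$ is constant in the differentiable trivialization of $\X$, each $\A_t$ is a reflexive sheaf with $c_1(\A_t)=0$, so $\slope_{\omega_t}(\A_t)=0$ for every $t$. Moreover, $\A_t$ is $\omega_t$-polystable-hyperholomorphic for every $t$: by assumption $\SheafEnd(F)$ is $\omega$-polystable-hyperholomorphic, its Hermite-Einstein connection remains Hermite-Einstein with respect to $\omega_t$, and the induced holomorphic structure on each $X_t$ is precisely $\A_t$.

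Now suppose, for contradiction, that $\A_t$ is $\omega_t$-slope-stable but $\A_s$ is not $\omega_s$-slope-stable for some other $s\in \PP^1_\omega$. By Definition \ref{def-slope-stability}(2), there exists a non-trivial saturated subsheaf $P_s\subset \A_s$ of maximal parabolic subalgebras with $\slope_{\omega_s}(P_s)\geq 0$. Because $\A_s$ is $\omega_s$-slope-polystable of slope $0$, every saturated subsheaf has slope $\leq 0$; hence in fact $\slope_{\omega_s}(P_s)=0$. By part (\ref{lemma-item-slope-zero-subsheaf-extends}) of Lemma \ref{lemma-extension-of-a-slope-zero-subsheaf-to-a-hyperholomorphic-subsheaf}, $P_s$ extends to an $\omega$-polystable-hyperholomorphic subsheaf $P_u\subset \A_u$ of $\omega_u$-slope $0$ for every $u\in \PP^1_\omega$; in particular we obtain $P_t\subset \A_t$. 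By part (\ref{lemma-item-extension-of-parabolic-is-parabolic}), $P_t$ is a sheaf of maximal parabolic subalgebras of $\A_t$, since $P_s$ was. Finally, the rank of $P_u$ is constant in $u$ (the extension is a parallel transport of the underlying $C^\infty$-bundle via the Hermite-Einstein connection), so $\rank(P_t)=\rank(P_s)<\rank(\A_s)=\rank(\A_t)$; thus $P_t$ is proper and non-trivial. This contradicts the $\omega_t$-slope-stability of $\A_t$, and completes the argument.

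The only non-routine step is the passage from an arbitrary destabilizing subsheaf back to a \emph{parabolic} destabilizing subsheaf: it is precisely here that one needs the projective stability hypothesis (via Definition \ref{def-slope-stability}(2)) rather than stability of the underlying sheaf, and it is exactly the content of part (\ref{lemma-item-extension-of-parabolic-is-parabolic}) of Lemma \ref{lemma-extension-of-a-slope-zero-subsheaf-to-a-hyperholomorphic-subsheaf} that the parabolic-subalgebra property is preserved under the twistor extension. Everything else — the vanishing of $c_1(\A_t)$, the polystability of $\A_t$, and the rank preservation — is either elementary or an immediate consequence of Verbitsky's theory that has already been cited in the excerpt.
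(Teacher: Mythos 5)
Your proposal is correct and follows essentially the same route as the paper: the paper also propagates a slope-zero saturated subsheaf of maximal parabolic subalgebras from the destabilized fiber to every fiber of the twistor family using parts (\ref{lemma-item-slope-zero-subsheaf-extends}) and (\ref{lemma-item-extension-of-parabolic-is-parabolic}) of Lemma \ref{lemma-extension-of-a-slope-zero-subsheaf-to-a-hyperholomorphic-subsheaf}, concluding that instability at one point forces instability everywhere. Your additional remarks (reducing a destabilizing parabolic of slope $\geq 0$ to slope exactly $0$ via polystability, and the constancy of rank along the extension) are details the paper leaves implicit, but the argument is the same.
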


\begin{proof}
Assume that $\A'_t$ is a saturated subsheaf of $\A_t$ of
maximal parabolic subalgebras, and $\A'_t$ has $\omega_t$-slope $0$,
for some $t\in \PP^1_\omega$. Then $\A'_t$ extends as an $\omega$-polystable-hyperholomorphic
subsheaf $P$ of $\A$,  with $P_0$ an $\omega$-slope $0$ subsheaf of $\SheafEnd(F)$, by Lemma 
\ref{lemma-extension-of-a-slope-zero-subsheaf-to-a-hyperholomorphic-subsheaf} part
\ref{lemma-item-slope-zero-subsheaf-extends}.
The subsheaf $P_t=\A'_t$ is a sheaf of maximal parabolic subalgebras. 
Its extension is also a subsheaf of maximal  parabolic subalgebras,
by Lemma 
\ref{lemma-extension-of-a-slope-zero-subsheaf-to-a-hyperholomorphic-subsheaf}
part \ref{lemma-item-extension-of-parabolic-is-parabolic}.
In particular, if $P_t$ is a non-zero proper subsheaf, then $\A_t$ is not
$\omega_t$-slope-stable, for any $t$.
%an $\omega_t$-slope-stable subsheaf of $F_t$
%of rank $r'$, such that $\SheafHom(F_t,F'_t)$ has $\omega_t$-slope $0$,
%for some $t\in \PP^1_\omega$. Then 
%$\SheafHom(F_t,F'_t)$ extends as an $\omega$-polystable-hyperholomorphic
%subsheaf $P$ of $\SheafEnd(F)$, of $\omega$-slope $0$, by Lemma 
%\ref{lemma-extension-of-a-slope-zero-subsheaf-to-a-hyperholomorphic-subsheaf} part
%\ref{lemma-item-slope-zero-subsheaf-extends}.
%The subsheaf $P_t:=\SheafHom(F_t,F'_t)$ is a sheaf of maximal parabolic subalgebras. 
%Its extension is also a subsheaf of maximal  parabolic subalgebras,
%by Lemma 
%\ref{lemma-extension-of-a-slope-zero-subsheaf-to-a-hyperholomorphic-subsheaf}
%part \ref{lemma-item-extension-of-parabolic-is-parabolic}.
%In particular, if $0<r'<\rank(F)$, then $F_t$ is not
%$\omega_t$-slope-stable, for any $t$.
\end{proof}

\begin{thm}
\label{thm-Verbitskys-inequality}
\cite[Cor. 3.24]{kaledin-verbitski-book}
Let $F$ be an $\omega$-slope-polystable reflexive sheaf on $(X,\omega)$,
such that  $c_1(F)/\rank(F)=c_1(F')/\rank(F')$ for every direct summand $F'$ of $F$.
Let $I_t$ be an induced complex structure 
such that $I_t\not\in\{I,-I\}$. Then 
\begin{equation}
\label{eq-verbitskys-inequality}
\int_X\kappa_2(F)\omega^{2n-2} \ \ \ \geq \ \ \ 
\Abs{\int_X\kappa_2(F)\omega_t^{2n-2}},
\end{equation}
and equality holds, if and only if each stable direct summand $F'$
of $F$ is $\omega$-stable-hyperholomorphic. Furthermore, equality holds
in (\ref{eq-verbitskys-inequality}) if $\kappa_2(F)$ is of Hodge type $(2,2)$
with respect to $I_t$, for all $t\in \PP^1_\omega$.
\end{thm}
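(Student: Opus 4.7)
The approach is Chern--Weil combined with the $Sp(n)$-representation theory of 2-forms on a hyperk\"{a}hler manifold. Since $F$ is $\omega$-slope-polystable of $\omega$-slope $0$, the Bando--Siu theorem, already invoked in the excerpt, endows the locally free locus of $F$ with an $L^2$-Hermite--Einstein metric whose Chern connection has curvature $\Theta$ satisfying $\Lambda_\omega \Theta = 0$ after tracing. The trace-free part $\Theta_0 := \Theta - \tfrac{1}{r}\mathrm{tr}(\Theta)\cdot\mathrm{id}_F$ is therefore an $\mathrm{End}_0(F)$-valued $\omega$-primitive $(1,1)$-form relative to $I$. Chern--Weil identifies $\kappa_2(F)$ with the closed de Rham form $-\tfrac{1}{8\pi^2}\mathrm{tr}(\Theta_0\wedge\Theta_0)$, so both sides of~(\ref{eq-verbitskys-inequality}) become integrals of explicit pointwise quadratic expressions in $\Theta_0$ against the two choices of Lefschetz operator.

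The next step is a pointwise decomposition using the $SU(2)\subset Sp(n)$-action that rotates the induced complex structures. Write $\Lambda^2 T^*X = \Lambda^2_{\mathrm{inv}}\oplus \Lambda^2_{\mathrm{var}}$, where $\Lambda^2_{\mathrm{inv}}$ consists of $2$-forms of type $(1,1)$ with respect to \emph{every} induced complex structure (equivalently $\omega_t$-primitive for every $t$), and $\Lambda^2_{\mathrm{var}}$ is the orthogonal complement, decomposing further into copies of the defining $3$-dimensional representation of $SU(2)$ plus copies of standard higher-weight pieces. Accordingly decompose $\Theta_0 = \Theta_0^{\mathrm{inv}} + \Theta_0^{\mathrm{var}}$ as an $\mathrm{End}_0(F)$-valued form; relative to $I_t$, the invariant part is of type $(1,1)$ while the variable part contains the $(2,0)_{I_t}+(0,2)_{I_t}$ component of $\Theta_0$. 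A direct linear-algebra computation on the $SU(2)$-module $\Lambda^2 T^*X$, together with the primitive Hodge--Riemann bilinear relations applied fibrewise, shows that $\mathrm{tr}(\Theta_0^{\mathrm{inv}}\wedge\Theta_0^{\mathrm{inv}})\wedge \omega_t^{2n-2}$ is independent of $t\in\PP^1_\omega$, whereas $\mathrm{tr}(\Theta_0^{\mathrm{var}}\wedge\Theta_0^{\mathrm{var}})\wedge\omega_t^{2n-2}$ rotates under the standard $SU(2)$-action on the $3$-dimensional summand generated by $\omega_I,\omega_J,\omega_K$. A pointwise Cauchy--Schwarz type estimate then shows that its absolute value at $t\not\in\{I,-I\}$ is strictly dominated by its value at $I$, unless the variable part vanishes.

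Integrating over $X$ and adding the two contributions yields (\ref{eq-verbitskys-inequality}). Equality forces $\Theta_0^{\mathrm{var}}\equiv 0$ pointwise on the locally free locus, which means $\Theta_0$ is of type $(1,1)$ relative to every induced complex structure; equivalently $\bar\partial_t^2=0$ for all $t\in\PP^1_\omega$ on each $\omega$-slope-stable direct summand, which is precisely Definition~\ref{def-hyperholomorphic-sheaf}. For the final clause, if $\kappa_2(F)$ is of Hodge type $(1,1)$ with respect to every $I_t$, then its de Rham class is $SU(2)$-invariant, so only the invariant summand contributes to $\int_X\kappa_2(F)\wedge\omega_t^{2n-2}$, whence the integral is independent of $t$ up to the sign induced by the $SU(2)$-action on $H^{4n-4}(X,\RealNumbers)$; this yields equality in~(\ref{eq-verbitskys-inequality}).

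The main obstacle is the pointwise Cauchy--Schwarz estimate: one must trace precisely how an $\omega$-primitive $\mathrm{End}_0(F)$-valued $(1,1)_I$-form decomposes under the $SU(2)$-rotation of complex structures, and verify that the pairings $\langle\phi\wedge\phi,\omega_t^{2n-2}\rangle$ on each irreducible $SU(2)$-summand of $\Lambda^2_{\mathrm{var}}$ carry a definite sign whose magnitude is maximized at $t=I$ and $t=-I$. This reduces to an explicit identity for $Sp(n)$-weights on $\Lambda^2$, essentially of the same flavor as the computation producing Verbitsky's construction of $H^*(X,\RealNumbers)$ as an $\mathfrak{so}(4,b_2-2)$-module used in Section~\ref{sec-review-of-results-on-Monodromy}; getting the normalizations right so that the invariant and variable contributions combine with the correct signs is the technically delicate point.
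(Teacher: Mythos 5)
Your proposal and the paper part ways at the very first step: the paper does not prove the inequality for stable $F$ at all --- it cites it verbatim as Verbitsky's Corollary 3.24 --- and its entire printed proof is the reduction of the polystable case to the stable one. Concretely, Markman writes $F=\oplus_{i=1}^N F_i$ with each $F_i$ $\omega$-slope-stable, uses $\kappa_2(F)=\sum_i\kappa_2(F_i)$, applies the cited inequality to each summand, and finishes with the triangle inequality; equality then forces equality summand by summand, and the final clause is again a citation (Verbitsky's Claim 3.21). What you have written is a reconstruction of the interior of that black box: the Bando--Siu $L^2$ Hermite--Einstein metric, the Chern--Weil identity $\kappa_2(F)=-\tfrac{1}{8\pi^2}\mathrm{tr}(\Theta_0\wedge\Theta_0)$ (which is correct), and the $SU(2)$-decomposition of the curvature. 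That route is faithful to how Verbitsky actually proves Corollary 3.24, and it buys one real advantage over the paper's reduction: by putting the direct-sum Hermite--Einstein metric on the polystable $F$ and arguing with its curvature directly, you never need the additivity $\kappa_2(F)=\sum_i\kappa_2(F_i)$, which is not automatic for $\kappa_2$ (it requires the classes $c_1(F_i)/\mathrm{rk}(F_i)$ to coincide; this does hold in the paper's applications, where $F=\SheafEnd(E)$ and all slopes and determinants are trivial, but it is a point the printed proof glosses over).

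The cost is that you are now carrying the full analytic weight of Verbitsky's theorem, and the two places where that weight sits are exactly the ones you defer. First, the expansion of $\mathrm{tr}(\Theta_0\wedge\Theta_0)\wedge\omega_t^{2n-2}$ has a cross term $2\,\mathrm{tr}(\Theta_0^{\mathrm{inv}}\wedge\Theta_0^{\mathrm{var}})\wedge\omega_t^{2n-2}$ which your phrase ``adding the two contributions'' silently drops; its vanishing after integration is an $SU(2)$-isotypic orthogonality statement that must be proved rather than assumed, since $\omega_t^{2n-2}$ is itself not $SU(2)$-invariant. Second, the pointwise estimate on the variable part --- the definiteness of the sign on each irreducible summand and the strict domination of $\Abs{\langle\cdot,\omega_t^{2n-2}\rangle}$ by its value at $t=I$ --- \emph{is} the theorem; you name it as ``the technically delicate point'' and stop there. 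So judged as a proof of the statement as the paper uses it (with Corollary 3.24 available as a reference), your plan is sound and arguably more robust than the printed two-line reduction; judged as a self-contained argument, it is a correct outline whose central lemma is left open. One small correction: for the last clause, $SU(2)$-invariance of the class $\kappa_2(F)$ makes $t\mapsto\int_X\kappa_2(F)\omega_t^{2n-2}$ an $SO(3)$-invariant function on the twistor sphere, hence genuinely constant --- not merely constant ``up to sign''.
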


\begin{proof}
When $F$ is $\omega$-slope-stable this is precisely 
\cite[Cor. 3.24]{kaledin-verbitski-book}. 
I thank Misha Verbitsky for pointing out this statement and the fact 
that the statement holds also when $F$ is $\omega$-slope-polystable.
Assume that $F=\oplus_{i=1}^N F_i$, where $F_i$ 
is $\omega$-slope-stable. Set $r:=\rank(F)$ and $r_i:=\rank(F_i)$.
Then 
\[
\kappa_2(F)=ch_2(F)-\frac{r}{2}(c_1(F)/r)^2=\sum_{i=1}^N [ch_2(F_i)-\frac{r_i}{2}(c_1(F_i)/r_i)^2]=
\sum_{i=1}^N\kappa_2(F_i).
\]
We get 
\[
\int_X\kappa_2(F)\omega^{2n-2}=\sum_{i=1}^N\int_X\kappa_2(F_i)\omega^{2n-2}
\geq \sum_{i=1}^N\Abs{\int_X\kappa_2(F_i)\omega_t^{2n-2}}
\geq \Abs{\int_X\kappa_2(F)\omega_t^{2n-2}},
\]
where the first inequality is by \cite[Cor. 3.24]{kaledin-verbitski-book},
and the second by the triangle inequality.
Clearly, equality holds above, if and only if it holds for each $F_i$.

If $\kappa_2(F)$ is of Hodge type $(2,2)$ with respect to $I_t$, 
for all $t\in \PP^1_\omega$, then equality holds in 
(\ref{eq-verbitskys-inequality}),
by  \cite[Claim 3.21]{kaledin-verbitski-book} and Remark \ref{rem-su-invariance} above.
\end{proof}

The following generalization of Theorem 
\ref{thm-hyperholomorphic-iff-slope-stable} was explained to me by Misha 
Verbitsky. 

\begin{cor}
\label{cor-main-result-on-projectively-hyperholomorphic-reflexive-sheaves}
\begin{enumerate}
\item
\label{cor-item-untwisted}
Let $E$ be an $\omega$-slope-stable (possibly twisted)
reflexive sheaf. Assume that $\kappa_2(E)$
%is $Mon(X)$-invariant 
remains of Hodge type $(2,2)$ along the chosen twistor line 
and the first Chern class of every direct summand of $\SheafEnd(E)$ vanishes. 
Then $\SheafEnd(E)$ is 
$\omega$-polystable-hyperholomorphic
and $E$ is projectively $\omega$-stable-hyperholomorphic.
\item
\label{cor-item-twisted}
Let $A$ be an $\omega$-slope-stable reflexive sheaf of Azumaya algebras of rank $r$
(Definition \ref{def-slope-stability}). 
%Assume that the underlying rank $r^2$ coherent sheaf $A$ is $\omega$-slope-polystable.
Assume that $c_2(A)$ 
%is $Mon(X)$-invariant  
remains of Hodge type $(2,2)$ along the chosen twistor line 
and the first Chern class of every direct summand of $A$ vanishes. 
Then $A$ extends to a reflexive sheaf $\A$ 
of Azumaya algebras over $\X$, 
%flat over $\PP^1_\omega$, 
and $\A_t$ is a $\omega_t$-slope-stable sheaf of Azumaya algebras, for all $t\in \PP^1_\omega$.
\end{enumerate}
\end{cor}

\begin{proof}
(\ref{cor-item-untwisted})
The sheaf $\SheafEnd(E)$ is $\omega$-slope-polystable,
by Proposition \ref{prop-polystability-of-a-very-twisted-sheaf}.
%since Lemma \ref{lemma-projectively-hyperholomorphic-iff-End-is} would then
%imply that $E$ is projectively $\omega$-stable-hyperholomorphic.
Apply Theorem \ref{thm-Verbitskys-inequality} with $F:=\SheafEnd(E)$ to conclude that 
$\SheafEnd(E)$ is $\omega$-polystable-hyperholomorphic.
%Equality holds in (\ref{eq-verbitskys-inequality}), since $\kappa_2(E)$ is of
%Hodge type $(2,2)$, for all $t\in \PP^1_\omega$, by Lemma \ref{lemma-Mon-invariant-classes-are-of-Hodge-type}.

(\ref{cor-item-twisted}) $A$ is isomorphic to $A^*$ as a coherent sheaf, using
the trace bilinear pairing, and thus $c_1(A)=0.$
The construction of $\A$ follows from Theorem \ref{thm-Verbitskys-inequality}.
The structure of Azumaya algebra extends, by Lemma 
\ref{lemma-extension-of-a-slope-zero-subsheaf-to-a-hyperholomorphic-subsheaf}
part (\ref{lemma-item-algebraic-structures}). The stability of $\A_t$ follows from Lemma 
\ref{lemma-if-stable-for-some-t-then-stable-for-all-t}.
\end{proof}

\begin{rem}
\label{rem-caution-not-flat}
The extension $\A$ in Corollary \ref{cor-main-result-on-projectively-hyperholomorphic-reflexive-sheaves} is not known to be flat over $\PP^1_\omega$. We know only that its singular locus is tri-analytic, so the dimension of the intersection of the singular locus with the fibers of the twistor family is constant \cite[Claim 3.16]{kaledin-verbitski-book}.
\end{rem}

%***************************************************
%
%***************************************************
\subsection{Deformation of pairs along twistor paths}
\label{sec-deformations-along-twistor-paths}

A {\em marking} of an irreducible
holomorphic symplectic manifold $X$ is an isometry 
$\phi:H^2(X,\Integers)\rightarrow \Lambda$  with a fixed lattice $\Lambda$.
Let $\mathfrak{M}_\Lambda$ be the moduli space of isomorphism classes of
marked irreducible holomorphic symplectic manifolds 
\cite{huybrects-basic-results}. 
A {\em twistor path} in $\mathfrak{M}_\Lambda$ is a sequence of
twistor lines, in which each consecutive pair has non-trivial
intersection in $\mathfrak{M}_\Lambda$, together with a choice of an intersection point for each consecutive pair. 
If the chosen intersection point, of each consecutive pair,
corresponds to a manifold with trivial Picard group, 
we call the twistor path {\em generic}. 

\begin{thm}
\label{thm-path-connectedness}
\cite[Theorems 3.2 and 5.2.e]{verbitsky-cohomology}
Let $(X_i,\phi_i)$, $i=1,2$, be two marked irreducible
holomorphic symplectic manifolds, in the same connected component of 
$\mathfrak{M}_\Lambda$. 
Then there exists a generic twistor path in $\mathfrak{M}_\Lambda$ 
connecting $(X_1,\phi_1)$ with $(X_2,\phi_2)$. 
\end{thm}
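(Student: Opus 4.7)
The plan is to transfer the problem from the moduli space $\mathfrak{M}_\Lambda$ to the period domain
\[
\Omega_\Lambda \;:=\; \{\,[\sigma]\in\PP(\Lambda\otimes\ComplexNumbers)\ :\ (\sigma,\sigma)=0,\ (\sigma,\bar\sigma)>0\,\},
\]
via the period map $\mathcal{P}:\mathfrak{M}_\Lambda\to\Omega_\Lambda$, and then exploit that twistor lines provide a rich family of rational curves in $\Omega_\Lambda$ passing through every point. First I would recall (from Huybrechts' work on basic properties of irreducible holomorphic symplectic manifolds) that $\mathcal{P}$ is surjective on each connected component of $\mathfrak{M}_\Lambda$, and that the Noether--Lefschetz locus $NL\subset\Omega_\Lambda$, consisting of periods whose corresponding manifolds have non-trivial Picard group, is a countable union of real-analytic hypersurfaces, hence of first category in $\Omega_\Lambda$. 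In particular, the complement $\Omega_\Lambda\setminus NL$ is dense and path-connected, and every marked manifold $(X,\phi)$ with $\mathcal{P}(X,\phi)\in\Omega_\Lambda\setminus NL$ has $\Pic(X)=0$.

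Next I would study the image of a twistor line under $\mathcal{P}$. If $\omega\in H^{1,1}(X,\RealNumbers)$ is a K\"ahler class, the associated twistor family $\X\to\PP^1_\omega$ maps under $\mathcal{P}$ onto a twistor line $T_\omega\subset\Omega_\Lambda$, which is a smooth rational curve contained in the three-dimensional positive real subspace $\phi(\mbox{span}_\RealNumbers\{\mbox{Re}(\sigma),\mbox{Im}(\sigma),\omega\})$. The key fact I would record is that for every point $p\in\Omega_\Lambda$, the twistor lines through $p$ sweep out an open neighborhood of $p$ in $\Omega_\Lambda$: indeed at a point $p$ with $\Pic=0$ every class in the positive cone of the real $(1,1)$-part is K\"ahler (by the projectivity/K\"ahler cone description for hyperk\"ahler manifolds with $\Pic=0$), so the union of twistor lines through $p$ contains an open subset of $\Omega_\Lambda$. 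This gives a local-connectedness statement: any point $p'$ sufficiently close to $p$ lies on some twistor line through $p$, and by choosing $p'$ outside $NL$ the concatenation remains a \emph{generic} twistor-path.

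Given this local statement, I would define, for a fixed base point $(X_1,\phi_1)$, the subset $U\subset\mathfrak{M}_\Lambda^{\circ}$ (the connected component) consisting of all marked pairs reachable from $(X_1,\phi_1)$ by a generic twistor-path. The local-connectedness observation shows $U$ is open. To show $U$ is also closed, I would argue that if $(Y,\phi_Y)$ is in the closure of $U$, then by density of the Picard-zero locus we can find a marked pair $(Y',\phi_{Y'})\in U$ arbitrarily close to $(Y,\phi_Y)$, and then one further twistor line through a nearby Picard-zero period connects $(Y',\phi_{Y'})$ to $(Y,\phi_Y)$. Since the connected component is connected and $U$ is non-empty, open, and closed, we obtain $U=\mathfrak{M}_\Lambda^{\circ}$, hence $(X_2,\phi_2)\in U$.

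The main obstacle, and the subtle point I expect to need real care, is the non-Hausdorff behavior of $\mathfrak{M}_\Lambda$: the period map has positive-dimensional fibers over $NL$ (inseparable points correspond to birational hyperk\"ahler models), so openness/closedness must be established in $\mathfrak{M}_\Lambda$ rather than in $\Omega_\Lambda$. This is precisely why the notion of \emph{generic} twistor-path — one whose consecutive intersections contain a Picard-zero point — is forced on us: at Picard-zero points the period map is a local isomorphism, and so local openness in $\Omega_\Lambda$ lifts unambiguously to local openness in $\mathfrak{M}_\Lambda$. Verifying this lifting carefully, together with the density of Picard-zero periods on every twistor line whose K\"ahler class is itself generic in the positive cone, is the technical heart of the argument.
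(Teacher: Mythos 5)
First, a remark on the comparison itself: the paper does not prove Theorem \ref{thm-path-connectedness} — it is imported verbatim from Verbitsky (\cite{verbitsky-cohomology}, Theorems 3.2 and 5.2.e), so there is no internal proof to match your argument against. Your overall strategy (pass to the period domain $\Omega_\Lambda$, use the local Torelli theorem to move between $\mathfrak{M}_\Lambda$ and $\Omega_\Lambda$, use density of the Picard-zero locus to arrange genericity, and run a connectedness argument) is indeed the standard framework in which this theorem is proved.

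However, there is a genuine gap at the central step. You claim that for a point $p$ with trivial Picard group, ``the twistor lines through $p$ sweep out an open neighborhood of $p$ in $\Omega_\Lambda$,'' and both your openness and closedness arguments rest on this. It is false, by a dimension count. Writing $P\subset\Lambda\otimes\RealNumbers$ for the positive $2$-plane determined by $p$, the twistor lines through $p$ are the conics $\PP(W\otimes\ComplexNumbers)\cap\Omega_\Lambda$ for positive $3$-planes $W\supset P$; these $W$ are parametrized by positive lines in $P^\perp$, an open subset of $\PP(P^\perp)$ of real dimension $b_2-3$, and each twistor line has real dimension $2$. Hence the union of all twistor lines through $p$ has real dimension at most $b_2-1$, whereas $\dim_{\RealNumbers}\Omega_\Lambda=2(b_2-2)$; for $K3^{[n]}$-type this is $22$ versus $42$. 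The fact that every positive $(1,1)$-class is K\"ahler when $\Pic=0$ enlarges the family of available twistor lines to its maximum, but does not make their union open. Consequently a single additional twistor line cannot reach an arbitrary nearby point, and the open-and-closed argument collapses as written. The repair is to work with \emph{chains}: one must prove that the set of points reachable from $p$ by generic twistor paths of arbitrary length is open, which is the actual content of Verbitsky's Theorem 3.2 and requires an argument about the Grassmannian of positive $2$-planes (two positive $2$-planes are joined by a twistor line exactly when they span a positive $3$-plane, and one connects arbitrary pairs through a chain of intermediate $2$-planes in general position). Two further points you gesture at but should make explicit in any repaired argument: density of Picard-zero points on a twistor line holds only when the associated $3$-plane $W$ satisfies $W^\perp\cap\Lambda=0$, so the lines in the chain must be chosen with this property; and the first and last lines of the chain must correspond to actual K\"ahler classes on $X_1$ and $X_2$ (which may have nontrivial Picard groups), not merely to positive classes.
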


We will need the following evident lemma.

\begin{lem}
\label{lemma-slope-stability-does-not-depend-on-kahler-class}
Let $X$ be a compact K\"{a}hler manifold with a trivial Picard group
$\Pic(X)=\{\StructureSheaf{X}\}$, $\omega$ and $\omega'$ two 
K\"{a}hler classes on $X$, and $E$ a torsion free, possibly  twisted, coherent 
$\StructureSheaf{X}$-module of rank $r$.
Then $E$ is $\omega$-slope-stable, if and only if $E$ does not admit
any subsheaf of rank $r'$, for $0<r'<r$. In particular, 
$E$ is $\omega$-slope-stable, if and only if $E$ is 
$\omega'$-slope-stable.
\end{lem}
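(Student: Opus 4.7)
The plan is to use the hypothesis $\Pic(X) = \{\StructureSheaf{X}\}$ to trivialize the first Chern class of every torsion-free coherent sheaf on $X$, which forces every slope to vanish and removes the K\"ahler class from the stability test.

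The first step is the observation that $\Pic(X) = \{\StructureSheaf{X}\}$ implies that every holomorphic line bundle on $X$ is trivial. Given any torsion-free coherent sheaf $F$ of positive rank $s$ on $X$, the reflexive hull $\det(F) := (\Wedge{s} F)^{**}$ is a line bundle, hence trivial. Therefore $c_1(F) = 0$, and for every K\"ahler class $\eta$ on $X$,
\[
\deg_\eta(F) \ = \ \int_X c_1(F) \cup \eta^{\dim_\ComplexNumbers(X) - 1} \ = \ 0,
\]
so $\mu_\eta(F) = 0$.

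The second step is to plug this into the stability definition. For any subsheaf $F \subset E$ of rank $r'$ with $0 < r' < r$ one has
\[
\deg_\eta \SheafHom(E,F) \ = \ \rank(E)\,\rank(F)\bigl(\mu_\eta(F) - \mu_\eta(E)\bigr) \ = \ 0,
\]
so the strict inequality required in Definition \ref{def-slope-stability} becomes $0 < 0$, which fails. Conversely, there is nothing to test when no such subsheaf exists: a proper subsheaf of full rank $r$ is absorbed into $E$ upon saturation (its saturation has rank $r$ with torsion-free quotient inside $E$, hence equals $E$), and torsion subsheaves of the torsion-free $E$ vanish. Thus $E$ is $\eta$-slope-stable if and only if $E$ admits no subsheaf of rank $r'$ with $0 < r' < r$.

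Since this intrinsic condition makes no reference to the K\"ahler class, applying it to both $\omega$ and $\omega'$ yields the ``in particular'' statement. No real obstacle is expected; the whole argument is an immediate consequence of the vanishing of all first Chern classes on $X$.
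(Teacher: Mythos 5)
Your argument is correct: the triviality of the Picard group forces every determinant line bundle, hence every first Chern class and every slope, to vanish, so the stability test degenerates to the nonexistence of subsheaves of intermediate rank, independently of the K\"ahler class. This is exactly the intended reasoning --- the paper itself offers no proof, introducing the statement only as an ``evident Lemma'' --- so your write-up simply supplies the details the author omits.
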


%The differential geometric Definition 
%\ref{def-projectively-omega-stable-hyperholomorphic}, of projectively 
%$\omega$-stable-hyperholomorphic sheaves, is convenient
%along a single twistor line, as the Hermite-Einstein connection 
%is constant along this line. 
%When we attempt to deform a reflexive sheaf along a twistor path,
%the language of Azumaya algebras is more convenient.

A {\em parametrized twistor path} $\gamma:C\rightarrow \mathfrak{M}_\Lambda$
consists of a connected reduced nodal curve $C$, 
of arithmetic genus $0$, with an 
ordering of the irreducible components, 
so that two consecutive components meet at a node, 
and a morphism $\gamma$ from $C$ to $\mathfrak{M}_\Lambda$, mapping the
$i$-th component of $C$ isomorphically onto a twistor line.
If $\gamma$ maps each node to  a point with a trivial Picard group, we call $\gamma$ 
a {\em generic parametrized twistor path}. 
%and the $i$-th node of $C$ to a point of intersection of the 
%$i$-th and $i+1$ twistor lines.
Let $\gamma:C\rightarrow \mathfrak{M}_\Lambda$ be a parametrized twistor path, 
$\X\rightarrow C$ the natural twistor deformation, 
$0\in C$ a point of the first component of $C$, and
$X_0$ the fiber of $\X$ over $0$. 
%$\omega_0$ a K\"{a}hler class, such that
%the twistor line $\PP^1_{\omega_0}$ is the first twistor line in the path.
Let $E$ be a reflexive twisted sheaf on $X_0$. 

\begin{defi}
\label{def-can-be-deformed-along-a-twistor-path}
\begin{enumerate}
\item
Let $\E$ be a reflexive sheaf over $\X$, whose singular locus  is equidimensional over $C$ of codimension $\geq 3$.
The {\em reflexive restriction} of $\E$ to the fiber $X_t$ of $\X$ over $t\in C$ is the convex hull of the quotient of the restriction of
$\E$ to $X_t$ by its torsion subsheaf.
\item
We say that $E$ 
{\em can be deformed along $\gamma$}, if there exists 
a reflexive twisted coherent sheaf $\E$ over $\X$, such that the singular locus of $\E$ is equidimensional\footnote{We do not require $\E$ to be flat over $C$.} over $C$  of codimension $\geq 3$,
%flat over $C$,
which reflexive restriction to $X_0$
represents the equivalence class of $E$ (Definition \ref{def-equivalent-twisted-sheaves}).
Equivalently, there exists a  reflexive sheaf of
Azumaya $\StructureSheaf{\X}$-algebras, with such a singular locus, 
%flat over $C$, 
which reflexive restriction to $X_0$ is isomorphic to $\SheafEnd(E)$. 
\end{enumerate}
\end{defi}

Let $X$ be an irreducible holomorphic symplectic manifold and 
$\gamma:C\rightarrow\mathfrak{M}_\Lambda$ a generic parametrized twistor path,
with $X_0=X$. Let $\omega_0$ be a K\"{a}hler class on $X_0$,
such that $\PP^1_{\omega_0}$ is the first twistor line. Let 
$\omega_{t_i^-}$, $1\leq i\leq N$, be a K\"{a}hler class on the 
$i$-th node $X_{t_i}$,
such that $\PP^1_{\omega_{t_i^-}}$ is the $i$-th twistor line,
and $\omega_{t_i^+}$, $1\leq i\leq N-1$, a K\"{a}hler class on $X_{t_i}$,
such that $\PP^1_{\omega_{t_i^+}}$ is the $i+1$ twistor line. 
Note that $\omega_0$ determines $\omega_{t_1^-}$, and 
$\omega_{t_i^+}$ determines $\omega_{t_{i+1}^-}$. 
At a node $t_i\in C$, the group $\Pic(X_{t_i})$ is trivial. Slope-stability 
is then independent of the K\"{a}hler class, by Lemma
\ref{lemma-slope-stability-does-not-depend-on-kahler-class}. 
We will abuse notation and
say that a sheaf on $X_{t_i}$ is $\omega_{t_i}$-slope-stable,
if it is slope-stable with respect to some, hence any K\"{a}hler class.
%We say that the locus $(F)_{sing}$ is {\em $\gamma$-analytic}, if 
%$(F)_{sing}\times C$ corresponds to a closed analytic subvariety of  $\X$, 
%under the differentiable trivialization of 
%$\X \cong [X\times C]$, associated to the canonical trivialization of each 
%twistor deformation \cite{HKLR}.

%\begin{prop}
%\label{prop-deformation-of-azumaya-algebras-along-twistor-paths}
%Let $A$ be an $\omega_0$-slope-stable reflexive sheaf of Azumaya algebras.
%Assume that the class $c_2(A)$ 
%is $Mon(X)$-invariant. 
%Then $A$ deforms along $\gamma$, in the sense of definition
%\ref{def-can-be-deformed-along-a-twistor-path}. 
%\end{prop}

\begin{prop}
\label{prop-deformation-of-azumaya-algebras-along-twistor-paths}
\begin{enumerate}
\item
\label{prop-item-untwisted}
Let $F$ be an $\omega_0$-slope-stable (possibly twisted) reflexive sheaf.
Assume that $\kappa_2(F)$ 
remains of Hodge type $(2,2)$ along $\gamma$
%is $Mon(X)$-invariant  
and the first Chern class of every direct summand of $\SheafEnd(F)$ vanishes.   
Then $F$ deforms along $\gamma$,
in the sense of definition \nolinebreak
\ref{def-can-be-deformed-along-a-twistor-path}. 
\item
\label{prop-item-twisted}
Let $A$ be an $\omega_0$-slope-stable reflexive sheaf of Azumaya algebras of rank $r$
(Definition \ref{def-slope-stability}). 
%Assume that the underlying rank $r^2$ coherent sheaf $A$ is $\omega_0$-slope-polystable. 
Assume
%, furthermore, 
that $c_2(A)$ 
remains of Hodge type $(2,2)$ along $\gamma$
%is $Mon(X)$-invariant  
and the first Chern class of every direct summand of $A$ vanishes. 
Then $A$ deforms along $\gamma$, as a reflexive sheaf of Azumaya algebras, 
in the sense of definition
\ref{def-can-be-deformed-along-a-twistor-path}. 
\end{enumerate}
\end{prop}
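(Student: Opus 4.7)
The plan is to prove both parts by induction on the number $N$ of twistor lines in the parametrized twistor path $\gamma$, using Corollary \ref{cor-main-result-on-projectively-hyperholomorphic-reflexive-sheaves} as the workhorse for extending across each individual twistor line.

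For the base case $N=1$, part (\ref{prop-item-twisted}) is immediate from Corollary \ref{cor-main-result-on-projectively-hyperholomorphic-reflexive-sheaves}(\ref{cor-item-twisted}): the reflexive sheaf of Azumaya algebras $A$ extends over the twistor space to a reflexive sheaf of Azumaya algebras $\A$, flat over $\PP^1_{\omega_0}$. For part (\ref{prop-item-untwisted}), Corollary (\ref{cor-item-untwisted}) extends $\SheafEnd(F)$ as a reflexive sheaf of Azumaya algebras over the twistor space, and Construction \ref{construction-family-of-projectively-hyperholomorphic-sheaves} promotes this to a reflexive twisted coherent sheaf $\F$ restricting to $F$ over $X_0$.

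For the inductive step, suppose the deformation exists over the union of the first $i$ twistor lines, restricting to a reflexive (twisted) sheaf $\A_{t_i}$ at the node $X_{t_i}$. I need to verify the three hypotheses of Corollary \ref{cor-main-result-on-projectively-hyperholomorphic-reflexive-sheaves} for $\A_{t_i}$ with respect to the Kähler class $\omega_{t_i^+}$. First, Lemma \ref{lemma-slope-stability-does-not-depend-on-kahler-class} together with the triviality of $\Pic(X_{t_i})$ (this is precisely where the genericity of $\gamma$ enters) guarantees that $\omega_{t_i^-}$-slope-stability of $\A_{t_i}$ as a sheaf of Azumaya algebras (and polystability of the underlying rank $r^2$ sheaf, in part (\ref{prop-item-twisted})) is equivalent to $\omega_{t_i^+}$-slope-stability (resp.\ polystability), since at a manifold with trivial Picard group any subsheaf has slope zero and stability reduces to a purely rank-theoretic condition on proper subsheaves. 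Second, $Mon(X_{t_i})$-invariance of $c_2(\A_{t_i})$ follows from the invariance hypothesis on $X_0$ by parallel transport along the previous twistor lines: $Mon(X_0)$ and $Mon(X_{t_i})$ are conjugate under the parallel transport isomorphism induced by the family $\X\to C$. Applying the corollary to the $(i+1)$-th twistor line then extends $\A_{t_i}$ further, and the two extensions glue along their common fiber $X_{t_i}$, where the restriction is uniquely determined.

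The main obstacle is the verification that at a node the relevant stability/polystability condition for $\A_{t_i}$ genuinely transfers from $\omega_{t_i^-}$ to $\omega_{t_i^+}$, and that this restriction is indeed what Corollary \ref{cor-main-result-on-projectively-hyperholomorphic-reflexive-sheaves} delivers: this requires the $\omega_{t_i^-}$-slope-stability of $\A_{t_i}$, which by Lemma \ref{lemma-if-stable-for-some-t-then-stable-for-all-t} propagates along the whole previous twistor line from the stability over $X_0$ (or the previous node). This circular dependency is broken precisely because Lemma \ref{lemma-slope-stability-does-not-depend-on-kahler-class} makes stability at the nodes Kähler-class-independent. The gluing at nodes is formal once the restriction data match, since a reflexive sheaf on the nodal total space is determined by its restrictions to the components plus the identification on the common fiber.
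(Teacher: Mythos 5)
Your proposal is correct and follows essentially the same route as the paper: induction on the number of twistor lines, with Corollary \ref{cor-main-result-on-projectively-hyperholomorphic-reflexive-sheaves} (equivalently Theorem \ref{thm-Verbitskys-inequality} plus Lemma \ref{lemma-extension-of-a-slope-zero-subsheaf-to-a-hyperholomorphic-subsheaf}) extending the Azumaya structure across each line, Lemma \ref{lemma-slope-stability-does-not-depend-on-kahler-class} transferring (poly)stability between $\omega_{t_i^-}$ and $\omega_{t_i^+}$ at the nodes where $\Pic$ is trivial, and Lemma \ref{lemma-if-stable-for-some-t-then-stable-for-all-t} propagating stability along each line. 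The only cosmetic difference is that the paper deduces part (\ref{prop-item-untwisted}) from part (\ref{prop-item-twisted}) in one stroke via $\SheafEnd(F)$ rather than running the induction for it separately, and it leaves implicit the parallel-transport argument for monodromy invariance of $c_2$ at the nodes, which you spell out.
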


\begin{proof} 
(\ref{prop-item-twisted})
The following argument is similar to the proof of  \cite[Theorem 10.8]{kaledin-verbitski-book}.
The proof is by induction on the number $N$ of twistor lines in $C$.
$A$ deforms along the first twistor line, by  Corollary 
\ref{cor-main-result-on-projectively-hyperholomorphic-reflexive-sheaves} and Remark \ref{rem-caution-not-flat}.

Assume that $A$ deforms, as an $\omega_t$-slope-stable sheaf of
Azumaya algebras, along the first $i$ twistor lines, and $i<N$. Then
$A_{t_i}$ is slope-polystable with respect to $\omega_{t_i^-}$ and hence also
with respect to $\omega_{t_i^+}$, by Lemma 
\ref{lemma-slope-stability-does-not-depend-on-kahler-class}.
Hence, $A_{t_i}$ is $\omega_{t_i^+}$ polystable-hyperholomorphic, by
Theorem \ref{thm-Verbitskys-inequality} and Lemma \ref{lemma-Mon-invariant-classes-are-of-Hodge-type}. The structure
of an Azumaya algebra deforms along the 
$i+1$ twistor line, by Lemma 
\ref{lemma-extension-of-a-slope-zero-subsheaf-to-a-hyperholomorphic-subsheaf}
part \ref{lemma-item-algebraic-structures}. 

The $\omega_t$-slope-stability  of $A_t$ is proven by induction as well.
The underlying rank $r^2$ coherent sheaf $A$ is $\omega_0$-slope-polystable, by Proposition \ref{prop-polystability-of-a-very-twisted-sheaf}, since $A\cong\SheafEnd(F)$ for some $\omega_0$-slope-stable reflexive twisted sheaf $F$ (see Section \ref{sec-azumaya}).
The stability for $t$ in the first twistor line follows from Lemma
\ref{lemma-if-stable-for-some-t-then-stable-for-all-t}. 
Stability of $A_{t_1}$
for $\omega_{t_1^+}$ follows from that for $\omega_{t_1^-}$ and Lemma 
\ref{lemma-slope-stability-does-not-depend-on-kahler-class}.
The proof of the induction step is similar.

Part (\ref{prop-item-untwisted})
follows from part (\ref{prop-item-twisted}), since 
$\SheafEnd(F)$ is an $\omega_0$-slope-stable sheaf of Azumaya algebras,
the underlying sheaf $\SheafEnd(F)$ (forgetting the algebra structure) is
$\omega_0$-slope-polystable by Proposition \ref{prop-polystability-of-a-very-twisted-sheaf}, 
and $c_2(\SheafEnd(F))$ is a scalar multiple of $\kappa_2(F)$, by Lemma \ref{lemma-kappa-2-is-propotional-to-c-2-of-End}. 
\end{proof}

\begin{rem}
\label{rem-verbitsky-results-ok-for-products}
With the exception of Theorem \ref{thm-path-connectedness}, Verbitsky proves the 
results mentioned
above for hyperk\"{a}hler varieties, without assuming the condition $h^{2,0}=1$
(the irreducibility condition). In particular, all the definitions and results in this section
hold for $X\times X$, where $X$ is an irreducible holomorphic symplectic manifold,
provided the twistor deformations of $X\times X$ we consider are only 
%is no-longer an irreducible holomorphic symplectic manifold. 
%Verbitsky mentions in \cite{kaledin-verbitski-book} that the assumption of 
%(holomorphic-symplectic)-irreducibility of $X$ is non-essential;
%Theorem \ref{thm-hyperholomorphic-iff-slope-stable}
% has a straightforward generalization to products of such, so
%Corollary 
%\ref{cor-main-result-on-projectively-hyperholomorphic-reflexive-sheaves}
%and Proposition 
%\ref{prop-deformation-of-azumaya-algebras-along-twistor-paths}
%work for 
fiber-square  $\X\times_{\PP^1_\omega} \X$ of twistor deformations of $X,$
associated to a K\"{a}hler class $\omega$ on $X$.
%(Alternatively, we could work with the restriction of the sheaf $E$ to
%$\{x\}\times X$, $x$ a point of $X$, as done in section 
%\ref{sec-P-B-as-a-twisted-sheaf-over-X-times-X}). 
Corollary \ref{cor-main-result-on-projectively-hyperholomorphic-reflexive-sheaves} and 
Proposition \ref{prop-deformation-of-azumaya-algebras-along-twistor-paths} 
will be applied in this form for $X$ replaced by $X\times X$ in the proofs of
Theorem \ref{thm-deformations-of-E-along-twistor-paths}.
\end{rem}
%***************************************************
%
%***************************************************
\section{Stable hyperholomorphic sheaves of rank $2n-2$ on all 
manifolds of $K3^{[n]}$-type}
\label{sec-deformability-theorem}

\begin{defi}
%\label{def-very-twisted}
Let $X$ be a complex manifold and $E$ a torsion free  $\theta$-twisted coherent sheaf
on $X$. $E$ is said to be {\em very twisted}, if the rank of $E$ is equal to the order of the 
class of $\theta$ in $H^2_{an}(X,\StructureSheaf{X}^*)$.
\end{defi}

%A $\theta$-twisted reflexive sheaf is said to be {\em very-twisted}, 
%if its rank is equal to the order of its Brauer class $\theta$.
A very-twisted reflexive sheaf does not have any non-trivial subsheaves of lower rank, by Remark \ref{rem-order-divides-rank}, 
so it is trivially slope-stable with respect to every K\"{a}hler class. The following Lemma thus applies.

\begin{lem}
\label{lemma-uniform-slope}
Let $E$ be a reflexive possibly twisted sheaf over a compact K\"{a}hler manifold $X$. 
Assume that $E$ is $\omega$-slope-stable for all K\"{a}hler classes $\omega$ in some open subset $U$ of the
K\"{a}hler cone of $X$. Then the first Chern class of every direct summand of
$\SheafEnd(E)$ vanishes.
\end{lem}

\begin{proof}
$\SheafEnd(E)$ is $\omega$-slope-polystable with respect to every K\"{a}hler class $\omega$ in $U$, by Proposition
\ref{prop-polystability-of-a-very-twisted-sheaf}. Set $n:=\dim_\ComplexNumbers(X)$. 
Hence, every direct summand of $\SheafEnd(E)$ has slope zero with respect to every K\"{a}hler class in $U$.
The image of $U$ under the polynomial map
$\omega\mapsto \omega^{n-1}$ is an open subset of $H^{n-1,n-1}(X,\RealNumbers)$, since its differential is invertible at every point, by Hard Lefschetz. 
Hence, the first Chern class of every direct summand vanishes.
\end{proof}

\begin{rem}
Slope stability of a torsion free sheaf is known to be an open condition on the K\"{a}hler class in many cases.
See \cite[Sec. 5.1]{lubke-teleman} for locally free sheaves over compact K\"{a}hler manifolds, \cite[Sec. 4.C]{huybrechts-lehn-book} for torsion free sheaves over projective surfaces, and \cite{greb-ross-toma} for more recent results for higher dimensional projective varieties.
\end{rem}

In Section \ref{sec-a-very-twisted-ext-1} 
we construct a very twisted version of the sheaf $E$ in
Proposition \ref{prop-V}.
%In Section \ref{sec-semistability} we show that if $F$ is
%very twisted, then the untwisted sheaf $\SheafEnd(F)$ is $\omega$-slope-polystable
%with respect to every K\"{a}hler class $\omega$. 
%Consequently, there exists a Hermite-Einstein metric on $\SheafEnd(F)$.
%In Section 
%\ref{sec-stability-of-an-untwisted-ext-1} we carefully 
%degenerate the $K3$ surface, the moduli space $\M$, and the sheaf $E$ 
%so that the sheaf $E$ becomes {\em untwisted}, yet still slope-stable.
In Section \ref{sec-proof-of-deformability}
we prove the deformability Theorem 
\ref{thm-main-introduction}.

%***************************************************
%
%***************************************************
\subsection{A very twisted 
$\SheafExt^1_{\pi_{13}}(\pi_{12}^*\E,\pi_{23}^*\E)$}
\label{sec-a-very-twisted-ext-1}
We construct a very twisted reflexive sheaf 
$\SheafExt^1_{\pi_{13}}(\pi_{12}^*\E,\pi_{23}^*\E)$,
over the self-product of a suitable choice of a moduli space $\M$
(Theorem \ref{thm-E-is-slope-stable}). 
%Recall that 
%an untwisted analogue of $\SheafExt^1_{\pi_{13}}(\pi_{12}^*\E,\pi_{23}^*\E)$
%appears in Theorem \ref{thm-introduction-stability-of-an-untwisted-ext-1}.

Let $\M_H(v)$ be a smooth and projective moduli space of
$H$-stable sheaves on a projective $K3$ surface $S$. 
Set $r:=(v,v)$. Assume, that $(v,v)\geq 2$. 
Let $\mu_r$ be the group of $r$-th roots of unity.
Let $\exp:H^2(\M_H(v),\frac{2\pi\sqrt{-1}}{r}\Integers)\rightarrow H^2(\M_H(v),\mu_r)$ be the homomorphism in Equation 
(\ref{eq-tilde-theta-via-c-1}).

\begin{lem}
\label{lemma-exp-bar-w-is-tilde-theta}
\begin{enumerate}
\item
\label{lemma-item-existence-and-uniqueness-of-w-bar}
There exists a unique $r(v^\perp)$ coset $\bar{w}$ in $v^\perp$
of classes $w$, such that $w-v$ belongs to $rK_{top}S$. 
\item
\label{lemma-item-pair-tilde-theta-is-monodromy-invariant}
Define a class in $H^2(\M_H(v),\mu_r)$, by 
\begin{equation}
\label{eq-tilde-theta-is-exp-bar-w}
\tilde{\theta}:=\exp(-2\pi\sqrt{-1}\bar{w}/r),
\end{equation}
where we identify $v^\perp$ with $H^2(\M_H(v),\Integers)$ via
Mukai's isometry (\ref{eq-Mukai-isomorphism}).
Then the pair $\{\tilde{\theta},\tilde{\theta}^{-1}\}$ is monodromy invariant.
\end{enumerate}
\end{lem}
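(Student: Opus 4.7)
My plan is to handle the two parts in turn and reduce the monodromy statement to an action of $O(K_{top}S)_v$ on the Mukai sublattice $v^\perp$ via Theorem \ref{thm-weight-2-monodromy}.

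For part (\ref{lemma-item-existence-and-uniqueness-of-w-bar}), I would exploit primitivity of $v$ together with unimodularity of the Mukai lattice. Since $v$ is primitive, one can pick $x\in K_{top}S$ with $(x,v)=-1$; then $w:=v+rx$ automatically satisfies $(w,v)=r+r(x,v)=0$ and $w-v=rx\in rK_{top}S$. Uniqueness of the coset modulo $r(v^\perp)$ reduces to the identity $rK_{top}S\cap v^\perp=r(v^\perp)$, which is immediate from nondegeneracy of the Mukai pairing: if $ry\in v^\perp$ then $r(y,v)=0$, so $(y,v)=0$ and $ry\in r(v^\perp)$.

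For part (\ref{lemma-item-pair-tilde-theta-is-monodromy-invariant}), the well-definedness of $\tilde{\theta}$ as a class in $H^2(\M_H(v),\mu_r)$ is transparent from the construction: the coset $\bar{w}$ determines an element of $v^\perp/r\cdot v^\perp$, which under Mukai's isomorphism (\ref{eq-Mukai-isomorphism}) becomes a class in $H^2(\M_H(v),\Integers/r)$, and the exponential $\Integers/r\to \mu_r$ sends it to $\tilde{\theta}$. To verify invariance of $\{\tilde{\theta},\tilde{\theta}^{-1}\}$, I would use Theorem \ref{thm-weight-2-monodromy} to represent every element of $Mon^2(\M_H(v))$ as $\monrep^2(g)$ for some $g\in O(K_{top}S)_v$, then apply $c_1$ to equation (\ref{eq-mon-equivariance-of-e}). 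Since the rank $(v,x)$ of $e_x$ vanishes for $x\in v^\perp$, the twist by the line bundle $\ell_g$ contributes nothing to $c_1(e_x)$, so under Mukai's isomorphism the action of $\monrep^2(g)$ on $v^\perp$ agrees with $g$ when $cov(g)=1$ and with $-g$ when $cov(g)=-1$. Because $g$ fixes $v$ and is an isometry, $g(w)-v=g(w-v)\in rK_{top}S$ and $g(w)\in v^\perp$, so by the uniqueness established in part (\ref{lemma-item-existence-and-uniqueness-of-w-bar}) we have $g(\bar{w})=\bar{w}$. Hence $\monrep^2(g)(\bar{w})=cov(g)\cdot\bar{w}$, and applying the exponential yields $\monrep^2(g)(\tilde{\theta})=\tilde{\theta}^{cov(g)}$, which preserves the unordered pair.

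The main technical point to watch is the rank bookkeeping in taking $c_1$ of formula (\ref{eq-mon-equivariance-of-e}): the clean conclusion that $\monrep^2(g)$ acts as $cov(g)\cdot g$ on $v^\perp$ relies on the vanishing $\rank(e_x)=(v,x)=0$, which is exactly what makes the twist by $\ell_g$ invisible under $c_1$. This is fortunate since $\bar{w}$ lies in $v^\perp$ by construction; off of $v^\perp$ an $\ell_g$-correction would appear and the statement would fail.
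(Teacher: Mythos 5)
Your proof is correct, and for part (\ref{lemma-item-pair-tilde-theta-is-monodromy-invariant}) it follows essentially the same route as the paper: both arguments reduce, via the surjectivity of $\monrep^2$ in Theorem \ref{thm-weight-2-monodromy}, to the action of $O(K_{top}S)_v$ on $v^\perp$, and both rest on the observation that the coset $\bar{w}$ is canonically attached to the embedding $v^\perp\subset K_{top}S$ together with the generator $v$, so that an isometry fixing $v$ preserves $\bar{w}$, while the sign $cov(g)$ --- which you extract carefully from equation (\ref{eq-mon-equivariance-of-e}), using $\rank(e_x)=(v,x)=0$ for $x\in v^\perp$ to kill the $\ell_g$-twist --- accounts for the possible passage from $\tilde{\theta}$ to $\tilde{\theta}^{-1}$. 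Where you genuinely diverge is part (\ref{lemma-item-existence-and-uniqueness-of-w-bar}): the paper exhibits $w=(1,0,n-1)$ explicitly for the Hilbert-scheme class $v=(1,0,1-n)$ and then transports existence to an arbitrary primitive $v$ of the same square using transitivity of the $O(K_{top}S)$-action on such classes, whereas you produce $w=v+rx$ directly from a class $x$ with $(x,v)=-1$, whose existence follows from primitivity of $v$ and unimodularity of the Mukai pairing on $K_{top}S$. Your route is more elementary and self-contained, since it avoids the orbit-transitivity input, and your verification of uniqueness via the identity $rK_{top}S\cap v^\perp=r(v^\perp)$ spells out a step the paper dismisses as clear.
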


\begin{proof}
\ref{lemma-item-existence-and-uniqueness-of-w-bar})
Uniqueness is clear. When $v$ is the class of the ideal sheaf 
of a length $n$ subscheme, with Mukai vector $(1,0,1-n)$,
choose $w=(1,0,n-1)$. The existence of such a class follows, for an arbitrary
primitive class $v$ with $(v,v)=2n-2$, since any two such classes
belong to the same $O(K_{top}S)$-orbit. 

\ref{lemma-item-pair-tilde-theta-is-monodromy-invariant})
The class $\tilde{\theta}$ is determined by the primitive isometric 
lattice embedding 
$H^2(\M_H(v),\Integers)\cong v^\perp\subset K_{top}S$
and the choice of a generator $v$ of the line orthogonal to the image of
$H^2(\M_H(v),\Integers)$. Any monodromy operator of 
$H^2(\M_H(v),\Integers)$ can be extended to an isometry of $K_{top}S$,
which necessarily maps $v$ to $v$ or $-v$, by
Theorem \ref{thm-weight-2-monodromy}.
\end{proof}

Let $\tilde{\theta}$ be the class in Equation (\ref{eq-tilde-theta-is-exp-bar-w}).
Denote by $\theta$ the image of $\tilde{\theta}$
in $H^2(\M_H(v),\StructureSheaf{}^*)$, via the sheaf inclusion
$\iota:\mu_r\hookrightarrow \StructureSheaf{}^*$. 
Let $\beta:B\rightarrow \M_H(v)\times \M_H(v)$  be the 
blow-up of the diagonal and $\PP{V}$ the projective bundle over $B$
associated to the twisted locally free sheaf (\ref{eq-vector-bundle-over-B}).

\begin{lem}
\label{lem-order-of-twisting-class-of-universal-sheaf}
\begin{enumerate}
\item
\label{lemma-item-obstruction-class-of-lifting-PP-V-to-SL-r}
The class $\tilde{\theta}(\PP{V})\in H^2(B,\mu_r)$, defined in
(\ref{eq-connecting-homomorphism-theta-tilde}),
satisfies
\begin{equation}
\label{eq-tilde-theta-PP-V}
\tilde{\theta}(\PP{V}) \ \ \ = \ \ \ 
\beta^*\left((\pi_1^*\tilde{\theta})^{-1}\pi_2^*\tilde{\theta}\right).
\end{equation} 
%\item
%There exists a $f_2^*\theta$-twisted universal sheaf $\E$ over
%$S\times \M_H(v)$. 
\item
\label{lemma-item-order-of-theta}
The order of the class $\theta$ in $H^2_{an}(\M_H(v),\StructureSheaf{}^*)$
is given by:
\[
\gcd\{(v,x) \ : \ x\in K_{top}S \ \mbox{and} \ 
c_1(x) \ \mbox{is of type} \ (1,1)\}.
\]
\end{enumerate}
\end{lem}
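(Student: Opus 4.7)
My plan for part (\ref{lemma-item-obstruction-class-of-lifting-PP-V-to-SL-r}) is to use equation (\ref{eq-tilde-theta-via-c-1}) to reduce the problem to computing $c_1(V)$ modulo $r$. Proposition \ref{prop-V} part (\ref{lemma-item-direct-images-of-V}) gives $R\beta_{*}V\cong E:=\SheafExt^1_{\pi_{13}}(\pi_{12}^*\E,\pi_{23}^*\E)$, so Grothendieck--Riemann--Roch applied to $\beta$, combined with the vanishings $c_1(T\M)=0$ and $\beta_{*}[D]=0$, yields $\beta_{*}c_1(V)=c_1(E)$ in $H^2(\M\times\M,\RationalNumbers)$. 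The blow-up decomposition $H^2(B,\RationalNumbers)=\beta^*H^2(\M\times\M,\RationalNumbers)\oplus\RationalNumbers\cdot[D]$ then forces $c_1(V)=\beta^*c_1(E)+a[D]$ for some $a\in\Integers$. To show $a=0$, I will invoke the involution $\tilde\tau^*V\cong V^*$ from Proposition \ref{prop-V} part (\ref{lemma-part-pull-back-of-V-is-its-dual}), together with the antisymmetry $\tau^*c_1(E)=-c_1(E)$; the latter follows from the Serre--Grothendieck duality $\tau^*\F\cong\F^\vee$ noted after Proposition \ref{prop-V}, the exact triangle $E\to\F\to\StructureSheaf{\Delta}[-1]\to E[1]$, and the vanishing of $c_1(\StructureSheaf{\Delta})$ in codimension $\geq 2$. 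Since $\tilde\tau^*[D]=[D]$, applying $\tilde\tau^*$ to the decomposition yields $-c_1(V)=-\beta^*c_1(E)+a[D]$, and adding the two equations forces $a=0$. Finally, Lemma \ref{lem-c1-of-F} identifies $c_1(E)=c_1(\F)=-\pi_1^*c_1(e_v)+\pi_2^*c_1(e_v)$, and the additivity of $e$ combined with $\bar w-v\in rK_{top}S$ yields $c_1(e_v)\equiv c_1(e_{\bar w})=\bar w \pmod r$ under Mukai's isometry (\ref{eq-Mukai-isomorphism}); substituting into $\exp(-2\pi\sqrt{-1}c_1(V)/r)$ produces (\ref{eq-tilde-theta-PP-V}).

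For part (\ref{lemma-item-order-of-theta}), my plan is to apply the analytic Kummer sequence $0\to\mu_r\to\StructureSheaf{\M}^*\xrightarrow{(\cdot)^r}\StructureSheaf{\M}^*\to 0$, whose long exact sequence identifies $\ker(\iota_{*}\colon H^2(\M,\mu_r)\to H^2_{an}(\M,\StructureSheaf{\M}^*))$ with the image of the Bockstein $\Pic(\M)\to H^2(\M,\mu_r)$. Using $H^3(\M,\Integers)=0$, one has $H^2(\M,\mu_r)\cong H^2(\M,\Integers)/rH^2(\M,\Integers)\cong v^\perp/rv^\perp$ via Mukai's Hodge isometry, under which $\Pic(\M)$ corresponds to $v^\perp\cap\Lambda^{1,1}$, where $\Lambda^{1,1}:=\{x\in K_{top}S:c_1(x) \text{ is of type }(1,1)\}$. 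Hence the order of $\theta$ is the least positive $d$ such that $d\bar w\in(v^\perp\cap\Lambda^{1,1})+rv^\perp$. To unpack this I pick the representative $w=v+rt$ of $\bar w$ with $t\in K_{top}S$; the condition $w\in v^\perp$ combined with $(v,v)=r$ forces $(t,v)=-1$. Writing $dw\equiv z\pmod{rv^\perp}$ with $z\in v^\perp\cap\Lambda^{1,1}$ translates into $dv-z=ru$ for some $u\in K_{top}S$, and the constraints $z\in v^\perp$ and $z\in\Lambda^{1,1}$ (together with $v\in\Lambda^{1,1}$) translate respectively to $(u,v)=d$ and $u\in\Lambda^{1,1}$. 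Existence of such $u$ is therefore equivalent to $d\in\{(x,v):x\in\Lambda^{1,1}\}$, a subgroup of $\Integers$ whose least positive element is the asserted $\gcd$.

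The one delicate point is that when $\E$ is itself only twisted, $V$ in (\ref{eq-vector-bundle-over-B}) is a twisted locally free sheaf and $c_1(V)$ is well-defined only modulo $rH^2(B,\Integers)$; this ambiguity, however, is absorbed by $\exp(-2\pi\sqrt{-1}c_1(V)/r)$, so the argument above may be carried out for a local untwisted representative of $V$ and descends to produce the canonical class $\tilde\theta(\PP V)\in H^2(B,\mu_r)$.
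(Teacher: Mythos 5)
Your part (\ref{lemma-item-order-of-theta}) is essentially the paper's own argument: the same Kummer-sequence reduction to whether $d\bar{w}$ lies in the image of $\Pic(\M_H(v))$ modulo $rv^\perp$, and the same dictionary ($x=(dv-\ell)/r$ in one direction, $\ell=dv-rx$ in the other) identifying the achievable $d$ with the subgroup $\{(v,x)\}$. Your part (\ref{lemma-item-obstruction-class-of-lifting-PP-V-to-SL-r}) is a valid \emph{alternative} route, but only while $\E$ is untwisted: where the paper kills the $[D]$-component of $c_1(V)$ by noting that $\restricted{V}{D}\cong \ell^\perp/\ell$ has trivial determinant (Proposition \ref{prop-V}, part \ref{prop-item-restriction-of-V-is-a-symplectic-vb}), you use $\tilde{\tau}^*V\cong V^*$ together with $\tau^*c_1(E)=-c_1(E)$; and where the paper specializes to $S^{[n]}$ and quotes $e_v=e_w$ from Lemma 5.9 of \cite{markman-integral-constraints}, you use additivity of $x\mapsto e_x$ to get $c_1(e_v)\equiv \bar{w}$ modulo $r$ for arbitrary $v$. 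Both substitutions are sound ($\beta_*c_1(V)=c_1(E)$ does follow from Proposition \ref{prop-V} part \ref{lemma-item-direct-images-of-V}, Grothendieck--Riemann--Roch, and $\beta_*[D]=0$), and in the untwisted case your computation is cleaner and more general than the paper's first step.

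The gap is your closing paragraph. In the situation where the lemma is actually applied (section \ref{sec-a-very-twisted-ext-1}, leading to Theorem \ref{thm-E-is-slope-stable}), the class $\theta$ has order exactly $r$, so $\E$ is genuinely twisted and so are $\F$, $E$ and $V$ (twisted by $\pi_1^*\theta^{-1}\pi_2^*\theta$ and its pullback). A twisted sheaf with nontrivial Brauer class has \emph{no} global untwisted representative, so ``$c_1(V)$ well defined modulo $rH^2(B,\Integers)$'' is not available in the holomorphic category: the failure of the local first Chern classes to glue \emph{is} the class $\tilde{\theta}(\PP{V})$ you are trying to compute, and equation (\ref{eq-tilde-theta-via-c-1}) is only stated for honest vector bundles. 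Moreover Lemma \ref{lem-c1-of-F}, on which your identification of $c_1(E)$ rests, is proved under the standing assumption that an untwisted universal sheaf exists. One could try to salvage this topologically (since $H^3(B,\Integers)=0$ the bundle $\PP{V}$ lifts to a topological vector bundle whose $c_1$ mod $r$ computes $\tilde{\theta}(\PP{V})$), but then the whole chain $\beta_!V=E$, $c_1(E)=c_1(\F)$, Lemma \ref{lem-c1-of-F} must be redone for compatible topological untwistings, which your one sentence does not supply. The paper instead proves (\ref{eq-tilde-theta-PP-V}) in the Hilbert-scheme case, where $\E$ is untwisted, and transports the identity to a general $\M_H(v)$ along the deformations of Lemma \ref{lem-lifting-to-deformations-of-pairs-ok-for-moduli-spaces}: both sides of (\ref{eq-tilde-theta-PP-V}) are flat sections of a local system with coefficients in the finite group $\mu_r$, so agreement at one fibre forces agreement everywhere. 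You need this, or an equivalent argument, to close the twisted case.
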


\begin{proof}
\ref{lemma-item-obstruction-class-of-lifting-PP-V-to-SL-r})
Assume first, that $v$ is the class of the ideal sheaf of a length $n$ 
subscheme. Then $V$ is a vector bundle, which restricts to the
exceptional divisor $D$ as a vector bundle with trivial determinant
(Proposition \ref{prop-V}). Thus,
$c_1(V)=\beta^*\beta_*c_1(V)=
\beta^*c_1(\F)$, where $\F$ is the object given in Equation
(\ref{eq-object-F}). Now, 
$c_1(\F)=-\pi_1^*c_1(e_v)+\pi_2^*c_1(e_v)$,
by Lemma \ref{lem-c1-of-F}. 
When $\E$ is the universal ideal sheaf over $S\times S^{[n]}$,
then $c_1(e_v)=c_1(e_w)$, where $v$ has Mukai vector $(1,0,1-n)$, and 
that of $w$ is $(1,0,n-1)$, by \cite[Lemma 5.9]{markman-integral-constraints}.
The coset $\bar{w}$ in equation (\ref{eq-tilde-theta-is-exp-bar-w}) is
$w+(2n-2)K_{top}S$, since $w-v=(2n-2)(0,0,1)$.
The equality (\ref{eq-tilde-theta-PP-V})
follows from equation (\ref{eq-tilde-theta-via-c-1}). 

The general case of equation (\ref{eq-tilde-theta-PP-V}) follows,
by deformation of the classes on both sides, via a 
deformation to the Hilbert scheme case, as in Lemma 
\ref{lem-lifting-to-deformations-of-pairs-ok-for-moduli-spaces}. 

\ref{lemma-item-order-of-theta})
Set $\M:=\M_H(v)$. Consider the short exact sequence
\begin{equation}
\label{eq-short-exact-seq-of-r-th-power}
0\rightarrow \mu_r \RightArrowOf{\iota} \StructureSheaf{}^* 
\RightArrowOf{(\bullet)^r} \StructureSheaf{}^* \rightarrow 0.
\end{equation}
The connecting homomorphism 
$H^1(\M,\StructureSheaf{}^*)\rightarrow H^2(\M,\mu_r)$ sends 
the class of a line bundle $L$ to $\exp(2\pi\sqrt{-1}c_1(L)/r)$. 
Let $d$ be a positive integer dividing $(v,v)$. 
Then $\iota(d\tilde{\theta})=1$, if and only if 
$d\tilde{\theta}=\exp(-2\pi\sqrt{-1}\ell/r)$, 
for some $\ell\in H^{1,1}(\M,\Integers)$. 
Identify $H^2(\M,\Integers)$ with $v^\perp$, via Mukai's Hodge-isometry
(\ref{eq-Mukai-isomorphism}). 
Set $\bar{\ell}:=\ell+rv^\perp$. 
It suffices to prove that the following are equivalent.
\begin{enumerate}
\item
\label{statement-ell-exists}
There exists $\ell\in v^\perp$, with $c_1(\ell)$ of type $(1,1)$, 
such that $\bar{\ell}=d\bar{w}$ in $v^\perp/rv^\perp$, where
$\bar{w}$ is the coset in Lemma \ref{lemma-exp-bar-w-is-tilde-theta}. 
\item
\label{statement-x-exists}
$d=(v,x)$, for some $x\in K_{top}S$, with $c_1(x)$ of type $(1,1)$.
\end{enumerate}

\ref{statement-ell-exists}$\Rightarrow$ \ref{statement-x-exists}:
The $(1,1)$ class $x:=\frac{dv-\ell}{r}$ is integral, 
by the assumption on $\ell$, and satisfies 
$(x,v)=\frac{d(v,v)}{r}=d$.

\ref{statement-x-exists}$\Rightarrow$\ref{statement-ell-exists}:
Set $\ell:=dv-(v,v)x$. Then $(\ell,v)=0$ and 
$\ell-dv=-rx$ belongs to
$rK_{top}S$. Thus, $\bar{\ell}=d\bar{w}$ in $v^\perp/rv^\perp$,
by Lemma \ref{lemma-exp-bar-w-is-tilde-theta} part 
\ref{lemma-item-existence-and-uniqueness-of-w-bar}.
\end{proof}

Set $r:=2n-2$, $n\geq 2$. Let $S$ be a projective $K3$ surface with
a cyclic Picard group generated by an ample line bundle $H$ with 
$c_1(H)^2=2r^2+r$. Let $v\in K_{top}S$ be the rank $r$ class
with $c_1(v)=c_1(H)$, and $\chi(v)=2r$. Its
Mukai vector $ch(v)\sqrt{td_S}$ is $(r,H,r)$. 
Then $(v,v)=r$ and $(v,x)\equiv 0$, (modulo $r$), 
for every class $x\in K_{top}S$ with $c_1(x)$ of type $(1,1)$. 
The moduli space $\M_H(v)$ is smooth and projective (see Section 
\ref{sec-construction-of-the-classes-kappa-i-X}).
Let $E$ be the rank $r$
$(\pi_1^*[\theta]^{-1}\pi_2^*[\theta])$-twisted sheaf
$\SheafExt^1_{\pi_{13}}(\pi_{12}^*\E,\pi_{23}^*\E)$,
over $\M_H(v)\times \M_H(v)$. 
$E$ is reflexive, by Proposition \ref{prop-V}.

\begin{thm}
\label{thm-E-is-slope-stable}
The $(\pi_1^*[\theta]^{-1}\pi_2^*[\theta])$-twisted sheaf $E$ is 
$\omega$-slope-stable (Definition \ref{def-slope-stability}) and the untwisted sheaf 
$\SheafEnd(E)$ is $\omega$-polystable-hyperholomorphic
(Definition \ref{def-hyperholomorphic-sheaf}), 
with respect to every K\"{a}hler class $\omega$ on $\M_H(v)\times \M_H(v)$.  
\end{thm}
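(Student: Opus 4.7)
The plan is to exhibit $E$ as a very twisted sheaf, from which $\omega$-slope-stability is automatic and Proposition \ref{prop-polystability-of-a-very-twisted-sheaf} yields $\omega$-polystability of $\SheafEnd(E)$; the hyperholomorphic upgrade will then follow from Verbitsky's inequality applied to $c_2(\SheafEnd(E))$, whose monodromy-invariance is inherited from that of $\kappa_2(\F)$. The first step is to pin down the order of $[\theta]\in H^2_{an}(\M_H(v),\StructureSheaf{}^*)$ via Lemma \ref{lem-order-of-twisting-class-of-universal-sheaf}(\ref{lemma-item-order-of-theta}). With $\Pic(S)=\Integers\cdot H$ and $H^2=2r^2+r$, any class $x=(a,bH,c)$ with $c_1(x)$ of type $(1,1)$ pairs with $v=(r,H,r)$ as
\[
(v,x) \;=\; b(2r^2+r)-r(a+c) \;=\; r\bigl[b(2r+1)-(a+c)\bigr],
\]
and the quantity in brackets ranges over all of $\Integers$. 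Hence $\gcd_x(v,x)=r$, and $[\theta]$ has order exactly $r=2n-2$.

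Next I verify that the twisting class $\alpha:=\pi_1^*[\theta]^{-1}\cdot\pi_2^*[\theta]$ on $\M\times\M$ also has order $r$. Pulling back via the slice $\iota_F:[G]\mapsto([F],[G])$ gives $\iota_F^*\alpha=[\theta]$, so the order of $\alpha$ is at least $r$; and since $E$ is $\alpha$-twisted reflexive of rank $r$, Remark \ref{rem-order-divides-rank} forces the order of $\alpha$ to divide $r$. Thus $E$ is very twisted. Any nonzero $\alpha$-twisted subsheaf $F\subset E$ then has rank a positive multiple of $r$ by Remark \ref{rem-order-divides-rank} again, and rank at most $r$, hence rank exactly $r$; so $E$ has no $\alpha$-twisted subsheaves of intermediate rank, and $\omega$-slope-stability (Definition \ref{def-slope-stability}) holds vacuously for every K\"ahler class $\omega$.

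For $\SheafEnd(E)$, Proposition \ref{prop-polystability-of-a-very-twisted-sheaf} gives $\omega$-slope-polystability for every $\omega$, and it remains to promote this to polystable-hyperholomorphic. A direct Chern-class computation from $c_1(\SheafEnd(E))=0$ yields $c_2(\SheafEnd(E))=-2r\,\kappa_2(E)$. The exact triangle $E\to\F\to\StructureSheaf{\Delta}[-1]$ together with the codimension $2n\geq 4$ of the diagonal imply $\kappa_2(E)=\kappa_2(\F)$, and Proposition \ref{prop-kappa-F-is-mon-invariant} makes $\kappa_2(\F)$ invariant under the diagonal action of a finite-index subgroup of $Mon(\M)$. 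By Lemma \ref{lemma-Mon-invariant-classes-are-of-Hodge-type} the class $c_2(\SheafEnd(E))$ is then of Hodge type $(1,1)$ along every fiber-square twistor line through $\M\times\M$, so equality holds in (\ref{eq-verbitskys-inequality}) and Theorem \ref{thm-Verbitskys-inequality} delivers the polystable-hyperholomorphic conclusion; Remark \ref{rem-verbitsky-results-ok-for-products} legitimizes the passage to the product. The only genuinely non-formal ingredient is the analytic content of Proposition \ref{prop-polystability-of-a-very-twisted-sheaf}, and the main delicacy is ensuring that the specific arithmetic choice $H^2=2r^2+r$ really makes the Brauer class attain its maximal possible order $r$.
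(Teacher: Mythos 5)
Your proposal is correct and follows essentially the same route as the paper: the order of $\theta$ equals the rank $r$ (via Lemma \ref{lem-order-of-twisting-class-of-universal-sheaf} and the arithmetic of $(v,x)$ for $v=(r,H,r)$ with $H^2=2r^2+r$), so $E$ is very twisted and stability is automatic by Remark \ref{rem-order-divides-rank}; polystability of $\SheafEnd(E)$ comes from Proposition \ref{prop-polystability-of-a-very-twisted-sheaf}, and the hyperholomorphic upgrade from the monodromy-invariance of $\kappa_2$ together with Theorem \ref{thm-Verbitskys-inequality}. You supply a few details the paper leaves implicit (the explicit gcd computation, the order of $\pi_1^*[\theta]^{-1}\pi_2^*[\theta]$ via restriction to a slice, and $c_2(\SheafEnd(E))=-2r\kappa_2(E)$), all of which check out.
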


\begin{proof}
The class $\theta$ has order $r$, by Lemma
\ref{lem-order-of-twisting-class-of-universal-sheaf}. 
It follows that $E$ does not have any non-zero 
twisted subsheaves of rank $< r$ (see Remark \ref{rem-order-divides-rank}). 
The $\omega$-slope-polystability of $\SheafEnd(E)$ follows from  
Proposition \ref{prop-polystability-of-a-very-twisted-sheaf} for all K\"{a}hler classes $\omega$.
Recall that $c_2(\SheafEnd(E))$ is a scalar multiple of $\kappa_2(E)$, by Lemma \ref{lemma-kappa-2-is-propotional-to-c-2-of-End}.
The class $\kappa_2(E)$ is monodromy invariant,
by Proposition \ref{prop-kappa-F-is-mon-invariant}. 
The first Chern classes of all direct summands of $\SheafEnd(E)$ vanish, by Lemma \ref{lemma-uniform-slope}.
Consequently, $\SheafEnd(E)$ is $\omega$-polystable-hyperholomorphic,
by Theorem \ref{thm-Verbitskys-inequality}, Lemma \ref{lemma-Mon-invariant-classes-are-of-Hodge-type}, and  
Remark \ref{rem-verbitsky-results-ok-for-products}.
\end{proof}

%***********************************************************************
% Hide
%***********************************************************************
\hide{
The sheaf $E:=\SheafExt^1_{\pi_{13}}(\pi_{12}^*\E,\pi_{23}^*\E)$
over $S^{[n]}\times S^{[n]}$ is reflexive of rank $2n-2$
(Proposition \ref{prop-V}). 
Let $\omega$ be a K\"{a}hler class of $S^{[n]}$,
$\X\rightarrow \PP^1_\omega$ the twistor deformation, and
$0\in \PP^1_\omega$ the point corresponding to $S^{[n]}$.
Denote by $\Sigma_t$, $t\in\PP^1_\omega$,  the 
complement of the diagonal in $X_t\times X_t$. 
Then we have:

\begin{lem}
\label{lem-E-t-is-stable}
Assume, that the pair $(S^{[n]},E)$, $n\geq 2$,
admits a deformation along an open neighborhood $U$ of $0$ in 
the twistor line $\PP^1_\omega$, 
via twisted reflexive sheaves, 
which are locally free over $\Sigma_t$, $t\in U$. 
Then for a generic $t$ in $U$, 
$E_t$ is $\omega_t$-slope-stable, for every K\"{a}hler class $\omega_t$ of $X_t$.
\end{lem}

\begin{proof}
We calculate first the class 
$\theta_t\in H^2(X_t,\StructureSheaf{X_t}^*)$ of the
$\theta_t$-twisted sheaf $E_t$ (following
\cite{huybrechts-schroer}). 
Set $r:=(v,v)=2n-2$. Let $\mu_r$ be the group of $r$-th roots of unity,
\[
\mu \ : \ H^2(X_t\times X_t,\Integers)\rightarrow H^2(X_t\times X_t,\mu_r)
\]
the homomorphism given by $\alpha\mapsto \exp(2\pi i\alpha/r)$, and
\[
\eta \ : \ H^1(\Sigma_t,PGL_r) \rightarrow H^2(\Sigma_t,\mu_r)
\]
the connecting homomorphism associated to the short exact sequence of sheaves
\[
0\rightarrow \mu_r \rightarrow SL_r(\StructureSheaf{})\rightarrow 
PGL_r(\StructureSheaf{}) \rightarrow 0. 
\]
We identify $H^2(\Sigma_t,\mu_r)$ with $H^2(X_t\times X_t,\mu_r)$,
via the restriction isomorphism, 
and view the values of $\eta$ in the latter.
The equality
\[
\mu(-c_1(E)) \ \ \ = \ \ \ \eta\left(\PP(E\restricted{)}{\Sigma}\right)
\]
follows from  \cite[Lemma 2.5]{huybrechts-schroer}. 
The classes $\eta(\PP({E_t}\restricted{)}{\Sigma_t})$
are the values of a section $\tilde{\theta}$ of the local system 
$H^2(\Sigma_t,\mu_r)$. 
The sheaf inclusion 
$\iota_*:\mu_r\rightarrow \StructureSheaf{}^*$ 
maps $\tilde{\theta}$ to a section $\theta$, where $\theta_t$ is the class
of $\PP(E_t\restricted{)}{\Sigma_t}$ in 
$H^2(\Sigma_t,\StructureSheaf{}^*)$.
\[
\theta_t \ \ \ = \ \ \ \iota_*[\tilde{\theta}_t].
\]

We show next, that $\theta_t$ has order $r$, for a generic $t$. 
The group $\Pic(X_t):= H^1(X_t,\StructureSheaf{}^*)$ is trivial, for
a generic $t\in \PP^1_\omega$. The image of 
the connecting homomorphism $\Pic(X_t)\rightarrow H^2(X_t,\mu_r)$, 
associated to 
\[
0\rightarrow \mu_r \RightArrowOf{\iota} \StructureSheaf{}^* 
\RightArrowOf{(\bullet)^r} \StructureSheaf{}^* \rightarrow 0,
\]
is the kernel of $\iota_*$. We conclude, that the orders of 
the two classes $\tilde{\theta}_t$ and $\theta_t$ are equal, 
for a generic $t$.
The order of $\tilde{\theta}_t$ is independent of $t$, and 
$\tilde{\theta}_0:=\mu(-c_1(E))$ has order $r=(v,v)$, 
since $c_1(E)$ is primitive (Lemma \ref{lem-c1-of-F}).

If $\theta_t$ has order $r$, then the  $\theta_t$-twisted 
sheaf $E_t$ does not have any non-trivial $\theta_t$-twisted 
subsheaves, since $E_t$ is torsion free of rank $r$
(see Remark \ref{rem-order-divides-rank}). Thus, $E_t$
is slope-stable, with respect to every K\"{a}hler class on $X_t$.
%and in particular the class $\omega_t$ in the twistor deformation. 
\end{proof}

Note: The deformation of $E_t$ along the twistor line $\PP^1_\omega$
may not coincide with the one we started, so we can not conclude that $E$ 
is hyperholomorphic (though, I suspect a closer look at Verbitsky's proof
of  \cite[Theorem 3.19]{kaledin-verbitski-book} will show, 
that we can extend the original deformation).
%***************************************************
% End Hide
%***************************************************
}

%******************
% Without Lieblich
%******************
\WithoutLieblich{
%***************************************************
%
%***************************************************
\subsection{The slope-polystable locus is constructible}
\label{sec-polystable-locus}
We prove in this subsection that the locus of slope-polystable 
reflexive twisted sheaves is constructible, assuming that
the singularities of the twisted sheaves are isolated. 
Related results have been proven earlier in 
\cite[Cor. 2.3.2.11]{lieblich-duke}. 

\begin{lem}
\label{lem-finite-pull-back-of-very-twisted-is-polystable}
Let $X$ be a normal projective variety, 
$\theta\in H^2_{an}(X,\StructureSheaf{X}^*)$ a class of finite order,
$H$ an ample line bundle on $X$, 
and $E$ a torsion-free $\theta$-twisted sheaf. 
Assume that $\SheafEnd(E)$ is $H$-slope-semistable.
Let $f:Y\rightarrow X$ be a finite (surjective) morphism from a normal
projective variety $Y$. Then 
$E$ is $H$-slope-polystable, if and only if $f^*E$ is $f^*H$-slope-polystable.
%\begin{enumerate}
%\item
%\label{lemma-item-if-pullback-is-polystable-then-E-is}
%If $f^*E$ is $f^*H$-slope-poly-stable, then $E$ is $H$-slope-poly-stable.
%\item
%\label{lemma-item-if-E-polystable-then-pullback-is}
%If $E$ has rank $r$ and $E$ is $H$-slope-polystable, then $f^*E$ is 
%$f^*H$-slope-poly-stable.
%\end{enumerate}
\end{lem}

\begin{proof}
Note first that $f^*\SheafEnd(E)$ is $f^*H$-slope-semistable,
by \cite[Lemma 3.2.2]{huybrechts-lehn-book}. 
Hence, $f^*E$ is $f^*H$-slope-semistable.
If $f^*E$ is $f^*H$-slope-polystable, then $E$ is $H$-slope-polystable,
by the same argument as in the proof of \cite[Lemma 3.2.3]{huybrechts-lehn-book}.

Assume that $E$ is $H$-slope-stable. We may assume that $f$ is Galois
and that $f^*\theta$ is a coboundary, by the ``if'' part. 
Let $F\subset f^*E$ be the 
socle of $f^*E$ (the maximal $H$-slope-polystable subsheaf constructed
in \cite[Lemma 1.5.5]{huybrechts-lehn-book}. Then $F$ is
non-trivial, of the same $f^*H$-slope as $f^*E$, and $F$ is invariant under all
Galois automorphisms of $Y$, by \cite[Lemma 1.5.9]{huybrechts-lehn-book}.
Hence, $F$ is the pullback of a non-trivial subsheaf $\bar{F}$ of $E$, 
such that $\SheafHom(\bar{F},E)$ has slope zero. We conclude that $\bar{F}=E$, 
since $E$ is assumed to be $H$-slope-stable. Hence, $F=f^*E$.
\end{proof}

Let $\pi:\X\rightarrow B$ be a flat projective morphism, with normal fibers, 
over an algebraic variety $B$ (reduced and irreducible). 
%Let $\theta$ be $2$-cocycle for $\StructureSheaf{\X}^*$, in the analytic topology.
Let $\F$ be a torsion free reflexive coherent (untwisted) sheaf on $\X$, flat over $B$. 
Denote by $F_b$ the restriction of $\F$ to the fiber of $\pi$ over $b\in B$. 
Assume that $F_b$ is reflexive, for all $b\in B$.
Let $H$ be a $\pi$-ample line bundle over $\X$.

\begin{lem}
\label{lemma-polystable-locus-is-constructible}
The locus $B^{\mu ps} := \{b\in B  \ : \  F_b \  \mbox{is} \  H_b\mbox{-slope-polystable}\}$
is a constructible subset of $B$.
\end{lem}

\begin{proof}
The proof is by induction on the rank $r$ of $\F$. 
The statement is trivial for $r=1$. Assume the statement for all 
such sheaves of smaller positive rank. 
Note first that the locus of $H_b$-slope-semistable sheaves is open. 
We may thus assume that all $F_b$ are semistable, possibly
after replacing $B$ by an open subset. 

Let $m$ be the relative dimension of $\pi$.
There is an open subset $B_0\subset B$, where 
the relative extension sheaf 
$\SheafExt_\pi^m(\F,\F)$ is a locally free $\StructureSheaf{B}$-module.
We may further assume that $B_0=B$. 
Otherwise, replace $B$ once by $B_0$ and then by each
irreducible component of $B\setminus B_0$ and argue by descending induction on
the dimension of the base.

Assume that $B=B_0$. Then 
$\A:=\pi_*\SheafHom(\F,\F)$ is a locally free sheaf of associative algebras with a unit.
Let $\tilde{B}:=Spec(\Sym^*(A^*))$ be the total space of $\A$. 
A point in $\tilde{B}$ consists of a pair $(b,e)$, 
$e\in H^0(X_b,\SheafEnd(F_b))$. The 
coefficients of the characteristic polynomial $char(b,e)$ of $(b,e)$ are
sections of the trivial line bundle over $X_b$ and are thus constant.
Given an integer $\rho$ in the range $0<\rho<r$, set
\[
\tilde{B}_\rho \ \ := \ \ \{(b,e) \ : \ 
e\circ (1-e)=0  \ \mbox{and} \ char(b,e)=x^{r-\rho}(x-1)^\rho\}.
\]
Then $\tilde{B}_\rho$ is a closed subset of $\tilde{B}$
consisting of pairs $(b,e)$, where $e$ is an idempotent element of rank $\rho$.

%It suffices to prove the statement for the pullback of $\F$ to $\tilde{B}$.
Set $\tilde{B}^{idem}:=\cup_{\rho=1}^{r-1}\tilde{B}_\rho$
and let $B^{dec}\subset B$ be the image of $\tilde{B}^{idem}$.
Then $B^{dec}$ is constructible, and it consists of $b\in B$,
such that $F_b$ is decomposable. 
Hence, the set $B^{indec}:=B\setminus B^{dec}$, of indecomposable 
$F_b$, is constructible. 

An indecomposable reflexive $H$-slope-semistable sheaf is 
$H$-slope-polystable,
if and only if it is $H$-slope-stable \cite[Cor. 1.6.11]{huybrechts-lehn-book}.
Hence, $B^{indec}\cap B^{\mu ps}$ is an open subset of $B^{indec}$, 
since $H$-slope-polystability of $F_b$, $b\in B^{indec}$, is equivalent to 
$H$-slope-stability, which is an open condition.
Hence both $B^{indec}\cap B^{\mu ps}$ and $B^{indec}\setminus B^{\mu ps}$
are constructible. It remains to prove that $B^{dec}\cap B^{\mu ps}$ is 
constructible. Hence, it sufficed to prove that 
$\tilde{B}_\rho\cap \pi^{-1}(B^{\mu ps})$ is constructible, for all
$0<\rho<r$.
In other words, it suffices to establish the statement for the pullback of $\F$ 
to each $\tilde{B}_\rho$. Over each $\tilde{B}_\rho$ we have a 
tautological idempotent endomorphism of the pullback of $\F$.

We may assume that we have a global non-zero eidempotent endomorphism of $\F$.
Then $\F$ decomposes as a direct sum of two sheaves of lower ranks 
$\F\cong \F_1\oplus \F_2$. Let $B^{\mu ps}_i\subset B$ be the locus, consisting of $b\in B$, 
where the sheaf $(\F_i)_b$ is $H$-slope-polystable.
Then $B^{\mu ps}_i$  is a constructible subset,  for $i=1,2$,
by the induction hypothesis. 
$B^{\mu ps}=B^{\mu ps}_1\cap B^{\mu ps}_2$ and is thus constructible.
\end{proof}

\begin{cor}
\label{cor-open-subset-of-polystable-sheaves}
If $B^{\mu ps}$ is dense in $B$, then $B^{\mu ps}$ contains a Zariski dense
open subset of $B$.
\end{cor}

Let $\pi:\X\rightarrow B$ and $H$  be as in Lemma
\ref{lemma-polystable-locus-is-constructible}.
Assume given a class $\theta$ in $H^2_{an}(\X,\StructureSheaf{\X})$ and a 
torsion free reflexive $\theta$-twisted coherent sheaf $\F$ on $\X$, flat over $B$.
Assume further, that the singular locus of $\F$ is finite over $B$.
Let $B^{\mu ps}\subset B$ be the subset consisting of $b\in B$, such that 
$F_b$ is $H_b$-slope-polystable, and $\SheafEnd(F_b)$ is 
$H_b$-slope-semistable.

\begin{prop}
\label{prop-polystable-locus-is-constructible-also-in-twisted-case}
\begin{enumerate}
\item
\label{prop-item-constructible-locus}
The locus $B^{\mu ps}$ is a constructible subset of $B$.
\item
\label{prop-item-very-twisted-is-stable-with-semistable-endomorphism-sheaf}
$B^{\mu ps}$ contains the subset 
$\{b\in B \ : \ F_b \ \mbox{is very twisted}\}.$
\item
\label{prop-item-endomorphism-sheaf-of-twisted-polystable-is-polystable}
If $b\in B^{\mu ps}$, then $\SheafEnd(F_b)$ is $H_b$-slope-polystable.
\end{enumerate}
\end{prop}

\begin{proof}
\ref{prop-item-constructible-locus})
The subset $B^{\mu ss}\subset B$, of points $b\in B$, such that 
$\SheafEnd(F_b)$ is $H_b$-slope-semistable, is an open subset of $B$, by
\cite[Prop. 2.3.1]{huybrechts-lehn-book}.
$B^{\mu ps}$ is contained in $B^{\mu ss}$, by definition, and it suffices to prove that 
$B^{\mu ps}$ is a constructible subset of  $B^{\mu ss}$. We may thus assume that 
$B=B^{\mu ss}$.

The existence of the $\theta$-twisted reflexive sheaf $\F$ implies also 
the existence of a locally free $\theta$-twisted sheaf over $\X$,
since the Brauer group is a birational invariant 
(\cite{groth-brauer-III}, 7.2 or \cite{milne}, Corollary 2.6 and Theorem 2.16). 
Denote by $\PP$ the projective bundle corresponding to a 
locally-free $\theta$-twisted sheaf over $\X$.
Denote the singular locus of $\F$ by $\tilde{B}\subset \X$.
Bertini's Theorem for a very ample line bundle over $\PP$ yields the following construction.
For each point $b\in B$, we construct a non-empty open subset $W_b\subset B$, 
a smooth subvariety $\tilde{\Y}\subset \restricted{\PP}{W_b}$, 
and a generically finite $W_b$-morphism $\tilde{f}:\tilde{\Y}\rightarrow \restricted{\X}{W_b}$,
such that $\tilde{f}$ is \'{e}tale\footnote{It is possible to satisfy the
condition that $\tilde{f}$ is \'{e}tale over $\tilde{W}_b$, since we assumed that 
$\tilde{B}$ is finite over $B$, and $\tilde{\Y}$ may be constructed, so that
its branch divisor does not contain any irreducible component of  $\tilde{B}$.
Hence we can construct $\tilde{\Y}$, so that the branch locus of
$\Y_b\rightarrow X_b$ is disjoint from the finitely many singularity points of $X_b$.} 
over $\tilde{W}_b:=\tilde{B}\cap \pi^{-1}(W_b)$. 
The class $\tilde{f}^*\theta$ is trivial, since
the pullback of $\theta$ to $\PP$ is a trivial class in $H^2(\PP,\StructureSheaf{\PP}^*)$. 
Set $\Y:={\rm Spec}(\tilde{f}_*\StructureSheaf{\tilde{\Y}})$ and 
let $f:\Y\rightarrow\restricted{\X}{W_b}$ be the Stein factorization of $\tilde{f}$. Then
$\Y$ is normal 
%\footnote{The morphism $\tilde{f}$ must factor through
%the normalization $\nu:\Y'\rightarrow \Y$, by the universal property of $\Y'$.
%The homomorphism 
%$\tilde{f}_*\StructureSheaf{\tilde{\Y}}\rightarrow f_*\StructureSheaf{\Y}$
%is an isomorphism, which factors through $\nu_*\StructureSheaf{\Y'}$.
%It follows that $\nu:\Y'_b\rightarrow \Y$ is an isomorphism.}.
and $f$ is finite and \'{e}tale over $\tilde{W}_b$.
The pullback $f^*\F$ is thus a reflexive $\StructureSheaf{\Y}$-module, which is
flat over $W_b$. 

The above construction can be carried out in such a way 
that the fiber $\tilde{Y}_b$, of $\tilde{\Y}$ over $b\in B$, is smooth and connected.
Then the morphism $f_b:Y_b\rightarrow X_b$ is the Stein factorization of 
$\tilde{f}_b:\tilde{Y}_b\rightarrow X_b$. Hence, $Y_b$ is normal. 
We may assume that all fibers of $\pi\circ f:\Y\rightarrow W_b$ are normal,
possibly after replacing $W_b$ by a Zariski open subset containing $b$.
Let $W^{\mu ps}_b$ be the set of points $t$ of $W_b$, such that 
$f^*\F$ restricts as a slope-polystable sheaf to the fiber of $\pi\circ f$
over $t$. Then $W^{\mu ps}_b$ is a constructible subset of $W_b$, by Lemma 
\ref{lemma-polystable-locus-is-constructible}.
%Lemma  \ref{lem-finite-pull-back-of-very-twisted-is-polystable} 
%implies that $W^{\mu ps}_b$ is precisely the set of points $t$, such that
%$\F$ restricts as a slope-polystable sheaf to the fiber of $\pi$ over $t$. 

\ref{prop-item-very-twisted-is-stable-with-semistable-endomorphism-sheaf})
If $F_b$ is very-twisted then it is $H_b$-slope-stable, by Remark
\ref{rem-order-divides-rank}, 
and $\SheafEnd(F_b)$ is $H_b$-slope-semistable, by Lemma
\ref{lem-maximally-twisted-implies-semistable}.

\ref{prop-item-endomorphism-sheaf-of-twisted-polystable-is-polystable})
Let $f_b:Y_b\rightarrow X_b$, $b\in B^{\mu ps}$, be as in the proof of part
\ref{prop-item-constructible-locus}.
Then $f^*F_b$ is $f_b^*H_b$-slope-polystable and untwisted,
by Lemma \ref{lem-finite-pull-back-of-very-twisted-is-polystable}.
Hence $\SheafEnd(f_b^*F_b)$ is slope-polystable. We conclude that 
$\SheafEnd(F_b)$ is $H_b$-slope-polystable, by Lemma 
\ref{lem-finite-pull-back-of-very-twisted-is-polystable} again.
\end{proof}
%******************
% End Without Lieblich
%******************
}

%*************
% Hide
%*************
\hide{
%***************************************************
%
%***************************************************
\subsection{Stability of an untwisted 
$\SheafExt^1_{\pi_{13}}(\pi_{12}^*\E,\pi_{23}^*\E)$}
\label{sec-stability-of-an-untwisted-ext-1}
Let $\pi:\M\rightarrow C$ be a non-isotrivial family of irreducible holomorphic-symplectic
manifolds over a (connected) Riemann surface $C$. Fix a positive integer $m$. 
Assume given a global non-zero section $\alpha$ of $H^0(C,R\pi_{*}^2\mu_m)$.
Denote by $C_{\alpha}$ the subset of $C$ given by
\[
C_{\alpha} \ \ := \ \ \{
t\in C \ : \ \alpha \ \mbox{maps to the trivial class in} \  
H^2_{an}(M_t,\StructureSheaf{M_t}^*)\}.
\]

\begin{prop}
\label{prop-oguiso}
\cite{oguiso-density}
$C_{\alpha}$ is a dense subset in the classical topology of $C$.
Furthermore, either $C_{\alpha}=C$, or $C_{\alpha}$ is 
enumerable.
\end{prop}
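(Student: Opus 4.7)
My plan is to translate the Brauer-triviality condition into a Hodge-orthogonality condition, deduce the dichotomy from a holomorphicity argument on the universal cover, and then establish density by a Kronecker-type equidistribution argument on the period domain.

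First I would apply the Kummer sequence $0 \to \mu_m \to \StructureSheaf{M_t}^* \stackrel{m}{\to} \StructureSheaf{M_t}^* \to 0$, together with the Bockstein sequence from $0 \to \Integers \stackrel{m}{\to} \Integers \to \mu_m \to 0$. Since $H^3(M_t, \Integers)$ is torsion-free (in fact zero for $K3^{[n]}$-type, as used in Section~\ref{sec-construction-of-the-classes-kappa-i-X}), the Bockstein identifies $H^2(M_t, \mu_m)$ with $H^2(M_t, \Integers)/m H^2(M_t, \Integers)$, and $\alpha_t$ maps to zero in $H^2_{an}(M_t, \StructureSheaf{M_t}^*)$ if and only if $\alpha_t$ lies in the image of the mod-$m$ first Chern class map $\Pic(M_t) \to H^2(M_t, \mu_m)$. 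Passing to the universal cover $p : \tilde{C} \to C$ trivializes the local system with stalk $\Lambda := H^2(M_t, \Integers)$, and $\alpha$ lifts to a flat integral section $\tilde{\alpha}$, well-defined modulo $m\Lambda$. Writing $\sigma_t$ for a local holomorphic generator of $H^{2,0}(M_t) \subset \Lambda \otimes \ComplexNumbers$, the preimage $p^{-1}(C_\alpha)$ is precisely the locus where $(\tilde{\alpha} + m\beta, \sigma_t) = 0$ for some $\beta \in \Lambda$, under the Beauville-Bogomolov form.

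Next I would introduce, for each $\beta \in \Lambda$, the holomorphic function $f_\beta(t) := (\tilde{\alpha} + m\beta, \sigma_t)$ on $\tilde{C}$. If some $f_\beta$ vanishes identically then $\tilde{\alpha} + m\beta$ is of type $(1,1)$ everywhere, forcing $C_\alpha = C$. Otherwise every $f_\beta$ has only isolated zeros, so $p^{-1}(C_\alpha) = \bigcup_\beta f_\beta^{-1}(0)$ is a countable union of discrete subsets of $\tilde{C}$ and $C_\alpha$ is enumerable. For density in the second case, I would fix an arbitrary non-empty open disc $U \subset C$ with lift $\tilde{U} \subset \tilde{C}$. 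Non-isotriviality forces the period map $t \mapsto [\sigma_t]$ to be non-constant, so after shrinking $\tilde{U}$ I may assume $\sigma(t)$ and $\sigma'(t)$ are $\ComplexNumbers$-linearly independent in $\Lambda \otimes \ComplexNumbers$ throughout $\tilde{U}$. A Baire-category argument then produces a very-generic $t_0 \in \tilde{U}$, outside a countable union of proper real-analytic subsets, such that the $\Integers$-linear map
\[
\Lambda \longrightarrow \ComplexNumbers^2, \qquad \beta \mapsto \bigl((\beta, \sigma(t_0)),\, (\beta, \sigma'(t_0))\bigr),
\]
has dense image (Kronecker's theorem, exploiting that $\Lambda$ has rank $23 \ge 5$).

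At such a $t_0$ I would choose $\beta \in \Lambda$ so that $|f_\beta(t_0)| < \varepsilon$ while $|f_\beta'(t_0)| > \delta$ for any prescribed ratio $\varepsilon/\delta$; Rouch\'e's theorem then locates a zero of $f_\beta$ in a disc of radius $O(\varepsilon/\delta)$ about $t_0$, hence inside $\tilde{U}$, yielding a point of $C_\alpha \cap U$. The hard part is precisely this simultaneous approximation: one must drive $f_\beta(t_0)$ close to zero while keeping the derivative $f_\beta'(t_0)$ bounded away from zero, which reduces to the Kronecker-density of the $\Integers$-linear image of $\Lambda$ in $\ComplexNumbers^2$ along the pair $(\sigma(t_0), \sigma'(t_0))$ and ultimately to non-isotriviality of the family. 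This is Oguiso's strategy for the density of Noether-Lefschetz loci in families of $K3$ surfaces, adapted to the $\mu_m$-valued Brauer-theoretic setting.
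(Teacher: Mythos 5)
Your reduction to the Hodge-theoretic statement is right, and it is the same first step the paper takes: triviality of $\alpha_t$ in $H^2_{an}(M_t,\StructureSheaf{M_t}^*)$ is equivalent, via the Kummer sequence, to $\tilde{\alpha}+m\beta$ being an integral $(1,1)$-class for some $\beta\in\Lambda$. Your dichotomy is also fine: if some $f_\beta$ vanishes identically then $C_\alpha=C$, and otherwise $p^{-1}(C_\alpha)$ is a countable union of discrete zero sets, so $C_\alpha$ is enumerable. Note, though, that the paper does not reprove density at all --- it simply observes that the statement is a mild refinement of Theorem 1.1 of Oguiso's paper (a single coset $\tilde\alpha+m\Lambda$ rather than the union over all cosets) and that Oguiso's proof goes through verbatim. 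You are attempting to supply the density proof from scratch, and that is where the problem lies.

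The gap is in the Kronecker--Rouch\'{e} step. Even granting that for a very generic $t_0$ the image of $\beta\mapsto\bigl((\beta,\sigma(t_0)),(\beta,\sigma'(t_0))\bigr)$ is dense in $\ComplexNumbers^2$, density only produces, for each $\varepsilon>0$, some $\beta$ with $|f_\beta(t_0)|<\varepsilon$ and $|f'_\beta(t_0)|>\delta$ --- with no control on $\Vert\beta\Vert$. Rouch\'{e} locates a zero of $f_\beta$ in a disc of radius $r$ about $t_0$ only if the quadratic remainder is dominated there, i.e.\ roughly if $\delta^2 > C\,\Vert\beta\Vert\,\varepsilon$ with $C$ depending on $\sup|\sigma''|$; since $\Vert\beta\Vert$ necessarily grows as $\varepsilon\to 0$, you need a quantitative Diophantine bound $\Vert\beta\Vert=O(\varepsilon^{-c})$ with $c<1$, which Kronecker's theorem and a Baire-category choice of $t_0$ do not provide (a Liouville-type $t_0$ in your residual set could defeat any such bound). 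The standard way to close this --- and what Oguiso in effect does --- avoids approximation entirely: show that the set $V_U:=\{\lambda\in\Lambda\otimes\RealNumbers : (\lambda,\sigma_t)=0\ \text{for some}\ t\in U\}$ has nonempty interior (this uses only that $\sigma'(t_0)\notin\ComplexNumbers\,\sigma(t_0)$, i.e.\ non-isotriviality), observe that $V_U$ is a cone, and note that a nonempty open cone contains arbitrarily large balls and hence meets the translate $\tilde{\alpha}+m\Lambda$ of the full-rank lattice $m\Lambda$. That yields an exact zero of some $f_\beta$ in $U$ with no Diophantine input. As written, your density argument does not go through.
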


\begin{proof}
The class $\alpha_t\in H^2(M_t,\mu_m)$ maps to the trivial class in 
$H^2_{an}(M_t,\StructureSheaf{M_t}^*)$, if and only if $\alpha_t$ 
belongs to the image of $\Pic(M_t)$ under the homomorphism 
$\tilde{\theta}:\Pic(M_t)\rightarrow H^2(M_t,\mu_m)$, given by
$\tilde{\theta}(L)=\exp(2\pi\sqrt{-1}c_1(L)/m)$. 
Indeed, we have seen that $\tilde{\theta}$ is the 
connecting homomorphism  of the short exact sequence 
(\ref{eq-short-exact-seq-of-r-th-power}).
% in the proof of Lemma 
%\ref{lem-order-of-twisting-class-of-universal-sheaf}.

We may assume that the local system $R^2\pi_{*}\mu_m$ is trivial,
as the statement is local in $C$. 
Given a global section $\kappa$ of the local system, 
denote by $\kappa_t$  the corresponding class in $H^2(M_t,\mu_m)$.
For each  $\kappa \in H^0(C,R^2\pi_{*}\mu_m)$, let $C_\kappa$
be the subset of the curve $C$, consisting of points $t\in C$,
such that $\kappa_t$ is the image of a non-trivial class in $\Pic(M_t)$.
\cite[Theorem 1.1]{oguiso-density} 
shows that the finite union 
$D:=\cup_{\kappa\in H^0(C,R^2\pi_{*}\mu_m)}C_\kappa$
is either equal to $C$ or dense and enumerable.
Proposition \ref{prop-oguiso} states that each $C_\kappa$ has this property and 
is thus a slight generalization of  
\cite[Theorem 1.1]{oguiso-density}. Oguiso's proof is easily seen to establish this 
generalization.
\end{proof}

Next we 
%combine  Proposition \ref{prop-oguiso}, 
%Corollary \ref{cor-open-subset-of-polystable-sheaves}, and
%Lemma \ref{lem-finite-pull-back-of-very-twisted-is-polystable} in order to 
prove slope-stability of untwisted reflexive sheaves in a family 
containing a very-twisted reflexive sheaf. 

Set $r:=2n-2$, $n\geq 2$.
Let $\pi:\S\rightarrow C$ be a non-isotrivial smooth and projective family of 
$K3$ surfaces,
admitting a $\pi$-ample line bundle $H$ on $\S$ of fiber-wise degree $2r^2+r$.
Assume, further, that there exists a point $0\in C$, such that 
$\Pic(S_0)$ is generated by $H_0$.
There exists a projective morphism $p:\M\rightarrow C$, whose fiber $M_t$, 
$t\in C$, is isomorphic to the moduli space $\M_{H_t}(r,H_t,r)$ of 
$H_t$-semistable sheaves on $S_t$ of class $(r,H_t,r)$, by \cite{simpson}.
The fiber $M_0$ of $p$ is smooth and connected of $K3^{[n]}$-type, by our
assumption on $\Pic(S_0)$. Hence, we may assume that $p$ is smooth
and all its fibers are of $K3^{[n]}$-type, possibly after restricting to a
Zariski dense open subset of $C$. Let $\StructureSheaf{\M}(1)$ be a 
$p$-ample line bundle on $\M$ and denote by $\StructureSheaf{M_t}(1)$
its restriction to $M_t$. Note 
that $c_1\left(\StructureSheaf{\M}(1)\right)$
maps to a section of $R^2p_{*}\Integers$, which is in the image 
via the homomorphism (\ref{eq-Mukai-isomorphism})
of the trivial local system of Mukai vectors spanned by
the Mukai vectors $(2r+1,c_1(H),0)$ and $(1,0,-1)$, by our
assumption on $\Pic(S_0)$.

There exists a $\theta$-twisted universal sheaf $\E$ over
$\S\times_C\M$, for some class $\theta\in H^2_{an}(\M,\StructureSheaf{\M}^*)$.
Let $\E_t$ be the restriction of $\E$ to a 
twisted universal sheaf over $S_t\times M_t$
and denote by $\theta_t\in H^2(M_t,\StructureSheaf{M_t}^*)$ its Brauer class.
Set
\[
E_t \ \ := \ \ \SheafExt^1_{\pi_{13}}(\pi_{12}^*\E_t,\pi_{23}^*\E_t).
\]
$E_t$ is a rank $2n-2$ reflexive sheaf on $M_t\times M_t$ (Proposition
\ref{prop-V}).
%*********************
% With Lieblich
%*********************
\WithLieblich{Given a point $m\in M_t$, denote by 
$E_{t,m}$ the restriction of $E_t$ to $M_t\times\{m\}$. 
Let $ZD^{\mu s}\subset \M$ be the subset consisting points $m$, such that 
$E_{t,m}$ is $\StructureSheaf{M_t}(1)$-slope-stable. 
}
Let $p_i$ be the projection from $M_t\times M_t$ onto the $i$-th factor.
Set $\StructureSheaf{M_t\times M_t}(1):=
p_1^*\StructureSheaf{M_t}(1)\otimes p_2^*\StructureSheaf{M_t}(1)$.
Let $\Sigma\subset C$ be the subset given by
\[
\Sigma \ \ := \ \ 
\{t\in C \ : \ E_t \ \mbox{is} \ \StructureSheaf{M_t\times M_t}(1)
\mbox{-slope-stable and} \ \theta_t \ \mbox{is trivial}
\}.
\]

\begin{thm}
\label{thm-stability-of-an-untwisted-ext-1}
The subset $\Sigma$ is a dense countable subset of $C$. 
The intersection of $\Sigma$ with the image of $ZD^{\mu s}$ is 
a dense countable subset of $C$ as well. 
\end{thm}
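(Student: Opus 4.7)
The key input is that $E_0$ is very twisted: with $v=(r,H_0,r)$ and arbitrary $(a,mH_0,b)\in K_{top}S_0$ the Mukai pairing equals $r[m(2r+1)-(a+b)]$, so by Lemma \ref{lem-order-of-twisting-class-of-universal-sheaf}(\ref{lemma-item-order-of-theta}) the order of $\theta_0$ equals $r=\rank(E_0)$. Consequently $E_0$ is trivially slope-stable, $\SheafEnd(E_0)$ is slope-polystable by Proposition \ref{prop-polystability-of-a-very-twisted-sheaf}, and $\dim H^0(\SheafEnd(E_0))=1$ by simplicity of slope-stable sheaves. The same conclusions hold at every $t$ in $C^{vt}:=\{t\in C:\Pic(S_t)=\langle H_t\rangle\}$; since $\pi$ is non-isotrivial, $C^{vt}$ is the complement in $C$ of a countable union of proper Noether--Lefschetz analytic subvarieties, hence is classically dense and uncountable.

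Next I would spread these pointwise properties to a cofinite subset of $C$. Upper-semicontinuity of $t\mapsto h^0(\SheafEnd(E_t))$ shows that $\{t:h^0\geq 2\}$ is Zariski-closed; being disjoint from the uncountable $C^{vt}$ it is finite, so its cofinite complement $C^1$ contains $C^{vt}$. A constructibility argument for the slope-polystable locus in a flat family of twisted sheaves---stratifying by the number and rank of orthogonal idempotents in the relative endomorphism algebra, after passing to an \'{e}tale cover trivialising the twisting cocycle, and inducting on rank---shows that $B^{\mu ps}:=\{t:E_t \text{ is slope-polystable, }\SheafEnd(E_t) \text{ slope-semistable}\}$ is constructible in $C$, and since it contains $C^{vt}$ it is cofinite. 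By Lemma \ref{lemma-exp-bar-w-is-tilde-theta} the class $\tilde{\theta}$ is globally defined (up to sign) as a section of $R^2_{p_*}\mu_r$, and Proposition \ref{prop-oguiso} applied to it shows that $C_{\tilde{\theta}}:=\{t:\theta_t=1 \text{ in } H^2_{an}(M_t,\StructureSheaf{M_t}^*)\}$ is dense enumerable, since the alternative $C_{\tilde{\theta}}=C$ is excluded by $\theta_0\neq 1$. For $t\in C^1\cap B^{\mu ps}\cap C_{\tilde{\theta}}$ the sheaf $E_t$ is untwisted, slope-polystable and satisfies $\dim\End(E_t)=1$; writing the polystable decomposition $E_t=\bigoplus F_i^{n_i}$ into pairwise non-isomorphic simple summands forces $1=\dim\End(E_t)=\sum n_i^2$, hence a single simple summand of multiplicity one, so $E_t$ is slope-stable. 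Thus $C^1\cap B^{\mu ps}\cap C_{\tilde{\theta}}\subset\Sigma$; as the intersection of a cofinite subset of $C$ with a dense enumerable subset it is itself dense enumerable, proving the first assertion.

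For the second assertion I would argue $\Sigma\subset p(ZD^{\mu s})$ via a generic restriction theorem. For $t\in\Sigma$, $E_t$ is untwisted and $(\StructureSheaf{M_t}(1)\boxtimes\StructureSheaf{M_t}(1))$-slope-stable on $M_t\times M_t$, and I must exhibit some $m\in M_t$ for which the restriction $E_{t,m}$ on $M_t\times\{m\}\cong M_t$ is $\StructureSheaf{M_t}(1)$-slope-stable. A single fibre has codimension $2n$, so I would apply Mehta--Ramanathan iteratively along sufficiently positive general divisors of the form $M_t\times D$ with $D\in|\StructureSheaf{M_t}(N)|$, $N\gg 0$, first reducing to slope-stability of $E_t|_{M_t\times B}$ for a general smooth curve $B\subset M_t$, and then specialising to a general fibre $M_t\times\{m\}$ with $m\in B$ using openness of slope-stability in the resulting flat family of reflexive sheaves on $M_t$ parametrised by $B$. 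The principal obstacle is this last step: extracting slope-stability of a generic fibre from slope-stability of the total space over a curve requires careful control of the relative Harder--Narasimhan filtration, since iterated Mehta--Ramanathan on a complete intersection of divisors cutting the second factor down to dimension zero naturally yields only semistability of a $0$-dimensional restriction that is a direct sum of distinct fibres, not stability of an individual fibre.
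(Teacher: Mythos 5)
Your treatment of the first assertion is correct but follows a genuinely different route from the paper's. The paper's argument is short: $E_0$ is very twisted, hence trivially $\StructureSheaf{M_0\times M_0}(1)$-slope-stable; the locus $C^s$ of $t$ with $E_t$ slope-stable is Zariski open by Lieblich's openness theorem for flat families of twisted sheaves (\cite{lieblich-duke}, Corollary 2.3.2.11), so $C^s$ is cofinite; and $\Sigma=C^s\cap C_\theta$ with $C_\theta$ dense and enumerable by Proposition \ref{prop-oguiso}. Your substitute --- density of the Noether--Lefschetz-general (very twisted) locus, upper semicontinuity of $\dim\Hom(E_t,E_t)$, constructibility of the slope-polystable locus via idempotents of the relative endomorphism algebra after an \'{e}tale trivialization of the cocycle, and the implication polystable plus simple implies stable --- is a legitimate alternative. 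It buys independence from \cite{lieblich-duke} at the cost of having to actually prove the constructibility statement for families of twisted reflexive sheaves (a nontrivial induction on rank, with care about the singular loci), and it yields stability only on the countable locus where $\theta_t$ is trivial rather than on a Zariski open set; since $\Sigma$ is in any case contained in $C_\theta$, that weaker conclusion happens to suffice here.

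The second assertion is where your proposal has a genuine gap, as you yourself flag. Mehta--Ramanathan restricts a slope-stable sheaf to a \emph{general high-degree complete intersection} with respect to the polarization; a fiber $M_t\times\{m\}$ of the second projection is not a member of any such linear system (its cohomology class is proportional to the top power of the pullback of $c_1(\StructureSheaf{M_t}(1))$ from the second factor alone, not to a power of the product polarization), and there is no general theorem deducing slope-stability of the restriction $E_t\restricted{|}{M_t\times\{m\}}$ from slope-stability of $E_t$ on the product. The intended argument does not pass through stability of $E_t$ on $M_t\times M_t$ at all. For every $m\in M_0$ the restriction $E_{0,m}$ is a rank $r$ reflexive sheaf on $M_0$ twisted by $\theta_0^{-1}$ (the factor $p_2^*\theta_0$ dies on restriction to $M_0\times\{m\}$), and $\theta_0^{-1}$ has order $r$; so $E_{0,m}$ is very twisted and therefore trivially $\StructureSheaf{M_0}(1)$-slope-stable. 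Hence $ZD^{\mu s}$ contains the entire fiber $M_0$ of $\M\rightarrow C$. Openness of the slope-stable locus in the family $\{E_{t,m}\}_{m\in\M}$ --- again by \cite{lieblich-duke}, or alternatively by running your own constructibility-plus-density argument for this family, whose singular locus is finite over $\M$ --- then shows that $ZD^{\mu s}$ is a dense Zariski open subset of $\M$, so its image in $C$ is cofinite, and $\Sigma\cap p(ZD^{\mu s})$ differs from $\Sigma$ by at most finitely many points, which gives the second assertion.
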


\begin{proof}
%Let $v_t$ be the Mukai vector $(r,H_t,r)$. Then $(v_t,v_t)=r$.
%The class $v_t\in K_{top}S_t$ is the restriction of a class 
%in $K_{alg}\S$, since  $H$ is a line bundle on the family $\S$.
Let $f_i:\M\times_C\M\rightarrow \M$, $i=1,2$, be the projections.
Let $\phi_{ij}$ be the projection from
$\M\times_C\S\times_\C\M$ onto the fiber product of the $i$-th and $j$-th factors.
The sheaf $\SheafExt^1_{\phi_{13}}\left(\phi_{12}^*\E,\phi_{23}^*\E\right)$
is $(f_1^*\theta^{-1})\cdot (f_2^*\theta)$-twisted of rank $r$. 
Hence, the order of the class $\theta$ divides $r$.
%by Lemma
%\ref{lem-order-of-twisting-class-of-universal-sheaf}. 
Consequently, the class $\theta$ lifts to a class
$\tilde{\theta}\in H^2(\M,\mu_r)$. 
Let $C_{\theta}\subset C$ be the subset consisting of points 
$t\in C$, such that $\theta_t$ is trivial. 
We conclude that $C_\theta$ is countable and dense, by 
Proposition \ref{prop-oguiso}. 
Let $C^{s}$ be the subset of $C$, consisting of points $t$ where $E_t$ is 
$\StructureSheaf{M_t\times M_t}(1)$-slope-stable. $C^{s}$ contains the point $0$,
by Theorem \ref{thm-E-is-slope-stable}.
%It remains to show that 
$C^s$ is a Zariski open subset of $C$, by
\cite[Corollary 2.3.2.11]{lieblich-duke}.  $\Sigma$ is the intersection $C^s\cap C_{\theta}$,
which is a dense and countable subset of $C$.
Lieblich's result also establishes that $ZD^{\mu s}$ is a Zariski open subset of $\M$.
$ZD^{\mu s}$ contains the whole fiber $M_0$, by Theorem
\ref{thm-E-is-slope-stable}. Hence, $ZD^{\mu s}$ is a Zariski dense open subset of $\M$.
In particular, the image of $ZD^{\mu s}$ in $C$ contains a Zariski dense 
open subset of $C$.
\end{proof}

%******************
% Without Lieblich
%******************
\WithoutLieblich{
\underline{Step 1:} Given a point $m\in M_t$, denote by 
$E_{t,m}$ the restriction of $E_t$ to $M_t\times\{m\}$ and by 
$E_{m,t}$ the restriction of $E_t$ to $\{m\}\times M_t$. 
Let $ZD^{\mu s}\subset \M$ 
be the subset consisting points $m$, such that both 
$E_{t,m}$ and $E_{m,t}$ are 
$\StructureSheaf{M_t}(1)$-slope-stable. Set 
$ZD_t^{\mu s}:=M_t\cap ZD^{\mu s}$.

\begin{claim}
$E_t$ is $\StructureSheaf{M_t\times M_t}(1)$-slope-stable 
if $ZD_t^{\mu s}$ is a Zariski dense subset of $M_t$.
\end{claim}

\begin{proof}
Assume that $ZD_t^{\mu s}$ is Zariski dense and let
$F\subset E_t$ be a rank $r'$ subsheaf $0<r'<r$. 
There exists a dense open subset $U_F\subset M_t$, such that 
$c_1(F_{m,t})=c_1(F\restricted{)}{\{m\}\times M_t}$, 
both $F_{t,m}$ and $F_{m,t}$ have rank $r'$, 
and the torsion subsheaf of each of $F_{t,m}$ and $F_{m,t}$
has support of codimension $\geq 2$, 
for all $m\in U_F$.
Choose a point $m\in ZD_t^{\mu s}\cap U_F$.
%We may assume that $\StructureSheaf{M_t}(1)$ is very ample, and there exist divisors
%$D_1, \dots, D_{2n}$ in $\linsys{\StructureSheaf{M_t}(1)}$,
%such that ${\mathbb D}:=\cap_{j=1}^{2n}D_j$ is a reduced subscheme of $U_t$,
%such that both $F_{t,m}$ and $F_{m,t}$ 
%have rank $r'$, for all $m\in {\mathbb D}$. 
Set $h:=c_1(\StructureSheaf{M_t}(1))$ and $h_i:=p_i^*h$.
Set $d:=h^{2n}$.
Then
\begin{eqnarray*}
\frac{c_1(F)}{r'}\cdot (h_1+h_2)^{4n-1} & = & 
\frac{c_1(F)}{r'}\cdot h_1^{2n-1}h_2^{2n}+
\frac{c_1(F)}{r'}\cdot h_1^{2n}h_2^{2n-1}
\\
&=&
%\sum_{m\in{\mathbb D}}
d\left[\frac{c_1(F_{m,t})}{r'}h_2^{2n-1}+
%\sum_{m\in{\mathbb D}}
\frac{c_1(F_{t,m})}{r'}h_1^{2n-1}\right]
\\
& < & 
%\sum_{m\in{\mathbb D}}
d\left[\frac{c_1(E_{m,t})}{r}h_2^{2n-1}+
%\sum_{m\in{\mathbb D}}
\frac{c_1(E_{t,m})}{r}h_1^{2n-1}\right]
\\
&=& \frac{c_1(E_t)}{r}\cdot (h_1+h_2)^{4n-1}. 
\end{eqnarray*}
\end{proof}
We conclude that it suffices to prove that $ZD^{\mu s}$ contains
a Zariski dense open subset of $\M$.

\underline{Step 2:}
We know that $E_{0,m}$ and $E_{m,0}$ are very twisted, for all $m\in M_0$.
Hence, there exists a Zariski dense open subset $U$ of $\M$, such that 
both $E_{\pi(m),m}$ and $E_{m,\pi(m)}$ are simple, for all $m\in U$.
%Now $U$ intersects fibers $M_t$ of $\pi$, for all $t$ in a dense open subset of $C$.
Let $ZD^{\mu ps} \subset \M$ be the subset consisting of points $m$
such that both $E_{\pi(m),m}$ and $E_{m,\pi(m)}$ are slope-polystable and both 
$\SheafEnd(E_{\pi(m),m})$ and $\SheafEnd(E_{m,\pi(m)})$ are slope-semistable. 
%It remains to show that $U\cap ZD^{\mu ps}$ contains a Zariski dense
%open subset of $\M$. 
%there exists an open subset $C^0$ of $C$, such that 
%the subset $ZD^{\mu ps}$ intersects $M_t$ in a Zariski dense subset of $M_t$, 
%for all $t\in C^0$.
%
%\underline{Step 3:}
%It suffices to prove that $ZD^{\mu ps}$ intersects each curve $\tilde{C}\subset \M$
%in a Zariski dense constructible subset of 
%$\tilde{C}$, provided $\tilde{C}$ dominates $C$. 
Then $ZD^{\mu s}$ contains $U\cap ZD^{\mu ps}$.
Density of $U\cap ZD^{\mu ps}$ is clear, as it contains the locus of $m$, where
$\theta_{\pi(m)}$ has order $r$ (Theorem \ref{thm-E-is-slope-stable} and 
Lemma \ref{lem-maximally-twisted-implies-semistable}). 
Furthermore, $ZD^{\mu ps}$ is constructible, by Proposition
\ref{prop-polystable-locus-is-constructible-also-in-twisted-case}.
Hence, $U\cap ZD^{\mu ps}$ contains a Zariski dense
open subset of $\M$. 
\end{proof}
%***********************
% End WithoutLieblich
%***********************
}

%******************
% End Hide
%******************
}

%***************************************************
%
%***************************************************
\subsection{Proof of the deformability  Theorem 
\ref{thm-main-introduction}}
%\ref{thm-introduction-deformations-of-E-along-twistor-paths}}
\label{sec-proof-of-deformability}
%Let $n$ be an integer $\geq 2$. Set $r:=2n-2$.
%Let $S$ be a $K3$ surface admitting an ample line bundle $H$ of degree 
%$2r^2+r$. Set $v:=(r,H,r)$ and $\M:=\M_H(v)$. Assume that $\M$ is smooth, 
%there exists an untwisted universal sheaf $\E$ over $S\times \M$, 
%and the reflexive sheaf 
%\begin{equation}
%\label{eq-stable-untwisted-ext-1}
%E:=\SheafExt^1_{\pi_{13}}(\pi_{12}^*\E,\pi_{23}^*\E)
%\end{equation}
%over $\M\times \M$ is $\StructureSheaf{\M\times \M}(1)$-slope-stable,
%where $\StructureSheaf{\M\times \M}(1):=
%p_1^*\StructureSheaf{\M}(1)\otimes p_2^*\StructureSheaf{\M}(1)$ and 
%$\StructureSheaf{\M}(1)$ corresponds to a Mukai vector
%\[
%a(1,0,-1)+b(2r+1,H,0)\in v^\perp 
%\]
%via the Mukai isomorphism
%(\ref{eq-Mukai-isomorphism}), for some integers $a$ and $b$.
%The existence of such a pair $(S,H)$ is proven in Theorem 
%\ref{thm-stability-of-an-untwisted-ext-1}.
Let $E$ be the very twisted sheaf of Theorem \ref{thm-E-is-slope-stable} over $\M_H(v)\times \M_H(v)$.

\begin{thm}
\label{thm-deformations-of-E-along-twistor-paths}
Let 
%$E$ be the very twisted sheaf given in Theorem \ref{thm-E-is-slope-stable}
%equation (\ref{eq-stable-untwisted-ext-1}) and
$X$ be an irreducible holomorphic symplectic manifold of $K3^{[n]}$-type. 
Then there exists a parametrized twistor path connecting $\M_H(v)$ and $X$,
along which $E$ can be deformed 
(in the sense of Definition \ref{def-can-be-deformed-along-a-twistor-path}).
\end{thm}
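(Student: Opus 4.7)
The plan is to apply Proposition \ref{prop-deformation-of-azumaya-algebras-along-twistor-paths} part (\ref{prop-item-twisted}) to the reflexive sheaf of Azumaya algebras $A := \SheafEnd(E)$ on $\M\times\M$, pushing it along a generic parametrized twistor path whose endpoint corresponds to $X\times X$. Because $c_1(A)=0$ and $c_2(A)$ is a nonzero rational multiple of $\kappa_2(E)$, this will produce a reflexive sheaf of Azumaya algebras on $X\times X$, which corresponds via Construction \ref{construction-family-of-projectively-hyperholomorphic-sheaves} to the desired reflexive twisted sheaf $\G$.

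First I would construct the path. Fix markings placing $X$ and $\M$ in the same connected component of $\mathfrak{M}_\Lambda$. Verbitsky's Theorem \ref{thm-path-connectedness} yields a generic parametrized twistor path $\gamma:C\to\mathfrak{M}_\Lambda$ joining $(\M,\phi_\M)$ to $(X,\phi_X)$ through IHS fibers $\{X_t\}_{t\in C}$, each node having trivial Picard group. Let $\X\to C$ be the associated family, and form the fiber-squared family $\X\times_C\X\to C$. By Remark \ref{rem-verbitsky-results-ok-for-products}, the hyperholomorphic theory of Section \ref{sec-hyperholomorphic-sheaves}, and in particular Proposition \ref{prop-deformation-of-azumaya-algebras-along-twistor-paths}, applies to the fiber-squared twistor lines of $X_t\times X_t$.

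Second, I would verify the three hypotheses of Proposition \ref{prop-deformation-of-azumaya-algebras-along-twistor-paths}(\ref{prop-item-twisted}) at the starting point, taking $\omega_0$ to be a K\"ahler class on $\M\times\M$ in the ray of $c_1(\StructureSheaf{\M\times\M}(1))$ spanning the first twistor line of $\gamma$.
(a) Slope-stability of $A$ as a sheaf of Azumaya algebras is equivalent, via Definition \ref{def-slope-stability}, to $\omega_0$-slope-stability of $E$, which is the hypothesis of the theorem.
(b) Slope-polystability of the underlying rank-$r^2$ coherent sheaf $A$: since $E$ is reflexive and $\omega_0$-slope-stable, Bando-Siu furnishes a Hermite-Einstein metric on $E$; the induced metric on $\SheafEnd(E)=A$ is again Hermite-Einstein, forcing $A$ to be $\omega_0$-slope-polystable.
(c) Monodromy-invariance of $c_2(A)$: because $c_1(A)=0$, $c_2(A)$ is a nonzero rational multiple of $\kappa_2(E)$, and Proposition \ref{prop-kappa-F-is-mon-invariant} with even index $i=2$ delivers full $Mon(\M)$-invariance of $\kappa_2(E)$ under the diagonal action, which is precisely the action of $Mon(\M\times\M)$ relevant for fiber-squared twistor deformations.

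With the hypotheses in hand, Proposition \ref{prop-deformation-of-azumaya-algebras-along-twistor-paths}(\ref{prop-item-twisted}) extends $A$ along the entire parametrized path $\gamma$ as a reflexive sheaf of Azumaya algebras $\A$ on $\X\times_C\X$, flat over $C$; the endpoint restriction is an Azumaya algebra on $X\times X$, and Construction \ref{construction-family-of-projectively-hyperholomorphic-sheaves} recovers the reflexive twisted sheaf $\G$. The main technical point is the transition of K\"ahler classes at the nodes of $C$: one must know that $\A_{t_i}$ remains slope-stable when the K\"ahler class pivots from the $i$-th to the $(i{+}1)$-st twistor line. This is handled automatically by the genericity of $\gamma$, since at each node $X_{t_i}$ the Picard group is trivial, hence so is $\Pic(X_{t_i}\times X_{t_i})$ (by K\"unneth, using $H^1(X_{t_i},\StructureSheaf{})=0$), and Lemma \ref{lemma-slope-stability-does-not-depend-on-kahler-class} makes slope-stability independent of the K\"ahler class at these points.
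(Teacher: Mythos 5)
Your overall strategy coincides with the paper's: verify the hypotheses of Proposition \ref{prop-deformation-of-azumaya-algebras-along-twistor-paths} for $\SheafEnd(E)$ (stability of $E$, polystability of $\SheafEnd(E)$, monodromy-invariance of $\kappa_2(E)$ via Proposition \ref{prop-kappa-F-is-mon-invariant}) and push along a generic parametrized twistor path obtained from Theorem \ref{thm-path-connectedness}, using Remark \ref{rem-verbitsky-results-ok-for-products} to work on the fiber square. Those verifications are all fine.

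The gap is in your construction of the path, specifically in the sentence where you take ``$\omega_0$ to be a K\"ahler class in the ray of $c_1(\StructureSheaf{\M\times\M}(1))$ spanning the first twistor line of $\gamma$.'' Theorem \ref{thm-path-connectedness} only asserts the \emph{existence} of some generic twistor path between the two marked manifolds; it gives you no control over which twistor line it starts with, so you cannot assume its first line is $\PP^1_{\omega'}$ for $\omega'=c_1(\StructureSheaf{\M}(1))$. Worse, the twistor line of the integral class $\omega'$ cannot in general be the first line of a \emph{generic} path: by (\cite{huybrects-basic-results}, paragraph 1.17), the Picard group of $X_{t}$ for generic $t\in\PP^1_{\omega'}$ is governed by $(\omega')^\perp\cap H^{1,1}(\M,\Integers)$, which is nonzero for an integral class whenever $\rank H^{1,1}(\M,\Integers)\geq 2$; then no node on that line has trivial Picard group and Lemma \ref{lemma-slope-stability-does-not-depend-on-kahler-class} is unavailable at the first pivot. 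The induction in Proposition \ref{prop-deformation-of-azumaya-algebras-along-twistor-paths} therefore cannot start from the line you chose. The paper repairs exactly this: since slope-stability is an open condition in the K\"ahler cone (\cite{huybrechts-lehn-book}, Ch.\ 4C), one replaces $\omega'$ by a nearby irrational K\"ahler class $\omega$ with $\omega^\perp\cap H^{1,1}(\M,\Integers)=0$, for which $E$ is still $\omega$-slope-stable; then a generic point $t_1\in\PP^1_\omega$ has trivial Picard group, one connects $X_{t_1}$ to $X$ by a generic path via Theorem \ref{thm-path-connectedness}, and prepends $\PP^1_\omega$. Adding this perturbation-and-prepending step closes the gap; the rest of your argument then goes through as written.
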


\begin{proof}
The class $\kappa_2(E)$ is $Mon(\M_H(v))$-invariant, 
by Proposition \ref{prop-kappa-F-is-mon-invariant}.
Let $\omega$ be a K\"{a}hler class on $\M_H(v)$ and set $\tilde{\omega}:=\pi_1^*\omega+\pi_2^*\omega$, where
$\pi_i$ is the projection from $\M_H(v)\times \M_H(v)$ onto the $i$-th factor. 
Then $\SheafEnd(E)$ is $\tilde{\omega}$-slope-polystable, by Proposition \ref{prop-polystability-of-a-very-twisted-sheaf}.
The sheaf 
$E$ is projectively $\omega$-stable-hyperholomorphic, by 
Corollary 
\ref{cor-main-result-on-projectively-hyperholomorphic-reflexive-sheaves}, 
Lemma \ref{lemma-Mon-invariant-classes-are-of-Hodge-type},
and Remark \ref{rem-verbitsky-results-ok-for-products}.
We may choose $\omega$, so that the hyperplane $\omega^\perp$ intersects trivially the 
lattice $H^{1,1}(\M_H(v),\Integers)$. Then 
$\Pic(X_{t_1})$ is trivial, for a generic $t_1\in \PP^1_\omega$, by \cite[paragraph 1.17
page 76]{huybrects-basic-results}. 
There exists a generic parametrized twistor path from $X_{t_1}$ to $X$, 
by Theorem \ref{thm-path-connectedness}. 
We get a generic parametrized twistor path from $\M_H(v)$ to $X$.
We conclude that $E$ deforms 
%as a slope-stable Azumaya algebra 
along the twistor path $\gamma$, by Proposition 
\ref{prop-deformation-of-azumaya-algebras-along-twistor-paths}. 
\end{proof}

\begin{proof}[Proof of Theorem \ref{thm-main-introduction}]
It remains to prove the equality $\kappa_i(F)=\pm\kappa_i(X\times X)$ for the sheaf $F$ obtained on $X\times X$ as a deformation of the sheaf $E$ via Theorem
\ref{thm-deformations-of-E-along-twistor-paths}, for $2\leq i \leq 2n-1$. 
The pair $\{\kappa(E),\kappa(E^*)\}$ associated to the sheaf $E$ in Theorem
\ref{thm-deformations-of-E-along-twistor-paths} is a parallel transport of the pair of $\kappa$-classes associated to the sheaf in Equation (\ref{eq-E}), by Lemma \ref{lem-lifting-to-deformations-of-pairs-ok-for-moduli-spaces}. 
Let $\Pi:\X\times_C\X\rightarrow C$ be the twistor family over the twistor path $C$ and let 
$\A$ be the Azumaya algebra over $\X\times_C\X$ extending $\SheafEnd(E)$  in the proof of Theorem
\ref{thm-deformations-of-E-along-twistor-paths}.
The equality $\kappa_i(F)=\pm\kappa_i(X\times X)$ would be clear, for all $i$, 
had we known the flatness of $\A$ over $C$. 
We do know that the singular locus $\Z$ of $\A$ is equidimensional\footnote{In fact, $\A$ is locally free away from 
the image of the diagonal embedding of $\X$ in its fiber square 
$\X\times_C\X$, by Proposition \ref{prop-V} and the fact that the singular locus is trianalytic. } 
over $C$, by Theorem \ref{thm-deformations-of-E-along-twistor-paths} and Definition \ref{def-can-be-deformed-along-a-twistor-path}. 
Let $U_t\subset X_t\times X_t$ be the complement of the intersection $Z_t$ of $\Z$ with the fiber over a point $t$ in $C$. 
We have  $\dim(Z_t)=2n$, since  $E$ is locally free away from the diagonal, 
by Proposition \ref{prop-V}. 
Recall that the Azumaya algebra $\A_t$ over $X_t\times X_t$ is the unique reflexive extension of the restriction of $\A$ to $U_t$, by Construction \ref{construction-family-of-projectively-hyperholomorphic-sheaves}. It suffices to show that 
$\kappa_i(\A_t)$ is equal to the restriction of $\kappa_i(\A)$ to $X_t\times X_t$,  for $2\leq i \leq 2n-1$, since it would then follow that 
the characteristic classes $\kappa_i(\A_t)$ form flat sections of the local system 
$R^{2i}\Pi_*\RationalNumbers$ over $C$, for $i$ in that range. 

The restrictions of $\kappa_i(\A)$ and $\kappa_i(\A_t)$ to $H^{2i}(U_t,\RationalNumbers)$ are equal, since both are equal to the $\kappa_i$ class of the restriction of $\A$ to $U_t$. 
The restriction homomorphism $H^k(X_t\times X_t,\Integers)\rightarrow H^k(U_t,\Integers)$ is an isomorphism, for $k\leq 4n-2$,
by Lefschetz Duality $H^k(U_t,\Integers)\cong H_{8n-k}(X_t\times X_t,Z_t,\Integers)$ and the vanishing of $H_{8n-k}(Z_t,\Integers)$ for $k<4n$. Hence, the restriction of $\kappa_i(\A)$ to $X_t\times X_t$ is equal to $\kappa_i(\A_t)$, for $2\leq i \leq 2n-1$. 
%The equality $\kappa_i(F)=\pm\kappa_i(X\times X)$ follows from 
%the analogous equality for $E$ over the cartesian square of a moduli space of sheaves 
%(the latter follows from Lemma \ref{lem-lifting-to-deformations-of-pairs-ok-for-moduli-spaces}).
\end{proof}

%**********
% Hide
%**********
\hide{
Let $N_i$ be the manifold corresponding to the $i$-th node $t_i$ of this path
(so that $N_i=X_{t_i}$ represents the isomorphism class
of $\gamma(t_i)$).
Assume that $E$ can be deformed along the first $i$ twistor lines in the 
path.
The twisting class $\theta$, in Lemma
\ref{lem-order-of-twisting-class-of-universal-sheaf}, 
is the image of the class $\tilde{\theta}$ in 
$H^2(\M_H(v),\mu_r)$, given in 
(\ref{eq-tilde-theta-is-exp-bar-w}). The class $\tilde{\theta}$ 
deforms uniquely, along every (simply connected) twistor line $\PP^1_\omega$, 
and its image in $H^2(X_t,\StructureSheaf{}^*)$, $t\in \PP^1_\omega$,
is the class $\theta_t$ of the deformed twisted sheaf, by the equality
(\ref{eq-theta-factors-through-iota}). 
The homomorphism 
$H^2(N_i,\mu_r)\rightarrow H^2(N_i,\StructureSheaf{}^*)$ is injective,
since its kernel is the image of $H^1(N_i,\StructureSheaf{}^*)$,
which we assumed to be trivial, 
via the connecting homomorphism of the short exact sequence
(\ref{eq-short-exact-seq-of-r-th-power}).
We conclude that the twisting class $\theta_i$, 
of the deformed twisted sheaf $E_i$ over $N_i$,
has order $r$ as well. Hence, $E_i$ is
$\omega$-slope-stable, with respect to every K\"{a}hler class 
$\omega$ on $N_i$. 
On $N_i:=X_{t_i}$ we have two natural choices of K\"{a}hler classes,
$\omega_{t_i^-}$, corresponding to the $i$-th node as a point of the 
$i$-th twistor line, and 
$\omega_{t_i^+}$, corresponding to the $i$-th node as a point of the 
$i+1$ twistor line. 
$\SheafEnd(E_i)$ is $\omega_{t_i^-}$-slope-polystable, by the 
induction hypothesis.
Hence, $\SheafEnd(E_i)$ is $\omega_{t_i^+}$-slope-polystable,
by Lemma \ref{lemma-slope-stability-does-not-depend-on-kahler-class}.
$E_i$ is thus projectively $\omega_{t_i^+}$-polystable-hyperholomorphic,
by Lemma  (???).
Consequently,  $E_i$ deforms along the 
$i+1$-twistor path, by Corollary
\ref{cor-main-result-on-projectively-hyperholomorphic-reflexive-sheaves}.
%**********
% End \hide
%**********
}

%****************************************************************
% 
%****************************************************************
\section{Proof of Lemma \ref{lem-BB-form-is-motivic}}
\label{sec-proof-of-relation-between-kappa-2-c-2-and-BB}
%{\em Proof of Lemma \ref{lem-BB-form-is-motivic}.\/}
It suffices to prove the Lemma for every smooth and compact moduli space
$\M:=\M_H(v)$, for all $(v,v)\geq 2$. 
Let 
\begin{eqnarray*}
u \ : \ K_{top}S & \longrightarrow & H^*(\M,\RationalNumbers)
\\
u(x) & := & ch(e_x)\cdot \exp\left(\frac{-c_1(e_v)}{(v,v)}
\right),
\end{eqnarray*}
where $e_x$ is given in (\ref{eq-e-x}), 
and $u_{2i}:K_{top}S\rightarrow H^{2i}(\M,\RationalNumbers)$
the composition of $u$ with the projection on the degree $2i$-summand.
Note that $u(v)=\kappa(e_v)$, $u_0(x)=(v,x)$, 
\[
u_2(x) \  \ \ = \ \ \ 
c_1(e_x)-\frac{(v,x)}{(v,v)}c_1(e_v),
\]
$u_2(v)=0$, and 
$u_2$ restricts to $v^\perp$ as the standard Mukai isomorphism of Equation (\ref{eq-Mukai-isomorphism})
\[
(u_2\restricted{)}{v^\perp} \ : \ v^\perp \ \ \ 
\LongIsomRightArrow H^2(\M,\Integers). 
\]
Moreover, $u$ is $O^+(K_{top}S)_v$ equivariant,
$
\monrep_g(u(g^{-1}(x))=u(x).
$
Indeed, 
\begin{eqnarray*}
\monrep_g(u(g^{-1}(x))&=& 
\monrep_g\left(ch(e_{g^{-1}(x)})\exp(-c_1(v)/(v,v))\right)\stackrel{{\rm Eq.} \ (\ref{eq-mon-equivariance-of-e})}{=}\\
& = & ch(e_x)\exp\left(c_1(\ell_g)-\monrep_g(c_1(e_v))/(v,v)\right) \stackrel{{\rm Eq.} \ (\ref{eq-c-1-ell-g})}{=} u(x).
\end{eqnarray*}
%by Theorem \ref{thm-symmetries-of-moduli-spaces} and equation (\ref{eq-c-1-ell-g}). 

The Mukai pairing is a class in $\Sym^2(K_{top}S)^*$. It determines an isomorphism
$K_{top}S\rightarrow K_{top}S^*$, being unimodular. The inverse of the latter isomorphism corresponds to a class 
$\tilde{q}$ in $\Sym^2K_{top}S$. 
The following equality is a special case of 
\cite[Eq. (4.8)]{markman-monodromy-I}:
\begin{equation}
\label{eq-c-2-TM-is-image-of-Mukai-pairing}
c_2(T\M) \ \ \ = \ \ \ (u_2\cup u_2 - 2u_4\cup u_0)(\tilde{q}),
\end{equation}
where $(u_2\cup u_2 - 2u_4\cup u_0)$ is the homomorphism from 
$K_{top}S\otimes K_{top}S$ to $H^4(\M,\RationalNumbers)$. 

The orthogonal decomposition 
$(K_{top}S)_\RationalNumbers=\RationalNumbers v+(v^\perp)_\RationalNumbers$
induces the decomposition
$
\tilde{q}  = \frac{v\otimes v}{(v,v)} + q^{-1},
$
where we identified $v^\perp$ with $H^2(\M,\Integers)$, via $u_2$. 
Equation (\ref{eq-relation-between-kappa-2-c-2-and-BB})
follows from (\ref{eq-c-2-TM-is-image-of-Mukai-pairing})
and the following equations
\begin{eqnarray}
\label{eq-u-4-cup-u-0-kills-q-inverse}
(u_4\cup u_0)(q^{-1}) & = & 0,
\\
\label{eq-tautology}
(u_2\cup u_2)(q^{-1}) &= & q^{-1},
\\
\label{eq-u-2-cup-u-2-of-v-times-v}
(u_2\cup u_2)(v\otimes v) & = & 0,
\\
\label{eq-u-4-cup-u-0-of-v-times-v}
(u_4\cup u_0)\left(\frac{v\otimes v}{(v,v)}\right) & = & u_4(v) \ = \ 
\kappa_2(X).
\end{eqnarray}

Proof of Equation (\ref{eq-u-4-cup-u-0-kills-q-inverse}):
$u_4\cup u_0$ is $O^+(K_{top}S)_v$-equivariant, and thus sends the 
$O^+(K_{top}S)_v$-invariant class 
$q^{-1}$ in $(v^\perp\otimes v^\perp)_\RationalNumbers$ to an 
$O^+(K_{top}S)_v$-invariant class in $u_4(v^\perp)_\RationalNumbers$. 
But the image $u_4(v^\perp)$ either vanishes, or is an irreducible 
$O(K_{top}S)_v$-module isomorphic to $v^\perp$. Thus, any
invariant class in $u_4(v^\perp)$ vanishes.

Equations (\ref{eq-tautology}) and (\ref{eq-u-4-cup-u-0-of-v-times-v}) 
are clear and Equation
(\ref{eq-u-2-cup-u-2-of-v-times-v}) follows from the vanishing of
$u_2(v)$, observed above.

It remains to calculate the dimension of 
$\mbox{span}\{q^{-1},c_2(TX),\kappa_2(X)\}$. 
$\Sym^2H^2(S^{[n]},\RationalNumbers)$ is the direct sum of the line spanned by $q^{-1}$ and the subspace spanned by squares of isotropic vectors, and the latter is an irreducible representation of any finite index subgroup of the orthogonal group 
\cite[Prop. 2.14]{looijenga-lunts}, hence also of 
$Mon(S^{[n]})$. Hence, the monodromy invariant subspace of $\Sym^2H^2(S^{[n]},\RationalNumbers)$ is one dimensional.
The homomorphism $\Sym^2H^2(S^{[n]},\RationalNumbers)\rightarrow 
H^4(S^{[n]},\RationalNumbers)$ is known to be injective
\cite{verbitsky}. 
When $n=2$, the homomorphism is surjective, by G\"{o}ttsche's
formula for the Betti numbers \cite{gottsche-formula}. 
When $n=3$, the co-kernel of the homomorphism is
an irreducible $23$-dimensional representation of $Mon(S^{[3]})$
\cite{markman-monodromy-I}.
Thus, the monodromy invariant subspace of $H^4(X,\RationalNumbers)$
is one dimensional, and is spanned by each of 
the three classes, 
%$c_2(TX)$ and the Beauville-Bogomolov form, 
for $X$ of $K3^{[n]}$-type, $n\leq 3$. 

Assume that $n\geq 4$. Then the monodromy invariant subspace of the quotient space \\
$H^4(S^{[n]},\RationalNumbers)/\Sym^2H^2(S^{[n]},\RationalNumbers)$
is one-dimensional and is spanned by the image of each of $\kappa_2(X)$
and $c_2(TX)$ \cite[Lemma 4.9]{markman-monodromy-I}.
%(\cite{markman-integral-constraints}, Theorems 1.5 and 1.9).
\EndProof

\hide{
%***************************************************
%
%***************************************************
\section{The Mukai lattice as a quotient Hodge structure}
Let the integral Hodge structure 
$Q^4(X,\Integers)$  
be the quotient of $H^4(X,\Integers)$ 
by $\Sym^2H^2(X,\Integers)$ and let
$\bar{c}_2(X)$ be the image in $Q^4(X,\Integers)$ of the second Chern class
$c_2(TX)$. 

We consider also analogous quotients of $H^d(X,\Integers)$, 
$d\geq 4$. 
Let $A_d \subset H^*(X,\Integers)$, $d\geq 0$,  be the graded subring of 
$H^*(X,\Integers)$, generated by classed in $H^i(X,\Integers)$, $i\leq d$. 
Set $(A_d)^k:=A_d\cap H^k(X,\Integers)$. 
The odd integral cohomology groups of $X$ vanish 
\cite{markman-integral-generators}.
We set $Q^2(X,\Integers) := H^2(X,\Integers)$. 
For an integer $i\geq 2$, we 
define
$Q^{2i}(X,\Integers)$ as the quotient in the short exact sequence
\begin{equation}
\label{eq-extension-of-Q-2i-by-A-2i-2}
0 \rightarrow 
%[A_{2i-2}\cap H^{2i}(X,\Integers)] 
B^{2i}
\rightarrow 
H^{2i}(X,\Integers) \rightarrow Q^{2i}(X,\Integers)
 \rightarrow 0,
\end{equation}
%\begin{equation}
%\label{eq-C-d}
%Q^d(X,\Integers) \ \ := \ \ H^d(X,\Integers)/
%[A_{d-1}\cap H^d(X,\Integers)],
%\end{equation}
where $B^{2i}:=(A_{2i-2})^{2i}+\left\{
H^{2i}(X,\Integers) \cap \frac{1}{i!}(A_{2i-4})^{2i}
\right\}$.
The two definitions of $Q^4(X,\Integers)$ agree, since $(A_0)^4=0.$

Let $U$ be the hyperbolic plane, 
whose bilinear form is given by the matrix
{\scriptsize
$\left[
\begin{array}{cc}
0 & 1
\\ 
1 & 0
\end{array}
\right].
$
}
Denote by $(-E_8)$ the $E_8$ lattice, 
with a negative definite bilinear form. 
A non-zero element of a free abelian group is said to be 
{\em primitive}, if it is not a multiple of another 
element by an integer larger than $1$.

\begin{thm}
\label{thm-introduction-invariant-Q-4}
(\cite{markman-integral-constraints}, Theorem 1.9)
Let $X$ be an irreducible holomorphic symplectic manifold 
deformation equivalent to
the Hilbert scheme $S^{[n]}$, $n\geq 4$. 

\noindent
a) Let $i$ be an integer in the range 
$1\leq i \leq \frac{1}{8}\dim_\ComplexNumbers X$.
\begin{enumerate}
\item
\label{thm-item-second-chern-class}
$Q^{4i}(X,\Integers)$ is torsion free.
Let $\bar{c}_{2i}(X)\in Q^{4i}(X,\Integers)$ be 
the projection of the second Chern class
$c_{2i}(TX)$ of the tangent bundle. Then $\frac{1}{2}\bar{c}_{2i}(X)$ 
is a  non-zero, integral, and primitive element.
\item
\label{thm-item-unimodular-pairing}
There exists a unique monodromy invariant even unimodular 
symmetric bilinear form on $Q^{4i}(X,\Integers)$, satisfying 
\begin{equation}
\label{eq-chern-classes-encode-dimension}
\left(\frac{1}{2}\bar{c}_{2i}(X),\frac{1}{2}\bar{c}_{2i}(X)\right)
\ \ \ = \ \ \ \dim_\ComplexNumbers(X)-2.
\end{equation}
\item
\label{thm-intro-item-Q-4-is-the-mukai-lattice}
The lattice $Q^{4i}(X,\Integers)$ 
is isometric to the orthogonal direct sum 
$U^{\oplus 4}\oplus (-E_8)^{\oplus 2}$.
%The lattice $Q^{4i}(X,\Integers)$ is isometric to the Mukai lattice 
%$K_{top}S$ with the pairing (\ref{eq-mukai-pairing-on-K-top}).
\item
\label{thm-item-pair-of-embeddings}
There exists a monodromy invariant pair of primitive lattice embeddings 
\[
e, -e \ : \ H^2(X,\Integers) \ \ \ \LongIsomRightArrow \ \ \
\bar{c}_{2i}(X)^\perp \ \subset \ Q^{4i}(X,\Integers).
\]
The embedding $e$ induces an isometry, compatible with the Hodge 
structures, between $H^2(X,\Integers)$ and the co-rank $1$ sublattice
orthogonal to $\bar{c}_{2i}(X)$. 
The character ${\rm span}_\Integers\{e\}$ of $Mon(X)$
%$\Hom(H^2(X,\Integers),Q^{4i}(X,\Integers))$ 
is  non-trivial. 
\end{enumerate}

b) When $i$ is a half-integer in the above range, 
so that $4i\equiv 2$ (modulo $4$), the above statements hold, 
with the following modifications. 
In part \ref{thm-item-second-chern-class} the class 
$\bar{c}_{2i}(X)$ vanishes, however, there is a 
rank $1$ sublattice $Q^{4i}(X,\Integers)'$, 
which is a monodromy subrepresentation of $Q^{4i}(X,\Integers)$. 
%Let $Q^{4i}(X,\Integers)''$ be the sublattice orthogonal to 
%$Q^{4i}(X,\Integers)'$.
Part \ref{thm-item-unimodular-pairing} holds, with
$\bar{c}_{2i}(X)/2$ replaced by an integral generator of
$Q^{4i}(X,\Integers)'$.
Part \ref{thm-item-pair-of-embeddings} holds, with
$\bar{c}_{2i}(X)$ replaced by $Q^{4i}(X,\Integers)'$,
and excluding the claim that ${\rm span}\{e\}$ in a non-trivial character. 
\end{thm}
%endhide
}

%***************************************************
%
%***************************************************
\section{Appendix: Polystability of $\SheafEnd(E)$ for a slope-stable twisted sheaf $E$}
\label{sec-semistability}
We prove Proposition \ref{prop-polystability-of-a-very-twisted-sheaf} in this section. Given a coherent sheaf $F$ over a complex manifold we denote by $F_{fr}$ the quotient of $F$ by its torsion subsheaf. The following is well known (see \cite[Sec. 3.5]{kaledin-verbitski-book}).

\begin{lem}
\label{lemma-polystability-and-tensor-products}
\cite{bando-siu}
Let $E$ and $F$ be reflexive coherent sheaves on a compact K\"{a}hler manifold $X$ and $\omega$ a K\"{a}hler form. 
If $E$ and $F$ are $\omega$-slope-polystable, then so is the reflexive hull of $(E\otimes F)_{fr}$. 
If $E$ and $F$ are $\omega$-slope-semistable, then so is $(E\otimes F)_{fr}$. 
\end{lem}

\begin{proof}
According to Bando and Siu, a reflexive coherent sheaf is $\omega$-slope-polystable if and only if it admits an admissible Hermite-Einstein metric \cite[Theorem 3]{bando-siu}. If $E$ and $F$ are $\omega$-slope-polystable, the  metric induced on the reflexive hull $(E\otimes F)_{fr}^{**}$ from  admissible Hermite-Einstein metrics of the factors is again admissible Hermite-Einstein and so $(E\otimes F)_{fr}^{**}$
is $\omega$-slope-polystable as well. 

The sheaf $(E\otimes F)_{fr}$ is $\omega$-slope-polystable (or semistable), if and only if its reflexive hull is. 
Now a sheaf is $\omega$-slope-semistable if and only if it admits a filtration whose graded summands are 
$\omega$-slope-polystable of the same slope. Such filtrations on $E$ and $F$ induce a filtration on 
$(E\otimes F)_{fr}^{**}$ by $\omega$-slope-polystable sheaves of the same slope, by the previous paragraph.
Hence, semistability of $E$ and $F$ implies that of $(E\otimes F)_{fr}^{**}$.
\end{proof}

\begin{defi}
Let $X$ be a complex manifold and $E$ a torsion free  $\theta$-twisted coherent sheaf on $X$. 
A subsheaf $A\subset \SheafEnd(E)$ is said to be {\em nilpotent}, if there exists a  filtration 
\[
0=V_0 \subset V_1 \subset V_2 \subset \cdots \subset V_k=E
\] 
of $E$ by subsheaves $V_i$ of strictly increasing ranks, such that the image of the natural homomorphism 
$A\otimes V_i\rightarrow E$ is contained in $V_{i-1}$, for $1\leq i\leq k$. 
\end{defi}

We may and will choose the  $V_i$ in the above filtration to be saturated subsheaves of $E$. 

\begin{rem}
\label{rem-Engel-theorem}
A subsheaf $A\subset \SheafEnd(E)$ is said to be a {\em subsheaf  of Lie subalgebras}, if the commutators $a_1a_2-a_2a_1$ of local sections of $A$ belong to $A$. 
Any subsheaf $A\subset \SheafEnd(E)$ of Lie subalgebras, whose local sections are nilpotent, is a nilpotent subsheaf, by Engel's Theorem \cite[Corollary in Sec. I.3.3]{humphreys}.
\end{rem}

\begin{lem}
\label{lem-non-semistable-has-nilpotent-subsheaf-of-positive-slope}
Let $(X,\omega)$ be a compact K\"{a}hler manifold and $E$ a reflexive 
%$\omega$-slope-stable, 
$\theta$-twisted coherent sheaf, for some $\theta\in H^2_{an}(X,\StructureSheaf{}^*)$.
If $\SheafEnd(E)$ is not $\omega$-slope-semistable then there exists a non-zero nilpotent $\omega$-slope-stable saturated subsheaf $A$ of $\SheafEnd(E)$ of positive $\omega$-slope, which is equal to the maximal slope among all (not necessarily nilpotent) subsheaves of $\SheafEnd(E)$.
\end{lem}

\begin{proof}
Assume that $\SheafEnd(E)$ is not semistable and let $A$ be an
$\omega$-slope-stable destabilizing subsheaf of $\SheafEnd(E)$ of maximal slope. The existence of $A$ follows by 
\cite[Theorem 1.6.7]{huybrechts-lehn-book}. The latter relies on the argument in the proof of 
\cite[Lemma 1.3.5]{huybrechts-lehn-book}, with Gieseker stability replaced by slope stability, an argument which goes through for coherent sheaves on compact K\"{a}hler manifolds.
We may assume that $A$ is a saturated subsheaf, since the slope of its saturation is greater than or equal to that of $A$. 
%We may assume that $A$ is $\omega$-slope-stable, 
%possibly after replacing $A$ by an $\omega$-slope-stable subsheaf of maximal $\omega$-slope.
Then $(A\otimes A)_{fr}$ is $\omega$-slope-polystable of slope $2\mu_\omega(A)$.
The image of $A\otimes A$ in $\SheafEnd(E)$ must be zero, since otherwise 
the slope of the image is at least $2\mu_\omega(A)$, contradicting the assumption
that the slope of $A$ is maximal. We conclude that $A$ is a subsheaf of nilpotent subalgebras, hence a
nilpotent subsheaf, by Remark \ref{rem-Engel-theorem}. 
\end{proof}

\begin{lem}
\label{lemma-non-polystable-has-nilpotent-subsheaf-of-zero-slope}
Let $(X,\omega)$ be a compact K\"{a}hler manifold and $E$ a reflexive  
%$\omega$-slope-stable, 
$\theta$-twisted  coherent sheaf, for some $\theta\in H^2_{an}(X,\StructureSheaf{}^*)$.
If $\SheafEnd(E)$ is not $\omega$-slope-polystable then there exists a non-zero nilpotent $\omega$-slope-stable saturated subsheaf $A$ of $\SheafEnd(E)$ of non-negative $\omega$-slope.
\end{lem}

\begin{proof}
We may assume that $\SheafEnd(E)$ is $\omega$-slope-semistable, as otherwise the statement follows from 
Lemma \ref{lem-non-semistable-has-nilpotent-subsheaf-of-positive-slope}. 
Let $F\subset \SheafEnd(E)$ be the maximal polystable subsheaf of $\omega$-slope zero
\cite[Lemma 1.5.5]{huybrechts-lehn-book}.
Then $F$ is reflexive, and is hence locally free away from a closed analytic
subvariety $Z$ of codimension $\geq 3$ in $X$. 
Let $F^\perp\subset \SheafEnd(E)$ be the subsheaf orthogonal to $F$ 
with respect to the trace-pairing on $\SheafEnd(E)$. We may assume that $\SheafEnd(E)$ is not $\omega$-slope-polystable.
Then $F^\perp$ does not vanish.
Set $A:=F\cap F^\perp$. 

The multiplication homomorphism
\[
m:\SheafEnd(E)\otimes \SheafEnd(E)\rightarrow \SheafEnd(E)
\]
maps $F\otimes F$ onto a subsheaf of slope $0$, since  $(F\otimes F)_{fr}$ is $\omega$-slope polystable 
and $\SheafEnd(E)$ is $\omega$-slope semistable and both have slope $0$.
We conclude that the image is slope-polystable, and is hence contained in $F$.
Consequently, $F$ is a sheaf of unital associative subalgebras of $\SheafEnd(E)$.

We show next that $A$ is a subsheaf of associative subalgebras, and in particular of Lie subalgebras. 
Let $a_1$ and $a_2$ be local sections of $A$ and $f$ a local section of $F$. 
Then 
$tr((a_1a_2)f)=tr(a_1(a_2f))=0$, since $a_1$ is a local section of $F^\perp$ and
$a_2f$ is a local section of  $F$, by the previous paragraph.
We conclude that $a_1a_2$ is a local section of $F^\perp$. Now $a_1a_2$ is a local section of $F$ as well, by the previous paragraph, and so of $A$.

Let $a$ be a local section of $A$.
Then $a^n$ is a section of $F$, for all $n\geq 0$.
Thus, $tr(a^k)=tr(a^{k-1}a)=0$, for all $k>0$. 
It follows that $a$ is nilpotent. Hence, the sheaf $A$ is a nilpotent subsheaf of 
$\SheafEnd(E)$, by Remark \ref{rem-Engel-theorem}. 

$F^\perp$ is isomorphic to $(\SheafEnd(E)/F)^*$ and is thus $\omega$-slope semistable of $\omega$-degree $0$.
The $\omega$-slope of the subsheaf $F+F^\perp$ of $\SheafEnd(E)$ is non-positive, since $\SheafEnd(E)$
is $\omega$-slope semistable. 
We have the short exact sequence
\[
0\rightarrow A \rightarrow F\oplus F^\perp \rightarrow F+F^\perp\rightarrow 0.
\]
Thus, the $\omega$-slope of $A$ is non-negative (and is hence zero), provided $A$ does not vanish. 
$A$ can not vanish, since otherwise $F\oplus F^\perp$ embeds as a subsheaf of $\SheafEnd(E)$ contradicting the maximality of $F$, since $F^\perp$ contains some non-zero polystable subsheaf of $\omega$-slope zero. We have established that $A$ is 
a non-zero nilpotent subsheaf of $\SheafEnd(E)$ of $\omega$-slope zero. If $A$ is $\omega$-slope unstable, replace it by an $\omega$-slope-stable subsheaf of $A$ of maximal $\omega$-slope.
\end{proof}

\begin{lem}
\label{lemma-slope-stable-implies-nilpotents-subsheaves-have-negative-slope}
Let $(X,\omega)$ be a compact K\"{a}hler manifold and $E$ an $\omega$-slope-stable  reflexive $\theta$-twisted  coherent sheaf, for some class $\theta\in H^2_{an}(X,\StructureSheaf{}^*)$.
Then every nilpotent subsheaf of 
$\SheafEnd(E)$ has negative $\omega$-slope.
\end{lem}

\begin{proof}
Let $A\subset \SheafEnd(E)$ be a non-zero nilpotent subsheaf of maximal $\omega$-slope. 
The proof is by contradiction. Assume that $\mu_\omega(A)\geq 0$. 
%If $\SheafEnd(E)$ is $\omega$-slope-polystable, we are done. Otherwise, the sheaf $A$ is non-zero, by Lemma
%\ref{lem-non-semistable-has-nilpotent-subsheaf-of-positive-slope} or 
%\ref{lemma-non-polystable-has-nilpotent-subsheaf-of-zero-slope}.
We may assume that $A$ is a saturated subsheaf, since otherwise the slope of its saturation is larger than or equal to the slope of $A$. 
We may assume that $A$ is $\omega$-slope-stable, possibly after replacing it with a slope-stable subsheaf of maximal $\omega$-slope. 
Let $F\subset E$ be the kernel of the natural homomorphism 
$E\rightarrow \SheafHom(A,E)$. Each stalk of $F$ is the intersection of the kernels of all elements in the corresponding stalk of $A$. 
Let $G$ be the saturation of the image of the natural homomorphism $A\otimes E\rightarrow E$. 
The subsheaves $F$ and $G$ are non-zero subsheaves of $E$ of lower rank, since $A$ is a nilpotent subsheaf.

Assume first that the sheaf $\SheafEnd(G)$ is $\omega$-slope-semistable. The sheaf 
$\SheafHom(A,\SheafEnd(G))$ is $\omega$-slope-semistable of the same non-positive slope as $A^*$,
by Lemma \ref{lemma-polystability-and-tensor-products},
as the sheaves $A$ and $\SheafEnd(G)$ are untwisted and $\omega$-slope-semistable. 
The sheaf $\SheafHom(G,(E/F))$ has positive $\omega$-slope,
since $E$ is $\omega$-slope-stable. This is seen as follows. Set $r:=\rank(E)$, $g:=\rank(G)$, $f:=\rank(F)$. The equality
\[
\mu_\omega(\SheafHom(F,G))=\mu_\omega(F^*\otimes G\otimes E^*\otimes E)=\mu_{\omega}(\SheafHom(E,G))-\mu_\omega(\SheafHom(E,F))
\]
yields
\[
\deg_\omega(\SheafHom(F,G))=fg\left[\mu_{\omega}(\SheafHom(E,G))-\mu_\omega(\SheafHom(E,F))\right]
\]
and 
\begin{eqnarray*}
\deg_\omega(\SheafHom((E/F),G))&=& \deg_\omega(\SheafHom(E,G))-\deg_\omega(\SheafHom(F,G))
\\
%&=&rg\mu_\omega(E^*\otimes G)-g(r-g)
%\\
&=&g(r-f)\mu_\omega(\SheafHom(E,G))+fg\mu_\omega(\SheafHom(E,F))<0.
\end{eqnarray*}
The natural homomorphism 
$\eta:E/F\rightarrow \SheafHom(A,G)$ is injective, by definition of $F$. Hence, the homomorphism
\[
\eta_* : \SheafHom(G,E/F)\rightarrow \SheafHom(G,\SheafHom(A,G))\cong
\SheafHom(A,\SheafEnd(G))
\]
is injective. This contradicts the semi-stability of $\SheafHom(A,\SheafEnd(G))$.

It remains to consider the case where $\SheafEnd(G)$ is not $\omega$-slope-semistable. 
In this case there exists an $\omega$-slope-stable non-zero nilpotent subsheaf $B\subset \SheafEnd(G)$ of positive $\omega$-slope,
by Lemma \ref{lem-non-semistable-has-nilpotent-subsheaf-of-positive-slope}. 
The composition
\[
B\otimes A \rightarrow \SheafEnd(G)\otimes \SheafHom(E,G)\rightarrow \SheafHom(E,G)\subset \SheafEnd(E)
\]
is a non-zero homomorphism, by definition of $G$. Indeed, 
each stalk of $G$ is the saturation of the sum of images of all elements in the corresponding stalk of $A$. 
The sheaf $(B\otimes A)_{fr}$ is $\omega$-slope-polystable of slope $\mu_\omega(B)+\mu_\omega(A).$
Hence, the image $C$ of the composition displayed above is a  subsheaf, whose slope is strictly larger than that of $A$. 
In particular, the slope of $C$ is positive and $\SheafEnd(E)$ is not $\omega$-slope-semistable. 
This contradicts the maximality of the slope of $A$ among all subsheaves (not necessarily nilpotent) of $\SheafEnd(E)$, by
Lemma \ref{lem-non-semistable-has-nilpotent-subsheaf-of-positive-slope}.
\end{proof}

%**************
% Hide (this proof is for maximally twisted E)
%**************
\hide{
\begin{proof}
The proof is by contradiction. 
Let $A'$ be a non-zero degenerate subsheaf of $\SheafEnd(E)$ 
and $A$ its saturation in $\SheafEnd(E)$. Then 
$A$ is a reflexive degenerate subsheaf of $\SheafEnd(E)$. 
Let $U\subset X$ be the open subset, where $A$ is locally free, and set
$Z:=X\setminus U$. 
Then the codimension of $Z$ in $X$ is $\geq 3$. 

We have the commutative diagram of exponential sequences
\[
\begin{array}{ccccccc}
H^2(X,\Integers) & \rightarrow & H^2_{an}(X,\StructureSheaf{}) &
\rightarrow & H^2_{an}(X,\StructureSheaf{}^*) & \rightarrow & 
H^3(X,\Integers)
\\
\cong \ \downarrow \ \hspace{1ex} & & 
\rho_1 \ \downarrow \ \hspace{2ex} & & 
\rho_2 \ \downarrow \ \hspace{2ex} & & 
\hspace{1ex} \ \downarrow \ \cong
\\
H^2(U,\Integers) & \rightarrow & H^2_{an}(U,\StructureSheaf{}) &
\rightarrow & H^2_{an}(U,\StructureSheaf{}^*) & \rightarrow & 
H^3(U,\Integers)
\end{array}
\]
Set $n:=\dim_\ComplexNumbers(X)$.
The left and right vertical homomorphisms are isomorphisms, by 
the codimension of $Z$, 
Lefschetz Duality $H^i(U,\Integers)\cong H_{2n-i}(X,Z,\Integers)$,
and the vanishing of $H_{2n-i}(Z,\Integers)$, for $i < 6$. 
The homomorphism $\rho_1$ is injective, 
since the codimension of $Z$ is $\geq 3$ \cite{Scheja}\footnote{
If $X$ is projective, and we consider the Zariski topology instead, 
the injectivity of $\rho_1$ follows from the vanishing of the
cohomology $H^i_Z(X,\StructureSheaf{X})$, with support along $Z$, for
$i\leq 2$ \cite{hartshorne-local-duality}. 
}. 
It follows that the homomorphism $\rho_2$ is injective as well, by
a diagram chase. We conclude, that 
the image $\theta'$ of $\theta$ in $H^2_{an}(U,\StructureSheaf{}^*)$
has order $r$. 

Set $Y:=\PP[\restricted{A}{U}]$ and let 
$\pi : Y\rightarrow U$ be the natural morphism. 
The pull-back $\pi^*:H^2(U,\StructureSheaf{}^*)\rightarrow 
H^2(Y,\StructureSheaf{}^*)$ is injective, by a similar diagram chase,
since the homomorphism $H^3(U,\Integers)\rightarrow
H^3(Y,\Integers)$ is injective, and both 
\[
H^2(U,\Integers)/c_1[\Pic(U)]\rightarrow H^2(Y,\Integers)/c_1[\Pic(Y)]
\]
and 
$H^2_{an}(U,\StructureSheaf{})\rightarrow H^2_{an}(Y,\StructureSheaf{})$
are isomorphisms. 
Hence, the pull-back $\theta'':=\pi^*(\theta')$ to $Y$ 
has order $r$. Consequently, the $\theta''$-twisted sheaf $\pi^*E$ does not 
have any non-trivial proper $\theta''$-twisted subsheaf. 
Let $\tau\subset \pi^*A$ be the tautological line subbundle.
The image of the composition
\[
\tau\otimes \pi^*E \rightarrow \pi^*(A\otimes E) \rightarrow 
\pi^*([\SheafEnd(E)]\otimes E) \rightarrow \pi^*E
\]
is a non-trivial $\theta''$-twisted proper subsheaf, since $\tau$ is
a degenerate subsheaf of $\pi^*\SheafEnd(E)$. A contradiction.
\end{proof}

\begin{lem}
\label{lem-maximally-twisted-implies-semistable}
Let $(X,\omega)$ be a compact K\"{a}hler manifold and $E$ a reflexive,  $\omega$-slope-stable, $\theta$-twisted sheaf, for some
$\theta\in H^2_{an}(X,\StructureSheaf{}^*)$.
Then $\SheafEnd(E)$ is an $\omega$-slope semistable sheaf.
\end{lem}

\begin{proof}
The proof is by contradiction.
Assume that $\SheafEnd(E)$ is not semistable, and let $F$ be an
$\omega$-slope-stable destabilizing subsheaf of $\SheafEnd(E)$ of maximal slope. 
Then $(F\otimes F)/tor$ is $\omega$-slope-polystable of slope $2\mu(F)$.
The image of $F\otimes F$ in $\SheafEnd(E)$ must be zero, since otherwise 
the slope of the image is $\geq 2\mu(F)$, contradicting the assumption
that the slope of $F$ is maximal. We conclude that $F$ is a 
nilpotent subsheaf. 
We obtain a contradiction, by Lemma
\ref{lem-maximally-twisted-does-not-have-nilpotents-subsheaves}.
\end{proof}
%**************
% End Hide 
%**************
}

%The following Proposition has been independently proven by 
%Lieblich \cite[Lemma 6.1.8]{lieblich-stability} for locally free sheaves in the algebraic category.

\begin{proof}[{\bf Proof of Proposition \ref{prop-polystability-of-a-very-twisted-sheaf}}]
The proposition follows immediately from Lemmas \ref{lemma-non-polystable-has-nilpotent-subsheaf-of-zero-slope} and
\ref{lemma-slope-stable-implies-nilpotents-subsheaves-have-negative-slope}.
\end{proof}

{\bf Acknowledgements:}
I would like to thank Misha Verbitsky and Daniel Huybrechts for 
valuable comments. I thank Andrei Caldararu for introducing me to
the theory of twisted coherent sheaves. I thank Jun Li for 
explaining to me the conjectural generalization of the 
Uhlenbeck-Yau Theorem for slope-stable twisted sheaves.  I thank 
Francois Charles for explaining to me his interesting results in 
\cite{charles}. I thank Sukhendu Mehrotra for his careful reading of the appendix. 
I am grateful to the two referees for their detailed and insightful remarks and corrections.
This paper was presented in several workshops. The first two were: 
``Workshop on Moduli spaces of vector bundles'',  
at the Clay Math. Inst., October 2006, 
and ``Non-linear integral transforms: Fourier-Mukai and Nahm'' 
at the Centre de Research Mathematique, Montreal, August 2007.

%****************************************************************
% Bibliography:
%****************************************************************

\end{document}